\newtheorem{theorem}{Theorem}[section]
\newtheorem{corollary}[theorem]{Corollary}
\newtheorem{lemma}[theorem]{Lemma}
\newtheorem{definition-theorem}[theorem]{Definition-Theorem}
\newtheorem{proposition}[theorem]{Proposition}
\theoremstyle{definition}
\newtheorem{definition}[theorem]{Definition}
\newtheorem{remark}[theorem]{Remark}
\newtheorem{example}[theorem]{Example}
\newtheorem*{example*}{Example}
\numberwithin{equation}{section}
\newcommand{\A}{{\mathcal A}}
\newcommand{\C}{{\mathsf C}}
\newcommand{\D}{{\mathsf D}}
\renewcommand{\H}{{\mathsf H}}
\newcommand{\KK}{\mathsf{K}}
\renewcommand{\S}{{\mathsf S}}
\newcommand{\T}{{\mathsf T}}
\newcommand{\U}{{\mathsf U}}
\newcommand{\X}{{\mathcal X}}
\newcommand{\Y}{{\mathcal Y}}
\newcommand{\tT}{{\sf T}}
\newcommand{\tD}{{\sf D}}
\newcommand{\ZZ}{\mathbb{Z}}
\newcommand{\add}{\mathsf{add}\hspace{.01in}}
\newcommand{\Fac}{\mathsf{Fac}\hspace{.01in}}
\newcommand{\Kb}{\mathsf{K}^{\rm b}}
\newcommand{\Db}{\mathsf{D}^{\rm b}}
\newcommand{\Dfd}{\mathsf{D}_{\rm fd}}
\renewcommand{\mod}{\mathsf{mod}\hspace{.01in}}
\newcommand{\Mod}{\mathsf{Mod}\hspace{.01in}}
\newcommand{\proj}{\mathsf{proj}\hspace{.01in}}
\newcommand{\per}{\mathsf{per}\hspace{.01in}}
\newcommand{\thick}{\mathsf{thick}\hspace{.01in}}
\newcommand{\dctilt}{{\it d}\strut\kern-.2em\operatorname{-ctilt}\nolimits}
\newcommand{\Ga}{\Gamma}
\newcommand{\fd}{\mathsf{fd}}
\newcommand{\Le}{\mathsf{L}}
\newcommand{\Ri}{\mathsf{R}}
\newcommand{\ten}{\otimes}
\newcommand{\Hom}{\operatorname{Hom}\nolimits}
\newcommand{\RHom}{\mathbf{R}\strut\kern-.2em\operatorname{Hom}\nolimits}
\newcommand{\HHom}{\mathcal{H}\strut\kern-.2em\operatorname{om}\nolimits}
\newcommand{\End}{\operatorname{End}\nolimits}
\newcommand{\silt}{\mbox{\rm silt}\hspace{.01in}}
\newcommand{\tstr}{\mbox{$t$-{\rm str}}\hspace{.01in}}
\newcommand{\tors}{\mbox{\rm tors}\hspace{.01in}}
\newcommand{\ftors}{\mbox{\rm ftors}\hspace{.01in}}
\newcommand{\ctilt}[1]{{#1}\operatorname{-ctilt}}
\newcommand{\ssilt}[1]{{#1}\operatorname{-silt}}
\newcommand{\xto}{\xrightarrow}
\newcommand{\eg}{\emph{e.g.\ }}
\newcommand{\ie}{\emph{i.e.\ }}
\newcommand{\ca}{\mathcal{A}}
\newcommand{\cb}{\mathcal{B}}
\newcommand{\std}{{\rm std}}
\newcommand{\dg}{{\mathrm{dg}}}
\newcommand{\pc}{{\mathrm{pc}}}
\newcommand{\hf}{\mathrm{hf}}
\newcommand{\chf}{\mathrm{chf}}
\newcommand{\coh}{\mathrm{coh}}
\newcommand{\Qcoh}{\mathrm{Qcoh}}
\newcommand{\cf}{\mathcal{F}}
\newcommand{\cg}{\mathcal{G}}
\begin{document}
\title[Silting-discreteness and t-discreteness]{Discreteness of silting objects and $t$-structures in triangulated categories}

\author{Takahide Adachi}
\address{T. Adachi: Graduate school of Science, Osaka Prefecture University, 1-1 Gakuen-cho, Nakaku, Sakai, Osaka, 599-8531, Japan}
\email{adachi@mi.s.osakafu-u.ac.jp}
\thanks{T.~Adachi is supported by Grant-in-Aid for JSPS Research Fellow 17J05537.} 
\author{Yuya Mizuno}
\address{Y. Mizuno: Department of Mathematics, Faculty of Science, Shizuoka University, 
836 Ohya, Suruga-ku, Shizuoka, 422-8529, Japan}
\email{yuya.mizuno@shizuoka.ac.jp}
\thanks{Y. Mizuno is supported by Grant-in-Aid for JSPS Research Fellow 17J00652.} 
\author{Dong Yang}
\address{D. Yang: Department of Mathematics, Nanjing University, 22 Hankou Road, Nanjing 210093, P. R.
China }
\email{yangdong@nju.edu.cn}
\date{\today}
\begin{abstract}
We introduce the notion of ST-pairs of triangulated subcategories, a prototypical example of which is the pair of the bound homotopy category and the bound derived category of a finite-dimensional algebra.  For an ST-pair $(\C,\D)$, we construct an injective order-preserving map from silting objects in $\C$ to bounded $t$-structures on $\D$ and show that the map is bijective if and only if $\C$ is silting-discrete if and only if $\D$ is $t$-discrete.
Based on a work of Qiu and Woolf, the above result is applied to show that if $\C$ is silting-discrete then the stability space of $\D$ is contractible. This is used to obtain the contractibility of the stability spaces of some Calabi--Yau triangulated categories associated to Dynkin quivers. 
\\
{\bf Key words}: silting object, silting-discrete triangulated category, $t$-structure, $t$-discrete triangulated category, ST-pair, stability space\\
{\bf MSC 2010}: 16E35, 16E45, 18E30 
\end{abstract}
\maketitle
\tableofcontents

\section{Introduction}

Silting objects were introduced in \cite{KV88} as a generalisation of tilting objects in order to parametrise bounded $t$-structures on derived categories of Dynkin quivers: there is a one-to-one correspondence between isomorphism classes of basic silting objects and bounded $t$-structures. Later such a correspondence was studied in the setting of finite-dimensional algebras in \cite{KoY14}, in the setting of non-positive differential graded (=dg) algebras with finite-dimensional total cohomology in \cite{SY16} and in the setting of homologically smooth non-positive dg algebras with finite-dimensional zeroth cohomology in \cite{KN2}. 
In recent years the connection between silting theory and the theory of $t$-structures (and co-$t$-structures) has received much attention \cite{HKM02,BR07,Bo10,MSSS13,AI12,AHMV,PV15}, and
silting theory has been playing an increasingly important role in the study of triangulated categories of algebraic origin, due to such a connection as well as its connection to cluster-tilting theory \cite{Am09,Gu11,BRT11,KN2,BY13,IYa14}, to stability spaces \cite{BPP2,QW14} and 
to the theory of universal localisation of rings \cite{AHMV2,MS16}.

\smallskip
This paper is a continuation of \cite{KV88,KoY14,SY16,KN2}. In these papers, pairs of triangulated categories appear as the natural home of silting objects and $t$-structures, respectively. Motivated by this fact,
we introduce the notion of ST-pairs (Definition~\ref{defn:ST-pair}) and study the correspondence between silting objects of $\C$ and $t$-structures on $\D$, where $(\C,\D)$ is an ST-pair. This bivariant viewpoint puts the above papers into a uniform framework.  
A prototypical example of an ST-pair is
\begin{itemize}
\item[(1)]  (Lemma~\ref{lem:ST-pair-for-fd-algebra}) $(\Kb(\proj\Lambda),\Db(\mod \Lambda))$, where $\Lambda$ is a finite-dimensional algebra, $\Kb(\proj\Lambda)$ is the bounded homotopy category of finitely generated projective $\Lambda$-modules and $\Db(\mod\Lambda)$ is the bounded derived category of finite-dimensional $\Lambda$-modules.
\end{itemize}
Other relevant examples include
\begin{itemize}
\item[(2)] (Lemma~\ref{lem:ST-pair-for-fd-non-positive-dg-algebra}) $(\per(A),\Dfd(A))$, where $A$ is a non-positive dg algebra with finite-dimensional total cohomology, $\per(A)$ is the perfect derived category of dg $A$-modules and $\Dfd(A)$ is the finite-dimensional derived category of dg $A$-modules;
\item[(3)] (Lemma~\ref{lem:standard-ST-pair-for-smooth-non-positive-dg-algebra}) $(\per(\Gamma),\Dfd(\Gamma))$, where $\Gamma$ is a homologically smooth non-positive dg algebra such that $H^0(\Gamma)$ is finite-dimensional;
\item[(4)] (Corollaries~\ref{cor:ST-pair-for-variety-with-silting}~and~\ref{cor:uniqueness-for-variety}) $(\per(X),\Db(\coh(X)))$, where $X$ is a projective scheme over a field such that the derived category $\per(X)$ of perfect complexes has a silting object (\eg the direct sum of a full exceptional sequence under suitable shifts), or equivalently, the bounded derived category $\Db(\coh(X))$ of coherent sheaves has an algebraic $t$-structure (note that these conditions imply that the Grothendieck group is free of finite rank).
\end{itemize}

For an ST-pair $(\C,\D)$ we establish an injective map $\Psi$ from the set $\silt(\C)$ of isomorphism classes of basic silting objects of $\C$ to the set $\tstr(\D)$ of bounded $t$-structures on $\D$. 
Moreover, we give characterisations of the map being bijective: it is bijective if and only if $\C$ is silting-discrete (\ie the number of silting objects of $\C$ is `locally finite', see Definition~\ref{def silting-discrete}) if and only if $\D$ is $t$-discrete (\ie the number of bounded $t$-structures on $\D$ is `locally finite', see Definition~\ref{defn:t-discreteness}). Furthermore, the map $\Psi$ is compatible with the natural partial orders and mutation operations on $\silt(\C)$ and $\tstr(\D)$.

\begin{theorem} \label{intro 1} 
Let $(\C,\D)$ be an ST-pair. 
\begin{itemize}
\item[(a)] {\rm (Theorem \ref{basic-map} and Proposition~\ref{st-mutation})} There is an order-preserving injection
\[
\Psi\colon \silt(\C) \to \tstr(\D),~~~~
M\mapsto (\tD_{M}^{\le 0}, \tD_{M}^{\ge 0})
\]
which takes silting mutation to semisimple HRS-tilt, where $\tD_{M}^{\le 0}$ (respectively, $\tD_{M}^{\ge 0}$) is the full subcategory of $\D$ consisting of objects $X$ such that
$\Hom(M,X[i])=0$ for all $i >0$ (respectively, $i<0$).
\item[(b)] {\rm (Theorem \ref{main})} The following conditions are equivalent. 
\begin{itemize}
\item[(i)] The map $\Psi$ is bijective.
\item[(ii)] $\C$ is silting-discrete.
\item[(iii)] $\D$ is t-discrete.
\item[(iv)] The heart of every bounded $t$-structure on $\D$ has a projective generator.
\end{itemize}
\end{itemize}
\end{theorem}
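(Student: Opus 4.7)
The plan is to verify first that, for every silting object $M\in\C$, the pair $(\tD_M^{\le 0},\tD_M^{\ge 0})$ is a bounded $t$-structure on $\D$. The orthogonality $\Hom(\tD_M^{\le 0},\tD_M^{\ge 1})=0$ is built into the definitions. The truncation triangles would be produced by iterating right $\add(M[{\geq}0])$-approximations and left $\add(M[{\leq}0])$-approximations, the ST-pair axioms ensuring that these processes converge in finitely many steps on every $X\in\D$; the same axioms will also supply boundedness. Injectivity of $\Psi$ would then follow by recovering $M$ from $\Psi(M)$ as a (lifted) projective generator of the heart $\H_M:=\tD_M^{\le 0}\cap\tD_M^{\ge 0}$, the basic silting hypothesis forcing uniqueness. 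Order preservation is immediate from the definitions: $M\geq N$ in the silting order says exactly that $N\in\tD_M^{\le 0}$, whence $\tD_N^{\le 0}\subseteq\tD_M^{\le 0}$. For the mutation statement, the plan is to identify the simples of $\H_M$ with the indecomposable summands of $M$ via the silting functor $\Hom(M,-)\colon\H_M\to\mod\End(M)$, after which left silting mutation at a summand $X$ can be checked to correspond, on hearts, to HRS-tilting at the simple $S_X$.

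\textbf{Proof plan for part (b).} I would arrange the implications as (ii) $\Rightarrow$ (i) $\Rightarrow$ (iv) $\Rightarrow$ (iii) $\Rightarrow$ (ii). For (ii) $\Rightarrow$ (i): given any bounded $t$-structure $\tau$ on $\D$, boundedness places $\tau$ inside an interval $[\Psi(M)[n],\Psi(M)]$ for some silting $M\in\C$ and some $n\gg 0$; silting-discreteness supplies only finitely many silting objects in $[M[n],M]$, and the mutation compatibility from (a) lets one chain down from $\Psi(M)$ to $\tau$ by finitely many HRS-tilts, each lifting to a silting mutation, forcing $\tau=\Psi(N)$ for some such $N$. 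For (i) $\Rightarrow$ (iv): bijectivity writes every bounded heart as some $\H_M$, which has $M$ as a projective generator. For (iv) $\Rightarrow$ (iii): a heart with a projective generator is equivalent to the module category of its endomorphism algebra, hence is a length category with only finitely many simples, so HRS-tilting at each simple produces the only immediately adjacent bounded $t$-structures; local finiteness then follows by induction on the "distance" from a fixed $t$-structure, provided the property (iv) is preserved under such tilts. For (iii) $\Rightarrow$ (ii): the shift-equivariant injection $\Psi$ embeds silting intervals into $t$-structure intervals, so $t$-discreteness transports directly to silting-discreteness.

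\textbf{Main obstacle.} The most delicate steps are the construction of the truncation triangles in (a) and the implication (iv) $\Rightarrow$ (iii). In (a), the existence of $\add(M)$-approximations is standard, but showing that iterated approximation terminates on every object of $\D$ (and not merely on objects of $\C$) is where the ST-pair axioms have to do their real work, and one must be careful that the resulting truncations lie in $\D$ rather than merely in some larger ambient category. In (iv) $\Rightarrow$ (iii), the difficulty is to show that HRS-tilting at a simple preserves the property "the heart has a projective generator", so that the local-finiteness argument can be iterated; this ultimately reduces to a lifting problem from hearts of $\D$ back to silting objects of $\C$, and it is precisely this lifting that the ST-pair axioms are designed to enable. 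Organising these two steps cleanly, so that the equivalence in (b) becomes a formal consequence of (a) together with shift-equivariance, is where the bulk of the technical care will need to be concentrated.
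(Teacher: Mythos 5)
Two steps in your plan are where the real technical content lives, and I do not think either sketch closes as written.

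For part (a), axiom (ST2) asserts that $(\T_M^{\le 0},\T_M^{\ge 0})$ is a $t$-structure for \emph{one} chosen silting object $M$ of $\C$; the work is transferring this to an arbitrary silting $N$, and iterating $\add(N)$-approximations does not do it. What Lemma~\ref{triangle1} produces for $X\in\T_N^{\le 0}$ is an expression $X\in N^{[0,l-1]}\ast\T_N^{\le -l}$ for every $l$, but the second piece never vanishes unless $X$ already lies in $\C$, so on a general object of $\D$ the iteration does not ``terminate'' into a truncation triangle. What the paper does (Lemma~\ref{IYa-thm44}) is first decompose $\T=N^{[0,2n]}\ast(N^{[0,2n]})^{\perp}$ by silting reduction \cite[Lemma 4.6]{IYa14}, and then handle the orthogonal factor using the $M$-truncation supplied by the axiomatic $t$-structure; the axiom is irreducible input, not a convergence guarantee. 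Once this transfer lemma is in place, order-preservation is immediate (Lemma~\ref{lem:partial-orders-of-presilting-and-t-structure}) and injectivity follows from antisymmetry of the silting order, without needing to recover $M$ as a lift of the projective generator (though that route also works).

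For part (b), the implication (iv)$\Rightarrow$(iii) has a genuine gap. $t$-discreteness demands that each interval $\D^{\le 0}\supseteq\D'^{\le 0}\supseteq\D^{\le -n}$ contain only finitely many $t$-structures, and ``local finiteness by induction on distance'' does not deliver this: the simple-tilt graph could be locally finite yet reach infinitely many $t$-structures inside a bounded interval at unbounded depth, and without already knowing (ii) or (i) there is no bound on that depth. The missing ingredient is the dictionary (for any silting $M$) between $\silt^2_M(\C)$, finitely generated torsion classes of $\D_M^0$, and support $\tau$-tilting $\End(M)$-modules (Propositions~\ref{twosilt-fgtor} and~\ref{restmap}), coupled with the reduction of silting-discreteness to two-term silting (Lemma~\ref{AM}). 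With these in hand, (iv)$\Rightarrow$(ii) is immediate by Corollary~\ref{finiteness}, and (iii) then follows from (i) via the interval-to-interval restriction of $\Psi$ (Corollary~\ref{cor:restriction-of-the-map-to-n-terms}). Your implications (ii)$\Rightarrow$(i) (chaining simple tilts down to $\tau$; this is exactly Proposition~\ref{keythm}) and (iii)$\Rightarrow$(ii) (intervals embed under $\Psi$) are sound and match the paper; the latter is slightly more direct than the paper's route through the auxiliary condition (v).
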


Under the conditions of Theorem \ref{intro 1}(b) all bounded $t$-structures are captured by silting theory, just as in the special case when $\C=\D$ is the bounded derived category of a Dynkin quiver \cite{KV88}. For the ST-pairs (1--4), the image of the map $\Psi$ is known to be the set of algebraic $t$-structures, see \cite[Theorem 6.1]{KoY14}, \cite[Theorem 1.2]{SY16}, \cite[Theorem 13.3]{KN2} and Corollary~\ref{cor:uniqueness-for-variety}, respectively.

\smallskip
Part (a) of the following theorem (and its proof) provides a certain canonical construction and justify the notion of ST pairs.
We also study uniqueness property for 
some important examples.

\begin{theorem}
\begin{itemize}
\item[(a)] {\rm (Theorem~\ref{thm:silting=>ST-pair})}
Let $\C$ be a Hom-finite Krull--Schmidt algebraic triangulated category with silting objects. Then there exist triangulated categories $\C'$ and $\D$ such that $(\C',\D)$ is an ST-pair and $\C'$ is triangle equivalent to $\C$.
\vspace{2pt}
\item[(b)] {\rm (Corollaries~\ref{cor:unique-ST-pair-for-the-homotopy-category}~and~\ref{cor:uniqueness-for-fd-algebra})} Let $\Lambda$ be a finite-dimensional algebra. If $\D$ is an algebraic triangulated category and $(\Kb(\proj\Lambda),\D)$ is an ST-pair, then $\D$ contains $\Kb(\proj\Lambda)$ and there is a triangle equivalence $\Db(\mod\Lambda)\to\D$ which is the identity on $\Kb(\proj\Lambda)$; if $(\C,\Db(\mod\Lambda))$ is an ST-pair, then $\C=\Kb(\proj\Lambda)$.
\vspace{2pt}
\item[(c)] {\rm (Corollaries~\ref{cor:ST-pair-for-variety-with-silting}~and~\ref{cor:uniqueness-for-variety})} Let $X$ be a projective scheme over a field. Then the pair $(\per(X),\Db(\coh(X)))$ is an ST-pair if and only if $\per(X)$ has a silting object if and only if $\Db(\coh(X))$ has an algebraic $t$-structure. Moreover, if $(\C,\Db(\coh(X)))$ is an ST-pair, then $\C=\per(X)$.
\end{itemize}
\end{theorem}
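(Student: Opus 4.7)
My plan is to realise $\C$ dg-algebraically. Pick a dg enhancement of $\C$ and a basic silting object $M\in\C$, and form the derived endomorphism dg algebra $\Gamma:=\RHom(M,M)$ inside the enhancement. The silting property $\Hom_\C(M,M[i])=0$ for $i>0$ forces $H^i(\Gamma)=0$ for $i>0$, so up to quasi-isomorphism $\Gamma$ is non-positive, and $H^0(\Gamma)\cong\End_\C(M)$ is finite-dimensional by Hom-finiteness of $\C$. Keller's Morita-style theorem then produces a triangle equivalence $\C\simeq\per(\Gamma)=:\C'$ sending $M\mapsto\Gamma$, and the natural candidate for $\D$ is $\Dfd(\Gamma)$. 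The main obstacle is that $\Gamma$ need not have finite-dimensional total cohomology nor be homologically smooth, so neither example~(2) nor example~(3) of the introduction applies directly: the ST-pair axioms must be verified by hand, the key ingredient being the canonical non-positive $t$-structure on $\D(\Gamma)$, which restricts to a bounded $t$-structure on $\Dfd(\Gamma)$ with heart $\mod H^0(\Gamma)$ and interacts appropriately with $\per(\Gamma)$.

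\textbf{Part (b).} For the first assertion, suppose $(\Kb(\proj\Lambda),\D)$ is an ST-pair with $\D$ algebraic. Applying Theorem~\ref{intro 1}(a) to the silting object $\Lambda\in\Kb(\proj\Lambda)$, the $t$-structure $\Psi(\Lambda)$ on $\D$ has a heart admitting $\Lambda$ as a projective generator of endomorphism algebra $\Lambda$. A standard reconstruction result --- any algebraic triangulated category with a bounded $t$-structure whose heart has a projective generator $\Lambda$ is canonically equivalent to $\Db(\mod\Lambda)$ --- then yields a triangle equivalence $\Db(\mod\Lambda)\to\D$ that is the identity on $\Kb(\proj\Lambda)$. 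For the second assertion, suppose $(\C,\Db(\mod\Lambda))$ is an ST-pair. The standard $t$-structure on $\Db(\mod\Lambda)$ is algebraic and, as noted after Theorem~\ref{intro 1}, lies in the image of $\Psi$; hence $\Psi(M)$ equals the standard $t$-structure for some $M\in\silt(\C)$. Unwinding $\Psi$ yields $\End_\C(M)\cong\Lambda$, and since $M$ classically generates $\C$ (a built-in feature of ST-pairs), we conclude $\C=\thick_\D(M)=\Kb(\proj\Lambda)$.

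\textbf{Part (c).} The three-way equivalence I would prove cyclically. ``ST-pair $\Rightarrow$ silting on $\per(X)$'' is immediate from the definition of an ST-pair. ``Silting on $\per(X) \Rightarrow$ algebraic $t$-structure on $\Db(\coh X)$'' follows from Theorem~\ref{intro 1}(a), provided one first checks that silting on $\per(X)$ upgrades $(\per(X),\Db(\coh X))$ to an ST-pair; this reduces, via part~(a) applied to $\C=\per(X)$ together with a direct identification of the resulting enlargement with $\Db(\coh X)$, to dg-algebra manipulations in the spirit of example~(3). The converse ``algebraic $t$-structure $\Rightarrow$ silting on $\per(X)$'' is the deepest implication: given an algebraic bounded $t$-structure on $\Db(\coh X)$ whose heart has a projective generator $P$ with $\End(P)=\Lambda$, Bondal--Van den Bergh yields an equivalence between $\Db(\coh X)$ and the bounded derived category of the heart, after which one checks that $P$ lies in $\per(X)$ (as a compact object of the algebraic ambient category) and is therefore silting there. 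For the uniqueness statement, given an ST-pair $(\C,\Db(\coh X))$, the second half of~(b) identifies $\C$ with $\Kb(\proj\Lambda)\subseteq\Db(\mod\Lambda)\simeq\Db(\coh X)$ for $\Lambda=\End_\C(M)$ with $M\in\silt(\C)$; combined with the intrinsic characterisation of $\per(X)$ as the thick subcategory of $\Db(\coh X)$ generated by any tilting object, this forces $\C=\per(X)$.
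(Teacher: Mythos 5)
Part (a) of your proposal follows the paper's route: realise $\C$ as $\per(A)$ for a non-positive dg algebra $A$ with each $H^p(A)$ finite-dimensional and take $\D=\Dfd(A)$. The one point you gloss over is the choice of ambient category: condition (ST3) forces $\T=\bigcup_n\T_A^{\le n}$, so one cannot work inside all of $\D(A)$ but must take $\T=\Dfd^-(A)$, as the paper does in Proposition~\ref{prop:ST-pair-for-non-positive-dg-algebra}. With that adjustment, (a) is fine.

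The genuine gaps are in (b) and (c). In (b), the ``standard reconstruction result'' you invoke is false as stated: an algebraic triangulated category with a bounded $t$-structure whose heart has a projective generator $P$ need not be equivalent to $\Db(\mod\End(P))$ (take $\Dfd(A)$ for a non-formal non-positive dg algebra $A$ with finite-dimensional cohomology and its standard heart $\mod H^0(A)$). One needs in addition $\Hom_\D(P,X[n])=0$ for all $X$ in the heart and all $n\ge 2$; this does hold here because $\Lambda$ is a \emph{tilting} object and hence lies in the heart $\D_\Lambda^0$, but proving the realization functor is then an equivalence is exactly the d\'evissage carried out in Proposition~\ref{prop:tilting=>unique-ST-pair}, and one must also first establish $\Kb(\proj\Lambda)\subseteq\D$, which the paper deduces from $\C$ being of finite type. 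For the second assertion of (b) your argument is circular: you assume the standard $t$-structure lies in the image of $\Psi_{(\C,\Db(\mod\Lambda))}$ for the \emph{unknown} $\C$, but surjectivity of $\Psi$ onto algebraic $t$-structures is only known for the specific pairs (1)--(4), not for an arbitrary ST-pair; moreover $\thick_\D(M)$ does not parse until you know $M\in\D$, which is not automatic since $\C$ and $\D$ need not be comparable. The paper instead first proves $\Kb(\proj\Lambda)=\Db(\mod\Lambda)_{\hf}\subseteq\C$ via Theorem~\ref{thm:S-and-T-determine-each-other} and then matches a silting object of $\C$ with one of $\Kb(\proj\Lambda)$ using the Hom-duality with the simples of the common heart. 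In (c), the deepest implication rests on a false premise: an algebraic $t$-structure has a length heart with finitely many simples but need \emph{not} have a projective generator --- that its heart admits one is essentially the conclusion, not a hypothesis. The paper's proof passes through Lunts' smooth categorical resolution, the positive dg algebra $\RHom(L,L)$ on the sum $L$ of the simples together with the Keller--Nicol\'as silting object of Lemma~\ref{lem:silting-for-positive-dg-algebra}, and the Serre-duality identification $\per(X)\simeq\Db(\coh(X))^{\chf}$. Likewise the uniqueness in (c) cannot be reduced to (b) unless $\per(X)$ has a tilting (not merely silting) object; the paper uses the intrinsic description $\per(X)=\Db(\coh(X))_{\hf}$ instead.
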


See Section~\ref{ss:completing-silting-to-ST-pair} for more uniqueness results, especially on categories involved in the ST-pairs (2) and (3).

\smallskip
One application of our main result is  the study of Bridgeland's stability spaces of triangulated categories \cite{Br1}. 
Basing on the work of Qiu and Woolf \cite{QW14}, we apply Theorem~\ref{intro 1} to give sufficient conditions on the contractibility of the stability space of a triangulated category $\D$. 

\begin{theorem}\label{main2}
\begin{itemize}
\item[(a)] {\rm (Theorem~\ref{thm:contractibility-vs-silting-discreteness})}
Let $(\C,\D)$ be an ST-pair. 
If $\C$ is silting-discrete, then the stability space of $\D$ is contractible.
\item[(b)] {\rm (Corollary \ref{cor:contractible-derived-category})} Let $\Lambda$ be a finite-dimensional algebra. 
If $\Kb(\proj\Lambda)$ is silting-discrete, then the stability space of $\Db(\mod\Lambda)$ is contractible.
\end{itemize}
\end{theorem}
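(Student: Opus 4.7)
Part (b) is an immediate consequence of part (a) combined with Lemma~\ref{lem:ST-pair-for-fd-algebra}, which asserts that $(\Kb(\proj\La),\Db(\mod\La))$ is an ST-pair for any finite-dimensional algebra $\La$. I therefore concentrate on part (a), whose strategy is to translate silting-discreteness of $\C$ into structural information about bounded $t$-structures on $\D$ and then feed the conclusion into the contractibility framework of Qiu and Woolf \cite{QW14}.

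The first step is to apply Theorem~\ref{intro 1}(b) to the hypothesis. Silting-discreteness of $\C$ is equivalent to $t$-discreteness of $\D$, to bijectivity of $\Psi\colon \silt(\C) \to \tstr(\D)$, and to the fact that every heart of a bounded $t$-structure on $\D$ admits a projective generator. Combined with the Hom-finiteness built into the ST-pair axiom, each such heart is equivalent to $\mod\La'$ for some finite-dimensional algebra $\La'$, so it is a length abelian category with only finitely many isomorphism classes of simple objects. Using Theorem~\ref{intro 1}(a) to identify silting mutation with semisimple HRS-tilt, the tilt graph of bounded $t$-structures on $\D$ becomes locally finite and connected, because the mutation graph of silting objects in a silting-discrete category is so.

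The last step is to feed these structural consequences into the Qiu--Woolf machinery. Their framework covers $\mathrm{Stab}(\D)$ by regions $U(\A)$, one for each heart $\A$, each of which is a contractible cell parametrised by the central charges of the finitely many simples of $\A$, and glues adjacent regions along codimension-one faces determined by simple HRS-tilts. Under the structural consequences established above---all hearts are length with finitely many simples, and the tilt graph is locally finite and connected---the cover $\{U(\A)\}$ gives a strong deformation retraction of $\mathrm{Stab}(\D)$ onto a single region $U(\A_{0})$, and the contractibility of this region yields the contractibility of $\mathrm{Stab}(\D)$.

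The main obstacle is confirming that \emph{every} stability condition on $\D$ is supported by one of the algebraic hearts in the image of $\Psi$, so that the cover $\{U(\A)\}_{\A}$ exhausts $\mathrm{Stab}(\D)$ rather than merely an open subset of it. This is precisely condition (iv) of Theorem~\ref{intro 1}(b), which holds by hypothesis: the heart of a stability condition is in particular the heart of a bounded $t$-structure, and hence algebraic. Once this is verified, the Qiu--Woolf contractibility theorem applies directly to $\D$ and completes the proof.
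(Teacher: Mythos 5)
Your overall strategy matches the paper's: use Theorem~\ref{intro 1}(b) to conclude that every bounded $t$-structure on $\D$ is $\C$-silting (hence algebraic), use the silting-mutation/HRS-tilt correspondence (Theorem~\ref{intro 1}(a)) together with connectivity of the silting mutation graph to conclude that the tilt graph of $\D$ is connected, and then invoke the Qiu--Woolf contractibility machinery. Part~(b) is obtained from (a) and Lemma~\ref{lem:ST-pair-for-fd-algebra}, exactly as in the paper.

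There is, however, a gap in the verification of the Qiu--Woolf hypotheses. The key requirement is that the (connected) tilt poset $\mathrm{Tilt}(\D)$ be of \emph{finite type}, which, in the sense of \cite[Section 4.1]{QW14}, means that each heart has only \emph{finitely many torsion classes}. You establish that each heart is length with finitely many simples (equivalently, equivalent to $\mod\La'$) and that the tilt graph is locally finite and connected, but none of these statements implies finiteness of $\tors(\T^0)$: local finiteness of the tilt graph only controls simple tilts, not arbitrary torsion classes, and a length category with finitely many simples can easily have infinitely many torsion classes. The paper's proof closes this gap explicitly: every bounded $t$-structure is of the form $(\D_M^{\leq 0},\D_M^{\geq 0})$ for a silting object $M$, and Corollary~\ref{finiteness} shows that $\tors(\D_M^0)$ is finite precisely because $\silt^2_M(\C)$ is finite when $\C$ is silting-discrete. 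Alternatively, as you observe, silting-discreteness is equivalent to $t$-discreteness of $\D$; combining this (with $n=1$ in Definition~\ref{defn:t-discreteness}) with Proposition~\ref{prop:woolf's-proposition} would also yield the required finiteness, but you never carry out this step. As written, your argument would let through a hypothetical heart with finitely many simples but infinitely many torsion classes, to which the Qiu--Woolf theorem does not apply.
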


In Example \ref{ex:silting-discrete-derived-categories} we will give some known examples where $\Kb(\proj\Lambda)$ is silting-discrete. By Theorem~\ref{main2}(b), Example  \ref{ex:silting-discrete-derived-categories}(2) affirms the first part of  \cite[Conjecture 5.8]{Q15}. During the preparation of this work, we were informed by Pauktztello, Saor\'in and Zvonareva that they independently obtained 
Theorem~\ref{main2}(b) in \cite{PSZ17}.

\smallskip
We also investigate silting-discreteness for a particularly  important 
class of triangulated categories.
We call an ST-pair $(\C,\D)$ a \emph{$(d+1)$-Calabi--Yau pair} (Definition \ref{def CYtriple}) if $\C\supseteq \D$ and there is a bifunctorial isomorphism $D\Hom(X,Y)\simeq \Hom(Y,X[d+1])$ for $X\in\D$ and $Y\in\C$. 
In this case, we give a characterisation of silting-discreteness for $\C$ using cluster-tilting theory for the \emph{cluster category} $\C/\D$ (\cite[Section 5.3]{IYa14}). 

\begin{theorem}[Theorem \ref{equivalent thm} and Corollary~\ref{cor:cluster-tilting-finite-vs-contractibility-cy-case}]
Let $d\geq 1$ be an integer and let $(\C,\D)$ be a $(d+1)$-Calabi--Yau pair. 
If the cluster category $\C/\D$ has only finitely many isomorphism classes of basic $d$-cluster-tilting objects, then $\C$ is silting-discrete and the stability space of $\D$ is contractible. 
\end{theorem}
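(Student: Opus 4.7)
The plan is to deduce silting-discreteness of $\C$ from the finiteness hypothesis on basic $d$-cluster-tilting objects in the cluster category $\C/\D$, and then to invoke Theorem~\ref{main2}(a) to obtain contractibility of the stability space of $\D$. The bridge is the known correspondence between silting theory in a $(d+1)$-Calabi--Yau triple and cluster-tilting theory in its associated cluster category, developed in \cite[Section~5.3]{IYa14}.

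First I would fix a silting object $M\in\C$ (which exists because $(\C,\D)$ is an ST-pair) and analyse the ``$d$-window'' consisting of silting objects $N\in\silt(\C)$ satisfying $M[d-1]\leq N\leq M$ in the silting order. Using the $(d+1)$-Calabi--Yau bifunctorial isomorphism from Definition~\ref{def CYtriple}, I would show that the projection $\pi\colon\C\to\C/\D$ sends each such $N$ to a basic $d$-cluster-tilting object $\pi(N)$ of $\C/\D$, that this assignment is injective on isomorphism classes, and that silting mutation in $\C$ corresponds to $d$-cluster-tilting mutation in $\C/\D$. The assumption that $\C/\D$ has only finitely many basic $d$-cluster-tilting objects then forces the $d$-window to be finite.

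Next I would upgrade window-finiteness to silting-discreteness. Since $[1]$ is an autoequivalence of $\silt(\C)$, every translated window consisting of silting $N$ with $M[n+d-1]\leq N\leq M[n]$ is also finite. Any bounded interval $[M[k],M]$ with $k\geq 1$ is covered by finitely many overlapping $d$-windows and is therefore finite, which is precisely silting-discreteness in the sense of Definition~\ref{def silting-discrete}. Finally Theorem~\ref{main2}(a) promotes silting-discreteness of $\C$ to contractibility of the stability space of $\D$, completing the proof.

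The main technical obstacle is verifying that the silting-to-$d$-cluster-tilting correspondence of \cite[Section~5.3]{IYa14} goes through verbatim for the abstract notion of $(d+1)$-Calabi--Yau pair: one needs the Calabi--Yau duality between $\C$ and $\D$ to produce the Ext-vanishing, Hom-finiteness and mutation triangles on which the bijection and mutation compatibility rest. Once this is in place, the rest is a clean combinatorial finiteness argument followed by the appeal to Theorem~\ref{main2}(a).
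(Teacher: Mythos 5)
Your overall route---embed $\silt^d_M(\C)$ into $\ctilt{d}(\C/\D)$ via the projection $\pi$ using \cite[Section 5.3]{IYa14}, conclude that each $d$-window is finite, and finish with Theorem~\ref{main2}(a)---is the same as the paper's. Your worry about transporting the silting/cluster-tilting correspondence to abstract Calabi--Yau pairs is unfounded: Definition~\ref{def CYtriple} is set up precisely so that $(\C,\D,M)$ is a $(d+1)$-Calabi--Yau triple in the sense of \cite[Section 5.1]{IYa14} for \emph{every} silting object $M$, so Theorem~\ref{IY cluster}(c) applies as stated and gives the injection $\silt^d_M(\C)\to\ctilt{d}(\C/\D)$; no re-verification is needed.

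The genuine gap is in your third step, the upgrade from window-finiteness to silting-discreteness. The claim that a bounded interval $\{N\in\silt(\C)\mid M\geq N\geq M[k]\}$ is ``covered by finitely many overlapping $d$-windows'' fails, because the partial order on $\silt(\C)$ is not total: a silting object $N$ with $M\geq N\geq M[k]$ need not be comparable to any intermediate shift $M[i(d-1)]$, and hence need not lie in any window of the form $\silt^d_{M[i(d-1)]}(\C)$. (For $d=1$ the failure is stark, since each $1$-window is the singleton $\{M[i]\}$.) The correct bootstrap, and the one the paper uses, is: for $d\geq 2$ one has $\silt^2_N(\C)\subseteq\silt^d_N(\C)$, so every two-term window is finite, and then Lemma~\ref{AM} (the criterion of Aihara and Mizuno, \cite[Theorem 2.4]{AM15}) yields silting-discreteness; this lemma is a substantive theorem, not a formal covering argument, and it is exactly the missing ingredient. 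Note also that $d=1$ requires a separate argument (Theorem~\ref{equivalent thm}(b)), since finiteness of $\ctilt{1}(\C/\D)$ carries no information about $\silt^2_M(\C)$. Once silting-discreteness is in hand, your appeal to Theorem~\ref{main2}(a) for contractibility is correct and matches the paper.
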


We apply this result to two important classes of dg algebras: the complete Ginzburg dg algebras and $(d+1)$-derived preprojective algebras, which are examples of deformed Calabi--Yau completions in the sense of \cite{Ke2}. 

\begin{corollary}[{Lemma~\ref{lem:Hom-finiteness-and-cy-property-for-derived-preprojective-algebras}, Corollaries \ref{main3-1}~and~\ref{derived pp}}]
\label{cor:contractible-cy-category-1}
Let $d\geq 1$ be an integer and $\Gamma=\Gamma_{d+1}(Q)$ the derived $(d+1)$-preprojective algebra of a finite quiver $Q$.
Then the following statements hold.
\begin{itemize}
\item[(a)] $(\per(\Gamma),\Dfd(\Gamma))$ is a $(d+1)$-Calabi--Yau pair if and only if $\dim H^0(\Gamma)<\infty$.  
\item[(b)] $\per(\Gamma)$ is silting-discrete if and only if  $Q$ is Dynkin. 
\item[(c)] If $Q$ is Dynkin, then the stability space of $\Dfd(\Gamma)$ is contractible.
\end{itemize}
\end{corollary}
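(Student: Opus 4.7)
The proof naturally splits along the three parts. For part (a), the plan is to invoke Keller's theory of Calabi--Yau completions: by construction $\Gamma=\Gamma_{d+1}(Q)$ is a homologically smooth non-positive dg algebra, and its bimodule $(d+1)$-Calabi--Yau structure yields the bifunctorial isomorphism $D\RHom_{\Gamma}(X,Y)\simeq \RHom_{\Gamma}(Y,X[d+1])$ for $X\in\Dfd(\Gamma)$ and $Y\in\per(\Gamma)$. Combined, these show that as soon as $\dim H^0(\Gamma)<\infty$, Lemma~\ref{lem:standard-ST-pair-for-smooth-non-positive-dg-algebra} produces the ST-pair $(\per(\Gamma),\Dfd(\Gamma))$, and the Calabi--Yau property upgrades it to a $(d+1)$-Calabi--Yau pair. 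Conversely, being an ST-pair forces $H^0(\Gamma)\in\Dfd(\Gamma)$, whence $\dim H^0(\Gamma)<\infty$.

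For part (b), the ``if'' direction is an application of Theorem~\ref{equivalent thm} together with Corollary~\ref{cor:cluster-tilting-finite-vs-contractibility-cy-case}. When $Q$ is Dynkin, the cluster category $\per(\Gamma)/\Dfd(\Gamma)$ is the classical $d$-Calabi--Yau cluster category of $Q$, whose set of isomorphism classes of basic $d$-cluster-tilting objects is finite by known results (Buan--Marsh--Reineke--Reiten--Todorov for $d=1$, and Iyama--Oppermann or Amiot for higher $d$); hence $\per(\Gamma)$ is silting-discrete. The ``only if'' direction is contrapositive: when $Q$ is non-Dynkin, $H^0(\Gamma)$ is the non-Dynkin $(d+1)$-preprojective algebra, which is infinite-dimensional, and one produces an infinite family of pairwise non-isomorphic silting objects in a bounded shift interval of $\per(\Gamma)$ by iteratively mutating $\Gamma$ along the idempotent summands corresponding to vertices of $Q$, the infinite representation type of $kQ$ guaranteeing that these mutation classes do not collapse.

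Part (c) is then immediate once (a) and (b) are in hand: for $Q$ Dynkin, (a) supplies the ST-pair $(\per(\Gamma),\Dfd(\Gamma))$ and (b) supplies the silting-discreteness of $\per(\Gamma)$, so Theorem~\ref{main2}(a) delivers the contractibility of the stability space of $\Dfd(\Gamma)$.

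The principal obstacle is the ``only if'' direction in (b). Because the pair fails to be an ST-pair in the non-Dynkin case (by (a)), the correspondence machinery of Theorem~\ref{intro 1} does not directly apply, so one must exhibit by hand an infinite family of silting objects clustered in a bounded interval of $\per(\Gamma)$. The natural route is to pull back infinitely many non-isomorphic basic tilting complexes over $kQ$ along the augmentation dg algebra map $\Gamma\to kQ$ induced by $\Pi_{d+1}(Q)\to kQ$, and then verify that these lifts remain pairwise non-isomorphic in $\per(\Gamma)$ and all lie in the required shift interval; the technical heart is the non-collapse of the lifts, which one expects to control via a cohomological comparison with the tilting theory of $kQ$.
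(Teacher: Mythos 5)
The main gap is in the ``only if'' direction of part~(b), and it stems from a factual error about $H^0(\Gamma)$. You assert that for non-Dynkin $Q$ the algebra $H^0(\Gamma)$ is ``the non-Dynkin $(d+1)$-preprojective algebra, which is infinite-dimensional,'' and that the pair ``fails to be an ST-pair in the non-Dynkin case,'' forcing a hand-crafted construction of infinitely many silting objects. This is only true for $d=1$. For $d\ge 2$ one has $H^0(\Gamma)=KQ$, which is finite-dimensional as soon as $Q$ has no oriented cycle, regardless of whether $Q$ is Dynkin; see the remark after the definition of $\Gamma_{d+1}(Q)$ in Section~\ref{ss:derived-preprojective-algebra} and Lemma~\ref{lem:Hom-finiteness-and-cy-property-for-derived-preprojective-algebras}. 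In fact, when $H^0(\Gamma)$ is infinite-dimensional, $\per(\Gamma)$ is not Hom-finite and ``silting-discrete'' is not even defined, so that case is vacuous; the genuinely substantive case is $d\ge 2$ with $Q$ acyclic non-Dynkin, and there $(\per(\Gamma),\Dfd(\Gamma))$ \emph{is} a $(d+1)$-Calabi--Yau pair. Consequently the ST-pair machinery applies and the paper's argument is short: $\End_{\per(\Gamma)}(\Gamma)=H^0(\Gamma)=KQ$, which has infinitely many bricks and hence is not $\tau$-tilting finite by \cite[Theorem 1.10]{DIJ15}; then Proposition~\ref{twosilt-fgtor}(b) (equivalently, the $(i)\Rightarrow(iii)$ direction of Theorem~\ref{equivalent thm}(b)) shows $\silt^2_\Gamma(\per(\Gamma))$ is infinite, so $\per(\Gamma)$ is not silting-discrete. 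Your proposed alternative of lifting tilting complexes along $\Gamma\to KQ$ and controlling non-collapse is both unnecessary and unjustified; it replaces a one-line application of the established correspondence with a construction whose key step (``non-collapse of the lifts'') is left entirely open.

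Two lesser points. For the ``if'' direction, note that Theorem~\ref{equivalent thm}(a) is stated only for $d\ge 2$; the paper therefore handles $d=1$ separately using \cite[Theorem 2.21]{M14} together with Proposition~\ref{twosilt-fgtor}, rather than through cluster-tilting finiteness of the $1$-Calabi--Yau cluster category. Your citation of Corollary~\ref{cor:cluster-tilting-finite-vs-contractibility-cy-case} in the ``if'' direction of~(b) is also misplaced: that corollary concludes contractibility, not silting-discreteness; the relevant input is Theorem~\ref{equivalent thm}(a). Parts (a) and (c) of your proposal are essentially correct and follow the paper's route.
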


\begin{corollary}[{Corollaries \ref{main3-2}~and~\ref{derived pp}}]
\label{cor:contractible-cy-category-2}
Let $\Gamma=\widehat{\Gamma}(Q,W)$ be the complete Ginzburg dg algebra of a finite quiver $Q$ with a nondegenerate potential $W$.
Then the following statements hold.
\begin{itemize}
\item[(a)] $(\per(\Gamma),\Dfd(\Gamma))$ is a $3$-Calabi--Yau pair if and only if $\dim H^0(\Gamma)<\infty$.  
\item[(b)] $\per(\Gamma)$ is silting-discrete if and only if  $Q$ is related to a Dynkin quiver by a finite sequence of quiver mutations. 
\item[(c)] If  $Q$ is related to a Dynkin quiver by a finite sequence of quiver mutations, then the stability space of $\Dfd(\Gamma)$ is contractible.
\end{itemize}
\end{corollary}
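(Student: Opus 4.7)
The plan is to verify each of (a), (b), (c) by combining standard properties of complete Ginzburg dg algebras (due to Ginzburg and Keller) with the ST-pair and Calabi--Yau pair machinery established earlier in the paper.

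For part (a), I would use the fact that $\Gamma=\widehat{\Gamma}(Q,W)$ is a homologically smooth non-positive dg algebra, so Lemma~\ref{lem:standard-ST-pair-for-smooth-non-positive-dg-algebra} applies: $(\per(\Gamma),\Dfd(\Gamma))$ is an ST-pair precisely when $H^0(\Gamma)$ is finite-dimensional. To upgrade such an ST-pair to a $3$-Calabi--Yau pair one needs the bifunctorial isomorphism $D\Hom(X,Y)\simeq\Hom(Y,X[3])$ for $X\in\Dfd(\Gamma)$ and $Y\in\per(\Gamma)$; this is the classical $3$-Calabi--Yau duality of complete Ginzburg dg algebras established by Keller, which holds with no further hypothesis on $H^0(\Gamma)$. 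Conversely, any $3$-Calabi--Yau pair is in particular an ST-pair, forcing $H^0(\Gamma)$ to be finite-dimensional.

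For part (b), I would apply the theorem immediately preceding this corollary to the $3$-Calabi--Yau pair of (a). By Amiot's construction, the cluster category $\per(\Gamma)/\Dfd(\Gamma)$ is the generalized cluster category $\C_{(Q,W)}$, and the Keller--Yang mutation theorem (using nondegeneracy of $W$) identifies the cluster-tilting graph of $\C_{(Q,W)}$ with the quiver mutation class of $Q$. If $Q$ is mutation-equivalent to a Dynkin quiver $Q'$, then this class is finite and $\C_{(Q,W)}$ is cluster-tilting-finite, so the preceding theorem yields silting-discreteness of $\per(\Gamma)$. For the converse, I would proceed contrapositively: if $Q$ is not mutation-Dynkin, its mutation class is infinite, and the Keller--Yang correspondence produces infinitely many pairwise non-isomorphic basic cluster-tilting objects in $\C_{(Q,W)}$. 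Via the bijection between silting objects of $\per(\Gamma)$ and cluster-tilting objects of $\C_{(Q,W)}$ in the Calabi--Yau setting (together with the silting-mutation/cluster-tilting-mutation compatibility), these produce infinitely many silting objects of $\per(\Gamma)$ within any interval containing $\Gamma$, contradicting silting-discreteness.

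Part (c) is then immediate from Theorem~\ref{main2}(a) applied to the ST-pair of (a) together with the silting-discreteness of (b). The main obstacle I anticipate is the converse direction of (b): one must actually produce infinitely many non-isomorphic basic cluster-tilting objects of $\C_{(Q,W)}$ from the assumption that $Q$ has an infinite mutation class, and then transfer non-finiteness faithfully back to the silting theory of $\per(\Gamma)$. Making this rigorous requires Keller--Yang's uniqueness statement for iterated mutations of quivers with potential, together with a careful analysis of which mutated silting objects lie in a prescribed interval relative to $\Gamma$; nondegeneracy of $W$ is essential at both steps.
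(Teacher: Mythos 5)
Your proposal is broadly aligned with the paper for parts (a) and (c), and your plan for (b) captures the right structure (reduce to finiteness of $2$-cluster-tilting objects in the cluster category via Theorem~\ref{equivalent thm}), but the converse direction of (b) as you have sketched it would fail.

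The critical flaw is the claim ``if $Q$ is not mutation-Dynkin, its mutation class is infinite.'' This is false: the Kronecker quiver $1\rightrightarrows 2$ is not mutation-equivalent to a Dynkin quiver, yet its quiver mutation class is finite (two quivers up to isomorphism). What is infinite for non-mutation-Dynkin $Q$ is the set of \emph{clusters}, by Fomin--Zelevinsky's classification of finite-type cluster algebras (\cite[Theorem 1.6]{FZ03}); the quiver mutation class and the number of clusters are genuinely different invariants. The paper's proof of Corollary~\ref{main3-2}(b) exploits exactly this: by Palu's cluster character formula (in the form of \cite[Corollary 4.4]{Y09}, a consequence of \cite[Theorem 1.4]{P08}) there is a surjection from a subset of $\ctilt{2}(\U_{(Q,W)})$ onto the set of clusters, and the latter being infinite forces $\ctilt{2}(\U_{(Q,W)})$ to be infinite. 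Your plan of producing infinitely many cluster-tilting objects ``from the infinite mutation class'' has no ground to stand on without first knowing the mutation class is infinite, which you cannot assume, and you would in any case need a way to distinguish cluster-tilting objects sharing the same quiver.

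Two smaller points. First, there is no bijection between silting objects of $\per(\Gamma)$ and cluster-tilting objects of $\U_{(Q,W)}$ as you invoke; the projection only induces a bijection $\silt^2_M(\C)\to\ctilt{2}(\U)$ on two-term pieces (Theorem~\ref{IY cluster}(c)). Fortunately this is all you need thanks to Theorem~\ref{equivalent thm}(a) for $d=2$, so your worry about ``which mutated silting objects lie in a prescribed interval relative to $\Gamma$'' is moot: infinitely many elements of $\ctilt{2}(\U)$ already force $\silt^2_\Gamma(\C)$ infinite, and you can apply Lemma~\ref{AM}. Second, for the ``if'' direction the paper does not go through cluster-tilting objects at all; it uses Keller--Yang's \cite[Corollary 4.6]{KeY11} to transport representation-finiteness of $KQ'$ through mutations of quivers with potential to get that $H^0\widehat{\Gamma}(Q,W)$ is representation-finite, hence $\tau$-tilting finite, and then invokes Theorem~\ref{equivalent thm}(b). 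Your route via finitely many cluster-tilting objects is workable (one can use the triangle equivalence $\C_{(Q,W)}\simeq\C_{(Q',0)}$ and finiteness for the Dynkin cluster category), but you should note that finiteness of the quiver mutation class does not by itself deliver finiteness of the cluster-tilting graph, for the same reason as above. Finally, for (a) the relevant hypothesis is that $\widehat{\Gamma}(Q,W)$ is \emph{topologically} homologically smooth (Lemma~\ref{lem:topologically homologically-smooth=>finite-projective-dimension}, Example~\ref{ex:CY-triple-from-CY-algebra}), not homologically smooth in the ordinary sense, but the conclusions are the same.
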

Corollary~\ref{cor:contractible-cy-category-1}(c) and Corollary~\ref{cor:contractible-cy-category-2}(c) affirm \cite[Conjecture 1.3]{I17} and the second part of  \cite[Conjecture 5.8]{Q15}.

\smallskip

\subsection*{Notation and convention} 
Throughout this paper let $K$ be a field and  denote the $K$-dual by $D=\Hom_K(-,K)$. 
Let $\T$ be a $K$-linear additive category. We say that $\T$ is \emph{Hom-finite} if all morphism spaces of $\T$ are finite-dimensional over $K$. We say that $\T$ is \emph{idempotent complete} if any idempotent $e\colon M\to M$ in $\T$ arises from a direct sum decomposition $M\simeq \mathrm{im}(e)\oplus \mathrm{ker}(e)$. We say that $\T$ is \emph{Krull--Schmidt} if every object $M$ of $\T$ is isomorphic to $M_1\oplus\ldots \oplus M_n$ with $M_1,\ldots,M_n$ having local endomorphism algebras. For example, any Hom-finite $K$-linear abelian category is Krull--Schmidt (by \cite[Theorem 1]{Atiyah56}).
The object $M$ is said to be \emph{basic} if in the above decomposition $M_1\oplus\ldots\oplus M_n$ the objects $M_1,\ldots,M_n$ are pairwise non-isomorphic. In this case, we denote $|M|=n$. 

For an object $M$ of $\T$, we denote by $\add(M)$ the smallest full subcategory of $\T$ which contains $M$ and which is closed under taking finite direct sums and direct summands. For a full subcategory $\X$ of $\T$, define full subcategories $\X^\perp:=\X^{\perp_{\T}}$ and ${}^{\perp}\X:={}^{\perp_{\T}}\X$ of $\T$ as 
\begin{eqnarray*}
\X^{\perp_\T}\hspace{-5pt}&:=\hspace{-5pt}&\{Y\in\T\mid \Hom_\T(X,Y)=0~\forall X\in\X\}\\
{}^{\perp_\T}\X\hspace{-5pt}&:=\hspace{-5pt}&\{Y\in\T\mid \Hom_\T(Y,X)=0~\forall X\in\X\}.
\end{eqnarray*}
For $Y\in\T$, a \emph{right $\X$-approximation} of $Y$ is a morphism $f\colon Y\to X$ such that any morphism $Y\to X'$ with $X'\in\X$ factors through $f$. It is a \emph{minimal right $\X$-approximation} if in addition any morphism $g\colon X\to X$ with $gf=f$ is an isomorphism. We say that $\X$ is \emph{contravariantly finite} in $\T$ if right $\X$-approximation exists for any object of $\T$. If $\X$ is Hom-finite Krull--Schmidt and contravariantly finite in $\T$, then minimal right $\X$-approximations exist for any object of $\T$. Dually, we define  \emph{(minimal) left $\X$-approximations} and \emph{covariantly finite} subcategories. See for example \cite{AS80} and \cite[Section 1]{ManteseReiten04}. If $\T$ is Hom-finite and $M$ is an object of $\T$, then $\add(M)$ is both contravariantly finite and covariantly finite. 

Assume that $\T$ is triangulated with shift functor $[1]$. It is said to be \emph{algebraic} if it is triangle equivalent to the stable category of a Frobenius category (for example, the derived category of a dg algebra and its triangulated subcategories are algebraic triangulated categories). It is said to be \emph{of finite type} if for any $X,Y\in\T$ the space $\bigoplus_{p\in\mathbb{Z}}\Hom_\T(X,Y[p])$ is finite-dimensional. For a full subcategory $\X$ of $\T$, we denote by $\thick(\X)$ the \emph{thick subcategory} of $\T$ generated by $\X$, that is, the smallest triangulated subcategory of $\T$ which contains $\X$ and which is closed under taking direct summands. For two full subcategories $\X$ and $\Y$ of $\T$, define $\Hom_\T(\X,\Y)$ as the union of $\Hom_\T(X,Y)$ for all $X\in\X$ and $Y\in\Y$ and define $\X*\Y$ as the full subcategory of $\T$ consisting of objects $M$ which admits a triangle $X\to M\to Y\to X[1]$ with $X\in\X$ and $Y\in\Y$. We say that $\X$ is \emph{extension-closed} if $\X*\X\subseteq\X$. 
We refer to \cite{Neeman99,Happel88} for knowledge on triangulated categories.

\subsection*{Acknowledgement}
TA would like to express his deep gratitude to Osamu Iyama for many fruitful discussions. He thanks Liu Yu for helpful discussion. DY thanks Steffen Koenig and Lidia Angeleri H\"ugel for answering his questions. All authors gratefully thank the referee for his/her very helpful comments, especially the suggestions to study uniqueness of ST-pairs, to include ST-pairs from algebraic geometry and to compare discreteness in the sense of \cite{BPP3} with $t$-discreteness. These comments led to a great improvement of the paper.

\section{Preliminaries}
In this section we recall Morita's theorem, support $\tau$-tilting modules, derived categories of dg algebras and (co)homologically finite objects.

\subsection{Finite-dimensional algebras}
We refer to \cite{AuslanderReitenSmaloe95,AssemSimsonSkowronski,DrozdKirichenko} for knowledge on finite-dimensional algebras. 

\smallskip

Let $\Lambda$ be a finite-dimensional $K$-algebra. Then the category $\mod\Lambda$  of finite-dimensional (right) $\Lambda$-modules is Hom-finite and Krull--Schmidt. The algebra $\Lambda$ is said to be \emph{basic} if the free module $\Lambda$ is a basic object in $\mod\Lambda$. We denote by $\Db(\mod\Lambda)$ the bounded derived category of $\mod\Lambda$. We denote by $\proj\Lambda$ the category of finite-dimensional projective $\Lambda$-modules and by $\Kb(\proj\Lambda)$ the bounded homotopy category of $\proj\Lambda$. There is a natural embedding $\Kb(\proj\Lambda)\hookrightarrow\Db(\mod \Lambda)$, via which we view $\Kb(\proj\Lambda)$ as a thick subcategory of $\Db(\mod\Lambda)$. The following is a standard fact in the representation theory of finite-dimensional algebras. 

\begin{lemma}[{\cite[Lemma 2.1]{HocheneggerKalckPloog16}}]\label{lem:KS-vs-idempotent-complete}
Let $\T$ be a $K$-linear Hom-finite additive category. Then $\T$ is Krull--Schmidt if and only if $\T$ is idempotent complete.
\end{lemma}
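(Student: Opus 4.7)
The shared engine for both directions is Hom-finiteness: for every $M\in\T$ the algebra $A:=\End_\T(M)$ is a finite-dimensional $K$-algebra and hence \emph{semiperfect}. The plan is to exploit two standard structural facts about semiperfect rings. First, $1_A$ admits a (finite) decomposition $1=e_1+\cdots+e_n$ into pairwise orthogonal primitive idempotents, and $e$ is primitive if and only if $eAe$ is local. Second, any two such decompositions of $1$ are conjugate by a unit of $A$. Translating between decompositions of $M$ and decompositions of $1_A$ is the only piece of machinery needed.

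\textbf{$(\Leftarrow)$ Idempotent complete $\Rightarrow$ Krull--Schmidt.} Take $M\in\T$ and $A=\End_\T(M)$. Using semiperfectness, write $1_A=e_1+\cdots+e_n$ with pairwise orthogonal primitive $e_i$. Idempotent completeness of $\T$ splits each $e_i$ as $\iota_i\pi_i$ with $\pi_i\iota_i=1_{M_i}$ for some $M_i\in\T$. The relations $\sum e_i=1$ and $e_ie_j=\delta_{ij}e_i$ translate directly into an isomorphism $M\simeq M_1\oplus\cdots\oplus M_n$ in $\T$. Finally the canonical isomorphism $\End_\T(M_i)\simeq e_iAe_i$ is local because $e_i$ is primitive, so $M$ has the required decomposition.

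\textbf{$(\Rightarrow)$ Krull--Schmidt $\Rightarrow$ Idempotent complete.} Let $e\colon M\to M$ be an idempotent and set $A=\End_\T(M)$. Apply Krull--Schmidt to $M$: a decomposition $M\simeq N_1\oplus\cdots\oplus N_n$ with $\End_\T(N_j)$ local gives rise to a decomposition $1_A=\pi_1+\cdots+\pi_n$ into pairwise orthogonal idempotents, and each $\pi_j$ is primitive because $\pi_jA\pi_j\simeq\End_\T(N_j)$ is local. Since $A$ is semiperfect, the two idempotents $e$ and $1-e$ can also be refined into sums of pairwise orthogonal primitive idempotents of $A$, producing a second decomposition of $1_A$ into primitives in which $e$ is itself a partial sum. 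By the conjugation fact for semiperfect rings, there is a unit $u\in A^{\times}$ carrying the $\pi_j$-decomposition to this new one. The unit $u$ is an automorphism of $M$, so conjugating the direct summands $N_j$ by $u$ gives a new direct sum decomposition of $M$ in which $e$ is literally the projection onto a subset of the summands. Hence $e$ splits.

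\textbf{Main obstacle.} All of the genuine work is hidden inside the semiperfect-ring result that any two orthogonal primitive decompositions of $1$ are conjugate. If one prefers to keep the argument self-contained inside $\T$, the harder direction $(\Rightarrow)$ must instead be handled by writing $e$ as a matrix with respect to a Krull--Schmidt decomposition of $M$ and inductively reducing it to a diagonal $0/1$ matrix via Fitting's lemma, using that each $\End_\T(N_j)$ is local and that $A$ has a nilpotent Jacobson radical (a consequence of finite-dimensionality). This combinatorial step is essentially a reproof of the semiperfect-ring lemma and is the main technical point of the proof.
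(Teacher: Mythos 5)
Your proof is correct. Note that the paper itself does not supply a proof of this lemma at all---it simply cites \cite[Lemma~2.1]{HocheneggerKalckPloog16}---so there is no in-paper argument to compare against; what you have written is a correct, self-contained version of the standard argument. The backward direction is clean: Hom-finiteness makes $\End_\T(M)$ finite-dimensional hence semiperfect, so $1$ decomposes into orthogonal primitive idempotents, each of which splits by idempotent completeness and yields a summand with local endomorphism ring $e_iAe_i$. The forward direction is also correct as written: the Krull--Schmidt decomposition provides one family $\pi_1,\dots,\pi_n$ of orthogonal, \emph{split}, primitive idempotents summing to $1$; refining $e$ and $1-e$ inside the semiperfect ring $A$ gives a second such family containing $e$ as a partial sum; the conjugacy theorem for orthogonal primitive decompositions of $1$ in a semiperfect ring (equivalently, Krull--Schmidt--Azumaya applied to the regular module $A_A$) produces a unit $u$ matching the two families, and conjugating a split idempotent by a unit produces a split idempotent, so $e$ splits. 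The only thing worth flagging is that your proof is in effect the Hom-finite specialisation of the general statement (due to Krause and others) that an additive category is Krull--Schmidt if and only if it is idempotent complete and every object has semiperfect endomorphism ring; once one accepts the semiperfect-ring toolbox, nothing further is needed, and your closing paragraph correctly identifies that the Fitting-lemma induction is the alternative if one wants to avoid quoting the conjugacy theorem.
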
 


\smallskip

Let $\ca$ be a $K$-linear abelian category. A \emph{projective generator} of $\ca$ is a projective object $P$ such that to each object $M$ of $\ca$ there is an epimorphism $Q\to M$ with $Q\in\add(P)$. For a finite-dimensional $K$-algebra $\Lambda$, the free module $\Lambda$ is a projective generator of $\mod\Lambda$. Conversely, by Morita's theorem, if a $K$-linear Hom-finite abelian category $\ca$ has a projective generator $P$, then the functor $\Hom_{\ca}(P,-)\colon \ca\to\mod\End_{\ca}(P)$ is an equivalence.

\subsection{Support $\tau$-tilting modules} We follow \cite{AIR14,DIJ15}. 
Let $\Lambda$ be a basic finite-dimensional $K$-algebra. A basic finite-dimensional $\Lambda$-module $M$ is said to be \emph{$\tau$-tilting} if $|M|=|\Lambda|$ and $\Hom_\Lambda(M,\tau M)=0$, where $\tau$ is the Auslander--Reiten translation of $\mod\Lambda$. For example, a basic tilting module of projective dimension $\leq 1$ is a $\tau$-tilting module. The algebra $\Lambda$ is said to be \emph{$\tau$-tilting finite} if there are only finitely many isomorphism classes of basic $\tau$-tilting $\Lambda$-modules. For example, local finite-dimensional algebras and representation-finite finite-dimensional algebras are $\tau$-tilting finite.

A basic $\Lambda$-module $M$ is said to be \emph{support $\tau$-tilting} if it is a $\tau$-tilting module over the factor algebra $\Lambda/\Lambda e\Lambda$ for some idempotent $e\in\Lambda$. According to \cite[Proposition 3.9]{DIJ15}, $\Lambda$ is $\tau$-tilting finite if and only if the number of isomorphism classes of basic support $\tau$-tilting $\Lambda$-modules is finite. More equivalent conditions can be found in \cite{DIJ15}.

\subsection{Derived categories of dg algebras}

Let $A$ be a dg $K$-algebra. Let $\C_{\dg}(A)$ be the dg category of (right) dg $A$-modules, see \cite[Section 3.1]{Ke4}. For $M,N\in\C_{\dg}(A)$, the Hom-complex $\Hom_{\C_{\dg}(A)}(M,N)$ in $\C_{\dg}(A)$ is the complex of $K$-vector spaces with its degree $p$ component $\Hom^p_{\C_{\dg}(A)}(M,N)$ being the space of homogeneous $A$-linear maps of degree $p$ from $M$ to $N$ (here we consider $A$ as a graded algebra and $M,N$ as graded $A$-modules) and with its differential defined by $d(f)=d_N\circ f-(-1)^p f\circ d_M$ for $f\in\Hom^p_{\C_{\dg}(A)}(M,N)$. For example, $\Hom_{\C_{\dg}(A)}(A,M)=M$. A dg $A$-module $M$ is said to be \emph{$\KK$-projective} if $\Hom_{\C_{\dg}(A)}(M,N)$ is acyclic for any acyclic dg $A$-module $N$. For example, $A_A$ is $\KK$-projective. 

The dg category $\C_{\dg}(A)$ is a pretriangulated dg category in the sense of \cite[Section 4.3]{Ke4}. Let $\KK(A):=H^0\C_{\dg}(A)$ be the homotopy category of $\C_{\dg}(A)$, see \cite[Section 2.2]{Ke4}. Precisely, the objects of $\KK(A)$ are dg $A$-modules, and for two  dg $A$-modules $M$ and $N$ we have
\[
\Hom_{\KK(A)}(M,N):=H^0\Hom_{\C_{\dg}(A)}(M,N).
\]
Then $\KK(A)$ is a triangulated category with shift functor $[1]$ being the shift of dg modules. The \emph{derived category} $\D(A)$ of dg $A$-modules is defined as the triangle quotient of $\KK(A)$ by the full subcategory of $\KK(A)$ consisting of acyclic dg $A$-modules. For $p\in\mathbb{Z}$ and for dg $A$-modules $M$ and $N$ with $M$ being $\KK$-projective, we have
\begin{eqnarray*}\label{eq:Hom-space-in-derived-category}
\Hom_{\D(A)}(M,N[p])=\Hom_{\KK(A)}(M,N[p])=H^p\Hom_{\C_{\dg}(A)}(M,N).
\end{eqnarray*}
In particular, 
\begin{eqnarray}\label{eq:Hom-space-from-free-dg-module}
\Hom_{\D(A)}(A,M[p])=H^p(M)
\end{eqnarray} 
for $p\in\mathbb{Z}$. 
The \emph{perfect derived category} $\per(A)$ is the thick subcategory of $\D(A)$ generated by $A_A$, and the \emph{finite-dimensional derived category} $\Dfd(A)$ is the full subcategory of $\D(A)$ consisting of dg $A$-modules $M$ whose total cohomology is finite-dimensional over $K$. If $A$ is a finite-dimensional $K$-algebra, we can consider it as a dg algebra concentrated in degree $0$. In this case, $\D(A)$ is exactly the unbounded derived category $\D(\Mod A)$ of the category of all $A$-modules and there are canonical triangle equivalences $\Kb(\proj A)\to\per(A)$ and $\Db(\mod A)\to\Dfd(A)$.


The dg algebra $A$ is said to be \emph{homologically smooth} if  $A\in\per(A^{op}\ten A)$ and \emph{bimodule $n$-Calabi--Yau} if in addition there is an isomorphism $\RHom_{A^{op}\ten A}(A,A^{op}\ten A)\simeq A[-n]$ in $\D(A^{op}\ten A)$. 

\begin{lemma}[{\cite[Lemma 4.1]{Ke3} and its proof}]
\label{lem:homologically-smooth=>finite-projective-dimension}
If $A$ is homologically smooth, then $\Dfd(A)\subseteq\per(A)$. If $A$ is bimodule $n$-Calabi--Yau, then there is a bifunctorial isomorphism for $X\in\Dfd(A)$ and $Y\in\per(A)$
\[
D\Hom(X,Y)\stackrel{\simeq}{\longrightarrow}\Hom(Y,X[n]).
\]
\end{lemma}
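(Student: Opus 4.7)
The plan is a standard bimodule resolution argument. Since $A$ is homologically smooth, $A$ is quasi-isomorphic in $\D(A^{op}\otimes A)$ to a bounded complex $P^\bullet$ whose terms are direct summands of finite direct sums of $A\otimes_K A$. Given $X\in\D(A)$, there is a chain of quasi-isomorphisms
\[
X \simeq X\Lotimes[A] A \simeq X\Lotimes[A] P^\bullet,
\]
and each term is a direct summand of $X\Lotimes[A](A\otimes_K A)\simeq X\otimes_K A$, viewed as a right $A$-module via the right action on the second factor (the tensor is underived because $A\otimes_K A$ is free as a left $A$-module). When $X\in\Dfd(A)$, we may replace $X$ by a bounded complex of finite-dimensional $K$-vector spaces, whence $X\otimes_K A$ is quasi-isomorphic to a bounded complex of finite free right $A$-modules, i.e.\ belongs to $\per(A)$. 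Totalising the bounded double complex $X\Lotimes[A] P^\bullet$ then puts $X$ in $\per(A)$.

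\textbf{Part (b): the duality.} Fix $X\in\Dfd(A)$, which lies in $\per(A)$ by Part (a). Both assignments
\[
Y\longmapsto D\Hom_{\D(A)}(X,Y)\qquad \text{and}\qquad Y\longmapsto \Hom_{\D(A)}(Y,X[n])
\]
are contravariant cohomological functors on $\per(A)$ compatible with shifts and direct summands. Since $\per(A)$ is the thick subcategory of $\D(A)$ generated by $A_A$, a standard d\'evissage reduces the construction of a bifunctorial isomorphism to the case $Y=A$, and in fact to a natural quasi-isomorphism in $\D(K)$
\[
D\RHom_A(X,A)\stackrel{\simeq}{\longrightarrow} X[n].
\]
To establish this, I would invoke the bimodule Calabi--Yau isomorphism $\RHom_{A^{op}\otimes A}(A,A^{op}\otimes A)\simeq A[-n]$ in $\D(A^{op}\otimes A)$, together with the bimodule resolution $P^\bullet$ produced in Part (a). Replacing $A$ on the right by $P^\bullet$ and computing termwise, $\RHom_A(X,A\otimes_K A)$ identifies naturally with $DX\otimes_K A$ (using that $X$ has finite-dimensional cohomology), after which the Calabi--Yau isomorphism converts the $K$-dual of $\RHom_A(X,A)$ into $X\otimes^L_A A[n]\simeq X[n]$. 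Bifunctoriality in $X$ and $Y$ then follows from the naturality of each step and of the d\'evissage.

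\textbf{Main obstacle.} The technical heart is in Part (b): the Calabi--Yau isomorphism lives in $\D(A^{op}\otimes A)$, while the desired duality must be realised bifunctorially in $\D(K)$ (and, after the d\'evissage, respect the triangulated structure on $\per(A)$). The delicate point will be to carry out the tensor--hom manipulations so that each intermediate identification is functorial in the outer right $A$-action on $A\otimes_K A$ (so that it extends from the term $A\otimes_K A$ to the whole resolution $P^\bullet$), and so that $D$ intertwines the left $A$-module structure on $\RHom_A(X,A)$ with the right $A$-module structure on $X[n]$ in the expected way. Once this bookkeeping is done, Part (a) and the thick-subcategory argument assemble into the claimed duality.
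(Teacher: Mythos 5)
The paper does not actually prove this lemma; it cites \cite[Lemma 4.1]{Ke3} and its proof, so your proposal should be measured against Keller's argument. Your Part (a) is essentially that argument and is correct: the only cosmetic issue is that for a dg algebra one should not speak of ``a bounded complex $P^\bullet$ whose terms are summands of finite free bimodules'' and ``totalising a double complex''; the clean formulation is that $-\otimes^{\mathbf{L}}_A-\colon\D(A^{op}\otimes A)\to\D(A)$ applied to $X$ is a triangle functor, hence carries $\thick(A^{op}\otimes A)\ni A$ into $\thick(X\otimes_K A)$, and $X\otimes_K A\simeq H^*(X)\otimes_K A\in\per(A)$ because $K$ is a field and $H^*(X)$ is finite-dimensional. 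This is exactly what your argument amounts to, so Part (a) stands.

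Part (b) has a genuine gap in the logical order of the d\'evissage. Checking that two contravariant cohomological functors agree on the generator $A$ does not produce a bifunctorial isomorphism between them: d\'evissage over $\thick(A)$ can only verify that an \emph{already given} natural transformation is an isomorphism; it cannot construct the transformation, and ``bifunctoriality follows from the naturality of the d\'evissage'' is not a valid step (the intermediate identifications on cones and summands are not canonical). You must first define the map for all $Y$ simultaneously. Two standard ways to do this: (i) follow Keller and write $\RHom_A(X,Y)\simeq\RHom_{A^{op}\otimes A}(A,\RHom_K(X,Y))\simeq\RHom_{A^{op}\otimes A}(A,A^{op}\otimes A)\otimes^{\mathbf{L}}_{A^{op}\otimes A}\RHom_K(X,Y)$ (the second isomorphism using $A\in\per(A^{op}\otimes A)$), then substitute the Calabi--Yau isomorphism and use $\RHom_K(X,Y)\simeq Y\otimes_K DX$ to land on $D\Hom(Y,X[n])$ --- this is manifestly natural in both variables; or (ii) extract from the case $Y=X$ a trace functional $\Hom(X,X[n])\to K$ and define the pairing $\Hom(X,Y)\times\Hom(Y,X[n])\to K$ by composing and applying the trace, after which d\'evissage in $Y$ legitimately proves nondegeneracy. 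Your ``main obstacle'' paragraph correctly senses where the difficulty lies, but as written the proposal treats it as bookkeeping rather than supplying the global construction that the argument actually requires.
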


\subsubsection{Derived categories of pseudo-compact dg algebras}
For bilaterally pseudocompact dg algebras (\eg complete Ginzburg dg algebras of quivers with potential, see Section~\ref{ss:ginzburg-dg-algebras}), there are pseudocompact versions $\per^\pc$ and $\Dfd^\pc$ of $\per$ and $\Dfd$, as defined and used in \cite[Appendix A]{KeY11}. In this subsection we briefly explain how they are defined and that under suitable conditions they are equivalent to $\per$ and $\Dfd$, respectively. This allows us to state the results which were in terms of $\per^{\pc}$ and $\Dfd^\pc$ now in terms of $\per$ and $\Dfd$. Lemma~\ref{lem:topologically homologically-smooth=>finite-projective-dimension} is the only result that will be used later.

\smallskip

We follow  \cite[Appendix A.11]{KeY11}.
Let $A$ be a bilaterally pseudocompact dg $K$-algebra. Let $\C_{\dg}^{\pc}(A)$ be the dg category of pseudocompact dg $A$-modules. For $M,N\in\C_{\dg}^{\pc}(A)$, the Hom-complex $\Hom_{\C_{\dg}^{\pc}(A)}(M,N)$ is the complex of $K$-vector spaces with its degree $p$ component $\Hom^p_{\C^{\pc}_{\dg}(A)}(M,N)$ being the space of continuous homogeneous $A$-linear maps of degree $p$ from $M$ to $N$ (here we consider $A$ as a pseudocompact graded algebra and $M,N$ as pseudocompact graded $A$-modules) and with its differential defined by $d(f)=d_N\circ f-(-1)^p f\circ d_M$ for $f\in\Hom^p_{\C^{\pc}_{\dg}(A)}(M,N)$. The dg category $\C^{\pc}_{\dg}(A)$ is pretriangulated. Similar to the beginning of this subsection, we can define $\D^{\pc}(A)$, $\per^{\pc}(A)$ and $\Dfd^{\pc}(A)$. The dg algebra $A$ is said to be \emph{topologically homologically smooth} if $A\in\per^{\pc}(A^{op}\widehat{\ten}A)$, where $A^{op}\widehat{\ten}A$ is the completion of $A^{op}\ten A$.

\begin{lemma}\label{lem:perfect-derived-category-of-pseudocompact-dg-algebra}
The forgetful functor $\D^{\pc}(A)\to\D(A)$ restricts to a triangle equivalence $\per^{\pc}(A)\to\per(A)$. If $A$ is topologically homologically smooth, then it restricts to a triangle equivalence  $\Dfd^{\pc}(A)\to\Dfd(A)$.
\end{lemma}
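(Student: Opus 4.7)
The plan is to handle the two equivalences in sequence, with the first being purely formal and the second relying on a pseudocompact analogue of Keller's smoothness lemma. The workhorse observation throughout is that for the free dg module $A_A$, any homogeneous $A$-linear map $A\to N$ is determined by the image of $1\in A$ and is therefore automatically continuous; this makes the forgetful functor an isomorphism on Hom spaces out of $A$, and the rest follows by dévissage.

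For the first claim, I would note that $A_A$ is $\KK$-projective in both $\C_{\dg}^{\pc}(A)$ and $\C_{\dg}(A)$. For any pseudocompact dg module $N$ the observation above identifies both $\Hom$-complexes out of $A$ with $N$, so taking $H^p$ yields
\[
\Hom_{\D^{\pc}(A)}(A,N[p])\stackrel{\simeq}{\longrightarrow}\Hom_{\D(A)}(A,N[p])
\]
for every $p\in\mathbb{Z}$. Since the forgetful functor is triangulated, a standard five-lemma dévissage over the triangles defining $\thick(A)=\per^{\pc}(A)$ upgrades this to full faithfulness on all of $\per^{\pc}(A)$. Essential surjectivity is then automatic: the image is a triangulated subcategory of $\per(A)$ closed under direct summands and containing $A$, hence coincides with $\thick(A)=\per(A)$.

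For the second claim, the crucial input is the pseudocompact analogue of Lemma~\ref{lem:homologically-smooth=>finite-projective-dimension}, namely that topological homological smoothness of $A$ forces $\Dfd^{\pc}(A)\subseteq\per^{\pc}(A)$ (and likewise $\Dfd(A)\subseteq\per(A)$). Granting these, the first part immediately yields full faithfulness of the restriction $\Dfd^{\pc}(A)\to\Dfd(A)$, and the image lies in $\Dfd(A)$ because the forgetful functor preserves the underlying complex of $K$-vector spaces and hence cohomology. For essential surjectivity, any $M\in\Dfd(A)$ lies in $\per(A)$, so the first equivalence produces a pseudocompact lift $\widetilde{M}\in\per^{\pc}(A)$ of $M$, and $\widetilde{M}$ automatically lies in $\Dfd^{\pc}(A)$ since cohomology is preserved. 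The main obstacle is thus the smoothness input: one has to carry out the argument of Keller's original lemma with the completed tensor product $A^{op}\widehat{\otimes}A$ in place of $A^{op}\otimes A$, and verify that the relevant bimodule Hom-duality and perfectness computations survive the pseudocompact topology. Once the two inclusions are available, the remainder of the proof is formal.
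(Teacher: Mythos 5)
Your treatment of the first equivalence is correct and is essentially the paper's argument: the whole point is that $\Hom$ out of the free module $A$ is unchanged by passing to continuous maps (the paper records this as $\Hom_{\C_{\dg}^{\pc}(A)}(A,A)=A=\Hom_{\C_{\dg}(A)}(A,A)$ and concludes at the dg level, via the pretriangulated hulls generated by $A$ and their idempotent-completed homotopy categories, rather than by a five-lemma d\'evissage in the triangulated categories; the two formulations are interchangeable here).

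For the second equivalence there is a genuine gap. The paper does not reprove this statement: it cites \cite[Proposition A.14(c)]{KeY11}. Your proposed proof of essential surjectivity starts from ``any $M\in\Dfd(A)$ lies in $\per(A)$''. But in this paper that inclusion is exactly Lemma~\ref{lem:topologically homologically-smooth=>finite-projective-dimension}, which is \emph{deduced from} the lemma you are proving (the paper obtains $\Dfd(A)\subseteq\per(A)$ by combining $\Dfd^{\pc}(A)\subseteq\per^{\pc}(A)$ with the two equivalences of Lemma~\ref{lem:perfect-derived-category-of-pseudocompact-dg-algebra}). So your argument is circular as written. Nor is $\Dfd(A)\subseteq\per(A)$ a ``routine adaptation'' of Keller's Lemma 4.1 of \cite{Ke3}: topological homological smoothness only gives a resolution of the diagonal bimodule inside $\per^{\pc}(A^{\op}\widehat{\ten}A)$, and applying it to a dg module that is not pseudocompact produces completed tensor products, so the natural output of that argument is precisely the pseudocompact inclusion $\Dfd^{\pc}(A)\subseteq\per^{\pc}(A)$ and nothing more. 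The real content of essential surjectivity of $\Dfd^{\pc}(A)\to\Dfd(A)$ is that an arbitrary dg $A$-module with finite-dimensional total cohomology is quasi-isomorphic to a pseudocompact one (e.g.\ via a minimal $A_\infty$-model on its cohomology, which is finite-dimensional and hence discrete); this is the step supplied by \cite{KeY11} and missing from your proposal. Full faithfulness on $\Dfd^{\pc}(A)$, granted $\Dfd^{\pc}(A)\subseteq\per^{\pc}(A)$, does follow from the first part as you say.
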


The second statement is contained in \cite[Proposition A.14(c)]{KeY11}. The first statement is implicitly claimed in \cite[Appendix A.11]{KeY11}. Here we give a quick explanation.  Let $\mathrm{pretr}_{\C_{\dg}^{\pc}(A)}(A)$ and $\mathrm{pretr}_{\C_{\dg}(A)}(A)$ be the pretriangulated subcategories of $\C_{\dg}^{\pc}(A)$ and $\C_{\dg}(A)$ generated by $A$, respectively. Then $\per^{\pc}(A)$ and $\per(A)$ are idempotent completions of the homotopy categories of $\mathrm{pretr}_{\C_{\dg}^{\pc}(A)}(A)$ and $\mathrm{pretr}_{\C_{\dg}(A)}(A)$, respectively.
Direct computation shows that $\Hom_{\C_{\dg}^{\pc}(A)}(A,A)=A=\Hom_{\C_{\dg}(A)}(A,A)$. This implies that the forgetful functor $\mathrm{pretr}_{\C_{\dg}^{\pc}(A)}(A)\to\mathrm{pretr}_{\C_{\dg}(A)}(A)$ is a dg equivalence and hence the induced triangle functor $\per(A)\to\per^{\pc}(A)$ is a triangle equivalence.

\smallskip

The dg algebra $A$ is said to be \emph{bimodule $n$-Calabi--Yau} if it is topologically homologically smooth and there is an isomorphism $\RHom_{\D^{\pc}(A^{op}\widehat{\ten}A)}(A,A^{op}\widehat{\ten}A)\simeq A[-n]$ in $\D^{\pc}(A^{op}\widehat{\ten}A)$. 
The following result is obtained by combining \cite[Proposition A.14(c) and Lemma A.16]{KeY11} with Lemma~\ref{lem:perfect-derived-category-of-pseudocompact-dg-algebra}.

\begin{lemma}
\label{lem:topologically homologically-smooth=>finite-projective-dimension}
If $A$ is topologically homologically smooth, then $\Dfd(A)\subseteq\per(A)$. If $A$ is bimodule $n$-Calabi--Yau, then there is a bifunctorial isomorphism for $X\in\Dfd(A)$ and $Y\in\per(A)$
\[
D\Hom(X,Y)\stackrel{\simeq}{\longrightarrow}\Hom(Y,X[n]).
\]
\end{lemma}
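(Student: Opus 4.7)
The plan is to reduce both assertions to their pseudocompact analogues proved in \cite{KeY11} and then transport along the equivalences of Lemma~\ref{lem:perfect-derived-category-of-pseudocompact-dg-algebra}. Since the statement explicitly says it is obtained by combining \cite[Proposition A.14(c) and Lemma A.16]{KeY11} with that lemma, no new triangulated machinery is needed; the work lies entirely in checking that the forgetful functor $\D^{\pc}(A)\to\D(A)$ is compatible with the structures involved.

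First, assume $A$ is topologically homologically smooth. By Lemma~\ref{lem:perfect-derived-category-of-pseudocompact-dg-algebra} the forgetful functor restricts to a triangle equivalence $\per^{\pc}(A)\xrightarrow{\simeq}\per(A)$, and under topological homological smoothness it also restricts to a triangle equivalence $\Dfd^{\pc}(A)\xrightarrow{\simeq}\Dfd(A)$. The inclusion $\Dfd^{\pc}(A)\subseteq\per^{\pc}(A)$ is exactly \cite[Proposition A.14(c)]{KeY11}. Chasing through the commutative square
\[
\xymatrix{
\Dfd^{\pc}(A)\ar@{^{(}->}[r]\ar[d]_{\simeq} & \per^{\pc}(A)\ar[d]^{\simeq}\\
\Dfd(A)\ar@{.>}[r] & \per(A)
}
\]
we obtain the required inclusion $\Dfd(A)\subseteq\per(A)$.

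Now assume in addition that $A$ is bimodule $n$-Calabi--Yau. Then \cite[Lemma A.16]{KeY11} provides a bifunctorial isomorphism
\[
D\Hom_{\D^{\pc}(A)}(X,Y)\xrightarrow{\simeq}\Hom_{\D^{\pc}(A)}(Y,X[n])
\]
for $X\in\Dfd^{\pc}(A)$ and $Y\in\per^{\pc}(A)$. Because the equivalences of Lemma~\ref{lem:perfect-derived-category-of-pseudocompact-dg-algebra} are fully faithful on $\per^{\pc}(A)$ and on $\Dfd^{\pc}(A)$ (indeed they are equivalences onto $\per(A)$ and $\Dfd(A)$), all four Hom-spaces appearing above coincide with their counterparts in $\D(A)$ after applying the forgetful functor. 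Transporting the isomorphism along the equivalences yields the desired bifunctorial isomorphism in $\D(A)$, and its naturality in $X$ and $Y$ is automatic from naturality of the equivalences.

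The only potentially subtle point is verifying that the forgetful functor really identifies the Hom-spaces between objects of $\per^{\pc}(A)$ and $\Dfd^{\pc}(A)$ with those in $\D(A)$; this however follows at once from the fact that the restrictions in Lemma~\ref{lem:perfect-derived-category-of-pseudocompact-dg-algebra} are triangle equivalences, so there is no further obstacle.
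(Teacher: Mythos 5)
Your proposal is correct and is essentially the same as the paper's: the paper states that the lemma "is obtained by combining \cite[Proposition A.14(c) and Lemma A.16]{KeY11} with Lemma~\ref{lem:perfect-derived-category-of-pseudocompact-dg-algebra}" and gives no further detail, and you have simply spelled out what that combination looks like (the commutative square for the inclusion, and transport of the CY duality along the equivalences, noting that the Hom-spaces are all computed inside $\per^{\pc}(A)\simeq\per(A)$ once the inclusion $\Dfd^{\pc}(A)\subseteq\per^{\pc}(A)$ is in hand).
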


\subsection{Compact objects and (co)homologically finite objects}
Let $\T$ be a $K$-linear triangulated category. Denote by $\T^{\mathrm{c}}$ the full subcategory of $\T$ of compact objects, that is, objects $X$ such that $\Hom_\T(X,-)$ commutes with coproducts. For a full subcategory $\X$ of $\T$, define
\begin{eqnarray*}
\T^{\mathrm{chf}}_\X\hspace{-5pt}&:=\hspace{-5pt}&\{Y\in\T\mid \bigoplus\nolimits_{p\in\mathbb{Z}}\Hom_\T(X,Y[p]) \text{ is finite-dimensional for all }X\in\X\}\\
\T_{\mathrm{hf}}^\X\hspace{-5pt}&:=\hspace{-5pt}&\{Y\in\T\mid \bigoplus\nolimits_{p\in\mathbb{Z}}\Hom_\T(Y,X[p]) \text{ is finite-dimensional for all }X\in\X\}.
\end{eqnarray*}
If $\X=\T$, we will omit the subscript and superscript $\X$ and simply write $\T^{\chf}$ and $\T_{\hf}$.

\smallskip
Let $X$ be a projective scheme over $K$. Denote by $\coh(X)$ and $\Qcoh(X)$ the category of coherent sheaves and the category of quasi-coherent sheaves over $X$, respectively, and denote by $\per(X)$ the derived category of perfect complexes over $X$.

\begin{lemma}\label{lem:compact-and-homologically-finite-objects-for-projetive-variety}
Let $X$ be a projective scheme over $K$. 
Let $\T=\D(\Qcoh(X))$, $\C=\per(X)$ and $\D=\Db(\coh(X))$. Then $\C=\T^{\mathrm{c}}$, $\D=\T^{\chf}_{\C}$ and $\C=\T^{\D}_{\hf}=\D_{\hf}$.
\end{lemma}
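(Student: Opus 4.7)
The plan is to establish the three equalities in turn, building on standard results on quasi-coherent sheaves on projective schemes.

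For $\C=\T^{\mathrm{c}}$, I would simply invoke the theorem of Neeman and Bondal--Van den Bergh that on a quasi-compact separated scheme (in particular on a projective scheme) the compact objects of $\D(\Qcoh(X))$ coincide with the perfect complexes.

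For $\D=\T^{\chf}_{\C}$, the inclusion $\D\subseteq\T^{\chf}_{\C}$ is the easier direction: if $P\in\per(X)$ and $Y\in\Db(\coh(X))$, then $\Hom_{\T}(P,Y[p])$ vanishes for all but finitely many $p$ (boundedness of $P$ as a complex of locally free sheaves against the bounded cohomology of $Y$), and each such $\Hom$ space is finite-dimensional, since $\mathbf{R}\HHom(P,Y)$ lies in $\Db(\coh(X))$ and its hypercohomology is finite-dimensional on a projective scheme. For the converse, given $Y\in\T^{\chf}_{\C}$, one tests against the family $\{\O_X(n)\}_{n\in\mathbb{Z}}\subseteq\C$. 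Using Serre vanishing together with the fact that every coherent sheaf is a quotient of $\O_X(-n)^{\oplus r}$ for $n\gg 0$, one extracts the cohomology sheaves of $Y$ and concludes that they are coherent and concentrated in a bounded range, so $Y\in\D$.

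For $\C=\T^{\D}_{\hf}$, the forward inclusion again uses finite-dimensionality of hypercohomology of bounded complexes of coherent sheaves, with the roles of source and target swapped. For the converse, given $Y\in\T^{\D}_{\hf}$, testing against the twists $\O_X(n)\in\D$ first forces $Y\in\D$ (as in the previous paragraph). Then testing against skyscraper sheaves $k(x)$ at closed points $x\in X$ shows that the stalk $Y_x$ has finite Ext-amplitude to the residue field of $\O_{X,x}$. By the Auslander--Buchsbaum--Serre theorem this gives finite projective dimension at every stalk, so $Y$ has locally finite Tor-amplitude, and combined with $Y\in\D$ on the noetherian scheme $X$ this characterises $Y$ as perfect. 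The remaining equality $\T^{\D}_{\hf}=\D_{\hf}$ is then automatic, since $\T^{\D}_{\hf}=\C\subseteq\D$ and the condition defining $\D_{\hf}$ involves only objects of $\D$.

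The main obstacle is the last implication---upgrading ``finite total Ext to every bounded coherent complex'' to perfectness. It rests on the local-to-global criterion for perfect complexes on a noetherian scheme and on Auslander--Buchsbaum--Serre to detect finite projective dimension pointwise from Ext-vanishing against residue fields; the other parts amount to careful bookkeeping with boundedness and Serre vanishing.
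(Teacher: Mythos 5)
The paper itself proves this lemma by citation alone: $\C=\T^{\mathrm{c}}$ is \cite[Theorem 3.1.1]{BondalVandenBergh03}, $\D=\T^{\chf}_{\C}$ is \cite[Lemma 7.46]{Rouquier08}, and the last equality is \cite[Lemma 7.49]{Rouquier08}. You instead set out to reprove these results; that is legitimate, and your treatment of the first equality, of both inclusions of the second, and of the inclusion $\per(X)\subseteq\T^{\D}_{\hf}$ is correct in outline.

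There is, however, a genuine gap at the step ``testing against the twists $\O_X(n)\in\D$ first forces $Y\in\D$ (as in the previous paragraph).'' In the previous paragraph $Y$ sits in the \emph{second} argument of $\Hom$: the groups $\Hom(\O_X(-n),Y[p])\simeq \mathbb{H}^p(X,Y(n))$ are hypercohomology of twists of $Y$, from which one can extract the cohomology sheaves of $Y$ via Serre vanishing. For $Y\in\T^{\D}_{\hf}$ the hypothesis concerns $\Hom(Y,\O_X(n)[p])$, with $Y$ in the \emph{first} argument; for an a priori unbounded object of $\D(\Qcoh(X))$ these contravariant groups give no direct access to the cohomology sheaves of $Y$, so the argument of the previous paragraph does not transfer, and the variance flip is exactly the nontrivial point. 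The standard repair (and essentially what Rouquier does) is to test against the twists $\omega_X^{\bullet}(n)$ of the dualizing complex, which lie in $\Db(\coh(X))$, and to use Grothendieck--Serre duality for $X\to\operatorname{Spec}K$ to identify $\Hom(Y,\omega_X^{\bullet}(n)[p])$ with $D\,\mathbb{H}^{-p}(X,Y(-n))$, thereby converting the contravariant finiteness condition into the covariant one already handled by the second equality; only after $Y\in\Db(\coh(X))$ is known does the skyscraper test make sense. On that final step, a small correction: the tool is not the Auslander--Buchsbaum--Serre theorem (which characterises regularity of $\O_{X,x}$) but the minimal free resolution criterion: a bounded complex of finitely generated $\O_{X,x}$-modules with $\operatorname{Ext}^i(Y_x,k(x))=0$ for $i\gg0$ has a finite minimal free resolution and hence is perfect over $\O_{X,x}$.
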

\begin{proof}
The first equality is \cite[Theorem 3.1.1]{BondalVandenBergh03}, the second equality is \cite[Lemma 7.46]{Rouquier08} and the third equality is \cite[Lemma 7.49]{Rouquier08}.
\end{proof}

We have a similar result for dg algebras.
\begin{lemma}\label{lem:compact-and-homologically-finite-objects-for-dg-algebra}
Let $A$ be a dg $K$-algebra. Let $\T=\D(A)$, $\C=\per(A)$ and $\D=\Dfd(A)$. Then $\C=\T^\mathrm{c}$ and $\D=\T^{\chf}_{\C}$. If $A$ is homologically smooth or topologically homologically smooth, then $\D=\C^{\chf}$. If $H^p(A)=0$ for all $p>0$ and $H^*(A)$ is finite-dimensional,
then $\C=\T^{\D}_{\hf}=\D_{\hf}$.
\end{lemma}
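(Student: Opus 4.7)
I would split the lemma into the four statements $\C=\T^{\mathrm{c}}$, $\D=\T^{\chf}_{\C}$, $\D=\C^{\chf}$, and $\C=\T^{\D}_{\hf}=\D_{\hf}$, and attack them in that order using standard dg-algebra tools. For the first equality, the free dg module $A_{A}$ is a compact generator of $\D(A)$: it is compact because $\Hom_{\D(A)}(A,-)=H^{0}$ by \eqref{eq:Hom-space-from-free-dg-module} and $H^{0}$ commutes with coproducts, and it generates in the usual sense. Keller's dg version of Neeman's theorem then identifies $\T^{\mathrm{c}}$ with $\thick(A)=\per(A)=\C$.

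For the second equality, the crucial observation is that for any fixed $Y\in\T$, the subclass of those $X\in\T$ with $\bigoplus_{p}\Hom_{\T}(X,Y[p])$ finite-dimensional is closed under shifts, cones and direct summands, hence is a thick subcategory of $\T$. Since $\C=\thick(A)$, the condition $Y\in\T^{\chf}_{\C}$ reduces to the single case $X=A$, which by \eqref{eq:Hom-space-from-free-dg-module} is exactly $\dim_{K}H^{*}(Y)<\infty$, i.e.\ $Y\in\D$. The third equality then follows formally: combining this with the inclusion $\D\subseteq\C$ supplied by Lemma~\ref{lem:homologically-smooth=>finite-projective-dimension} or Lemma~\ref{lem:topologically homologically-smooth=>finite-projective-dimension} gives $\C^{\chf}=\C\cap\T^{\chf}_{\C}=\C\cap\D=\D$.

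For the last assertion, assume $A$ is non-positive with finite-dimensional total cohomology, so that $B:=H^{0}(A)$ is a finite-dimensional algebra and $A\in\D$; in particular $\C=\thick(A)\subseteq\D$. The inclusion $\C\subseteq\T^{\D}_{\hf}$ is obtained by the same thick-subcategory trick, now applied in the first variable: the set of $Y\in\T$ with $\bigoplus_{p}\Hom(Y,X[p])$ finite-dimensional for every $X\in\D$ is thick in $\T$ and contains $A$ thanks to $\Hom(A,X[p])=H^{p}(X)$. For the reverse inclusion $\T^{\D}_{\hf}\subseteq\C$ I would exploit the standard $t$-structure on $\D(A)$ available for non-positive $A$, whose heart is $\Mod B$ and which restricts on $\D$ to a $t$-structure with heart $\mod B$; finite-dimensionality of $B$ ensures that $\D$ is generated as a thick subcategory by the finitely many simple $B$-modules $S_{1},\dots,S_{n}$. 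Lifting primitive orthogonal idempotents from $B$ to $A$ yields indecomposable projective dg modules $P_{j}=e_{j}A$ with $\Hom_{\D(A)}(P_{j},S_{i}[p])$ concentrated in degree $p=0$ and ``dual'' to the $S_{i}$. For $Y\in\T^{\D}_{\hf}$ the bigraded space $\bigoplus_{i,p}\Hom(Y,S_{i}[p])$ is finite-dimensional, and a standard inductive approximation argument (descending on the highest shift $p$ at which some $\Hom(Y,S_{i}[p])$ survives, and using approximations by the $P_{j}$) exhibits $Y$ as a finite iterated extension of shifts of the $P_{j}$, so that $Y\in\thick(P_{1},\dots,P_{n})=\per(A)$. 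Finally $\D_{\hf}=\D\cap\T^{\D}_{\hf}=\D\cap\C=\C$ since $\C\subseteq\D$.

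The routine part of the argument consists of the compact-generation statement and the thick-subcategory closure of the two finiteness conditions, from which $\C=\T^{\mathrm{c}}$, $\D=\T^{\chf}_{\C}$, $\D=\C^{\chf}$ and the easier inclusion $\C\subseteq\T^{\D}_{\hf}$ all fall out. The genuine difficulty is the remaining inclusion $\T^{\D}_{\hf}\subseteq\C$, where one really needs the $t$-structure produced by the non-positivity of $A$, the finiteness of the simples of $H^{0}(A)$, and the inductive construction of $Y$ out of the indecomposable projectives $e_{j}A$.
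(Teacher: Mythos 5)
Your proposal follows essentially the same route as the paper: compact generation for the first equality, the formula $\Hom_{\D(A)}(A,M[p])=H^p(M)$ plus thick-subcategory reduction for the second, the inclusion $\D\subseteq\C$ from smoothness for the third, and a minimal-resolution/approximation argument à la Rouquier (the paper cites Lemma 7.49 of Rouquier and either Keller's semifree resolutions or Proposition~\ref{prop:minimal-resolution}) for the last. One small technical caution: ``lifting primitive orthogonal idempotents from $B=H^0(A)$ to $A$'' is not always possible for a dg algebra, but this is harmless—one should instead take the Krull--Schmidt decomposition of $A$ inside the Hom-finite, idempotent-complete category $\per(A)$ to obtain the indecomposable summands $P_j$, which enjoy exactly the Hom-duality with the simples $S_i$ that you use.
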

\begin{proof}
The equality $\C=\T^\mathrm{c}$ follows from \cite[Theorem 5.3]{Ke1}. The equality $\D=\T^{\chf}_{\C}$ follows from the formula~\eqref{eq:Hom-space-from-free-dg-module}.  If $A$ is homologically smooth or topologically smooth, then $\C\supseteq\D$ by Lemma~\ref{lem:homologically-smooth=>finite-projective-dimension} or Lemma~\ref{lem:topologically homologically-smooth=>finite-projective-dimension}, and hence $\D=\T^{\chf}_{\C}=\C^{\chf}$. If $H^*(A)$ is finite-dimensional, then $\C\subseteq \D$. If moreover $H^p(A)=0$ for all $p>0$, then for objects in $\D$ minimal $\KK$-projective resolutions exist (one can either give a direct proof using \cite[Theorem 3.1 c)]{Ke1}, or use Proposition~\ref{prop:minimal-resolution}), and the rest of the proof is similar to that of \cite[Lemma 7.49]{Rouquier08}.
\end{proof}

As compact objects and (co)homologically finite objects are defined homologically, they are preserved under triangle equivalences.

\begin{lemma}\label{lem:restriction-of-der-equiv-to-per-and-dfd}
Let $A$ be a dg $K$-algebra. If $F:\T\to\D(A)$ is a triangle equivalence, then $F$ restricts to triangle equivalences $\T^{\mathrm{c}}\to\per(A)$ and $\T^{\mathrm{chf}}_{\T^{\mathrm{c}}}\to\Dfd(A)$.
\end{lemma}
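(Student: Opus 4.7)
The plan is to exploit the fact that both properties—compactness and (co)homological finiteness relative to a subcategory—are formulated purely in terms of Hom-spaces, shifts and coproducts, all of which are preserved (up to isomorphism) by any triangle equivalence.

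First, I would verify the equivalence $\T^{\mathrm{c}}\to\per(A)$. Since $F$ is a triangle equivalence and $\D(A)$ admits arbitrary (small) coproducts, the same is true for $\T$, and $F$ together with a quasi-inverse $G$ commute with coproducts (any equivalence preserves those colimits that exist in the source). Consequently, for $X\in\T$, the functor $\Hom_\T(X,-)$ commutes with coproducts if and only if $\Hom_{\D(A)}(F(X),-)\simeq \Hom_\T(X,G(-))$ does, so $X\in\T^{\mathrm{c}}$ iff $F(X)\in\D(A)^{\mathrm{c}}$. By Lemma~\ref{lem:compact-and-homologically-finite-objects-for-dg-algebra} this common condition is equivalent to $F(X)\in\per(A)$, and hence $F$ restricts to a triangle equivalence $\T^{\mathrm{c}}\to\per(A)$.

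Next I would handle the second restriction. For $Y\in\T$ and $X\in\T^{\mathrm{c}}$, the equivalence $F$ yields a natural isomorphism
\[
\bigoplus_{p\in\ZZ}\Hom_\T(X,Y[p])\;\simeq\;\bigoplus_{p\in\ZZ}\Hom_{\D(A)}(F(X),F(Y)[p]).
\]
Since the first restriction identifies $F(\T^{\mathrm{c}})=\per(A)$, letting $X$ range over $\T^{\mathrm{c}}$ is the same as letting $F(X)$ range over $\per(A)$. Therefore $Y\in\T^{\mathrm{chf}}_{\T^{\mathrm{c}}}$ if and only if $F(Y)\in\D(A)^{\mathrm{chf}}_{\per(A)}$, which by Lemma~\ref{lem:compact-and-homologically-finite-objects-for-dg-algebra} is precisely $\Dfd(A)$. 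The restricted functor is full and faithful because $F$ is, and essentially surjective onto $\Dfd(A)$ by the above characterisation applied to any quasi-inverse, so it is a triangle equivalence.

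I do not expect a genuine obstacle here: everything reduces to the homological invariance of the defining conditions under triangle equivalences. The only point worth being careful about is the first step, namely noting that $\T$ automatically inherits coproducts from $\D(A)$ via $F$, so that ``commutes with coproducts'' makes sense and transports across $F$; once this is observed, the remainder is a direct application of Lemma~\ref{lem:compact-and-homologically-finite-objects-for-dg-algebra}.
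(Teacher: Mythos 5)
Your proof is correct, and it makes explicit exactly the argument the paper leaves to the reader: the paper does not prove this lemma but only remarks, in the sentence preceding it, that compactness and (co)homological finiteness are homological conditions and hence preserved by triangle equivalences. Your elaboration—transporting coproducts along the equivalence to handle compactness, then using the already-established restriction $F(\T^{\mathrm{c}})=\per(A)$ together with Lemma~\ref{lem:compact-and-homologically-finite-objects-for-dg-algebra} for the chf part—is precisely the intended reasoning.
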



\section{Silting objects and $t$-structures}
\label{s:silting-objects-and-t-structures}

In this section we recall the notions and basic properties of silting objects and $t$-structures.

\subsection{Silting objects}
In this subsection, we recall the definition of silting objects and silting mutations.
For details, we refer to \cite{AI12}. Let $\T$ be a $K$-linear triangulated category with shift functor $[1]$. 

\begin{definition} Let $M$ be an object of $\T$.
\begin{itemize}
\item[(a)] $M$ is said to be \emph{presilting} if $\Hom_{\T}(M,M[n])=0$ for all positive integers $n$.
\item[(b)] $M$ is said to be \emph{silting} if it is presilting and $\T=\thick(M)$. If $\T$ is Krull--Schmidt, then we denote by $\silt(\T)$ the set of isomorphism classes of basic silting objects of $\T$.
\item[(c)] $M$ is said to be \emph{tilting} if $\Hom_\T(M,M[n])=0$ for $n\neq 0$ and $\T=\thick(M)$.
\end{itemize}
\end{definition}

Note that a presilting object $M$ of $\T$ is a silting object of $\thick(M)$. We give a typical example of a silting object.
\begin{example}
Let $\Lambda$ be a $K$-algebra.
Then $\Lambda$ is a silting object of the bounded homotopy category $\Kb(\proj\Lambda)$. It is in fact a tilting object.
\end{example}

More generally, we have

\begin{lemma}\label{lem:non-positive-dg-algebra-and-silting}
Let $A$ be a dg $K$-algebra such that $H^{p}(A)=0$ for all $p>0$. Then $A$ is a silting object of $\per(A)$.
\end{lemma}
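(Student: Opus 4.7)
The plan is to verify the two defining conditions of a silting object directly from the setup already recalled in the paper. By definition, $\per(A)$ is the thick subcategory of $\D(A)$ generated by the free module $A_A$, so the condition $\per(A)=\thick(A)$ is tautological and requires no argument.

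It remains to check the presilting condition, i.e.\ that $\Hom_{\per(A)}(A,A[n])=0$ for every $n>0$. Since $\per(A)$ is a full subcategory of $\D(A)$, this Hom space is computed in $\D(A)$. Here I would invoke the formula
\[
\Hom_{\D(A)}(A,M[p]) = H^p(M)
\]
recalled in \eqref{eq:Hom-space-from-free-dg-module}, which is valid because $A_A$ is $\KK$-projective. Applying it with $M=A$ gives $\Hom_{\per(A)}(A,A[n])=H^n(A)$, and the hypothesis $H^p(A)=0$ for all $p>0$ yields the vanishing for all $n>0$.

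There is essentially no obstacle here; the proof is a one-line verification once the formula \eqref{eq:Hom-space-from-free-dg-module} and the definition of $\per(A)$ are in hand. The only thing worth pointing out is that $A$ being $\KK$-projective is what makes \eqref{eq:Hom-space-from-free-dg-module} applicable, and this was already noted in the preliminaries.
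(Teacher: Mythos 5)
Your proof is correct and matches the paper's approach exactly: the paper's one-line proof also invokes the formula \eqref{eq:Hom-space-from-free-dg-module} applied to $M=A$, with the generation condition $\per(A)=\thick(A)$ being true by definition. You have simply spelled out the details the paper leaves implicit.
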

\begin{proof}
This follows from the formula \eqref{eq:Hom-space-from-free-dg-module} for $M=A$.
\end{proof}

For objects $M,N$ of $\T$, we write $M\ge N$ if $\Hom_{\T}(M,N[n])=0$ for all positive integers $n$.
Then the relation $\ge$ gives a partial order on $\silt(\T)$ by \cite[Theorem 2.11]{AI12}.

\begin{definition}\label{defn:n-term-silting}Let $M$ be a basic silting object of $\T$ and let $n$ be a positive integer. An object $N$ of $\T$ is called an 
\emph{$n$-term silting object with respect to $M$} if it belongs to the set
\[
\silt^n_M(\T):=\{ N\in \silt(\T) \mid M\ge N \ge M[n-1]\}. \notag
\]
\end{definition}

For a presilting object $M$ of $\T$ and integers $k\le l$, we define a full subcategory $M^{[k,l]}$ of $\T$ as follows:
\[
M^{[k,l]}:=\add (M[k])\ast \add(M[k+1])\ast \cdots\ast \add(M[l-1])\ast \add(M[l]). \notag
\]
Note that, for integers $k,l,m,$ and $n$ satisfying $k\leq l$, $m\leq n$ and $m\leq l$, we have
\[
M^{[k,l]}\ast M^{[m,n]} \subseteq M^{[\min\{ k,m\}, \max\{ l,n\}]}. 
\]

\begin{lemma}\label{AI223}
Let $M$ be a silting object of $\T$.
Then the following hold.
\begin{itemize}
\item[(a)] For all integers $k\le l$, we have $M^{[k,l]}=\add(M^{[k,l]})$, \ie it is closed under direct summands.
\item[(b)] For an object $X\in M^{[k,l]}$ and an integer $k\leq m < l$, the following hold.
\begin{itemize}
\item[(1)] If $\Hom_{\T}(M^{[k,m]},X)=0$, then $X$ is in $M^{[m+1,l]}$.
\item[(2)] If $\Hom_{\T}(X,M^{[m+1,l]})=0$, then $X$ is in $M^{[k,m]}$.
\end{itemize}
\item[(c)]  We have $\T =\bigcup_{n\ge 0} M^{[-n,n]}$.
\end{itemize}
\end{lemma}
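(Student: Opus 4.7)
The plan is to prove (a) and (b) together by induction on $l-k$, and then to derive (c) from (a). The technical backbone is the presilting vanishing $\Hom_{\T}(M^{[k,m]}, M^{[m+1,l]}[p]) = 0$ for all $p \ge 0$ and $k \le m < l$, which follows from $\Hom_{\T}(M, M[p]) = 0$ for $p > 0$ by repeatedly taking long exact sequences along the filtrations that define the subcategories $M^{[k,m]}$ and $M^{[m+1,l]}$.

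The base case $l=k$ is immediate: $M^{[k,k]} = \add(M[k])$ is closed under direct summands by definition, and (b) is vacuous. For the inductive step of (b), given $X \in M^{[k,l]}$, the definition provides a triangle $X_1 \to X \to X_2 \to X_1[1]$ with $X_1 \in M^{[k,m]}$ and $X_2 \in M^{[m+1,l]}$. For (b)(1), the hypothesis $\Hom_{\T}(M^{[k,m]}, X) = 0$ forces the morphism $X_1 \to X$ to vanish, so the triangle splits into $X_2 \simeq X \oplus X_1[1]$. Since $[m+1,l]$ has strictly shorter length than $[k,l]$, the inductive hypothesis of (a) gives $X \in M^{[m+1,l]}$. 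Part (b)(2) is strictly dual: the hypothesis $\Hom_\T(X,M^{[m+1,l]})=0$ forces $X\to X_2$ to vanish, so $X_1\simeq X[-1]\oplus X_2[-1]$... actually more cleanly, rotate first to deduce $X\in M^{[k,m]}$ by summand-closure on the shorter interval $[k,m]$.

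For the inductive step of (a), assume $X \oplus Y \in M^{[k,l]}$ with $l > k$, and take a triangle $Z \to X \oplus Y \to W \to Z[1]$ with $Z \in M^{[k, l-1]}$ and $W \in \add(M[l])$ coming from the decomposition $M^{[k,l]} = M^{[k, l-1]} * \add(M[l])$. The vanishing $\Hom_{\T}(M^{[k, l-1]}, \add(M[l])) = 0$ makes $X \oplus Y \to W$ into a left $\add(M[l])$-approximation. Replacing $W$ by its minimal version (available in the Hom-finite Krull--Schmidt setting implicit in $\silt(\T)$), one obtains $W = W_X \oplus W_Y$ where $W_X, W_Y$ are the minimal left approximations of $X$ and $Y$ respectively; the resulting map is diagonal and forces $Z$ to split as $Z_X \oplus Z_Y$ with distinguished triangles $Z_X \to X \to W_X \to Z_X[1]$ and $Z_Y \to Y \to W_Y \to Z_Y[1]$. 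The inductive hypothesis of (a) on the shorter interval $[k, l-1]$ gives $Z_X, Z_Y \in M^{[k, l-1]}$, whence $X, Y \in M^{[k, l-1]} * \add(M[l]) = M^{[k,l]}$. I expect this to be the main obstacle, since the minimal-approximation splitting is the subtle step; in a purely abstract triangulated setting it would need to be replaced by an octahedral argument of the kind used in Aihara--Iyama \cite{AI12}.

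Finally, for (c), the subcategory $\bigcup_{n \ge 0} M^{[-n,n]}$ contains $M$, is closed under shifts because $M^{[k,l]}[p] = M^{[k+p, l+p]} \subseteq M^{[-n,n]}$ for $n \ge \max\{|k+p|, |l+p|\}$, is closed under extensions by the absorption rule $M^{[k,l]} * M^{[m,n]} \subseteq M^{[\min\{k,m\}, \max\{l,n\}]}$ stated before the lemma, and is closed under direct summands by (a). It is therefore a thick subcategory of $\T$ containing $M$, hence equals $\thick(M) = \T$.
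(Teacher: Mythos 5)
Your arguments for (b) and (c) are correct. The paper itself gives no self-contained argument, merely citing \cite{IYa14} and \cite{AI12}; your splitting mechanism for (b) (the vanishing hypothesis forces the relevant map in the defining filtration triangle to be zero, so $X$ becomes a direct summand of the complementary term lying in a strictly shorter interval) is exactly right, and deducing (c) from (a) by checking that $\bigcup_{n\ge 0}M^{[-n,n]}$ is a thick subcategory containing $M$ is standard.

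The gap you flag in (a) is genuine, and since your proofs of (b) and (c) both rest on (a), it propagates to the whole proposal. Minimal left $\add(M[l])$-approximations --- and the subsidiary claim, which you use but do not justify, that the minimal left approximation of $X\oplus Y$ is the direct sum of the minimal left approximations of $X$ and $Y$ --- require $\T$ to be Hom-finite and Krull--Schmidt. The lemma is stated with no such hypothesis: the paper's source \cite[Lemma~2.6]{IYa14} works in an arbitrary triangulated category, and the paper later invokes this lemma (through Lemmas~\ref{twolemma} and~\ref{IYa-thm44}) in situations where the ambient category is explicitly not assumed Hom-finite. So the approximation route proves a strictly weaker statement than what is claimed; matching the stated generality requires the octahedral argument you allude to (or, once the bounded co-$t$-structure of Lemma~\ref{l:co-heart} is established independently, observing that $M^{[k,l]}$ is an intersection of shifted aisles, which are closed under direct summands by axiom (0) of Definition~\ref{defn:co-t-structure}).
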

\begin{proof} 
(a) follows from  \cite[Lemma 2.6]{IYa14}. (b) follows from  \cite[Lemma 2.5]{IYa14}. 
(c) follows from \cite[Proposition 2.23]{AI12} and \cite[Proposition 2.8]{IYa14}.
\end{proof}

\begin{lemma}\label{twolemma}
\label{lem:partial-order-and-interval}
Let $M,N$ be silting objects of $\T$. 
Then $M\geq N$ if and only if $N\in M^{[0,n]}$ for some $n\gg 0$.

More generally, for integers $k\le l$, we have $M[k]\ge N \ge M[l]$ if and only if $N\in M^{[k,l]}$.
\end{lemma}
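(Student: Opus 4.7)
The plan is to prove the general statement, from which the first statement follows as the special case $k=0$, $l=n-1$ (where only the lower bound matters for the ``only if'' direction, but the argument works uniformly).

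For the ``if'' direction, suppose $N\in M^{[k,l]}$. The key observation is that for $k\le j\le l$ and $p>0$, one has $\Hom_\T(M[k],M[j][p])=\Hom_\T(M,M[j-k+p])=0$ since $M$ is presilting and $j-k+p>0$; similarly $\Hom_\T(M[j],M[l+p])=\Hom_\T(M,M[l+p-j])=0$. Since $N$ is obtained as an iterated extension in $\add(M[k])*\cdots*\add(M[l])$, repeatedly applying the long exact sequences coming from $\Hom_\T(M[k],-)$ and $\Hom_\T(-,M[l])$ to the defining triangles yields $\Hom_\T(M[k],N[p])=0$ and $\Hom_\T(N,M[l+p])=0$ for all $p>0$, i.e.\ $M[k]\ge N\ge M[l]$.

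For the ``only if'' direction, suppose $M[k]\ge N\ge M[l]$. Unpacking the definition, $M[k]\ge N$ is equivalent to $\Hom_\T(M[i],N)=0$ for all $i<k$, and $N\ge M[l]$ is equivalent to $\Hom_\T(N,M[j])=0$ for all $j>l$. By Lemma~\ref{AI223}(c), there exists $n'\ge 0$, which we may take with $-n'<k$ and $n'>l$, such that $N\in M^{[-n',n']}$. A second application of long exact sequences to the defining triangles of objects in $M^{[l+1,n']}$, using $\Hom_\T(N,M[j])=0$ for $j\in\{l+1,\dots,n'\}$, shows that $\Hom_\T(N,M^{[l+1,n']})=0$. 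Hence Lemma~\ref{AI223}(b)(2) applied with $m=l$ to $N\in M^{[-n',n']}$ yields $N\in M^{[-n',l]}$. Symmetrically, $\Hom_\T(M^{[-n',k-1]},N)=0$ because $\Hom_\T(M[i],N)=0$ for $i\in\{-n',\dots,k-1\}$, so Lemma~\ref{AI223}(b)(1) applied with $m=k-1$ gives $N\in M^{[k,l]}$, as required.

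The first statement is then the case $[k,l]=[0,n]$: the ``if'' direction gives $M\ge N$ (actually $M\ge N\ge M[n]$), and for the ``only if'' direction one takes $n=n'$ from Lemma~\ref{AI223}(c) applied to $N$ and invokes only the $M\ge N$ half of the above argument (Lemma~\ref{AI223}(b)(1) with $m=-1$).

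The only non-bookkeeping step is the propagation of Hom-vanishing along the defining filtration $\add(M[k])*\cdots*\add(M[l])$ via long exact sequences, which is where Lemma~\ref{AI223}(b) does the real work; everything else is a routine reindexing of the presilting condition on $M$. I do not anticipate any genuine obstacle.
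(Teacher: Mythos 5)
Your proof is correct and rests on the same ingredients as the paper's: Lemma~\ref{AI223}(b) and (c) together with propagation of Hom-vanishing along the defining triangles of $M^{[k,l]}$. The only difference is organizational — you prove the two-sided general statement directly and specialize to get the first claim, whereas the paper proves the one-sided case first and reduces the general case to it via the pairs $(M[k],N)$ and $(N,M[l])$; both routes are fine.
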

\begin{proof} 
The `if' part of the first statement  follows from the fact that the subcategory of objects $X$ of $\T$ satisfying $\Hom_\T(M,X[n])=0$ for all $n>0$ is closed under extensions, direct summands and $[1]$. To show the `only if' part, assume that $M\geq N$.  By Lemma~\ref{AI223}(c) there exists $n\gg0$ such that $N\in M^{[-n,n]}$. Assume $M\geq N$. Then $\Hom(M[<\hspace{-3pt}0],N)=0$, in particular, $\Hom(M^{[-n,-1]},N)=0$. By Lemma~\ref{AI223}(b), $N\in M^{[0,n]}$.

Observe that $N\in M^{[k,l]}$ if and only if $N\in (M[k])^{[0,n]}$ and $M[l]\in N^{[0,n]}$ for some $n\gg0$. The second statement follows from the first one applied to the pairs of presilting objects $(M[k],N)$ and $(N,M[l])$.
\end{proof}

As consequences of Lemma~\ref{lem:partial-order-and-interval}, we obtain the following symmetry for  $M^{[k,l]}$ and description of $\silt^n_{M}(\T)$. 
\begin{lemma}\label{lemma32}
Let $M$ and $N$ be silting objects of $\T$ and let $k\le l$ be integers. Then $N\in M^{[k,l]}$ if and only if $M\in N^{[-l,-k]}$.
\end{lemma}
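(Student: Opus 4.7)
The plan is to derive this as an immediate consequence of Lemma~\ref{lem:partial-order-and-interval} (the second statement of \texttt{twolemma}), by observing that the partial-order condition $M[k]\geq N\geq M[l]$ is literally the same Hom-vanishing condition as $N[-l]\geq M\geq N[-k]$, just with the roles of $M$ and $N$ swapped.

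More concretely, I would proceed as follows. By Lemma~\ref{lem:partial-order-and-interval} applied to the silting objects $M$ and $N$, the condition $N\in M^{[k,l]}$ is equivalent to $M[k]\geq N\geq M[l]$, and the condition $M\in N^{[-l,-k]}$ is equivalent to $N[-l]\geq M\geq N[-k]$. So it suffices to show
\[
M[k]\geq N\geq M[l]\quad\Longleftrightarrow\quad N[-l]\geq M\geq N[-k].
\]

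Now unwind the definition of $\geq$. The inequality $M[k]\geq N$ says $\Hom_\T(M,N[m])=0$ for every integer $m>-k$, while $M\geq N[-k]$ says $\Hom_\T(M,N[-k+n])=0$ for every $n>0$, which is the same condition. Similarly $N\geq M[l]$ and $N[-l]\geq M$ both translate to $\Hom_\T(N,M[m])=0$ for every $m>l$. Hence the two double inequalities are equivalent, which gives the lemma.

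I do not expect any obstacle here: the content is purely bookkeeping on Hom-vanishing, and the only non-trivial input is Lemma~\ref{lem:partial-order-and-interval}, which has already been established.
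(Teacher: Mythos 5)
Your proposal is correct and is essentially identical to the paper's own proof: both reduce to Lemma~\ref{lem:partial-order-and-interval} and then note that $M[k]\geq N\geq M[l]$ and $N[-l]\geq M\geq N[-k]$ unwind to the same Hom-vanishing conditions. You just spell out the bookkeeping a bit more explicitly than the paper does.
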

\begin{proof}
This follows from Lemma~\ref{lem:partial-order-and-interval} because $M[k]\geq N\geq M[l]$ if and only if $N[-l]\geq M\geq N[-k]$.
\end{proof}

\begin{proposition}
Let $M,N$ be silting objects of $\T$ and let $n$ be a positive integer. Then $N\in \silt^n_{M}(\T)$ if and only if $N\in M^{[0,n-1]}$.
\end{proposition}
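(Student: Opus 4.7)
The statement is essentially the specialisation of Lemma~\ref{lem:partial-order-and-interval} (the second assertion) to the integers $k=0$ and $l=n-1$, so the plan is to simply invoke that lemma. Unwinding the definition of $\silt^n_M(\T)$, the condition $N\in\silt^n_M(\T)$ reads: $N$ is a silting object and $M\ge N\ge M[n-1]$. Since $N$ is assumed to be silting in the hypothesis of the present proposition, it suffices to show that $M\ge N\ge M[n-1]$ is equivalent to $N\in M^{[0,n-1]}$.

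This equivalence is exactly Lemma~\ref{lem:partial-order-and-interval} applied to the presilting objects $M$ and $N$, provided the required inequality $k\le l$ holds. Since $n\ge 1$ by assumption, we indeed have $0\le n-1$, so the application is legitimate. Concretely, I would write:
\[
N\in \silt^n_M(\T) \iff M\ge N\ge M[n-1] \iff M[0]\ge N\ge M[n-1] \iff N\in M^{[0,n-1]},
\]
where the middle equivalence is tautological and the last one is Lemma~\ref{lem:partial-order-and-interval}.

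There is no real obstacle, since the nontrivial content is already contained in Lemma~\ref{lem:partial-order-and-interval}; the proof is a one-line application. If a more self-contained argument were desired, the only non-formal ingredient would be the ``only if'' direction of Lemma~\ref{lem:partial-order-and-interval}, whose proof in turn rests on Lemma~\ref{AI223}(b) and (c): the silting property of $M$ ensures $\T=\bigcup_{n\ge0}M^{[-n,n]}$, while the vanishing $\Hom_\T(M^{[-n,-1]},N)=0$ coming from $M\ge N$ forces $N$ into $M^{[0,n]}$. But since this is already recorded as Lemma~\ref{lem:partial-order-and-interval}, I would not repeat it.
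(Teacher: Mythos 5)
Your proof is correct and is precisely the intended derivation: the paper states the proposition as a consequence of Lemma~\ref{lem:partial-order-and-interval} (with $k=0$, $l=n-1$), exactly as you do.
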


Next we introduce silting mutation. Assume that $\T$ is Hom-finite and Krull--Schmidt. Let $M$ be a silting object of $\T$.
For a decomposition $M=X\oplus Y$, we take a triangle
\[
X\xto{f}Y'\xto{}X^{\ast}\xto{}X[1] \notag
\]
with $f$ being a minimal left $\add(Y)$-approximation of $X$.
Then $\mu_{X}^{\Le}(M):=X^{\ast}\oplus Y$ is called the \emph{left mutation} of $M$ with respect to $X$. 
Dually, we define the \emph{right mutation} $\mu_{X}^{\Ri}(M)$. In this paper \emph{mutation} will mean left or right mutation.  If $X$ is indecomposable, then we call the mutation an \emph{irreducible mutation}. 
By \cite[Theorem 2.31]{AI12}, a mutation of a silting object is again a silting object. In fact, if $N$ is a left mutation of $M$, then $N\in \silt^2_{M}(\T)$ by \cite[Proposition 2.33]{AI12}.
Moreover, by \cite[Theorem 2.35]{AI12}, the Hasse quiver of the partially ordered set $\silt(\Lambda)$ coincides with the mutation quiver.

\smallskip
We end this subsection with the relation between silting objects and co-$t$-structures.

\begin{definition}[{\cite[Definition 2.4]{Pauksztello08}}]\label{defn:co-t-structure}
A \emph{co-$t$-structure} on
$\T$  (or \emph{weight structure} in~\cite{Bo10}) is a pair $(\T_{\geq 0},\T_{\leq
0})$ of strict and full subcategories of $\T$ such that, putting $\T_{\geq n}=\T_{\geq 0}[-n]$ and $\T_{\leq n}=\T_{\leq 0}[-n]$ for $n\in\mathbb{Z}$, we have
\begin{itemize}
\item[(0)] both $\T_{\geq 0}$ and $\T_{\leq
0}$ are additive and closed under taking direct summands;
\item[(1)] $\T_{\geq 1}\subseteq\T_{\geq 0}$ and
$\T_{\leq 0}\subseteq\T_{\leq 1}$;
\item[(2)] $\Hom(\T_{\geq 1}, \T_{\leq 0})=0$;
\item[(3)] $\T=\T_{\geq 1}\ast\T_{\leq 0}$.
\end{itemize}
The intersection $\T_{\geq
0}~\cap~\T_{\leq 0}$ is called the \emph{co-heart} of the co-$t$-structure $(\T_{\geq 0},\T_{\leq 0})$. 
\end{definition}
The conditions (0) and (2) can be replaced by the following condition:
\begin{itemize}
\item[($2'$)] $\tT_{\geq 1}={}^\perp(\T_{\leq 0})$ and $\tT_{\leq 0}=(\tT_{\geq 1})^{\perp}$.
\end{itemize}
A co-$t$-structure $(\T^{\leq 0},\T^{\geq
0})$ is said to be \emph{bounded}~\cite{Bo10} if
\[
\bigcup\nolimits_{n\in\mathbb{Z}}\T_{\leq
n}=\T=\bigcup\nolimits_{n\in\mathbb{Z}}\T_{\geq n}.
\]

\begin{example}
Let $\Lambda$ be a $K$-algebra. Let $\KK_{\geq 0}$ (respectively, $\KK_{\leq 0}$) be the full subcategory of $\Kb(\proj\Lambda)$ consisting of objects isomorphic to complexes with trivial components in negative degrees (respectively, in positive degrees). Then $(\KK_{\geq 0},\KK_{\leq 0})$ is a bounded co-$t$-structure on $\Kb(\proj\Lambda)$ with co-heart $\add(\Lambda)$ which is equivalent to $\proj\Lambda$.
\end{example}

The following result gives a bijection between silting objects and co-$t$-structures.

\begin{lemma}[{\cite[Theorem 4.10 (a)]{MSSS13} and \cite[Proposition 2.8]{IYa14}}]\label{l:co-heart}
Let $M$ be an object of $\T$. Then $M$ is a silting object if and only if $\add(M)$ is the co-heart of a bounded co-$t$-structure on $\T$.
\end{lemma}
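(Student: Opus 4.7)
The plan is to prove both implications directly using the subcategories $M^{[k,l]}$ from Lemma~\ref{AI223} and unpacking the axioms of Definition~\ref{defn:co-t-structure}.

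For the forward direction, given a silting object $M$ I would set
\[
\T_{\geq 0} := \bigcup_{n \geq 0} M^{[-n,0]}, \qquad \T_{\leq 0} := \bigcup_{n \geq 0} M^{[0,n]},
\]
and check the axioms one by one. Closure under direct summands (part of (0)) comes from Lemma~\ref{AI223}(a); the shift inclusions in (1) are immediate from $M^{[-n-1,-1]} \subseteq M^{[-(n+1),0]}$ and its dual; the orthogonality $\Hom(\T_{\geq 1},\T_{\leq 0}) = 0$ in (2) reduces by induction on the number of extensions to the presilting identities $\Hom(M[i],M[j]) = 0$ for $i < 0 \le j$; and the decomposition in (3) follows from the presentation $M^{[-n,n]} = M^{[-n,-1]} * M^{[0,n]}$ via associativity of $*$, combined with Lemma~\ref{AI223}(c) to cover every object of $\T$. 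Boundedness is then immediate from Lemma~\ref{AI223}(c). To identify the co-heart with $\add(M)$ I would take $X \in M^{[-n,0]} \cap M^{[0,n]}$ and apply Lemma~\ref{AI223}(b)(2), using $\Hom(M^{[-n,0]},M^{[1,n]}) = 0$ (again by presilting) to conclude $X \in M^{[0,0]} = \add(M)$.

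For the reverse direction, suppose $\add(M)$ is the co-heart of a bounded co-$t$-structure $(\T_{\geq 0},\T_{\leq 0})$. Presilting is immediate: $M \in \T_{\geq 0}$ and, for $i \ge 1$, $M[i] \in \T_{\leq -i} \subseteq \T_{\leq -1}$, so $\Hom(M,M[i]) = 0$ by a shift of axiom (2). For $\T = \thick(M)$ I would induct on the width $n+m$ of an interval $X \in \T_{\geq -m} \cap \T_{\leq n}$, which exists by boundedness. After shifting we may assume $m = 0$; the base case $n = 0$ gives $X \in \add(M)$ directly. For $n \ge 1$, axiom (3) yields a triangle $Y \to X \to Z \to Y[1]$ with $Y \in \T_{\geq 1}$ and $Z \in \T_{\leq 0}$, and using the extension-closure of $\T_{\geq 0}$ and $\T_{\leq n}$ (from the $^\perp$-description $(2')$) one verifies $Y \in \T_{\geq 1} \cap \T_{\leq n}$ and $Z \in \T_{\geq 0} \cap \T_{\leq 0} = \add(M)$. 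Both lie in strictly smaller intervals, so induction completes the argument.

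The main obstacle I anticipate is the bookkeeping in the inductive step of the reverse direction: correctly tracking the index shifts to confirm that both $Y$ and $Z$ really do land in smaller-width subcategories, and invoking the right extension-closure properties for $\T_{\geq 0}$ and $\T_{\leq 0}$. Everything else is a fairly mechanical unpacking of the definitions of $M^{[k,l]}$ and $*$.
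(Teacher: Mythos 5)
Your proof is correct. The paper itself offers no argument for this lemma, deferring entirely to \cite[Theorem 4.10(a)]{MSSS13} and \cite[Proposition 2.8]{IYa14}; what you have written is essentially the argument those references contain, reconstructed from the paper's own Lemma~\ref{AI223}. A few remarks on the details you glossed over but correctly flagged. In the forward direction, additivity in axiom~(0) (not just closure under summands) follows from the observation $M^{[k,l]}\ast M^{[k,l]}\subseteq M^{[k,l]}$, which is a special case of the displayed inclusion after Definition~\ref{defn:n-term-silting}. Your identification of the co-heart works equally well via Lemma~\ref{AI223}(b)(1) applied to the vanishing $\Hom(M^{[-n,-1]},X)=0$, which is perhaps slightly more direct. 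In the reverse direction, the index bookkeeping you were worried about does come out right: from the triangle $Y\to X\to Z\to Y[1]$ with $Y\in\T_{\geq 1}$ and $Z\in\T_{\leq 0}$, extension-closure gives $Z\in\T_{\geq 0}\cap\T_{\leq 0}$ and $Y\in\T_{\geq 1}\cap\T_{\leq n}$, so $Y[1]$ lands in $\T_{\geq 0}\cap\T_{\leq n-1}$, strictly reducing the width, and $\thick(M)$ is closed under the shift needed to invoke the inductive hypothesis. One thing worth being aware of, though not a flaw: Lemma~\ref{AI223}(c), which you invoke in the forward direction, is itself proved in the paper by citing \cite[Proposition 2.8]{IYa14} — one of the two references cited for the present lemma — so your proof is not logically independent of those sources, but it does correctly show how the statement follows from the material already developed in Section~\ref{s:silting-objects-and-t-structures}.
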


Let $A$ be a dg $K$-algebra satisfying
\begin{itemize}
\item[(P1)] $H^p(A)=0$ for all $p<0$,
\item[(P2)] $H^0(A)$ is a finite-dimensional semisimple $K$-algebra.
\end{itemize}
By \cite[Corollary 4.7]{KN1}, there is a dg $A$-module $S$ (unique up to isomorphism) such that the graded $H^*(A)$-module $H^*(S)$ is isomorphic to $H^0(A)$. The following result appeared in an earlier version of \cite{KN1}.

\begin{lemma}\label{lem:silting-for-positive-dg-algebra}
Keep the notation and assumptions as in the preceding paragraph.
Then $S$ is a silting object of $\Dfd(A)$.
\end{lemma}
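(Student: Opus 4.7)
Proof plan: The goal is to verify two properties: $\Hom_{\D(A)}(S,S[n])=0$ for $n>0$ (presilting), and $\thick(S)=\Dfd(A)$ (generation).

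For presilting, I would first reduce to a minimal-model situation. By standard minimal-model theory one may replace $A$ by a quasi-isomorphic dg algebra in which $A^{<0}=0$, $A^0=H^0(A)$, and $d|_{A^0}=0$; rename this model $A$. Then the quotient $\pi\colon A\to A/A_{>0}=H^0(A)$ is a surjective dg algebra homomorphism, and $S:=A/A_{>0}$ equipped with the right $A$-action through $\pi$ is a dg $A$-module concentrated in degree~$0$ with $H^*(S)=H^0(A)$; by the uniqueness built into the statement this coincides up to isomorphism in $\D(A)$ with the module named in the lemma. I would then construct a semi-free (hence $\KK$-projective) resolution $P\to S$ of the form $P=\bigoplus_i A\cdot g_i$ in which every generator $g_i$ has non-negative degree. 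Such a resolution exists because the cohomology of $S$, and of each kernel arising in the iterative construction, is concentrated in non-negative degrees, and a class in degree $p\geq 1$ can be killed by a generator of degree $p-1\geq 0$.

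A morphism $\varphi\colon P\to S$ of degree $n$ in the hom complex is determined by the values $\varphi(g_i)\in S^{n+|g_i|}$; when $n>0$, since $|g_i|\geq 0$ and $S$ is concentrated in degree~$0$, each $\varphi(g_i)$ vanishes, and hence $\varphi=0$. Therefore $\Hom^n_{\C_{\dg}(A)}(P,S)=0$ for $n>0$, and on cohomology this yields $\Hom_{\D(A)}(S,S[n])=0$.

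For generation, I would invoke the natural t-structure on $\D(A)$ with aisles $\D^{\leq 0}=\{M:H^{>0}(M)=0\}$ and $\D^{\geq 0}=\{M:H^{<0}(M)=0\}$; under the coconnective hypothesis, the existence of this t-structure together with the equivalence between its heart and $\Mod H^0(A)$ via $M\mapsto H^0(M)$ is established in \cite{KN1}. Under this equivalence $S$ corresponds to the regular $H^0(A)$-module. Since $H^0(A)$ is semisimple, $\mod H^0(A)$ is a semisimple abelian category in which every simple appears as a direct summand of $S$, so $\add(S)$ contains the heart of the induced bounded t-structure on $\Dfd(A)$. Any $M\in\Dfd(A)$ then admits a finite filtration by truncation triangles whose subquotients are shifts of heart objects, each lying in $\add(S)$; hence $M\in\thick(S)$, completing the argument. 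The principal technical hurdle is the passage to the minimal model together with the setup of the natural t-structure; both are standard within the framework of \cite{KN1}.
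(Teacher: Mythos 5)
Your argument splits into two halves, and they have rather different fates.

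The presilting half is sound in outline and is more explicit than the paper's. Passing to a model with $A^{<0}=0$, $A^0=H^0(A)$ and building a semi-free resolution $P=\bigoplus_i A\cdot g_i$ with $|g_i|\geq 0$ does give $\Hom^{n}_{\C_{\dg}(A)}(P,S)=0$ for $n>0$, and the degree bookkeeping you carry out is correct. The one place to be careful is the assertion that the iterative construction can always be kept in non-negative degrees: this requires choosing the covers minimally (otherwise an extra cohomology class in degree $-1$ appears in the cone), and it is exactly the semisimplicity hypothesis (P2) on $H^0(A)$ that guarantees such a minimal choice at degree $0$. That is, in effect, the minimal-resolution machinery of \cite{KN1}, so the sketch is believable but not self-contained.

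The generation half has a genuine gap. You invoke ``the natural $t$-structure on $\D(A)$ with aisles $\D^{\leq 0}=\{M:H^{>0}(M)=0\}$ and $\D^{\geq 0}=\{M:H^{<0}(M)=0\}$'' and attribute it, together with an equivalence of its heart with $\Mod H^0(A)$, to \cite{KN1}. That is what happens for a \emph{connective} (non-positive) dg algebra: it is precisely Example~\ref{ex:t-structure-for-non-positive-dg-algebra} of the paper, where standard truncation works because $A$ is concentrated in non-positive degrees. Here $A$ is \emph{coconnective} (positive), and for such $A$ the smart truncation $\tau^{\leq 0}M$ of a dg $A$-module is not a dg $A$-submodule (positive-degree elements of $A$ move out of the truncation), so these cohomological aisles do \emph{not} form a $t$-structure. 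What \cite[Corollary 4.1]{KN1} actually provides in this situation is a \emph{co-$t$-structure} $(\D_{\geq 0},\D_{\leq 0})$ on $\D(A)$ with these aisles. Consequently, the ``finite filtration by truncation triangles whose subquotients are shifts of heart objects'' you rely on does not exist, and there is no abelian heart equivalent to $\Mod H^0(A)$ on the $A$-side; the co-heart $\D_{\geq 0}\cap\D_{\leq 0}$ is only an additive category, equivalent to $\proj H^0(A)$.

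The repair is to use the co-$t$-structure honestly: it restricts to a bounded co-$t$-structure on $\Dfd(A)$ with co-heart $\add(S)$, and the existence of (finitely many iterated) weight decompositions then shows every object of $\Dfd(A)$ is built from shifts of objects in $\add(S)$, giving $\Dfd(A)=\thick(S)$. In fact, once you know $\add(S)$ is the co-heart of a bounded co-$t$-structure on $\Dfd(A)$, Lemma~\ref{l:co-heart} immediately gives that $S$ is silting, handling presilting and generation in one stroke; this is exactly the paper's proof, and it avoids the explicit resolution computation you carry out for the presilting half.
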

\begin{proof}
Let $\D_{\geq 0}$ (respectively, $\D_{\leq 0}$) be the full subcategory of $\D(A)$ consisting of objects $X$ such that $H^p(X)=0$ for $p<0$ (respectively, $p>0$). 
By \cite[Corollary 4.1]{KN1}, $(\D_{\geq 0},\D_{\leq 0})$ is a co-$t$-structure on $\D(A)$. It naturally restricts to a co-$t$-structure on $\Dfd(A)$ such that $S$ belongs to the co-heart and $\Dfd(A)=\thick(S)$, namely, $S$ is a silting object of $\Dfd(A)$.
\end{proof}

\subsection{Silting-discreteness}
 In this subsection we recall the notion of silting-discrete triangulated categories. 
Let $\T$ be a $K$-linear Hom-finite Krull--Schmidt triangulated category with shift functor $[1]$. Assume that $\T$ has silting objects.

\begin{definition}\label{def silting-discrete}
The triangulated category $\T$ is said to be \emph{silting-discrete} if, for any basic silting object $M$, the set $\silt^n_M(\T)$ is finite for any positive integer $n$.
\end{definition}

By \cite[Proposition 3.8]{Ai13}, $\T$ is silting-discrete if and only if for any fixed basic silting object $M$ of $\T$ the set $\silt^n_M(\T)$ is finite for any positive integer $n$. Moreover, if $\T$ is silting-discrete, then we can obtain all basic silting objects in $\T$ from any fixed basic silting object by a finite sequence of mutations (see \cite[Corollary 3.9]{Ai13}).
We have the following criterion for silting-discreteness.

\begin{lemma}[{\cite[Theorem 2.4]{AM15}}]\label{AM}
The category $\T$ is silting-discrete if and only if the set $\silt_M^{2}(\T)$ is finite for any basic silting object $M$ of $\T$.
\end{lemma}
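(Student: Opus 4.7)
The ``only if'' direction is immediate from Definition~\ref{def silting-discrete}. For the converse, assume $\silt^2_L(\T)$ is finite for every basic silting object $L$ of $\T$, fix a basic silting object $M$, and prove by induction on $n\geq 1$ that $\silt^n_M(\T)$ is finite. The cases $n=1,2$ are trivial or are the hypothesis itself.

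For the inductive step with $n\geq 2$, assuming finiteness of $\silt^n_K(\T)$ for every basic silting $K$, I aim to establish the inclusion
\[
\silt^{n+1}_M(\T) \;\subseteq\; \bigcup_{L\in\silt^n_M(\T)}\silt^2_L(\T),
\]
which then presents $\silt^{n+1}_M(\T)$ as a finite union (by induction) of finite sets (by hypothesis), hence finite. The containment ``$\supseteq$'' is a routine partial-order chase: for $L\in\silt^n_M(\T)$ and $N\in\silt^2_L(\T)$ one has $M\geq L\geq N$, and $N\geq L[1]\geq M[n]$ using $L\geq M[n-1]$, so $N\in\silt^{n+1}_M(\T)$.

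For the forward containment, given $N\in\silt^{n+1}_M(\T)$, I choose $L$ to be any \emph{minimal} element of $\{K\in\silt^n_M(\T) : K\geq N\}$; this set is nonempty (it contains $M$) and finite (by induction), so such $L$ exists. It is immediate that $L\geq N$; what remains is to show $N\geq L[1]$, i.e., $N\in\silt^2_L(\T)$. I would argue by contradiction: supposing $N\not\geq L[1]$, produce an indecomposable summand $X$ of $L$ such that the irreducible left mutation $L':=\mu_X^{\Le}(L)$ satisfies $L>L'\geq N$ and $L'\in\silt^n_M(\T)$. A direct long-exact-sequence argument on the defining triangle $X\to Y'\to X^{\ast}\to X[1]$ of $L'$ confirms both $M\geq L'$ (using $M\geq L$ and that summands of $L$ dominate $X$) and $L'\geq M[n-1]$ (using $L\geq M[n-1]$); hence $L'\in\silt^n_M(\T)$. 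Since $L>L'\geq N$, this contradicts the minimality of $L$.

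\textbf{Main obstacle.} The delicate point is the very existence of an indecomposable summand $X$ of $L$ for which $\mu_X^{\Le}(L)\geq N$, given $L\geq N$ together with $N\not\geq L[1]$. This is a ``descent-by-mutation'' principle: whenever $N$ sits strictly below $L$ in the silting partial order but is not already a 2-term silting over $L$, some indecomposable summand of $L$ admits a left mutation producing an intermediate silting that still dominates $N$. The argument should rest on the identification of left mutations with Hasse-quiver arrows of $\silt(\T)$ via \cite[Theorem~2.35]{AI12}, combined with an approximation-triangle analysis using Lemma~\ref{AI223} applied to $N\in L^{[0,k]}$ for some $k$ supplied by Lemma~\ref{twolemma}. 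Pinpointing the correct summand $X$---essentially a summand of $L$ not appearing as a summand of $N$ whose approximation triangle is compatible with the filtration of $N$---is the technical heart of the proof; once in place, the induction closes and silting-discreteness follows.
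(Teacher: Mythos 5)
The paper does not prove this lemma itself; it quotes it from \cite[Theorem~2.4]{AM15}, so there is no internal proof to compare with and you are effectively reconstructing the external argument. Your outline --- induct on $n$ and realise $\silt^{n+1}_M(\T)$ as a finite union $\bigcup_{L\in\silt^n_M(\T)}\silt^2_L(\T)$, with the containment $\supseteq$ settled by a partial-order chase --- has a plausible shape, and the $\supseteq$ direction is indeed correct.

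The proof of $\subseteq$ has two genuine gaps, one acknowledged and one not. The acknowledged one is the descent step: given $L > N$ with $N\not\geq L[1]$, the existence of \emph{some} irreducible left mutation $L'$ of $L$ still satisfying $L'\geq N$ is precisely the technical heart of the cited theorem and does not follow formally from \cite[Theorem~2.35]{AI12} (which only identifies the Hasse arrows); it cannot simply be deferred. The unacknowledged gap undermines the minimality contradiction even if that step were granted: you also need $L'\in\silt^n_M(\T)$, i.e.\ $L'\geq M[n-1]$, and the long-exact-sequence argument you gesture at does \emph{not} deliver this from $L\geq M[n-1]$ alone. Write $L=X\oplus Y$ and apply $\Hom(-,M[n])$ to the mutation triangle $X\xto{f}Y'\to X^*\to X[1]$; using $\Hom(L,M[k])=0$ for $k\geq n$ one gets
\[
\Hom(X^*,M[n])\;\simeq\;\cok\bigl(f^*\colon\Hom(Y',M[n-1])\to\Hom(X,M[n-1])\bigr),
\]
so $L'\geq M[n-1]$ holds if and only if every morphism $X\to M[n-1]$ factors through the left $\add(Y)$-approximation $f$. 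This is an extra constraint on the chosen summand $X$ that neither $L\geq M[n-1]$ nor $L'\geq N$ supplies. Without it, $L'\geq N\geq M[n]$ only places $L'$ in $\silt^{n+1}_M(\T)$, not in $\silt^n_M(\T)$, and minimality of $L$ in $\{K\in\silt^n_M(\T):K\geq N\}$ gives no contradiction. So the summand $X$ must be chosen to satisfy two factorisation conditions simultaneously, and establishing that such an $X$ always exists is where the real work lies; as it stands the proposal is incomplete.
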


We collect some examples of silting-discrete triangulated categories.
\begin{example}\label{ex:silting-discrete-derived-categories}
The bounded homotopy category $\Kb(\proj\Lambda)$ is silting-discrete if $\Lambda$ is one of the following finite-dimensional $K$-algebras, where $K$ is assumed to be algebraically closed for (3), (4), (5), (6): 
\begin{itemize}
\item[(1)] local algebras (see \cite[Theorem 2.26]{AI12}), 
\item[(2)] representation-finite hereditary algebras (see \cite[Example 3.7]{Ai13}),
\item[(3)] derived-discrete algebras of finite global dimension (see \cite[Proposition 6.12]{BPP2}),
\item[(4)] representation-finite symmetric algebras (see \cite[Theorem 5.6]{Ai13}),
\item[(5)] Brauer graph algebras whose Brauer graph contains at most one cycle of odd length and no cycle of even length (see \cite[Theorem 6.7]{AAC15}),
\item[(6)] algebras of dihedral, semidihedral and quaternion type (see \cite[Corollary 6.9]{EJR16}).
\end{itemize}
\end{example}

\subsection{$t$-structures}\label{ss:$t$-structures}
In this subsection, we recall the notion of $t$-structures, which was introduced by Beilinson--Bernstein--Deligne \cite{BBD81}.
 Let $\T$ be a $K$-linear triangulated category with shift functor $[1]$.

\begin{definition}\label{BBDdef}
A \emph{$t$-structure} on $\T$ is a pair $(\tT^{\le 0},\tT^{\ge 0})$ of strict and full subcategories of $\T$ such that, putting $\tT^{\le n}=\tT^{\le 0}[-n]$ and $\tT^{\ge n}=\tT^{\ge 0}[-n]$ for any integer $n$, we have
\begin{itemize}
\item[(1)] $\tT^{\le 1}\supseteq\tT^{\le 0}$ and $\tT^{\ge 0}\supseteq\tT^{\ge 1}$,
\item[(2)] $\Hom_{\T}(\tT^{\le 0},\tT^{\ge 1})=0$,
\item[(3)] $\T=\tT^{\le 0}\ast \tT^{\ge 1}$. 
\end{itemize}
The category $\T^0:=\T^{\leq 0}\cap\T^{\geq 0}$ is called the \emph{heart} of the $t$-structure $(\T^{\leq 0},\T^{\geq 0})$.
\end{definition}

It is easy to see that for every integer $n$, the pair $(\tT^{\le n},\tT^{\ge n})$ is also a $t$-structure and the category $\tT^{n}:=\tT^{\le n}\cap \tT^{\ge n}$ is the heart. Note that $\tT^{\le 0}$ and $\tT^{\ge 0}$ are additive subcategories which are closed under extensions and direct summands.
The condition (2) in Definition \ref{BBDdef} can be replaced by the following condition:
\begin{itemize}
\item[($2'$)] $\tT^{\le 0}={}^{\perp}(\tT^{\ge 1})$ and $\tT^{\ge 1}=(\tT^{\le 0})^{\perp}$.
\end{itemize}
By the condition (3) of Definition \ref{BBDdef}, for $Z\in\T$ there is a triangle $X\to Z\to Y\to X[1]$ with $X\in\T^{\leq 0}$ and $Y\in\T^{\leq 1}$. This triangle is unique up to a unique isomorphism, so the correspondences $Z\mapsto X$ and $Z\mapsto Y$ extend to functors 
\[
\sigma^{\le 0}\colon  \T \to \tT^{\le 0}\ \textnormal{and}\ \sigma^{\ge 1}\colon  \T \to \tT^{\ge 1},\ \textnormal{respectively}, \notag
\]
called the \emph{truncation functors}. 
The functor $\sigma^{\le 0}$ is right adjoint to the inclusion $\tT^{\le 0}\to \T$
and the functor $\sigma^{\ge 1}$ is left adjoint to the inclusion $\tT^{\ge 1}\to \T$.

In the following proposition, we collect some basic results on $t$-structures.
\begin{proposition}[{\cite[Th\'eor\`eme 1.3.6]{BBD81}}]\label{BBD}
Let $(\tT^{\le 0},\tT^{\ge 0})$ be a $t$-structure on $\T$.
Then the following statements hold.
\begin{itemize}
\item[(a)] The heart $\tT^{0}$ is an abelian category.
\item[(b)] The \emph{cohomology functor} $\sigma^{0}:=\sigma^{\ge 0}\sigma^{\le 0}=\sigma^{\le 0}\sigma^{\ge 0}\colon  \T \to \tT^{0}$ is a cohomological functor, that is, it takes triangles in $\T$ to long exact sequences in $\T^0$.
\item[(c)] For objects $X,Y,Z$ of $\tT^{0}$, the following are equivalent.
\begin{itemize}
\item[(i)] $0\to X\xto{f} Y\xto{g} Z\to 0$ is a short exact sequence in $\tT^{0}$.
\item[(ii)] $X\xto{f} Y \xto{g} Z\xto{h}X[1]$ is a triangle in $\T$ for some morphism $h\colon Z\to X[1]$.
\end{itemize}
\end{itemize}
\end{proposition}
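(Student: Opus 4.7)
The plan is to reduce everything to a careful use of the truncation functors $\sigma^{\leq n}$ and $\sigma^{\geq n}$ together with the octahedral axiom; the adjunctions between the truncations and the inclusions are what manufacture kernels and cokernels out of cones. I would first establish some preparatory lemmas that will be used throughout: $(\alpha)$ the truncation functors exist and are mutually adjoint to the inclusions of $\T^{\leq n}$ and $\T^{\geq n}$ (this is a direct consequence of Definition \ref{BBDdef}(2) and (3) combined with uniqueness of the truncation triangle up to unique isomorphism); $(\beta)$ each $\T^{\leq n}$ and $\T^{\geq n}$ is closed under extensions (immediate from ($2'$)); and $(\gamma)$ the two truncations commute, so that $\sigma^{0}$ is well defined (by applying the octahedron to nested truncation triangles).

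For part (a), let $f\colon X\to Y$ be a morphism in $\T^0$ and complete it to a triangle $X\xrightarrow{f} Y\to C\to X[1]$ in $\T$. Using $(\beta)$ applied to the triangle and its rotations, one shows $C\in\T^{\leq 0}\cap\T^{\geq -1}$. I then set
\[
\cok f := \sigma^{\geq 0}(C)\in\T^{0},\qquad \ker f := \sigma^{\leq 0}(C[-1])\in\T^{0}.
\]
Adjunction $(\alpha)$ produces the universal maps $Y\to\cok f$ and $\ker f\to X$; their universal properties in $\T^0$ follow by translating $\Hom$-vanishing statements in $\T$ back through the triangle. This already makes $\T^0$ an additive category (additivity is automatic since $\T^0$ is a full additive subcategory) with kernels and cokernels. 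To upgrade to abelian one verifies $\mathrm{coim}(f)\cong\mathrm{im}(f)$, the step I expect to be the main obstacle: the cleanest route is to apply the octahedral axiom to the composition $X\to Y\to\cok f$, which produces the image as $\ker(Y\to\cok f)$, and to check, again by $\Hom$-vanishing arguments using ($2'$) and $(\beta)$, that the natural map $\mathrm{coim}(f)\to\mathrm{im}(f)$ is an isomorphism.

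For part (b), given a triangle $X\to Y\to Z\to X[1]$ in $\T$, I would apply $\sigma^0$, use the rotation invariance of triangles, and the fact that $\sigma^0\circ [1]=\sigma^{-1}=\sigma^1[-1]\circ$ (same thing) to obtain the long exact sequence
\[
\cdots\to\sigma^{n-1}(Z)\to\sigma^{n}(X)\to\sigma^{n}(Y)\to\sigma^{n}(Z)\to\sigma^{n+1}(X)\to\cdots
\]
in $\T^0$. Exactness at $\sigma^n(Y)$ reduces, via shifts, to exactness at $\sigma^0(Y)$ in the image of one triangle, which is verified by identifying $\sigma^0(X)\to\sigma^0(Y)$ and $\sigma^0(Y)\to\sigma^0(Z)$ with the canonical maps arising from the kernel/cokernel construction of part (a) applied to the appropriate truncations.

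For part (c), the implication (ii)$\Rightarrow$(i) is then immediate from (b): the long exact sequence degenerates because $X,Y,Z\in\T^0$ forces $\sigma^n$ to vanish on them for $n\neq 0$. For (i)$\Rightarrow$(ii), given a short exact sequence in $\T^0$, complete $f$ to a triangle $X\xrightarrow{f} Y\to C\to X[1]$; the construction in (a) identifies $C$ with $\cok f=Z$ (since $\ker f=0$ forces $C\in\T^{\geq 0}$ and $C\in\T^{\leq 0}$ was shown above), giving the required triangle. Throughout, the recurring technical point is keeping track of which $\T^{\leq n}$ and $\T^{\geq n}$ contain the various cones; this is the only place where mild care is needed, and it is handled uniformly by the extension-closedness of the aisles recorded in $(\beta)$.
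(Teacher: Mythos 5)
The paper does not prove this proposition; it simply cites \cite[Th\'eor\`eme 1.3.6]{BBD81}, so there is no in-paper argument to compare against. Your sketch correctly reproduces the standard BBD argument: the identification $\ker f=\sigma^{\le 0}(C[-1])$, $\cok f=\sigma^{\ge 0}(C)$ for the cone $C$ of $f$ (which you correctly locate in $\T^{\le 0}\cap\T^{\ge -1}$), the coim/im comparison via the octahedron, the long exact sequence for (b) by rotation, and the degeneration argument for (c), are all exactly the steps of the cited proof.
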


\begin{example}[{\cite[Exemple 1.3.2(i)]{BBD81}}]\label{ex:standard-t-structure}
Let $\ca$ be a $K$-linear abelian category and $\Db(\ca)$ the bounded derived category of $\ca$. Put
\begin{eqnarray*}
\D_{\std}^{\leq 0}\hspace{-5pt}&:=\hspace{-5pt}&\{X\in\Db(\ca)\mid H^n(X)=0~\forall n>0\},\\
\D_{\std}^{\geq 0}\hspace{-5pt}&:=\hspace{-5pt}&\{X\in\Db(\ca)\mid H^n(X)=0~\forall n<0\}.
\end{eqnarray*}
Then $(\D_{\std}^{\leq 0},\D_{\std}^{\geq 0})$ is a $t$-structure on $\Db(\ca)$, called the \emph{standard $t$-structure}.
The associated truncation functors are the standard truncations of complexes and the associated cohomology functor is the functor $H^0\colon \Db(\ca)\to\ca$ of taking the zeroth cohomology, which restricts to an equivalence between the heart $\D_{\std}^0$ and $\ca$.
\end{example}

\begin{example}\label{ex:t-structure-for-non-positive-dg-algebra}
Let $A$ be a dg $K$-algebra with $H^p(A)=0$ for $p>0$. Let $\D^{\leq 0}$ (respectively, $\D^{\geq 0}$) be the full subcategory of $\D(A)$  consisting of the dg $A$-modules $M$ with $H^p(M)=0$ for $p>0$ (respectively, for $p<0$). Then $(\D^{\leq 0},\D^{\geq 0})$ is a $t$-structure on $\D(A)$. The associated truncation functors are standard truncations and $H^0\colon \D(A)\to \Mod H^0(A)$ restricts to an equivalence between the heart and $\Mod H^0(A)$, see \cite[Section 2.1]{Am09}. 

Let $\Dfd^-(A)$ be the full subcategory of $\D(A)$ consisting of dg $A$-modules $M$ with $H^p(M)=0$ for $p>\hspace{-5pt}>0$. Then $\Dfd^-(A)$ is stable under standard truncations, so $(\D^{\leq 0},\D^{\geq 0})$ induces a $t$-structure $(\Dfd^{-,\leq 0},\Dfd^{-,\geq 0})$  on $\Dfd^-(A)$ by \cite[3.1.19]{BBD81}, where $\Dfd^{-,\leq 0}=\D^{\leq 0}\cap \Dfd^-(A)$ and $\Dfd^{-,\geq 0}=\D^{\geq 0}\cap \Dfd^-(A)$.
\end{example}

A $t$-structure $(\tT^{\le 0}, \tT^{\ge 0})$ is said to be \emph{bounded} if 
\[
\displaystyle \T=\bigcup\nolimits_{n\in \ZZ} \tT^{\le n}=\bigcup\nolimits_{n\in \ZZ}\tT^{\ge n}.
\]
For example, the standard $t$-structure in Example~\ref{ex:standard-t-structure} is a bounded $t$-structure, while the two $t$-structures in Example~\ref{ex:t-structure-for-non-positive-dg-algebra} are not bounded. We denote by $\tstr(\T)$ the set of bounded $t$-structures on $\T$.
For two $t$-structures $(\tT^{\le 0}, \tT^{\ge 0})$ and $(\tT'^{\le 0}, \tT'^{\ge 0})$, we write $(\tT^{\le 0}, \tT^{\ge 0})\ge (\tT'^{\le 0}, \tT'^{\ge 0})$ if $\tT^{\le 0} \supseteq \tT'^{\le 0}$, or equivalently $\tT^{\ge 0}\subseteq \tT'^{\ge 0}$. Then $\ge$ gives a partial order on $\tstr(\T)$. 
The following results on bounded $t$-structures are well-known. 

\begin{lemma}\label{KY33}\label{lem:bounded-$t$-structure}
Let $(\tT^{\le 0},\tT^{\ge 0})$ be a $t$-structure on $\T$ with heart $\tT^{0}$.
\begin{itemize}
\item[(a)]
$(\tT^{\le 0},\tT^{\ge 0})$ is bounded if and only if $\T=\thick(\tT^{0})$.
\item[(b)] {\rm (\cite[(1.3.13.1)]{BBD81})}
If $(\tT^{\le 0},\tT^{\ge 0})$ is bounded, there is the following decompositions
\begin{eqnarray*}
\T\hspace{-5pt}&=\hspace{-5pt}&\bigcup\nolimits_{n\ge 0}\tT^{-n}\ast\tT^{-n+1}\ast\cdots\ast\tT^{n-1}\ast\tT^{n},\\
\tT^{\le 0}\hspace{-5pt}&=\hspace{-5pt}&\bigcup\nolimits_{n\ge 0}\tT^{-n}\ast\tT^{-n+1}\ast\cdots\ast\tT^{0},\\
\tT^{\geq 0}\hspace{-5pt}&=\hspace{-5pt}&\bigcup\nolimits_{n\ge 0}\tT^{0}\ast\tT^{1}\ast\cdots\ast\tT^{n}.
\end{eqnarray*}
\item[(c)] 
Let $(\T'^{\leq 0},\T'^{\geq 0})$ be a $t$-structure on $\T$ such that $\T^{\leq 0}\supseteq \T'^{\leq 0}\supseteq \T^{\leq -n}$ for some positive integer $n$. Then $(\T'^{\leq 0},\T'^{\geq 0})$ is bounded if and only if $(\T^{\leq 0},\T^{\geq 0})$ is bounded.
\end{itemize}
\end{lemma}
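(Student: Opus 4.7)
My plan is to establish part (a) first by identifying the thick subcategory generated by the heart, then deduce (b) as a byproduct of the induction used for (a), and prove (c) by shifting the sandwiched inclusion.

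For part (a), I will introduce the \emph{bounded subcategory} $\T_b := \bigcup_{n\geq 0}(\T^{\leq n}\cap \T^{\geq -n})$ and prove $\T_b = \thick(\T^0)$. Since any $X\in \T^{\leq n}\cap \T^{\geq -m}$ lies in $\T^{\leq \max(n,m)}\cap \T^{\geq -\max(n,m)}$, boundedness of the $t$-structure is equivalent to $\T = \T_b$, so the claim reduces to this identification. The inclusion $\thick(\T^0)\subseteq \T_b$ follows once one checks that $\T_b$ is thick, which reduces to the well-known closure of $\T^{\leq 0}$ and $\T^{\geq 0}$ under extensions and direct summands. For the reverse inclusion, given $X\in \T^{\leq n}\cap \T^{\geq -n}$, I will induct on $n$ via the canonical truncation triangle $\sigma^{\leq -n}(X) \to X \to \sigma^{\geq -n+1}(X) \to \sigma^{\leq -n}(X)[1]$: the left term lies in $\T^{-n}$ (since $X\in \T^{\geq -n}$ and $\sigma^{\geq -n+1}(X)[-1]\in \T^{\geq -n+2}$ force $\sigma^{\leq -n}(X)\in \T^{\geq -n}$ by extension-closure), while the right term lies in $\T^{\leq n}\cap \T^{\geq -n+1}$, so that the inductive hypothesis places it in $\thick(\T^0)$.

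For part (b), the first decomposition falls out of the induction already carried out in (a): any $X\in \T^{\leq n}\cap \T^{\geq -n}$ is built as an iterated extension exhibiting $X\in \T^{-n}\ast \T^{-n+1}\ast \cdots \ast \T^n$, and by boundedness every object of $\T$ lies in such an intersection. For the second and third decompositions, I will apply the same inductive argument to $X\in \T^{\leq 0}\cap \T^{\geq -n}$ and to $X\in \T^{\leq n}\cap \T^{\geq 0}$ respectively, noting that the truncation pieces outside the prescribed range vanish, and then take unions over $n$, which exhausts $\T^{\leq 0}$ and $\T^{\geq 0}$ by boundedness.

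For part (c), condition $(2')$ following Definition \ref{BBDdef} converts the sandwich $\T^{\leq 0}\supseteq \T'^{\leq 0}\supseteq \T^{\leq -n}$ into the dual sandwich $\T^{\geq 1}\subseteq \T'^{\geq 1}\subseteq \T^{\geq -n+1}$ on coaisles. Shifting both sandwiches by an arbitrary integer $m$ yields $\bigcup_m \T'^{\leq m} = \bigcup_m \T^{\leq m}$ and $\bigcup_m \T'^{\geq m} = \bigcup_m \T^{\geq m}$, and boundedness of either $t$-structure transfers to the other. The only slightly delicate point across the three parts is verifying in (a) that $\T_b$ is closed under direct summands, which I expect to handle by invoking that each $\T^{\leq n}\cap \T^{\geq -n}$ is itself summand-closed; the remainder of the argument amounts to straightforward bookkeeping with the truncation functors.
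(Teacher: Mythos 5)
The paper does not actually supply a proof of this lemma: it is introduced with ``The following results on bounded $t$-structures are well-known,'' and (b) is attributed to \cite[(1.3.13.1)]{BBD81}. So there is no authorial argument to compare against; I will just assess your proposal on its own terms. Your overall strategy is the standard one, and parts (b) and (c) are fine: in (c), passing to coaisles via $(2')$ and shifting does yield $\bigcup_m \T'^{\leq m}=\bigcup_m \T^{\leq m}$ and $\bigcup_m \T'^{\geq m}=\bigcup_m \T^{\geq m}$, so boundedness transfers both ways.

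There is one bookkeeping gap in (a). You induct on $n$ for $X\in\T^{\leq n}\cap\T^{\geq -n}$, with implicit hypothesis that objects of $\T^{\leq m}\cap\T^{\geq -m}$ lie in $\thick(\T^0)$ whenever $m<n$. After the truncation triangle, you correctly place $\sigma^{\leq -n}(X)\in\T^{-n}$ (a shift of $\T^0$), but the cone $\sigma^{\geq -n+1}(X)$ lies in $\T^{\leq n}\cap\T^{\geq -n+1}$. This interval, with cohomology in degrees $-n+1,\dots,n$, is strictly narrower than $[-n,n]$, but it is \emph{not} contained in any symmetric $\T^{\leq m}\cap\T^{\geq -m}$ with $m<n$ (nor does any shift put it in that shape, since its width $2n$ differs in parity from $2m+1$). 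So the inductive hypothesis, as stated, does not apply to the cone. The repair is routine: replace the induction on $n$ by an induction on the width $m-l$ of a two-sided interval, proving that $\T^{\leq m}\cap\T^{\geq l}\subseteq\thick(\T^0)$ for all $l\leq m$. The base case $m=l$ gives $\T^l=\T^0[-l]\subseteq\thick(\T^0)$, and the inductive step is exactly your truncation argument, with the cone now landing in $\T^{\leq m}\cap\T^{\geq l+1}$, which has strictly smaller width. With this reformulation, (a) is complete and (b) falls out as you describe (the same induction shows $\T^{\leq m}\cap\T^{\geq l}\subseteq\T^l\ast\T^{l+1}\ast\cdots\ast\T^m$, and the three displayed unions follow by taking $l=-n,m=n$, resp.\ $l=-n,m=0$, resp.\ $l=0,m=n$).
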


A bounded $t$-structure $(\tT^{\le 0}, \tT^{\ge 0})$ is said to be \emph{algebraic} if the heart $\T^0$ is a length category  (\ie objects in $\T^0$ admit finite filtrations) with finitely many isomorphism classes of simple objects. For example, for $\ca=\mod\Lambda$, where $\Lambda$ is a finite-dimensional $K$-algebra, the standard $t$-structure on $\Db(\ca)$ in Example~\ref{ex:standard-t-structure} is an algebraic $t$-structure. 

\begin{example}\label{ex:algebraic-t-structure}
A bounded $t$-structure is algebraic provided that its heart has finitely many isomorphism classes of indecomposable objects. For example, all bounded $t$-structures on $\Db(\mod\Lambda)$ are algebraic for the following finite-dimensional $K$-algebra $\Lambda$:
\begin{itemize}
\item[(1)] $\Lambda$ is representation-finite hereditary,
\item[(2)] $K$ is algebraically closed and $\Lambda$ is a finite-dimensional derived-discrete $K$-algebra of finite global dimension (\cite[Proposition 7.1]{BPP1}).
\end{itemize}
\end{example}

\begin{lemma}\label{lem:intermediate-t-structures-wrt-algebraic-t-structure}
Let $(\T^{\leq 0},\T^{\geq 0})$ and $(\T'^{\leq 0},\T'^{\geq 0})$ be $t$-structures on $\T$. Assume that $(\T^{\leq 0},\T^{\geq 0})$ is algebraic. Then $(\T'^{\leq 0},\T'^{\geq 0})$ is bounded if and only if there are integers $m>n$ such that $\T^{\leq m}\supseteq \T'^{\leq 0}\supseteq \T^{\leq n}$.
\end{lemma}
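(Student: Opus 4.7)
The plan is to prove both implications, with the \emph{if} direction being an immediate consequence of Lemma~\ref{KY33}(c) after a shift, and the \emph{only if} direction requiring us to use the algebraicity of $(\T^{\le 0},\T^{\ge 0})$ in an essential way.

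For the \emph{if} direction, suppose $\T^{\le m}\supseteq \T'^{\le 0}\supseteq \T^{\le n}$ with $m>n$. I will shift the whole chain by $-m$ to obtain $\T^{\le 0}\supseteq \T'^{\le -m}\supseteq \T^{\le n-m}$, and then set $k=m-n>0$ so that $\T^{\le -k}=\T^{\le n-m}$. Applying Lemma~\ref{KY33}(c) to the pair of $t$-structures $(\T^{\le 0},\T^{\ge 0})$ and $(\T'^{\le -m},\T'^{\ge -m})$, which satisfy $\T^{\le 0}\supseteq \T'^{\le -m}\supseteq \T^{\le -k}$, I can transfer boundedness from the algebraic (hence bounded) $t$-structure to $(\T'^{\le -m},\T'^{\ge -m})$, and thence to $(\T'^{\le 0},\T'^{\ge 0})$ by another shift.

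For the \emph{only if} direction, suppose $(\T'^{\le 0},\T'^{\ge 0})$ is bounded. Let $S_1,\ldots,S_r$ be the isomorphism classes of simple objects of the length heart $\T^0$. By boundedness of the $t$-structure $(\T'^{\le 0},\T'^{\ge 0})$, I can find integers $a_i,b_i$ with $S_i\in \T'^{\le a_i}\cap \T'^{\ge b_i}$; setting $a=\max_i a_i$ and $b=\min_i b_i$, every simple $S_i$ lies in $\T'^{\le a}\cap \T'^{\ge b}$. The heart of a $t$-structure contains no nonzero object belonging to $\T'^{\ge c+1}\cap\T'^{\le c}$, which forces $b\le a$ (after replacing $a$ by $\max(a,b)$ if needed, we may assume this strictly so that the final $m,n$ satisfy $m>n$).

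The key step is the claim $\T^{\le 0}\subseteq \T'^{\le a}$ and, dually, $\T^{\ge 0}\subseteq \T'^{\ge b}$. Here I will use Lemma~\ref{KY33}(b) for the bounded algebraic $t$-structure, which writes every object of $\T^{\le 0}$ as a finite iterated extension of objects in $\T^{-k}=\T^0[k]$ for $k\ge 0$; since $\T^0$ is a length category with only the $S_j$ as simples, every such object further decomposes as a finite iterated extension of shifts $S_j[k]$ with $k\ge 0$. Since $\T'^{\le a}={}^{\perp}(\T'^{\ge a+1})$ is extension-closed and closed under nonnegative shifts (as $\T'^{\le a-k}\subseteq \T'^{\le a}$ for $k\ge 0$), and contains each $S_j$, it contains every $S_j[k]$ with $k\ge 0$ and hence all of $\T^{\le 0}$. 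Shifting yields $\T^{\le -a}\subseteq \T'^{\le 0}$, so $n:=-a$ works for the lower bound. The dual argument gives $\T^{\ge 0}\subseteq \T'^{\ge b}$; taking left orthogonals (via $\T^{\le -1}={}^{\perp}(\T^{\ge 0})$ and $\T'^{\le b-1}={}^{\perp}(\T'^{\ge b})$) and then shifting produces $\T'^{\le 0}\subseteq \T^{\le -b}$, giving $m:=-b$ (adjusting by $1$ if necessary to force $m>n$).

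The main obstacle is the \emph{only if} direction, specifically verifying that the containment $\T^{\le 0}\subseteq \T'^{\le a}$ really follows from the simples lying in $\T'^{\le a}$; this is where both ingredients of algebraicity (finitely many simples and the heart being a length category) are needed together with the filtration description of $\T^{\le 0}$ from Lemma~\ref{KY33}(b). Everything else is bookkeeping with shifts and left orthogonals.
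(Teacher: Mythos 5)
Your proof is correct and follows essentially the same route as the paper's: for the ``only if'' direction you place the finitely many simples of $\T^0$ into $\T'^{\le a}\cap\T'^{\ge b}$ using boundedness, propagate to all of $\T^{\le 0}$ and $\T^{\ge 0}$ via extension-closure and the length property (the paper packages this slightly differently by first showing $\T^0\subseteq\T'^{\le -n}\cap\T'^{\ge -m}$ and then invoking Lemma~\ref{KY33}(b) once more), and then shift and take orthogonals; the ``if'' direction via Lemma~\ref{KY33}(c) is left implicit in the paper but is the same observation. The only quibble is the phrase ``replacing $a$ by $\max(a,b)$'', which is a no-op since you already know $a\ge b$; the parenthetical ``adjusting by $1$ if necessary to force $m>n$'' is what actually handles the borderline case $a=b$.
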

\begin{proof} 
Let $S$ be the direct sum of a complete set of pairwise non-isomorphic simple objects in $\T^0$. Then by Lemma~\ref{KY33}(b), there exist integers $m>n$ such that $S\in \T'^{\geq -m}\cap \T'^{\leq -n}$. Hence $\T^{0}\subseteq \T'^{\geq -m}\cap \T'^{\leq -n}$ because objects of $\T^{0}$ are iterated extensions of direct summands of $S$.
By Lemma \ref{KY33}(b) again, $\T^{\le 0} \subseteq \T'^{\le -n}$ and $\T^{\ge 0}\subseteq \T'^{\ge -m}$ hold, and hence $\T^{\le m}\supseteq \T'^{\le 0}\supseteq \T^{\le n}$. 
\end{proof}

When $\T$ is Hom-finite and Krull--Schmidt, we are particularly interested in the class of bounded $t$-structures $(\T^{\leq 0},\T^{\geq 0})$ whose heart $\T^0$ admits a projective generator $P$. By Morita's theorem, in this case the functor $\Hom_{\T^0}(P,-)\colon \T^0\to\mod\End_{\T^0}(P)$ is an equivalence, in particular,  $(\T^{\leq 0},\T^{\geq 0})$ is algebraic. 

\subsection{$t$-discreteness}

In this subsection we introduce the notion of $t$-discrete triangulated categories. 
Let $\T$ be a $K$-linear triangulated category with shift functor $[1]$.

\begin{definition}\label{defn:t-discreteness}
The triangulated category $\T$ is said to be \emph{t-discrete} if for any bounded $t$-structure $(\T^{\leq 0},\T^{\geq 0})$ and for any positive integer $n$, the number of $t$-structures $(\T'^{\leq 0},\T'^{\geq 0})$ satisfying $\T^{\leq 0}\supseteq \T'^{\leq 0}\supseteq\T^{\leq -n}$ is finite.
\end{definition}

\begin{example}
Let $\Lambda$ be a representation-finite finite-dimensional hereditary $K$-algebra. Then $\Db(\mod\Lambda)$ is $t$-discrete. Indeed, let  $(\T^{\leq 0},\T^{\geq 0})$ be any fixed bounded $t$-structure on $\Db(\mod\Lambda)$ and let $n$ be any positive integer. Then associating a $t$-structure to its heart defines an injection from the set of  $t$-structures $(\T'^{\leq 0},\T'^{\geq 0})$ satisfying $\T^{\leq 0}\supseteq \T'^{\leq 0}\supseteq\T^{\leq -n}$ to the set of strict and full Krull--Schmidt additive subcategories of $\T^{\leq 0}\cap \T^{\geq -n}$, which is a finite set simply because the category $\T^{\leq 0}\cap \T^{\geq -n}$ has only finitely many isomorphism classes of indecomposable objects.
\end{example}

The following lemma shows that $t$-discreteness can be verified by fixing an algebraic $t$-structure $(\T^{\leq 0},\T^{\geq 0})$ and checking the required finiteness condition.

\begin{lemma}\label{lem:criterion-for-t-discreteness}
Let $(\T^{\leq 0},\T^{\geq 0})$ be an algebraic $t$-structure on $\T$. Then $\T$ is t-discrete if and only if for any positive integer $n$, the number of $t$-structures $(\T'^{\leq 0},\T'^{\geq 0})$ satisfying $\T^{\leq 0}\supseteq \T'^{\leq 0}\supseteq\T^{\leq -n}$ is finite.
\end{lemma}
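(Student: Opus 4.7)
The ``only if'' direction is immediate: an algebraic $t$-structure is bounded by definition, so the stated finiteness is a special case of $t$-discreteness. The content lies in the converse. The plan is to reduce the finiteness of intermediate $t$-structures around an \emph{arbitrary} bounded $t$-structure to the same finiteness around the \emph{fixed algebraic} one, using Lemma~\ref{lem:intermediate-t-structures-wrt-algebraic-t-structure} to sandwich the former between shifts of the latter.

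In detail, fix a bounded $t$-structure $(\T''^{\leq 0},\T''^{\geq 0})$ and a positive integer $n$. Applying Lemma~\ref{lem:intermediate-t-structures-wrt-algebraic-t-structure} to $(\T''^{\leq 0},\T''^{\geq 0})$ and $(\T^{\leq 0},\T^{\geq 0})$ furnishes integers $m>l$ with
\[
\T^{\leq m}\supseteq\T''^{\leq 0}\supseteq\T^{\leq l},
\]
and shifting by $[n]$ gives $\T''^{\leq -n}\supseteq\T^{\leq l-n}$. For any $t$-structure $(\T'^{\leq 0},\T'^{\geq 0})$ satisfying $\T''^{\leq 0}\supseteq\T'^{\leq 0}\supseteq\T''^{\leq -n}$, concatenating the inclusions yields
\[
\T^{\leq m}\supseteq\T'^{\leq 0}\supseteq\T^{\leq l-n},
\]
which after shifting by $[-m]$ becomes $\T^{\leq 0}\supseteq\T'^{\leq -m}\supseteq\T^{\leq -N}$, where $N:=m+n-l$ is a positive integer since $m>l$ and $n\geq 1$.

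Since a $t$-structure is determined by its aisle, the hypothesis applied to the positive integer $N$ bounds the number of possible $\T'^{\leq -m}$ by a finite quantity, and therefore the number of $\T'^{\leq 0}$ by the same. I do not anticipate a genuine obstacle; the argument is essentially the bookkeeping of shifts, and Lemma~\ref{lem:intermediate-t-structures-wrt-algebraic-t-structure} performs the substantive work of comparing an arbitrary bounded $t$-structure with the fixed algebraic reference.
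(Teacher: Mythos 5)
Your proof is correct and follows essentially the same route as the paper: both fix an arbitrary bounded $t$-structure, sandwich its aisle between shifts of the algebraic one via Lemma~\ref{lem:intermediate-t-structures-wrt-algebraic-t-structure}, and then invoke the finiteness hypothesis after a shift. One small notational slip: to move from $\T^{\leq m}\supseteq\T'^{\leq 0}\supseteq\T^{\leq l-n}$ to $\T^{\leq 0}\supseteq\T'^{\leq -m}\supseteq\T^{\leq -N}$, you apply the shift $[m]$ rather than $[-m]$ (under the convention $\T^{\leq n}=\T^{\leq 0}[-n]$), but this does not affect the argument.
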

\begin{proof} Let $(\T''^{\leq 0},\T''^{\geq 0})$ be any bounded $t$-structure on $\T$. By Lemma~\ref{lem:intermediate-t-structures-wrt-algebraic-t-structure}, there are integers $k>l$ such that
 $\T^{\le k}\supseteq \T''^{\le 0}\supseteq \T^{\le l}$. As a consequence, any $t$-structure $(\T'^{\leq 0},\T'^{\geq 0})$ satisfying $\T''^{\leq 0}\supseteq \T'^{\leq 0}\supseteq\T''^{\leq -n}$ must satisfy $\T^{\leq k}\supseteq\T'^{\leq 0}\supseteq\T^{-n+l}$. The set of such $t$-structures, by assumption, is finite.
\end{proof}

\subsection{Torsion classes}
Let $\A$ be a $K$-linear abelian category.
A full subcategory $\X$ of $\A$ is called a \emph{torsion class} if, for each $Z\in \A$, there exists an exact sequence 
$0\to X\to Z\to Y\to 0$
with $X\in \X$ and $Y\in \X^{\perp}$. It is said to be \emph{finitely generated} if there exists an object $X\in \A$ such that $\X=\Fac(X)$, the full subcategory of $\A$ consisting of objects $M$ such that there is an epimorphism $X'\to M$ with $X'\in\add(X)$. For example, if $T$ is a support $\tau$-tilting module over a finite-dimensional $K$-algebra $\Lambda$, then $\Fac(T)$ is a finitely generated torsion class of $\mod\Lambda$ (\cite{AIR14}); for another example, torsion sheaves form a torsion class of $\coh(\mathbb{P}^1)$, which is not finitely generated. We denote by $\tors(\A)$ the set of torsion classes of $\A$ and by $\ftors(\A)$ the set of finitely generated torsion classes of $\A$. 

\subsubsection{The relation between torsion classes and intermediate $t$-structures} Let $\T$ be a $K$-linear triangulated category with shift functor $[1]$. 
Let $(\T^{\le 0},\T^{\ge 0})$ be a bounded $t$-structure on $\T$. For a torsion class $\X$ of the heart $\T^{0}$, we define two full subcategories of $\T$ as follows:
\[
\mu_{\X}^{\Le}\tT^{\le 0}:=\tT^{\le -1}\ast \X,\ \ \mu_{\X}^{\Le}\T^{\ge 0}:=\X^{\perp_{\T^{0}}}[1]\ast \T^{\ge 0}.\notag
\]
By \cite[Chapter 1, Proposition 2.1]{HRS96}, the pair $(\mu_{\X}^{\Le}\tT^{\le 0}, \mu_{\X}^{\Le}\tT^{\ge 0})$ is also a bounded $t$-structure on $\T$ and its heart is
$
\mu_{\X}^{\Le}\tT^{\le 0}\cap\mu_{\X}^{\Le}\tT^{\ge 0}=\X^{\perp_{\tT^{0}}}[1]\ast \X.$
We call this $t$-structure the \emph{HRS-tilt} of $(\tT^{\le 0}, \tT^{\ge 0})$ with respect to $\X$. Clearly, we have
$\tT^{\le 0}\supseteq \mu_{\X}^{\Le}\tT^{\le 0}\supseteq \tT^{\le -1}. $
Conversely, let $(\tT'^{\le 0},\tT'^{\ge 0})$ be a $t$-structure satisfying $\tT^{\le 0}\supseteq \tT'^{\le 0}\supseteq \tT^{\le -1}$. Note that such a $t$-structure is always bounded by Lemma \ref{lem:bounded-$t$-structure}(c). Then the subcategory $\tT'^{0}\cap \tT^{0}$ is a torsion class of $\tT^{0}$ by \cite[Theorem 3.1]{BR07}.

\begin{proposition}[{\cite[Proposition 2.1]{W10}}]\label{prop:woolf's-proposition}
Fix a bounded $t$-structure $(\tT^{\le 0},\tT^{\ge 0})$ and let $\tstr^2(\T)$ denote the set of $t$-structures $(\tT'^{\le 0},\tT'^{\ge 0})$ satisfying $\tT^{\le 0}\supseteq \tT'^{\le 0}\supseteq \tT^{\le -1}$.
Then there are mutually inverse bijections
\[
\xymatrix@C=3pc{\tstr^2(\T)\ar@/^1mm/[r]^{\ \Phi}&\tors(\T^{0})\ar@/^1mm/[l]^{\ \ \Phi'}}, \notag
\]
where $\Phi\colon  (\T'^{\le 0}, \T'^{\ge 0})\mapsto \T'^{0}\cap \T^{0}$ and $\Phi'\colon  \X\mapsto (\mu_{\X}^{\Le}\tT^{\le 0}, \mu_{\X}^{\Le}\tT^{\ge 0})$.
\end{proposition}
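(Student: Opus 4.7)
The plan is to verify that the two maps are well-defined and mutually inverse. Well-definedness of $\Phi'$ follows from the HRS-tilt construction recalled above: $(\mu_{\X}^{\Le}\tT^{\le 0},\mu_{\X}^{\Le}\tT^{\ge 0})$ is a bounded $t$-structure, and the inclusions $\tT^{\le 0}\supseteq \mu_{\X}^{\Le}\tT^{\le 0}\supseteq \tT^{\le -1}$ are immediate from $\mu_{\X}^{\Le}\tT^{\le 0}=\tT^{\le -1}\ast \X$. Well-definedness of $\Phi$ is the statement, taken from \cite[Theorem 3.1]{BR07} and recalled just before the proposition, that $\tT'^0\cap \tT^0$ is a torsion class of $\tT^0$.

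For $\Phi\circ \Phi'=\id$, let $\X\subseteq \tT^0$ be a torsion class; the heart of the HRS-tilt is $\X^{\perp_{\tT^0}}[1]\ast \X$. An object $M$ in this heart that also lies in $\tT^0$ fits into a triangle $Y[1]\to M\to X\to Y[2]$ with $Y\in \X^{\perp_{\tT^0}}$ and $X\in\X$. Since $M\in\tT^{\ge 0}$ while $Y[1]\in \tT^{\le -1}$, the first morphism vanishes, so $M\simeq X\oplus Y[2]$, and $Y[2]\in\tT^{\le -2}$ combined with $M\in\tT^{\ge 0}$ forces $Y=0$, giving $M\simeq X\in\X$. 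The reverse inclusion $\X\subseteq \Phi\Phi'(\X)$ is clear from the definitions.

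For $\Phi'\circ \Phi=\id$, given an intermediate $t$-structure $(\tT'^{\le 0},\tT'^{\ge 0})$ set $\X:=\tT'^0\cap \tT^0$; it suffices to show $\tT'^{\le 0}=\tT^{\le -1}\ast \X$, as the right aisle of a $t$-structure is determined by the left via condition $(2')$. The inclusion $\tT^{\le -1}\ast \X\subseteq \tT'^{\le 0}$ follows from extension-closedness of $\tT'^{\le 0}$ together with $\tT^{\le -1}\subseteq \tT'^{\le 0}$ and $\X\subseteq \tT'^{\le 0}$. For the reverse, take $Z\in\tT'^{\le 0}$ and consider the truncation triangle $A\to Z\to B\to A[1]$ with respect to $(\tT^{\le 0},\tT^{\ge 0})$ with $A\in\tT^{\le -1}$ and $B\in\tT^{\ge 0}$. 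I would show $B\in\X$ by checking three memberships: $B\in\tT^{\le 0}$, by applying extension-closedness of $\tT^{\le 0}$ to the rotated triangle $Z\to B\to A[1]$ (since $A[1]\in\tT^{\le -2}\subseteq\tT^{\le 0}$ and $Z\in\tT^{\le 0}$), whence $B\in \tT^0$; $B\in\tT'^{\le 0}$, by the same triangle using $A[1]\in\tT^{\le -1}\subseteq \tT'^{\le 0}$; and $B\in\tT'^{\ge 0}$, using that $\tT^{\le 0}\supseteq \tT'^{\le 0}$ dualises via $(2')$ to $\tT^{\ge 0}\subseteq \tT'^{\ge 0}$. This gives $Z\in\tT^{\le -1}\ast \X$.

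The main obstacle is this last reconstruction step, where one must juggle both $t$-structures at once and invoke extension-closedness of three different aisles. However, once the truncation triangle of $Z$ with respect to the original $t$-structure is written down, the sandwich condition $\tT^{\le -1}\subseteq \tT'^{\le 0}\subseteq \tT^{\le 0}$ is exactly what makes all required containments drop out, so the remaining verification is essentially bookkeeping.
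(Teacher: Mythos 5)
The paper does not include its own proof of this proposition; it is cited directly from Woolf's paper, with the key ingredient (that $\T'^0\cap\T^0$ is a torsion class for any intermediate $t$-structure) attributed to Beligiannis--Reiten just before the statement. So there is no in-paper argument to compare against, and your direct proof must be judged on its own.

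Your argument is essentially correct and follows the standard route. There is, however, a small slip in the $\Phi\circ\Phi'=\id$ step: when the morphism $Y[1]\to M$ in the triangle $Y[1]\to M\to X\to Y[2]$ vanishes (because $Y[1]\in\T^{\le -1}$ and $M\in\T^{\ge 0}$), the triangle becomes split, but the resulting decomposition is $X\simeq M\oplus Y[2]$, not $M\simeq X\oplus Y[2]$ — it is the cone of the zero map that splits, not the middle term. The conclusion is unaffected: in either reading, $Y[2]$ is exhibited as a direct summand of an object of $\T^{\ge 0}$ (either $X\in\X\subseteq\T^0\subseteq\T^{\ge 0}$ or $M\in\T^{\ge 0}$), and since $\T^{\ge 0}$ is closed under summands while $Y[2]\in\T^{\le -2}$, the axiom $\Hom(\T^{\le -1},\T^{\ge 0})=0$ forces $Y[2]=0$, hence $M\simeq X\in\X$. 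The rest — well-definedness via the HRS-tilt description, the easy inclusion $\X\subseteq\Phi\Phi'(\X)$, the reduction of $\Phi'\circ\Phi=\id$ to the single equality $\T'^{\le 0}=\T^{\le -1}\ast\X$ via condition $(2')$, and the three-membership check on the truncation $B\in\T^{\ge 0}$ of $Z\in\T'^{\le 0}$ — is carried out correctly. You should simply correct the direction of the splitting.
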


\subsubsection{The relation between torsion classes and 2-term silting objects}
Let $\T$ be a $K$-linear Hom-finite Krull--Schmidt triangulated category with shift functor $[1]$. 

\begin{proposition}\label{twosilt-fgtor}
Let $M$ be a basic silting object of $\T$ and $E=\End_\T(M)$. 
\begin{itemize}
\item[(a)]
There is a bijection 
\[
\silt^2_M(\T) \longrightarrow \ftors(\mod E),\  N\mapsto \Fac(\Hom_\T(M,N)). \]
\item[(b)] The following are equivalent.
\begin{itemize}
\item[(i)] The set $\silt^2_M(\T)$ is finite.
\item[(ii)] All torsion classes of $\mod E$ are finitely generated.
\item[(iii)] The set $\tors(\mod E)$ is finite.
\item[(iv)] $E$ is $\tau$-tilting finite.
\end{itemize}
\end{itemize}
\end{proposition}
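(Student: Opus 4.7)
The plan is to prove (a) by factoring the claimed bijection as the composition of two maps. First, I will show that $N\mapsto \Hom_\T(M,N)$ defines a bijection between $\silt^2_M(\T)$ and the set of isomorphism classes of basic support $\tau$-tilting $E$-modules; then I will compose with the Adachi--Iyama--Reiten bijection $T\mapsto \Fac(T)$ between basic support $\tau$-tilting $E$-modules and $\ftors(\mod E)$ of \cite{AIR14}, which yields the desired map $N\mapsto \Fac(\Hom_\T(M,N))$.

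For the first bijection, the key observation is that any $N\in\silt^2_M(\T)$ lies in $M^{[0,1]}=\add(M)\ast\add(M[1])$, so it fits into a triangle
\[
M_1\longrightarrow M_0\longrightarrow N\longrightarrow M_1[1]
\]
with $M_0,M_1\in\add(M)$. Applying $\Hom_\T(M,-)$ and using $\Hom_\T(M,M[1])=0$ produces an exact sequence of right $E$-modules
\[
\Hom_\T(M,M_1)\longrightarrow \Hom_\T(M,M_0)\longrightarrow \Hom_\T(M,N)\longrightarrow 0,
\]
exhibiting $\Hom_\T(M,N)$ as the cokernel of a morphism between finitely generated projective $E$-modules. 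The silting conditions on $N$ (namely $\Hom_\T(N,N[1])=0$ together with $N$ generating $\thick(M)$) translate via the restricted functor $\Hom_\T(M,-)\colon M^{[0,1]}\to \mod E$ into precisely the conditions that $\Hom_\T(M,N)$ is a basic support $\tau$-tilting $E$-module. For the inverse direction, given a support $\tau$-tilting $E$-module $T$ with minimal projective presentation $P_1\to P_0\to T\to 0$, I would lift the $P_i$ to objects $M_i\in\add(M)$ and the morphism via the fully faithfulness of $\Hom_\T(M,-)$ on $\add(M)$, then take the mapping cone to recover $N$, and check $N\in\silt^2_M(\T)$.

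For part (b), the equivalences (ii)$\Leftrightarrow$(iii)$\Leftrightarrow$(iv) are the content of \cite{DIJ15}: an algebra is $\tau$-tilting finite if and only if it admits only finitely many torsion classes, if and only if every torsion class is functorially finite. The equivalence (i)$\Leftrightarrow$(iv) is immediate from (a) together with \cite[Proposition 3.9]{DIJ15}, which states that $E$ is $\tau$-tilting finite if and only if the set of basic support $\tau$-tilting $E$-modules is finite, equivalently (via AIR) if and only if $\ftors(\mod E)$ is finite.

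The main obstacle is the rigorous verification of the first bijection in (a), particularly that $\Hom_\T(M,N)$ is support $\tau$-tilting and that the construction is inverse to the mapping-cone procedure described above. Although the projectivisation philosophy $\Hom_\T(M,-)$ is classical in silting and $\tau$-tilting theory, translating the silting axioms on $N$ to the $\tau$-rigidity condition $\Hom_E(T,\tau T)=0$ on $T=\Hom_\T(M,N)$ requires a careful analogue of the AIR argument for the specific 2-term subcategory $M^{[0,1]}$, carried out in the generality of arbitrary silting $M$ rather than in the familiar setting $\T=\Kb(\proj E)$, $M=E$.
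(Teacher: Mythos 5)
Your approach is essentially identical to the paper's: the paper also factors the map through the set of basic support $\tau$-tilting $E$-modules and then uses the Adachi--Iyama--Reiten correspondence with finitely generated torsion classes, and for part (b) it also reduces to the $\tau$-tilting-finiteness characterisations of Demonet--Iyama--Jasso. The only difference is that the technical heart of your argument -- proving that $N\mapsto\Hom_\T(M,N)$ is a bijection from $\silt^2_M(\T)$ onto basic support $\tau$-tilting $E$-modules, which you flag as ``the main obstacle'' -- is available as a citation (namely \cite[Theorems 0.2 and 0.3]{IJY14}, which the paper invokes together with \cite[Theorem 0.5]{AIR14} and \cite[Proposition 4.6]{AS80}), so the paper does not carry out the verification you sketch.
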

\begin{proof}
By \cite[Proposition 4.6]{AS80}, a torsion class in $\mod E$  is finitely generated if and only if it is both contravariantly finite and covariantly finite.
(a) follows from {\cite[Theorem 0.5]{AIR14} and \cite[Theorems 0.2 and 0.3]{IJY14}}.
(b)  follows from (a) and \cite[Corollary 2.9 and Theorem 3.8]{DIJ15}.
\end{proof}


\section{ST-pairs: examples and basic properties}
\label{s:ST-pairs-examples}

In this section we introduce the notion of ST-pair of triangulated categories, give motivating examples and study basic properties. 

\medskip
Throughout this section, let $\T$ be a $K$-linear idempotent complete triangulated category (not necessary Hom-finite) with shift functor $[1]$. 

\subsection{Subcategories orthogonal to a presilting object}
\label{ss:categories-associated-to-presilting-object}

For a presilting object $M$ of $\T$, a thick subcategory $\S$ of $\T$ and an integer $n$, we define full subcategories of $\S$ as follows:
\begin{eqnarray*}
\S_{M}^{\le n}\hspace{-5pt}&:=\hspace{-5pt}&\{ X\in \S \mid \Hom_{\T}(M,X[>\hspace{-3pt}n])=0 \},\\
\S_{M}^{\ge n}\hspace{-5pt}&:=\hspace{-5pt}&\{ X\in \S \mid \Hom_{\T}(M,X[<\hspace{-3pt}n])=0 \}, \\
\S_{M}^{n}\hspace{-5pt}&:=\hspace{-5pt}&\S_{M}^{\le n}\cap \S_{M}^{\ge n}. 
\end{eqnarray*}
Here, $\Hom_{\T}(M,X[>\hspace{-3pt}n])=0$ means $\Hom_{\T}(M,X[p])=0$ for all integers $p>n$, and $\Hom_{\T}(M,X[<\hspace{-3pt}n])=0$ is defined similarly. It immediately follows from definition that 
\begin{itemize}
\item[(1)] $\S_M^{\leq n}=\T_M^{\leq n}\cap \S$, $\S_M^{\geq n}=\T_M^{\geq n}\cap\S$ and $\S_M^n=\T_M^n\cap\S$,
\item[(2)] $\S_{M}^{\le n}=\S_{M}^{\le 0}[-n]=S_{M[-n]}^{\leq 0}$ and $\S_{M}^{\ge n}=\S_{M}^{\ge 0}[-n]=\S_{M[-n]}^{\geq 0}$,
\item[(3)] $\S_M^{\leq n}\subseteq \S_M^{\leq n+1}$ and $\S_M^{\geq n}\supseteq\S_M^{\geq n+1}$,
\item[(4)] both $\S_{M}^{\le n}$ and $\S_{M}^{\ge n}$ are extension-closed.

\end{itemize}

The following lemma describes the relationship between partial orders of silting objects and $t$-structures. 
\begin{lemma}\label{lem:partial-orders-of-presilting-and-t-structure}
Let $M,N$ be presilting objects of $\T$ satisfying $\thick(M)=\thick(N)$. 
Then $M\geq N$ if and only if $\T_M^{\leq 0}\supseteq \T_N^{\leq 0}$.

More generally, for integers $k\le l$, we have $M[k]\ge N \ge M[l]$ if and only if $\tT_{M}^{\le -k}\supseteq \tT_{N}^{\le 0}\supseteq \tT_{M}^{\le -l}$.
\end{lemma}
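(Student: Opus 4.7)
The plan is to first prove the base case $k=l=0$ and then deduce the general statement by applying it twice. Observe that by the definition of $\T_M^{\le 0}$, the condition $M\ge N$ is literally the statement $N\in\T_M^{\le 0}$. This makes one direction of the base case immediate: if $\T_M^{\le 0}\supseteq\T_N^{\le 0}$, then since $N$ is presilting we have $\Hom(N,N[p])=0$ for all $p>0$, i.e.\ $N\in\T_N^{\le 0}$, and hence $N\in\T_M^{\le 0}$, which gives $M\ge N$.

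For the converse direction I would exploit the assumption $\thick(M)=\thick(N)$; without it there would be no way to control $\Hom(M,-)$ using only $\Hom(N,-)$. Let $\S=\thick(M)=\thick(N)$. Inside $\S$ both $M$ and $N$ are silting (not merely presilting), so from $M\ge N$ Lemma~\ref{twolemma} applied in $\S$ produces an integer $n\gg 0$ with $N\in M^{[0,n]}$, and Lemma~\ref{lemma32} then yields the symmetric statement $M\in N^{[-n,0]}$. Thus $M$ admits a finite filtration whose factors are summands of $N[-n],N[-n+1],\ldots,N[0]$. For any $X\in\T_N^{\le 0}$, any $p>0$, and any $i\in\{-n,\ldots,0\}$, one has $\Hom(N[i],X[p])=\Hom(N,X[p-i])=0$ since $p-i\ge p>0$. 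Because $\Hom(-,X[p])$ is cohomological, this vanishing propagates along the filtration of $M$ and forces $\Hom(M,X[p])=0$; hence $X\in\T_M^{\le 0}$, completing the base case.

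For the general statement I would split $M[k]\ge N\ge M[l]$ into its two halves and apply the base case separately to the pairs of presilting objects $(M[k],N)$ and $(N,M[l])$; the hypothesis on thick subcategories is preserved because shifting does not change the generated thick subcategory. It then remains only to identify $\T_{M[k]}^{\le 0}=\T_M^{\le -k}$ and $\T_{M[l]}^{\le 0}=\T_M^{\le -l}$, both being instances of the identity $\S_M^{\le n}=\S_{M[-n]}^{\le 0}$ recorded at the beginning of Subsection~\ref{ss:categories-associated-to-presilting-object}. The one substantive step in the whole argument is the converse in the base case, where the interplay between the filtration $M\in N^{[-n,0]}$ and the cohomological vanishing condition defining $\T_N^{\le 0}$ must be set up correctly; everything else is bookkeeping.
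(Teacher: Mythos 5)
Your proposal is correct and follows essentially the same approach as the paper's proof: the easy direction uses $N\in\T_N^{\le 0}$; the converse uses Lemmas~\ref{twolemma} and~\ref{lemma32} to get $M\in N^{[-n,0]}$ and then propagates the vanishing $\Hom(N[i],X[>\hspace{-3pt}0])=0$ along the filtration; the general case is reduced to the base case via the pairs $(M[k],N)$ and $(N,M[l])$ and the identity $\T_{M[k]}^{\le 0}=\T_M^{\le -k}$. The only difference is cosmetic — you spell out the cohomological propagation step that the paper states implicitly as $\Hom(N^{[-n,0]},X[>\hspace{-3pt}0])=0$.
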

\begin{proof} 
Assume $\T_M^{\leq 0}\supseteq \T_N^{\leq 0}$. Because $N$ is presilting, it belongs to $\T_N^{\leq 0}$, and hence to $\T_M^{\leq 0}$. This implies that $M\geq N$.
Conversely, assume $M\geq N$. By Lemma~\ref{lem:partial-order-and-interval}, there exists $n\in\mathbb{N}$ such that $N\in M^{[0,n]}$. For any $X\in\T_N^{\leq 0}$, we have $\Hom(N^{[-n,0]},X[>\hspace{-3pt}0])=0$. By Lemma~\ref{lemma32}, $M\in N^{[-n,0]}$ and hence $\Hom(M,X[>\hspace{-3pt}0])=0$, \ie $X\in\T_M^{\leq 0}$.

Recall from the proof of Lemma~\ref{lem:partial-order-and-interval} that $N\in M^{[k,l]}$ if and only if $N\in (M[k])^{[0,n]}$ and $M[l]\in N^{[0,n]}$ for some $n\gg0$ and recall that $\T_{M[k]}^{\leq 0}=\T_M^{\leq -k}$. The second statement follows from the first statement applied to the pairs of presilting objects $(M[k],N)$ and $(N,M[l])$.
\end{proof}

\begin{lemma}\label{triangle1}
\label{lem:co-t-structure-for-presilting}
Let $M$ be a presilting object of $\T$ such that $\add(M)$ is contravariantly finite in $\T$.  For any integer $l>0$, the following equality holds: 
\[
\tT_{M}^{\le 0}=M^{[0,l-1]}\ast \tT_{M}^{\le -l}.\notag
\]
\end{lemma}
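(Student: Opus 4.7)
The plan is to prove the equality by establishing each inclusion separately, with the non-trivial direction done by induction on $l$.

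\textbf{Inclusion $\supseteq$.} This direction is immediate. Since $M$ is presilting, each shift $M[i]$ with $0\le i\le l-1$ lies in $\tT_{M}^{\le 0}$ (because $\Hom(M,M[i+p])=0$ for $p>0$). As $\tT_{M}^{\le 0}$ is extension-closed (property~(4) at the start of Subsection~\ref{ss:categories-associated-to-presilting-object}), it follows that $M^{[0,l-1]}\subseteq\tT_{M}^{\le 0}$. Combining with $\tT_{M}^{\le -l}\subseteq\tT_{M}^{\le 0}$ (property~(3)) and extension-closedness again gives $M^{[0,l-1]}\ast\tT_{M}^{\le -l}\subseteq\tT_{M}^{\le 0}$.

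\textbf{Inclusion $\subseteq$, base case $l=1$.} Given $X\in\tT_{M}^{\le 0}$, choose a right $\add(M)$-approximation $f\colon M_{0}\to X$ (which exists by contravariant finiteness) and complete it to a triangle
\[
M_{0}\xto{f} X\to Z\to M_{0}[1].
\]
Clearly $M_{0}\in\add(M)\subseteq M^{[0,0]}$, so it remains to verify $Z\in\tT_{M}^{\le -1}$, i.e.\ $\Hom(M,Z[p])=0$ for all $p\ge 0$. Applying $\Hom(M,-)$ yields the long exact sequence
\[
\Hom(M,M_{0}[p])\to\Hom(M,X[p])\to\Hom(M,Z[p])\to\Hom(M,M_{0}[p+1]).
\]
For $p>0$ the outer terms vanish since $M$ is presilting, and the middle term vanishes since $X\in\tT_{M}^{\le 0}$. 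For $p=0$ the right-most term is zero (presilting), and the first map is surjective precisely because $f$ is a right $\add(M)$-approximation, so the third term is zero as well.

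\textbf{Inductive step.} Assume the statement for $l-1$. Given $X\in\tT_{M}^{\le 0}$, we first obtain a triangle
\[
Y'\to X\to Z'\to Y'[1]
\]
with $Y'\in M^{[0,l-2]}$ and $Z'\in\tT_{M}^{\le -(l-1)}$. Note that $Z'[-(l-1)]\in\tT_{M}^{\le 0}$, so applying the base case produces a triangle
\[
A\to Z'[-(l-1)]\to B\to A[1]
\]
with $A\in\add(M)$ and $B\in\tT_{M}^{\le -1}$. Shifting by $[l-1]$ gives
\[
A[l-1]\to Z'\to B[l-1]\to A[l],
\]
where now $A[l-1]\in\add(M[l-1])$ and $B[l-1]\in\tT_{M}^{\le -l}$. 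Applying the octahedral axiom to the composition $Y'\to X\to Z'$ (with the second triangle above as the cone of $X\to Z'$) yields an object $Y$ fitting into triangles
\[
Y'\to Y\to A[l-1]\to Y'[1],\qquad Y\to X\to B[l-1]\to Y[1].
\]
The first triangle shows $Y\in M^{[0,l-2]}\ast\add(M[l-1])\subseteq M^{[0,l-1]}$, and the second exhibits $X$ as an element of $M^{[0,l-1]}\ast\tT_{M}^{\le -l}$.

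\textbf{Main difficulty.} The argument is essentially bookkeeping; the only real content is the base case, where contravariant finiteness of $\add(M)$ is used precisely to kill the $\Hom(M,Z)$-term in degree zero. Keeping track of the direction of shifts (using the identity $\tT_{M}^{\le n}[m]=\tT_{M}^{\le n-m}$) and assembling the two triangles via the octahedral axiom is the only place where an error would be easy to make.
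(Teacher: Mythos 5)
Your proof is correct and follows essentially the same route as the paper's: the trivial inclusion via extension-closedness, then the one-step case via a right $\add(M)$-approximation triangle and the induced long exact sequence (with the approximation property killing the degree-zero term), iterated to get the general case. The only cosmetic difference is that you assemble the induction from the tail and make the associativity of $\ast$ explicit via the octahedral axiom, whereas the paper peels off one $\add(M)$-layer at a time using the shift $\tT_M^{\le -1}=\tT_M^{\le 0}[-1]$ and leaves the re-association implicit.
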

\begin{proof}
Since $\T^{\leq 0}_M$ is extension-closed and contains both $M^{[0,l-1]}$ and $\T^{\leq -l}_{M}$, we have $\tT_{M}^{\le 0}\supseteq M^{[0,l-1]}\ast \tT_{M}^{\le -l}$.
Fix any $X\in \tT_{M}^{\le 0}$. Let $f\colon M'\to X$ be a right $\add(M)$-approximation and form a triangle $M'\xto{f}X\to X'\to M'[1]$.
Applying $\Hom_{\T}(M,-)$ to this triangle, we obtain an exact sequence
\[
\Hom_{\T}(M,M'[i])\xto{\Hom(M,f[i])} \Hom_{\T}(M,X[i])\to \Hom_{\T}(M,X'[i])\to \Hom_{\T}(M,M'[i+1]).
\]
Thus $\Hom_{\T}(M,X'[>\hspace{-3pt}-1])=0$, \ie $X'\in \tT_{M}^{\le -1}$. So $X\in M'\ast X' \subseteq\add(M)\ast \tT_{M}^{\le -1}$, and hence $\tT_{M}^{\le 0}\subseteq \add(M)\ast \tT_{M}^{\le -1}$. Inductively, we have $\tT_{M}^{\le 0}\subseteq M^{[0,l-1]}\ast \tT_{M}^{\le -l}$.
\end{proof}

\subsection{ST-pairs}
\label{ss:ST-pair}

\begin{definition}\label{WST}\label{defn:ST-pair}
Let $\C$ and $\D$ be thick subcategories of $\T$.
The pair $(\C,\D)$ is called an \emph{ST-pair} inside $\T$ if there exists a silting object $M$ of $\C$ such that
\begin{itemize}
\item[(ST1)]  $\Hom_{\T}(M,T)$ is finite-dimensional for any object $T$ of $\T$, 
\item[(ST2)] $(\tT_{M}^{\le 0}, \tT_{M}^{\ge 0})$ is a $t$-structure on $\T$,
\item[(ST3)] $\T=\bigcup_{n\in\mathbb{Z}}\T_M^{\leq n}$ and $\D=\bigcup_{n\in\mathbb{Z}}\T_M^{\geq n}$.
\end{itemize}
When there is a need to emphasise the silting object $M$, we call the triple $(\C,\D, M)$ an \emph{ST-triple}. We will omit the ambient category $\T$ when it is not relevant.
\end{definition}

A prototypical example is the ST-pair $(\Kb(\proj\Lambda),\Db(\mod\Lambda))$, where $\Lambda$ is a finite-dimensional algebra (Lemma~\ref{lem:ST-pair-for-fd-algebra}). More motivating examples will be given in Sections~\ref{ss:ST-pair-for-fd-algebra} and \ref{ss:ST-pair-for-smooth-non-positive-dg-algebra}.   We remark that all the following four possibilities occur: (1) $\C=\D$ (Sections~\ref{ss:ST-pair-for-fd-algebra} and~\ref{ss:ST-pair-for-smooth-non-positive-dg-algebra}), (2) $\C\subsetneq\D$ (Section~\ref{ss:ST-pair-for-fd-algebra}), (3) $\C\supsetneq\D$ (Section~\ref{ss:ST-pair-for-smooth-non-positive-dg-algebra}), (4) $\C$ and $\D$ are not comparable (Section~\ref{ss:completing-silting-to-ST-pair}).

We give two remarks on the conditions (ST1) and (ST3).

\begin{remark}\label{rem111}\label{rem:ST1}
\begin{itemize}
\item[(a)] If $\T$ is Hom-finite, then (ST1) is always satisfied for any silting object $M$.
\item[(b)] The objects $X$ of $\T$ such that $\Hom_\T(X,T)$ is finite-dimensional for any $T\in\T$  form a thick subcategory of $\T$. As a consequence, if one silting object $M$ of $\C$ satisfies the condition (ST1), so does any silting object of $\C$. 
\item[(c)] (ST1) holds for $M$ if and only if $\Hom_\T(M,M[p])$ is finite-dimensional for all $p\in\mathbb{Z}$ and $\add(M)$ is contravariantly finite in $\T$. 
\item[(d)] By (ST1), $\C$ is Hom-finite. Therefore $\C$ is Krull--Schmidt since it is idempotent complete (Lemma~\ref{lem:KS-vs-idempotent-complete}). Similarly, $\D$ is Hom-finite and Krull--Schmidt by (ST1') in Proposition~\ref{prop:ST=>TS}. 
\end{itemize}
\end{remark}

\begin{remark}\label{rem:ST2}
Under (ST2), the following conditions are equivalent:
\begin{itemize}
\item[(i)] the condition (ST3),
\item[(ii)] $\tT_{M}^{\ge 0}\subseteq \D$ and $\D=\thick(\tT_{M}^{0})$,
\item[(iii)] $\tT_{M}^{\ge 0}\subseteq \D$ and $(\D_M^{\leq 0},\D_M^{\geq 0})$ is a bounded $t$-structure on $\D$.
\end{itemize}
\smallskip
{\it Proof: }(i)$\Rightarrow$(iii):  $(\D_M^{\leq 0},\D_M^{\geq 0})$ is the induced $t$-structure in the sense of \cite[1.3.19]{BBD81}. The verification of boundedness is straightforward.

(iii)$\Rightarrow$(i): The condition $\T_M^{\geq 0}\subseteq \D$ implies that $\T_M^{\geq n}\subseteq \D$ for all $n\in\mathbb{Z}$, so $\T_M^{\geq n}=\D_M^{\geq n}$. Therefore $\D=\bigcup_{n\in\mathbb{Z}}\D_M^{\geq n}=\bigcup_{n\in\mathbb{Z}}\T_M^{\geq n}$. To show the other equality, take $X\in\T$ and form a triangle $X'\to X\to X''\to X[1]$ with $X'\in \T_M^{\leq 0}$ and $X''\in\T_M^{\geq 1}$. As $X''\in\D$, there is a positive integer $n$ such that $X''\in\D_M^{\leq n}$. So $X\in \T_M^{\leq 0}*\D_M^{\leq n}\subseteq\T_M^{\leq n}$. Therefore $\T=\bigcup_{n\in\mathbb{Z}}\T_M^{\leq n}$.

(ii)$\Leftrightarrow$(iii): Assume $\T_M^{\geq 0}\subseteq \D$. Then 
\[
\T_M^0=\T_M^{\leq 0}\cap\D\cap\T_M^{\geq 0}=\D_M^{\leq 0}\cap\D_M^{\geq 0}=\D_M^0.
\]
The equivalence (ii)$\Leftrightarrow$(iii) follows by Lemma~\ref{lem:bounded-$t$-structure}(a).
\end{remark}

In the rest of this subsection, we fix an ST-triple $(\C,\D,M)$ inside $\T$. 

\begin{proposition}\label{prop:heart-of-silting-t-structure}
\begin{itemize}
\item[(a)] Let $\sigma_{M}^{0}$ be the  cohomology functor associated with the $t$-structure $(\T_M^{\leq 0},\T_M^{\geq 0})$. Then the object $\sigma_{M}^{0}(M)$ is a projective generator of the heart $\tT^{0}_{M}$, and $\Hom_\T(M,-)$ restricts to an equivalence from $\tT_{M}^{0}$ to $\mod \End_{\T}(M)$.  
\item[(b)] $(\tD_{M}^{\le 0},\tD_{M}^{\ge 0})$ is a bounded $t$-structure on $\D$ and $\tD_{M}^{0}=\tT_{M}^{0}$. 
\item[(c)] $(\T_M^{\leq 0},\T_M^{\geq 0})$ is a non-degenerated $t$-structure on $\T$, \ie $\bigcap_{n\in\mathbb{Z}}\T_M^{\leq n}=0=\bigcap_{n\in\mathbb{Z}}\T_M^{\geq n}$. 
\end{itemize}
\end{proposition}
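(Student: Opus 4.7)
The plan is to treat part (a) first, since parts (b) and (c) will fall out easily once it is in place. Since $M$ is presilting, $M\in\T_{M}^{\leq 0}$, so $\sigma^{0}_{M}(M)=\sigma^{\geq 0}\sigma^{\leq 0}(M)=\sigma^{\geq 0}(M)$, and I write $P$ for this object. Everything will be obtained by feeding the truncation triangle
\[
\sigma^{\leq -1}(M)\longrightarrow M\longrightarrow P\longrightarrow\sigma^{\leq -1}(M)[1]
\]
and the $t$-structure orthogonality of (ST2) into appropriate functors. For projectivity of $P$ in $\T^{0}_{M}$, by Proposition~\ref{BBD}(c) it suffices to verify $\Hom_{\T}(P,X[1])=0$ for $X\in\T^{0}_{M}$; applying $\Hom_{\T}(-,X[1])$ to the triangle above, both $\Hom_{\T}(M,X[1])=0$ (since $X\in\T_{M}^{\leq 0}$) and $\Hom_{\T}(\sigma^{\leq -1}(M)[1],X[1])\cong\Hom_{\T}(\sigma^{\leq -1}(M),X)=0$ (since $\sigma^{\leq -1}(M)\in\T_{M}^{\leq -1}$ and $X\in\T_{M}^{\geq 0}$), so the middle term vanishes as desired. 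For the generator property, Remark~\ref{rem:ST1}(c) tells us that (ST1) implies $\add(M)$ is contravariantly finite in $\T$, so any $X\in\T^{0}_{M}$ admits a right $\add(M)$-approximation $M_{0}\to X$; arguing as in the proof of Lemma~\ref{triangle1}, its cone fits into a triangle $M_{0}\to X\to Y\to M_{0}[1]$ with $Y\in\T_{M}^{\leq -1}$. Since $\sigma^{0}_{M}(Y)=\sigma^{\geq 0}(Y)=0$, the cohomology long exact sequence for $\sigma^{0}_{M}$ produces an epimorphism $\sigma^{0}_{M}(M_{0})\twoheadrightarrow X$ in $\T^{0}_{M}$, and $\sigma^{0}_{M}(M_{0})\in\add(P)$.

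To finish (a), the same truncation triangle, together with the vanishings $\Hom_{\T}(M,\sigma^{\leq -1}(M)[i])=0$ and $\Hom_{\T}(\sigma^{\leq -1}(M)[i],P)=0$ for $i=0,1$ (both instances of $t$-structure orthogonality), gives $\End_{\T^{0}_{M}}(P)=\Hom_{\T}(P,P)\cong\Hom_{\T}(M,P)\cong\End_{\T}(M)$ and, for each $X\in\T^{0}_{M}$, $\Hom_{\T^{0}_{M}}(P,X)\cong\Hom_{\T}(M,X)$. To invoke Morita's theorem I still need $\T^{0}_{M}$ to be Hom-finite; I plan to derive this by choosing, for any $X\in\T^{0}_{M}$, an epimorphism $P^{n}\twoheadrightarrow X$ supplied by the generator property, which induces an injection $\Hom_{\T^{0}_{M}}(X,Y)\hookrightarrow\Hom_{\T^{0}_{M}}(P,Y)^{n}\cong\Hom_{\T}(M,Y)^{n}$, a finite-dimensional vector space by (ST1). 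Morita's theorem then yields the stated equivalence $\Hom_{\T}(M,-)\colon\T^{0}_{M}\xrightarrow{\sim}\mod\End_{\T}(M)$. Part (b) is then a direct consequence of Remark~\ref{rem:ST2}: the implication (i)$\Rightarrow$(iii) applied to (ST3) yields both that $(\D_{M}^{\leq 0},\D_{M}^{\geq 0})$ is a bounded $t$-structure on $\D$ and that $\T_{M}^{\geq 0}\subseteq\D$; in particular $\T^{0}_{M}\subseteq\D$, so $\D^{0}_{M}=\T^{0}_{M}\cap\D=\T^{0}_{M}$.

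For part (c), the clean observation is that $X\in\bigcap_{n\in\mathbb{Z}}\T_{M}^{\leq n}$ is equivalent to $\Hom_{\T}(M,X[i])=0$ for \emph{every} $i\in\mathbb{Z}$; in particular $\Hom_{\T}(M,X[i])=0$ for $i\leq 0$, which places $X$ in $\T_{M}^{\geq 1}$ as well. Combined with $X\in\T_{M}^{\leq 0}$, the $t$-structure orthogonality of (ST2) forces $\Hom_{\T}(X,X)=0$, hence $X=0$. The same reasoning, with the roles of $\leq$ and $\geq$ swapped, gives $\bigcap_{n\in\mathbb{Z}}\T_{M}^{\geq n}=0$. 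I expect the main obstacle to lie in part (a): namely, the Hom-finiteness of $\T^{0}_{M}$ does not follow directly from (ST1), which only controls $\Hom_{\T}(M,-)$, and requires bootstrapping from the generator property; after that hurdle is cleared, parts (b) and (c) are essentially formal.
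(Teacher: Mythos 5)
Your proof is correct, and though the overall skeleton (projective generator $\sigma^0_M(M)$, then Morita, with (b) falling out of Remark~\ref{rem:ST2}) matches the paper, you substitute self-contained arguments for two steps that the paper outsources to~\cite{PV15}. For projectivity of $P=\sigma^0_M(M)$ and for the non-degeneracy claim (c), the paper simply invokes \cite[Proposition~4.3]{PV15} after observing that (ST2) makes $M$ a silting object in the sense of~\cite{PV15}; you instead derive projectivity from the truncation triangle $\sigma^{\le -1}_M(M)\to M\to P\to\sigma^{\le -1}_M(M)[1]$ together with $t$-structure orthogonality (using Proposition~\ref{BBD}(c) to translate $\mathrm{Ext}^1$-vanishing into $\Hom_\T(P,X[1])=0$), and derive (c) from the clean observation that $\bigcap_n\T_M^{\le n}$ consists exactly of the objects $X$ with $\Hom_\T(M,X[i])=0$ for all $i$, which places such an $X$ simultaneously in $\T_M^{\le 0}$ and $\T_M^{\ge 1}$ and hence forces $X=0$. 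Your generator argument and the Hom-space isomorphisms are the same as the paper's, which packages them into the separate Lemma~\ref{lem:truncation-preserves-morphisms}. One genuine improvement on your side: you notice that the paper's formulation of Morita's theorem requires the heart $\T^0_M$ to be Hom-finite, a hypothesis the paper's proof applies without comment (and which cannot yet be deduced from (ST1') of Proposition~\ref{prop:ST=>TS}, since that comes later), and you close this gap by bootstrapping from the generator property and (ST1). Both approaches buy the same result; yours is more self-contained and fills in a point the paper leaves implicit, while the paper's is shorter because it leans on the external reference.
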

\begin{proof} (b) was shown in Remark~\ref{rem:ST2}. By (ST2), $M$ is a silting object in the sense of \cite[Definition 4.1]{PV15}. 
Thus (c) follows from \cite[Proposition 4.3]{PV15}.
Let us prove (a).  By \cite[Proposition 4.3]{PV15}, $\sigma^0_M(M)$ is projective in $\T_M^0$. Let $X\in\T_M^0$. By Lemma~\ref{lem:co-t-structure-for-presilting}, there is a triangle $M'\stackrel{f}{\to} X\to Y \to M'[1]$ with $M'\in\add(M)$ and $Y\in\T_M^{\leq -1}$. Then $\sigma^0_M(f)\colon \sigma^0_M(M')\to X$ is an epimorphism, and it follows that $\sigma^0_M(M)$ is a projective generator of $\T_M^0$. By Morita's theorem, the functor $\Hom_{\tT_{M}^{0}}(\sigma_{M}^{0}(M),-)\colon  \tT_{M}^{0}\to \mod\End_{\T}(\sigma_{M}^{0}(M))$ is an equivalence of abelian categories. By Lemma~\ref{lem:truncation-preserves-morphisms} below we have an isomorphism $\End_{\T}(\sigma_{M}^{0}(M))\simeq \End_{\T}(M)$ of algebras and an isomorphism $\Hom_{\T_{M}^{0}}(\sigma_{M}^{0}(M),-)\simeq \Hom_{\T}(M,-)|_{\tT_{M}^{0}}$ of functors. Therefore the functor $\Hom_\T(M,-)|_{\T_M^0}\colon \T_M^0\to\mod\End_\T(M)$ is an equivalence.
\end{proof}

\begin{lemma}\label{lem:truncation-preserves-morphisms}
Let $X\in\T_M^{\leq 0}$ and let $g\colon X\to\sigma^0_M(X)$ be the canonical morphism. Then for $M'\in\add(M)$ and $Y\in\T_M^0$ the maps
$\Hom_\T(M',g)\colon\Hom_\T(M',X)\to\Hom_\T(M',\sigma^0_M(X))$  and $\Hom_\T(g,Y)\colon\Hom_\T(\sigma^0_M(X),Y)\to\Hom_\T(X,Y)$ are functorial isomorphisms.
\end{lemma}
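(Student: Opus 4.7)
The plan is to reduce both assertions to the standard truncation triangle of $X$ with respect to the $t$-structure $(\T_M^{\le 0},\T_M^{\ge 0})$ and then apply the two orthogonalities that come with that $t$-structure, one holding by definition of $\T_M^{\le n}$ (against $\add(M)$) and the other holding by the axioms of a $t$-structure (against the heart). So this is essentially a bookkeeping argument around the truncation triangle.

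First I would observe that, since $X\in\T_M^{\le 0}$, one has $\sigma^{\le 0}(X)=X$ and hence
\[
\sigma^0_M(X)=\sigma^{\ge 0}\sigma^{\le 0}(X)=\sigma^{\ge 0}(X),
\]
and the canonical morphism $g\colon X\to\sigma^0_M(X)$ fits into the truncation triangle
\[
Z\longrightarrow X\xrightarrow{\ g\ }\sigma^0_M(X)\longrightarrow Z[1]
\]
with $Z:=\sigma^{\le -1}(X)\in\T_M^{\le -1}$. By definition of $\T_M^{\le -1}$, we have $\Hom_\T(M,Z[i])=0$ for every $i\ge 0$; in particular $\Hom_\T(M',Z)=0=\Hom_\T(M',Z[1])$ for every $M'\in\add(M)$.

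Next I would apply $\Hom_\T(M',-)$ to the above triangle and read off the four-term exact sequence
\[
\Hom_\T(M',Z)\to\Hom_\T(M',X)\xrightarrow{\Hom(M',g)}\Hom_\T(M',\sigma^0_M(X))\to\Hom_\T(M',Z[1]).
\]
The outer two terms vanish by the previous step, so $\Hom_\T(M',g)$ is an isomorphism, functorially in $M'$. For the second claim, I would apply $\Hom_\T(-,Y)$ to the same triangle to get
\[
\Hom_\T(Z[1],Y)\to\Hom_\T(\sigma^0_M(X),Y)\xrightarrow{\Hom(g,Y)}\Hom_\T(X,Y)\to\Hom_\T(Z,Y).
\]
Here $Y\in\T_M^0\subseteq\T_M^{\ge 0}$ and $Z,Z[1]\in\T_M^{\le -1}$, and the defining orthogonality of the $t$-structure gives $\Hom_\T(\T_M^{\le -1},\T_M^{\ge 0})=0$, so the outer terms vanish and $\Hom_\T(g,Y)$ is a functorial isomorphism.

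There is no real obstacle in this proof; the only point that demands a moment of care is aligning the indexing conventions, namely that $X\in\T_M^{\le 0}$ forces $\sigma^0_M(X)=\sigma^{\ge 0}(X)$ (so $g$ is genuinely the truncation map with cofibre $Z[1]$ and fibre $Z\in\T_M^{\le -1}$), and that membership of $Z$ in $\T_M^{\le -1}$ gives vanishing of $\Hom_\T(M,Z[i])$ precisely for $i\ge 0$, which is exactly what is needed to kill both outer terms in each of the two long exact sequences.
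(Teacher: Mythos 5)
Your proof is correct and follows essentially the same route as the paper: apply $\Hom_\T(M',-)$ and $\Hom_\T(-,Y)$ to the truncation triangle $\sigma_M^{\le -1}(X)\to X\to\sigma_M^0(X)\to\sigma_M^{\le -1}(X)[1]$ and use the vanishing $\Hom_\T(M',\T_M^{\le -1})=0$ (from the definition of $\T_M^{\le -1}$) and $\Hom_\T(\T_M^{\le -1},\T_M^{\ge 0})=0$ (from the $t$-structure axioms). The paper only writes out the first isomorphism and says the second is similar; you have filled in both, and your index bookkeeping ($Z,Z[1]\in\T_M^{\le -1}$, vanishing of $\Hom_\T(M,Z[i])$ for $i\ge 0$) is accurate.
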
 
\begin{proof}
Applying $\Hom_\T(M',-)$ to the triangle $\sigma^{\leq -1}_M(X)\to X\stackrel{g}{\to} \sigma^0_M(X)\to(\sigma^{\leq -1}_M(X))[1]$, we obtain an exact sequence
\[{{
\Hom_\T(M',\sigma^{\leq -1}_M(X))\to \Hom_\T(M',X){\rightarrow} \Hom_\T(M',\sigma^0_M(X))\to \Hom_\T(M',(\sigma^{\leq -1}_M(X))[1]).
}}\]
The two outer terms vanish because both $\sigma^{\leq -1}_M(X)$ and $(\sigma^{\leq -1}_M(X))[1]$ belong to $\T_M^{\leq -1}$. This implies that $\Hom(M',g)$ is an isomorphism. That $\Hom(g,Y)$ is an isomorphism can be shown similarly.
\end{proof}

It turns out that the indecomposable direct summands of $M$ and simple objects of $\D_M^0$ satisfy a Hom-duality, as satisfied by indecomposable projective modules and simple modules over a finite-dimensional algebra.

\begin{corollary}\label{cor:Hom-duality-between-silting-and-smc}
Assume that $M$ is basic and fix a decomposition $M=M_1\oplus\ldots\oplus M_n$ with $M_1,\ldots,M_n$ indecomposable. Let $S_i$ be the simple top of $\sigma^0_M(M_i)$. Then $\Hom_\T(M_i,S_j[p])=0$ unless $i=j$ and $p=0$, in which case it is isomorphic to $\End(S_i)$.
\end{corollary}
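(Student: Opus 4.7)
The plan is to transfer the computation to $\mod E$, where $E=\End_\T(M)$ (which is a finite-dimensional $K$-algebra by (ST1)), via the equivalence $F:=\Hom_\T(M,-)|_{\T_M^0}\colon \T_M^0\to\mod E$ of Proposition~\ref{prop:heart-of-silting-t-structure}(a). Let $1=e_1+\ldots+e_n$ be the primitive idempotent decomposition of $E$ corresponding to $M=M_1\oplus\ldots\oplus M_n$; then $\Hom_\T(M_i,X)=\Hom_\T(M,X)e_i$ for every $X\in\T$.

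For the case $p\neq 0$, note that $M$ is presilting, so $M\in \T_M^{\leq 0}$, and hence each $M_i\in\T_M^{\leq 0}$. Since $S_j\in\T_M^0$, we have $\Hom_\T(M,S_j[p])=0$ for all $p\neq 0$ directly from the definitions of $\T_M^{\leq 0}$ and $\T_M^{\geq 0}$. Passing to the $e_i$-component yields $\Hom_\T(M_i,S_j[p])=0$ for all $i,j$ and all $p\neq 0$.

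For the case $p=0$, apply Lemma~\ref{lem:truncation-preserves-morphisms} to $X=M_i\in\T_M^{\leq 0}$ and $Y=S_j\in\T_M^0$ to identify $\Hom_\T(M_i,S_j)\cong\Hom_\T(\sigma^0_M(M_i),S_j)$. The same lemma (applied with $M'=M$) gives $F(\sigma^0_M(M_i))=\Hom_\T(M,\sigma^0_M(M_i))\cong\Hom_\T(M,M_i)=e_iE$, which is the indecomposable projective $E$-module $P_i$. Since $F$ is exact and $S_i$ is the simple top of $\sigma^0_M(M_i)$, the module $F(S_i)$ is the simple top of $P_i$, \ie $F(S_i)\cong e_iE/e_i\mathrm{rad}(E)=:T_i$. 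Morita's theorem gives
\[
\Hom_\T(\sigma^0_M(M_i),S_j)\cong\Hom_E(e_iE,T_j)\cong T_je_i,
\]
which is zero unless $i=j$, in which case it equals $\End_E(T_i)\cong\End_\T(S_i)$ via $F$.

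The proof presents no real obstacle; the only subtlety is bookkeeping, namely checking that the primitive idempotents $e_i$ are transported through the equivalence $F$ in a way consistent with the labelling of simple tops, so that the index $i$ on $S_i$ in $\T_M^0$ agrees with the index $i$ on $T_i$ in $\mod E$. This is immediate from the fact that $F$ sends the decomposition $\sigma^0_M(M)=\bigoplus_i\sigma^0_M(M_i)$ to the decomposition $E=\bigoplus_i e_iE$.
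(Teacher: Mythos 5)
Your proof is correct and follows essentially the same route as the paper's: the vanishing for $p\neq 0$ comes from $S_j\in\T_M^0$, and the case $p=0$ is reduced via Lemma~\ref{lem:truncation-preserves-morphisms} to computing $\Hom_\T(\sigma^0_M(M_i),S_j)$. The only difference is that you spell out this last computation explicitly through the Morita equivalence with $\mod E$, where the paper simply invokes the standard projective--simple duality in the heart.
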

\begin{proof}
For $p\neq 0$, the space $\Hom_\T(M_i,S_j[p])$ vanishes because $S_j$ belongs to $\T_M^0$. By Lemma~\ref{lem:truncation-preserves-morphisms}, the space $\Hom_\T(M_i,S_j)$ is isomorphic to $\Hom_\T(\sigma^0_M(M_i),S_j)$, which vanishes for $i\neq j$ and is $\End(S_i)$ if $i=j$.
\end{proof}

The following stronger version of Lemma~\ref{lem:co-t-structure-for-presilting} shows that `minimal $M$-resolution' exists in $\T$. Let $S$ be the direct sum of a complete set of pairwise non-isomorphic simple objects of $\T_M^0$.

\begin{proposition}\label{prop:minimal-resolution}
Let $l>0$ and $X\in\T_M^{\leq 0}$. Then there exist $M^0,M^{-1},\ldots,M^{-l+1}\in\add(M)$ and $Y\in\T_M^{\leq -l}$ such that $X\in M^0\ast M^{-1}[1]\ast\cdots\ast M^{-l+1}[l-1]\ast Y$ and 
\[
\Hom_\T(X,S[p])\simeq\begin{cases} \Hom_\T(M^{-p},S) & \text{if } 0\leq p\leq l-1,\\
\Hom_\T(Y,S[p]) & \text{if } p\geq l.\end{cases}
\]
\end{proposition}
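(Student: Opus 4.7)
The plan is to induct on $l$, with the base case $l=1$ carrying the essential content. For the base case, I would first observe that $\add(M)$ is Hom-finite, Krull--Schmidt and contravariantly finite in $\T$ (by Remark~\ref{rem:ST1}(c)--(d)), so a minimal right $\add(M)$-approximation $f^0\colon M^0\to X$ exists. Completing it to a triangle $M^0\xto{f^0}X\to Y\to M^0[1]$, the argument in the proof of Lemma~\ref{lem:co-t-structure-for-presilting} gives $Y\in\T_M^{\leq -1}$, and hence the extension $X\in M^0\ast Y$.

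To establish the cohomology identity at $p=0$, I would apply $\sigma^0_M$ to the triangle: since $\sigma^0_M(Y)=0$ (as $Y\in\T_M^{\leq -1}$), this induces a surjection $\sigma^0_M(M^0)\twoheadrightarrow\sigma^0_M(X)$ in the heart. Under the Morita equivalence $\Hom_\T(M,-)\colon\T_M^0\xrightarrow{\sim}\mod E$ with $E=\End_\T(M)$ (Proposition~\ref{prop:heart-of-silting-t-structure}(a)), this surjection is identified (via Lemma~\ref{lem:truncation-preserves-morphisms}) with $\Hom_\T(M,f^0)\colon\Hom_\T(M,M^0)\to\Hom_\T(M,X)$; by the standard correspondence between minimal right $\add(M)$-approximations and projective covers, the minimality of $f^0$ makes this map a projective cover in $\mod E$. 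Since any morphism from a projective module to a simple one factors through the semisimple quotient modulo the radical, the induced map $\Hom_E(\Hom_\T(M,X),T)\to\Hom_E(\Hom_\T(M,M^0),T)$ is an isomorphism for each simple $E$-module $T$, which translates back to $\Hom_\T(X,S)\simeq\Hom_\T(M^0,S)$. For $p\geq 1$, applying $\Hom_\T(-,S[p])$ to the triangle yields a long exact sequence whose outer terms $\Hom_\T(M^0,S[p-1])$ and $\Hom_\T(M^0,S[p])$ both vanish for $p\geq 2$ (since $S\in\T_M^0$), while for $p=1$ the connecting map $\Hom_\T(M^0,S)\to\Hom_\T(Y,S[1])$ is forced to vanish by exactness combined with the iso at $p=0$ (its image equals all of $\Hom_\T(M^0,S)$). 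Either way, $\Hom_\T(X,S[p])\simeq\Hom_\T(Y,S[p])$.

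For the inductive step, I would apply the base case to write $X\in M^0\ast X_1$ with $X_1\in\T_M^{\leq -1}$; since $X_1[-1]\in\T_M^{\leq 0}$, the induction hypothesis applied to $X_1[-1]$ with parameter $l-1$ yields $X_1[-1]\in N^0\ast N^{-1}[1]\ast\cdots\ast N^{-l+2}[l-2]\ast Y'$ with $Y'\in\T_M^{\leq-(l-1)}$, together with the corresponding cohomology identities. Shifting by $[1]$, relabelling $M^{-k}:=N^{-(k-1)}$ for $k=1,\ldots,l-1$, and setting $Y:=Y'[1]\in\T_M^{\leq -l}$, one obtains the desired decomposition $X\in M^0\ast M^{-1}[1]\ast\cdots\ast M^{-l+1}[l-1]\ast Y$. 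The cohomology identities for $X$ then propagate by combining $\Hom_\T(X,S[p])\simeq\Hom_\T(X_1,S[p])$ (from the base case, for $p\geq 1$) with $\Hom_\T(X_1,S[p])=\Hom_\T(X_1[-1],S[p-1])$ and the inductive identities for $X_1[-1]$.

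The main obstacle will be the identification of $\sigma^0_M(f^0)$ as a projective cover in $\mod E$ via the minimality of the approximation; once this correspondence is secured, the long exact sequence together with the standard behaviour of projective covers against simple modules takes care of the cohomology identities, and the induction proceeds formally.
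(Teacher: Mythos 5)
Your proof is correct and follows the paper's argument essentially verbatim: the base case $l=1$ via a minimal right $\add(M)$-approximation, transporting minimality through $F=\Hom_\T(M,-)$ and Lemma~\ref{lem:truncation-preserves-morphisms} to identify a projective cover in $\mod\End_\T(M)$, and then deducing the isomorphism against $S$. The only difference is that you spell out the long exact sequence computation for $p\geq 1$ (distinguishing $p=1$ from $p\geq 2$) and the bookkeeping in the inductive step, both of which the paper leaves implicit after treating $p=0$.
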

\begin{proof} We prove the statement for $l=1$. The general case is obtained by induction on $l$. 
Take a minimal right $\add(M)$-approximation $f\colon M^0\to X$ and form a triangle $M^0\stackrel{f}{\to} X\to Y\to M^0[1]$. Then $Y$ belongs to $\T_M^{\leq -1}$ as shown in the proof of Lemma~\ref{lem:co-t-structure-for-presilting}. It remains to prove that $\Hom_\T(f,S)\colon\Hom_\T(X,S)\to\Hom_\T(M^0,S)$ is an isomorphism. By the definition of minimal approximations, the homomorphism $F(f)\colon F(M'){\to}F(X)$ is a projective cover in $\mod\End_\T(M)$, where $F=\Hom_\T(M,-)$.
So by Lemma~\ref{lem:truncation-preserves-morphisms}, the homomorphism $F(\sigma^0_M(f))\colon F(\sigma^0_M(M')){\to}F(\sigma^0_M(X))$ is a projective cover in $\mod\End_\T(M)$. Recall from Proposition~\ref{prop:heart-of-silting-t-structure}(a) that $F|_{\T^0_M}\colon\T^0_M\to \mod\End_\T(M)$ is an equivalence. So $F(S)$ is a semisimple $\End_\T(M)$-module and $\Hom(F(\sigma^0_M(f)),F(S))\colon\Hom(F(\sigma^0_M(X)),F(S))\to\Hom(F(\sigma^0_M(M')),F(S))$ is an isomorphism. Therefore the bottom map in the following commutative diagram is an isomorphism
\[
\xymatrix@C=4.5pc{
\Hom_\T(X,S)\ar[r]^{\Hom(f,S)} &\Hom_\T(M',S)\\
\Hom_\T(\sigma^0_M(X),S)\ar[r]^{\Hom(\sigma^0_M(f),S)}\ar[u]&\Hom_\T(\sigma^0_M(M'),S).\ar[u]
}
\]
The two vertical maps are isomorphisms by Lemma~\ref{lem:truncation-preserves-morphisms}. So $\Hom(f,S)$ is also an isomorphism.
\end{proof}

\subsection{ST-pairs for finite-dimensional algebras and projective schemes}\label{ss:ST-pair-for-fd-algebra}

The ST-pair in the next lemma is a prototypical example.

\begin{lemma}\label{lem:ST-pair-for-fd-algebra}
Let $\Lambda$ be a finite-dimensional $K$-algebra. Then $(\Kb(\proj\Lambda),\Db(\mod\Lambda),\Lambda)$ is an ST-triple inside $\Db(\mod\Lambda)$.
\end{lemma}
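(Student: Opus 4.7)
The plan is to directly verify the three conditions of Definition~\ref{defn:ST-pair} with $\T=\D=\Db(\mod\Lambda)$, $\C=\Kb(\proj\Lambda)$ and $M=\Lambda$. The crucial observation is the natural isomorphism
\[
\Hom_{\Db(\mod\Lambda)}(\Lambda,X[p])\simeq H^p(X)
\]
for any $X\in\Db(\mod\Lambda)$ and any $p\in\mathbb{Z}$, which comes from the fact that $\Lambda$ is a projective generator of $\mod\Lambda$ sitting in cohomological degree $0$. Recall also from the example following the definition of silting objects that $\Lambda$ is a (tilting, hence) silting object of $\Kb(\proj\Lambda)$, so it is a legitimate candidate for $M$.

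Condition (ST1) is then immediate: for any $T\in\T$ the space $\Hom_\T(\Lambda,T)=H^0(T)$ is finite-dimensional over $K$ since $T$ is a bounded complex of finite-dimensional $\Lambda$-modules. For (ST2), applying the isomorphism above gives
\begin{eqnarray*}
\T_\Lambda^{\le 0}&=&\{X\in\Db(\mod\Lambda)\mid H^p(X)=0~\forall p>0\},\\
\T_\Lambda^{\ge 0}&=&\{X\in\Db(\mod\Lambda)\mid H^p(X)=0~\forall p<0\},
\end{eqnarray*}
which is precisely the standard $t$-structure on $\Db(\mod\Lambda)$ recalled in Example~\ref{ex:standard-t-structure}. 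For (ST3), the standard $t$-structure is bounded, so $\bigcup_{n\in\mathbb{Z}}\T_\Lambda^{\le n}=\Db(\mod\Lambda)=\T$; the parallel statement $\bigcup_{n\in\mathbb{Z}}\T_\Lambda^{\ge n}=\Db(\mod\Lambda)$, together with $\D=\T$, yields the second equality in (ST3).

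There is no real obstacle here: the content of the lemma is essentially a dictionary between the silting-theoretic subcategories $\T_\Lambda^{\le n}$ and $\T_\Lambda^{\ge n}$ on one side and the standard homological truncations on $\Db(\mod\Lambda)$ on the other, and the whole argument rests on the formula $\Hom_{\Db(\mod\Lambda)}(\Lambda,-)\simeq H^0(-)$.
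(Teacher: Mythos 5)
Your proof is correct and takes essentially the same approach as the paper: both rest on the identification $\Hom_{\Db(\mod\Lambda)}(\Lambda,X[p])\simeq H^p(X)$, which shows that $(\T_\Lambda^{\le 0},\T_\Lambda^{\ge 0})$ is the standard $t$-structure, and then use its boundedness. The only cosmetic difference is that the paper invokes the equivalent reformulation of (ST3) given in Remark~\ref{rem:ST2}(iii), whereas you verify (ST3) directly; the two checks amount to the same thing.
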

\begin{proof}
Let $\T:=\Db(\mod \Lambda)$. 
Then $\Lambda$ is a presilting object of $\T$, $\Kb(\proj\Lambda)=\thick(\Lambda)$, and 
$(\T_\Lambda^{\leq 0},\T_\Lambda^{\geq 0})$ is the standard $t$-structure on $\T$ (Example~\ref{ex:standard-t-structure}) because $\Hom_{\T}(\Lambda,X[p])=H^p(X)$ for $X\in\T$ and $p\in\mathbb{Z}$. (ST1), (ST2) and the condition (iii) in Remark~\ref{rem:ST2} are satisfied, so $(\Kb(\proj\Lambda),\Db(\mod\Lambda),\Lambda)$ is an ST-triple inside $\Db(\mod\Lambda)$.
\end{proof}

\begin{corollary}\label{cor:ST-pair-variety-with-tilting}
Let $X$ be a projective scheme over $K$. 
If $\per(X)$ has a tilting object, then $(\per(X),\Db(\coh(X)))$ is an ST-pair inside $\Db(\coh(X))$.
\end{corollary}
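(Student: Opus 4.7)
The plan is to reduce to Lemma~\ref{lem:ST-pair-for-fd-algebra} via a derived equivalence provided by the classical tilting theorem. Let $T$ be a tilting object of $\per(X)$, set $\Lambda := \End_{\per(X)}(T)$, and set $\T := \Db(\coh(X))$. I will verify that $(\per(X),\Db(\coh(X)),T)$ is an ST-triple inside $\T$ by transporting the ST-triple $(\Kb(\proj\Lambda),\Db(\mod\Lambda),\Lambda)$ of Lemma~\ref{lem:ST-pair-for-fd-algebra}.

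First, I check that $\Lambda$ is a finite-dimensional $K$-algebra. Lemma~\ref{lem:compact-and-homologically-finite-objects-for-projetive-variety} gives $\per(X)=\T_{\hf}$; applied to the perfect object $T$ this yields $\dim_K\bigoplus_p\Hom_\T(T,T[p])<\infty$, so in particular $\dim_K\Lambda<\infty$. Next I invoke the tilting theorem: since $T$ is tilting (so $\Hom(T,T[n])=0$ for $n\neq 0$ and $\thick(T)=\per(X)$) and since $\per(X)$ is the subcategory of compact objects of $\D(\Qcoh(X))$ by Lemma~\ref{lem:compact-and-homologically-finite-objects-for-projetive-variety}, the object $T$ is a compact generator of $\D(\Qcoh(X))$ with endomorphism algebra concentrated in degree $0$. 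By Keller's version of Morita theory for derived categories, the functor $F:=\RHom(T,-)\colon\D(\Qcoh(X))\to\D(\Mod\Lambda)$ is a triangle equivalence sending $T$ to $\Lambda$. Restricting to compact objects gives $F\colon\per(X)\xrightarrow{\sim}\Kb(\proj\Lambda)$, and then applying Lemmas~\ref{lem:compact-and-homologically-finite-objects-for-projetive-variety} and \ref{lem:compact-and-homologically-finite-objects-for-dg-algebra} on both sides to pass to cohomologically finite objects yields $F\colon\Db(\coh(X))\xrightarrow{\sim}\Db(\mod\Lambda)$.

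Finally, the three ST-axioms are purely categorical: (ST1) concerns finiteness of $\Hom_\T(T,-)$, which under $F$ becomes finiteness of $\Hom_{\Db(\mod\Lambda)}(\Lambda,-)$ on all objects; (ST2) concerns the pair $(\T_T^{\le 0},\T_T^{\ge 0})$ being a $t$-structure, which $F$ identifies with $(\Db(\mod\Lambda)_\Lambda^{\le 0},\Db(\mod\Lambda)_\Lambda^{\ge 0})$, the standard $t$-structure; and (ST3) concerns the exhaustion conditions, which are preserved by any triangle equivalence. Thus all three axioms transport back from the ST-triple $(\Kb(\proj\Lambda),\Db(\mod\Lambda),\Lambda)$ of Lemma~\ref{lem:ST-pair-for-fd-algebra} to $(\per(X),\Db(\coh(X)),T)$, establishing the corollary.

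The only nontrivial step is the derived equivalence in paragraph two; its extension from perfect complexes to the bounded derived category of coherent sheaves is the tilting theorem in algebraic geometry (Bondal, Hille--Van den Bergh), and here it is essentially formal once one combines the compact-generation statement and the characterisation $\Db(\coh(X))=\D(\Qcoh(X))^{\chf}_{\per(X)}$ of Lemma~\ref{lem:compact-and-homologically-finite-objects-for-projetive-variety} with the analogous characterisation of $\Db(\mod\Lambda)$ inside $\D(\Mod\Lambda)$.
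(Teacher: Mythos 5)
Your proof is correct and follows essentially the same route as the paper: reduce to Lemma~\ref{lem:ST-pair-for-fd-algebra} by producing the derived equivalence $\D(\Qcoh(X))\simeq\D(\Mod\Lambda)$ from the tilting object and restricting it to compact and cohomologically finite objects (the paper packages that restriction as Lemma~\ref{lem:restriction-of-der-equiv-to-per-and-dfd}, you invoke Lemmas~\ref{lem:compact-and-homologically-finite-objects-for-projetive-variety} and~\ref{lem:compact-and-homologically-finite-objects-for-dg-algebra} directly, which amounts to the same thing). Your extra care in checking that $\Lambda$ is finite-dimensional and in spelling out the transport of (ST1)--(ST3) is sound but not a substantively different argument.
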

\begin{proof}
Let $T$ be a tilting object in $\per(X)$ and let $\Lambda:=\End(T)$. Then by tilting theory there is a triangle equivalence $F\colon\D(\Mod\Lambda)\to\D(\Qcoh(X))$, which, by Lemma~\ref{lem:restriction-of-der-equiv-to-per-and-dfd}, restricts to triangle equivalences $\Kb(\proj\Lambda)\to \per(X)$ and $\Db(\mod\Lambda)\to\Db(\coh(X))$. The desired result then follows from Lemma~\ref{lem:ST-pair-for-fd-algebra}.
\end{proof}

For a finite-dimensional $K$-algebra $\Lambda$ it is known that $\Kb(\proj\Lambda)=\Db(\mod\Lambda)$ if and only if $\Lambda$ is of finite global dimension. In fact we have

\begin{proposition}
For a finite-dimensional $K$-algebra $\Lambda$, the following are equivalent:
\begin{itemize}
\item[(i)] $\Lambda$ is of finite global dimension,
\item[(ii)] $\Kb(\proj\Lambda)$ has an algebraic $t$-structure,
\item[(iii)] there is a triangulated category $\C$ such that $(\C,\Kb(\proj\Lambda))$ is an ST-pair,
\item[(iv)] $\Db(\mod\Lambda)$ has a silting object,
\item[(v)] there is a triangulated category $\D$ such that $(\Db(\mod\Lambda),\D)$ is an ST-pair.
\end{itemize}
\end{proposition}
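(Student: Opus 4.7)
The plan is to cycle the implications as (i)$\Rightarrow$(ii),(iii),(iv),(v); (v)$\Rightarrow$(iv); (iii)$\Rightarrow$(ii); (iv)$\Rightarrow$(i); (ii)$\Rightarrow$(i). The block of arrows out of (i) is straightforward: finite global dimension forces $\Kb(\proj\Lambda)=\Db(\mod\Lambda)$, so Lemma~\ref{lem:ST-pair-for-fd-algebra} degenerates to provide ST-pairs with either factor equal to $\Kb(\proj\Lambda)=\Db(\mod\Lambda)$ (giving (iii) and (v)), the standard $t$-structure on $\Db(\mod\Lambda)$ becomes an algebraic $t$-structure on $\Kb(\proj\Lambda)$ with heart $\mod\Lambda$ (giving (ii)), and $\Lambda$ is a silting object of $\Db(\mod\Lambda)$ (giving (iv)). The arrow (v)$\Rightarrow$(iv) is the defining requirement of an ST-pair, and (iii)$\Rightarrow$(ii) follows directly from Proposition~\ref{prop:heart-of-silting-t-structure}(a,b) applied to an ST-triple $(\C,\Kb(\proj\Lambda),M)$: the induced bounded $t$-structure on $\Kb(\proj\Lambda)$ has heart equivalent to $\mod\End(M)$, which is a length category with finitely many simples.

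For (iv)$\Rightarrow$(i), I will use a Hom-bounding argument on the simples. Let $M$ be a silting object of $\Db(\mod\Lambda)$; since $\thick(M)=\Db(\mod\Lambda)$ contains the finitely many simple $\Lambda$-modules $U_1,\ldots,U_n$, Lemma~\ref{AI223}(c) provides an integer $a\geq 0$ with $U_i\in M^{[-a,a]}$ for every $i$. Expanding $U_i$ as an iterated extension of copies of $M[s]$ with $s\in[-a,a]$ and $U_j[k]$ as one with $s\in[-a+k,a+k]$, the silting relation $\Hom(M[s],M[t])=0$ for $t>s$ forces $\Ext^k(U_i,U_j)=\Hom(U_i,U_j[k])=0$ whenever $k>2a$ and for all $i,j$. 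A routine induction on composition length via the long exact $\Ext$-sequence extends this vanishing to all pairs of finite-dimensional $\Lambda$-modules, so the global dimension of $\Lambda$ is at most $2a$.

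The main obstacle is the final implication (ii)$\Rightarrow$(i). Given an algebraic bounded $t$-structure on $\Kb(\proj\Lambda)$ with heart $\H$ having simples $S_1,\ldots,S_n$, my plan has five steps. First, the finite type of $\Kb(\proj\Lambda)$ (bounded complexes of projectives have $\bigoplus_k\Hom(X,Y[k])$ finite-dimensional) gives $\Ext^k_\H(S_i,S_j)=0$ for $k$ large, so $\H$ has finite global dimension. Second, Beilinson's realization functor yields a triangle equivalence $\Db(\H)\simeq\Kb(\proj\Lambda)$, using that the ambient category is Hom-finite, Krull--Schmidt and algebraic, and that $\Kb(\proj\Lambda)=\thick(\H)$ by Lemma~\ref{KY33}(a). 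Third, using the silting object $\Lambda$ of $\Kb(\proj\Lambda)$ together with the bounded co-$t$-structure it induces by Lemma~\ref{l:co-heart}, I will extract a projective generator of $\H$, giving an equivalence $\H\simeq\mod\Gamma$ for some finite-dimensional algebra $\Gamma$. Fourth, under $\Kb(\proj\Lambda)\simeq\Db(\mod\Gamma)$ the module $\Gamma$ becomes a silting object of $\Db(\mod\Gamma)$, so the already-established (iv)$\Rightarrow$(i) applied to $\Gamma$ gives finite global dimension of $\Gamma$ and hence $\Db(\mod\Gamma)=\per(\Gamma)$. Fifth, a Rickard-type argument upgrades the triangle equivalence to $\Db(\mod\Lambda)\simeq\Db(\mod\Gamma)$, and the coincidence $\per=\Db$ is derived invariant, so it transports to $\Lambda$, yielding finite global dimension. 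The delicate step is the third: producing a projective generator of $\H$ from the interplay of the co-heart $\add\Lambda$ and the heart $\H$ through the realization equivalence.
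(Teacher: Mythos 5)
Your implications out of (i), as well as (v)$\Rightarrow$(iv), (iii)$\Rightarrow$(ii), and the direct Hom-bounding argument for (iv)$\Rightarrow$(i) via Lemma~\ref{AI223}(c), are all correct (for the last, the paper simply cites \cite[Example 2.5(a)]{AI12}, so your self-contained version is a mild improvement). The gap is in (ii)$\Rightarrow$(i), and it is genuine. Your first step, that finite type of $\Kb(\proj\Lambda)$ forces $\operatorname{Ext}^k_{\H}(S_i,S_j)=0$ for $k$ large, presupposes that the comparison maps $\operatorname{Ext}^k_{\H}(X,Y)\to\Hom_{\Kb(\proj\Lambda)}(X,Y[k])$ are injective; this is automatic only for $k\le 2$ and can fail for $k\ge 3$, so the Yoneda $\operatorname{Ext}$ groups of the heart are not controlled by the ambient Hom-spaces without further argument. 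Your second step asserts that the realization functor $\Db(\H)\to\Kb(\proj\Lambda)$ is a triangle equivalence; essential surjectivity follows from $\Kb(\proj\Lambda)=\thick(\H)$, but full faithfulness is exactly the statement that those comparison maps are isomorphisms in all degrees, which is not automatic and is the thing to be proved. Your third step, producing a projective generator of $\H$ from the co-$t$-structure generated by $\Lambda$, is the most serious: you flag it as delicate and offer only a vague mechanism, but there is no a priori adjacency between that co-$t$-structure and the given $t$-structure, so $\sigma^0(\Lambda)$ has no reason to be projective in $\H$. Producing a projective generator in the heart is essentially the content of the proposition, so this step cannot be left to an unproved ``interplay.''

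The paper's proof of (ii)$\Rightarrow$(i) sidesteps all three issues at once by passing to the dg endomorphism algebra. Let $S$ be the direct sum of the simples of $\H$; then $\Kb(\proj\Lambda)=\thick(S)$, so $S$ is a compact generator of $\D(\Mod\Lambda)$, and Keller's Morita theorem gives an unconditional triangle equivalence $\RHom_\Lambda(S,-)\colon\D(\Mod\Lambda)\to\D(\tilde E)$ with $\tilde E=\RHom_\Lambda(S,S)$, restricting to $\Kb(\proj\Lambda)\to\per(\tilde E)$ and $\Db(\mod\Lambda)\to\Dfd(\tilde E)$. Since $S$ lies in a heart, $H^p(\tilde E)=\Hom_{\Kb(\proj\Lambda)}(S,S[p])$ vanishes for $p<0$, and $H^0(\tilde E)=\End_{\H}(S)$ is finite-dimensional semisimple, so $\tilde E$ satisfies (P1)--(P2); Lemma~\ref{lem:silting-for-positive-dg-algebra} then produces a silting object of $\Dfd(\tilde E)\simeq\Db(\mod\Lambda)$, and (iv)$\Rightarrow$(i) closes the loop. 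No realization functor, no $\operatorname{Ext}$-comparison, and no projective generator of $\H$ are needed, because the dg Morita equivalence is automatic and the required homological control is carried by $H^*(\tilde E)$ rather than postulated for the heart.
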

\begin{proof}
(i)$\Rightarrow$(iii)$\Rightarrow$(ii) and (i)$\Rightarrow$(v)$\Rightarrow$(iv) are clear. (iv)$\Rightarrow$(i) is \cite[Example 2.5(a)]{AI12}.

(ii)$\Rightarrow$(i): Let $S$ be the direct sum of a complete set of pairwise non-isomorphic simple objects of the heart of the given algebraic $t$-structure on $\Kb(\proj\Lambda)$. Then $\Kb(\proj\Lambda)=\thick(S)$, in other words, $S$ is a compact generator of $\D(\Mod\Lambda)$. Let $\tilde{E}=\RHom_\Lambda(S,S)$. Then by \cite[Lemma 6.1]{Ke1} there is a triangle equivalence $\RHom_\Lambda(S,-)\colon\D(\Mod\Lambda)\to\D(\tilde{E})$, which restricts to triangle equivalences $\Db(\mod\Lambda)\to\Dfd(\tilde{E})$ and $\Kb(\proj\Lambda)\to\per(\tilde{E})$. Moreover, $\tilde{E}$ satisfies the conditions (P1) and (P2) in Lemma~\ref{lem:silting-for-positive-dg-algebra} , so $\Dfd(\tilde{E})$ has a silting object, and hence $\Db(\mod\Lambda)$ has a silting object. By (iv)$\Rightarrow$(i), we obtain that $\Lambda$ has finite global dimension.
\end{proof}

\smallskip
As finite-dimensional non-positive dg algebras appear naturally as the dg endomorphism algebras of silting objects in $\Kb(\proj\Lambda)$ (see for example \cite[Section 5.1]{KoY14}),  we include the following generalisation of Lemma~\ref{lem:ST-pair-for-fd-algebra}. 
We omit its proof as it is a special case of Proposition~\ref{prop:ST-pair-for-non-positive-dg-algebra}.

\begin{lemma}\label{lem:ST-pair-for-fd-non-positive-dg-algebra}
Let $A$ be a dg $K$-algebra satisfying
\begin{itemize}
\item[(FN1)] $H^p(A)=0$ for all $p>0$,
\item[(FN2)] $H^*(A)$ is finite-dimensional.
\end{itemize}Then $(\per(A),\Dfd(A),A)$ is an ST-triple inside $\Dfd(A)$.
\end{lemma}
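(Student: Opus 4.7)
The plan is to verify the three conditions (ST1), (ST2), (ST3) of Definition~\ref{defn:ST-pair} with ambient category $\T := \Dfd(A)$, distinguished object $M := A$, $\C := \per(A) = \thick(A)$, and $\D := \Dfd(A)$. First, hypothesis (FN2) guarantees that $A \in \Dfd(A)$ (so $\per(A)$ really is a thick subcategory of $\T$), and hypothesis (FN1) combined with Lemma~\ref{lem:non-positive-dg-algebra-and-silting} ensures that $A$ is a silting object of $\per(A)$. Moreover $\T$ is idempotent complete because $\Dfd(A)$ is a thick subcategory of $\D(A)$.

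For (ST1), the formula~\eqref{eq:Hom-space-from-free-dg-module} gives $\Hom_{\T}(A, T) = H^{0}(T)$ for every $T \in \T$, which is finite-dimensional by definition of $\Dfd(A)$. For (ST2), observe that
\[
\T_A^{\leq 0} = \{X \in \Dfd(A) \mid H^p(X) = 0 \text{ for all } p > 0\}, \qquad \T_A^{\geq 0} = \{X \in \Dfd(A) \mid H^p(X) = 0 \text{ for all } p < 0\},
\]
again by~\eqref{eq:Hom-space-from-free-dg-module}. By (FN1) and Example~\ref{ex:t-structure-for-non-positive-dg-algebra}, the analogous pair $(\D^{\leq 0}, \D^{\geq 0})$ is a $t$-structure on $\D(A)$ whose truncation functors are the usual smart truncations of complexes. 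These truncations preserve finite-dimensionality of total cohomology (the pieces of $H^{*}(X)$ land in the truncations), so $\Dfd(A)$ is stable under them; by \cite[1.3.19]{BBD81} they induce a $t$-structure on $\Dfd(A)$, which is exactly $(\T_A^{\leq 0}, \T_A^{\geq 0})$.

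Finally, (ST3) is immediate: any $X \in \Dfd(A)$ has total cohomology concentrated in a finite range of degrees, so $X \in \T_A^{\leq n} \cap \T_A^{\geq -n}$ for $n \gg 0$; hence $\T = \bigcup_n \T_A^{\leq n}$ and $\D = \T = \bigcup_n \T_A^{\geq n}$. There is no real obstacle here — the one subtlety, and the only step that genuinely needs both hypotheses together, is showing that the standard $t$-structure on $\D(A)$ restricts to $\Dfd(A)$, for which (FN1) gives existence on $\D(A)$ and (FN2) ensures the truncations keep us inside $\Dfd(A)$.
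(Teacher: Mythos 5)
Your proof is correct, and it is slightly more direct than the route the paper itself takes. The paper does not prove Lemma~\ref{lem:ST-pair-for-fd-non-positive-dg-algebra} directly; it declares it a special case of Proposition~\ref{prop:ST-pair-for-non-positive-dg-algebra}, whose proof works with the larger ambient category $\T = \Dfd^-(A)$ rather than $\Dfd(A)$. That detour is forced there because Proposition~\ref{prop:ST-pair-for-non-positive-dg-algebra} assumes only (N2) (each $H^p(A)$ finite-dimensional) rather than (FN2), and under (N2) alone $A$ may have cohomology in infinitely many negative degrees, so $\per(A)\not\subseteq\Dfd(A)$ and one needs the larger category $\Dfd^-(A)$ to house both $\per(A)$ and $\Dfd(A)$. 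Your argument exploits the stronger hypothesis (FN2), which gives $A\in\Dfd(A)$ and hence $\per(A)\subseteq\Dfd(A)$; this lets you take $\T=\Dfd(A)$ directly, making (ST3) essentially tautological and shortening the verification of (ST2) to the observation that standard truncations preserve finite total cohomology. So: same strategy (identify $(\T_A^{\leq 0},\T_A^{\geq 0})$ with standard truncation and verify ST1--3), but in a smaller ambient category, buying a more self-contained argument for this lemma at the cost of not also covering the general proposition. One small point worth making explicit if you write this up: idempotent completeness of $\Dfd(A)$ follows because it is a thick (hence direct-summand-closed) subcategory of the idempotent complete category $\D(A)$, which is required by Definition~\ref{defn:ST-pair}.
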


\begin{corollary}\label{cor:ST-pair-for-variety-with-silting}
Let $X$ be a projective scheme over $K$.
If $\per(X)$ has a silting object, then $(\per(X),\Db(\coh(X)))$ is an ST-pair inside $\Db(\coh(X))$.
\end{corollary}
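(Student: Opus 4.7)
The plan is to use derived Morita theory to transfer the problem to a dg algebra and apply Lemma~\ref{lem:ST-pair-for-fd-non-positive-dg-algebra}. Let $M$ be a silting object of $\per(X)$ and let $A$ be a dg $K$-algebra quasi-isomorphic to $\RHom_{\D(\Qcoh(X))}(M,M)$. First I would show that $A$ satisfies conditions (FN1) and (FN2) of Lemma~\ref{lem:ST-pair-for-fd-non-positive-dg-algebra}. Since $M$ is presilting, $H^p(A)=\Hom(M,M[p])=0$ for $p>0$, giving (FN1). For (FN2), Lemma~\ref{lem:compact-and-homologically-finite-objects-for-projetive-variety} gives $\per(X)=\Db(\coh(X))_{\hf}$, hence $\bigoplus_{p\in\ZZ}\Hom(M,M[p])=H^*(A)$ is finite-dimensional over $K$.

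Next I would invoke derived Morita theory to produce a triangle equivalence. Since $M$ is a silting object of $\per(X)$, we have $\thick(M)=\per(X)=\D(\Qcoh(X))^{\mathrm{c}}$ by Lemma~\ref{lem:compact-and-homologically-finite-objects-for-projetive-variety}, so $M$ is a compact generator of $\D(\Qcoh(X))$. By Keller's theorem (\cite[Theorem 4.3]{Ke1}) the functor
\[
F:=\RHom(M,-)\colon\D(\Qcoh(X))\longrightarrow \D(A)
\]
is a triangle equivalence sending $M$ to $A$. By Lemma~\ref{lem:restriction-of-der-equiv-to-per-and-dfd}, $F$ restricts to triangle equivalences between compact objects and between compact-homologically-finite objects. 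Combining Lemma~\ref{lem:compact-and-homologically-finite-objects-for-projetive-variety} (for the source) and Lemma~\ref{lem:compact-and-homologically-finite-objects-for-dg-algebra} (for the target, using that $A$ already lies in $\Dfd(A)$), these restrict to triangle equivalences
\[
\per(X)\stackrel{\sim}{\longrightarrow}\per(A),\qquad \Db(\coh(X))\stackrel{\sim}{\longrightarrow}\Dfd(A).
\]

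Finally I would conclude by transfer of structure. Lemma~\ref{lem:ST-pair-for-fd-non-positive-dg-algebra} ensures that $(\per(A),\Dfd(A),A)$ is an ST-triple inside $\Dfd(A)$. Because all of the conditions (ST1), (ST2) and (ST3) are formulated entirely in terms of the ambient triangulated category together with its subcategories $\C$ and $\D$ and the silting object $M$, they are preserved by the triangle equivalence $F$ taking $M$ to $A$. Hence $(\per(X),\Db(\coh(X)),M)$ is an ST-triple inside $\Db(\coh(X))$, which proves the corollary.

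The main obstacle I expect is the bookkeeping step of verifying that $F$ restricts cleanly to the two required sub-equivalences, and in particular that $\Db(\coh(X))$ and $\Dfd(A)$ match under $F$; this is where the intrinsic description $\Db(\coh(X))=\D(\Qcoh(X))^{\chf}_{\per(X)}$ coming from Lemma~\ref{lem:compact-and-homologically-finite-objects-for-projetive-variety} is essential, since otherwise one would have to chase boundedness and coherence through the non-canonical resolution implicit in $F$.
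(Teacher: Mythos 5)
Your proposal is correct and follows essentially the same route as the paper: take $A=\RHom(M,M)$, check that the silting property gives (FN1) and that $\per(X)$ being of finite type (Lemma~\ref{lem:compact-and-homologically-finite-objects-for-projetive-variety}) gives (FN2), apply Keller's theorem together with Lemma~\ref{lem:restriction-of-der-equiv-to-per-and-dfd} to match $\per(X)$ with $\per(A)$ and $\Db(\coh(X))$ with $\Dfd(A)$, and transfer the ST-triple structure of Lemma~\ref{lem:ST-pair-for-fd-non-positive-dg-algebra}. The only cosmetic difference is that the paper deduces finite-dimensionality of $H^*(A)$ directly from $\per(X)$ being of finite type, while you unpack the same fact via $\per(X)=\Db(\coh(X))_{\hf}$; and your appeal to Lemma~\ref{lem:compact-and-homologically-finite-objects-for-dg-algebra} for the target is superfluous, since Lemma~\ref{lem:restriction-of-der-equiv-to-per-and-dfd} already names $\Dfd(A)$ as the codomain.
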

\begin{proof}
Let $T$ be a silting object of $\per(X)$ and let $A:=\RHom(T,T)$. Then the cohomology of $A$ is concentrated in non-positive degrees, and by \cite[Theorem 4.3]{Ke1} there is a triangle equivalence $\RHom(T,-)\colon\D(\Qcoh(X))\to \D(A)$, which, by Lemma~\ref{lem:restriction-of-der-equiv-to-per-and-dfd}, restricts to triangle equivalences $\per(X)\to \per(A)$ and $\Db(\coh(X))\to \Dfd(A)$. Since $\per(X)$ is of finite type (Lemma~\ref{lem:compact-and-homologically-finite-objects-for-projetive-variety}), the total cohomology of $A$ is finite-dimensional. The desired result then follows from Lemma~\ref{lem:ST-pair-for-fd-non-positive-dg-algebra}.
\end{proof}

\subsection{ST-pairs for smooth non-positive dg algebras}\label{ss:ST-pair-for-smooth-non-positive-dg-algebra}

Let $\Gamma$ be a dg $K$-algebra satisfying the following conditions:
\begin{itemize}
\item[(SN1)] $H^{p}(\Gamma)=0$ for each integer $p>0$,
\item[(SN2)] $H^{0}(\Gamma)$ is finite-dimensional,
\item[(SN3)] $\per(\Gamma)\supseteq\Dfd(\Gamma)$.
\end{itemize}
The condition (SN3) is satisfied if $\Gamma$ is topologically homologically smooth (Lemma~\ref{lem:topologically homologically-smooth=>finite-projective-dimension}) or homologically smooth (Lemma~\ref{lem:homologically-smooth=>finite-projective-dimension}). Important examples of $\Gamma$ include complete Ginzburg dg algebras of Jacobi-finite quivers with potential and derived preprojective algebras of Dynkin quivers, which will be discussed in Section \ref{section calabi-yau}. If $\Gamma$ is homologically smooth, then the ingredients of the proof of the following lemma is already contained in \cite{Am09}.

\begin{lemma}\label{lem:standard-ST-pair-for-smooth-non-positive-dg-algebra}
$(\per(\Gamma),\Dfd(\Gamma),\Gamma)$ is an ST-triple inside $\per(\Gamma)$.
\end{lemma}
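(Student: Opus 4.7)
The plan is to verify the three conditions (ST1), (ST2), (ST3) of Definition~\ref{defn:ST-pair} for the silting object $M=\Gamma$. By Lemma~\ref{lem:non-positive-dg-algebra-and-silting} together with (SN1), $\Gamma$ is a silting object of $\per(\Gamma)$. The formula~\eqref{eq:Hom-space-from-free-dg-module} gives $\Hom_{\D(\Gamma)}(\Gamma,T[p])=H^p(T)$, hence
\[
\per(\Gamma)_\Gamma^{\leq n}=\per(\Gamma)\cap\D^{\leq n},\qquad \per(\Gamma)_\Gamma^{\geq n}=\per(\Gamma)\cap\D^{\geq n},
\]
where $(\D^{\leq 0},\D^{\geq 0})$ denotes the standard $t$-structure on $\D(\Gamma)$ from Example~\ref{ex:t-structure-for-non-positive-dg-algebra}, whose heart is equivalent to $\Mod H^0(\Gamma)$.

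The main step is to establish (ST2), namely that the standard $t$-structure on $\D(\Gamma)$ restricts to a bounded $t$-structure on $\per(\Gamma)$ with heart equivalent to $\mod H^0(\Gamma)$. This is the dg analogue of Amiot's argument from \cite[Section~2.1]{Am09} in the homologically smooth case, and the same strategy carries over under the weaker hypothesis (SN3). The technical core, which I expect to be the main obstacle, is the claim $(\ast)$: for every $T\in\per(\Gamma)$ and every $p\in\mathbb{Z}$, the cohomology $H^p(T)$ is finite-dimensional over $K$. Granted $(\ast)$, the truncation $\sigma^{\geq 1}T$ has bounded and finite-dimensional cohomology, so it lies in $\Dfd(\Gamma)\subseteq\per(\Gamma)$ by (SN3); the truncation triangle then forces $\sigma^{\leq 0}T\in\per(\Gamma)$, giving the restricted $t$-structure, and $(\ast)$ simultaneously identifies the heart with $\mod H^0(\Gamma)$. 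To prove $(\ast)$, I would use that (SN2) and (SN3) together imply every finite-dimensional $H^0(\Gamma)$-module lies in $\per(\Gamma)$, and argue via d\'evissage in $\per(\Gamma)=\thick(\Gamma)$ that it suffices to verify finite-dimensionality of $H^p(\Gamma)$ for every $p$. This last assertion should follow by a minimal semi-free resolution argument \`a la Amiot, combining (SN2) with (SN3).

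Once (ST2) is established, condition (ST1) is immediate: for $T\in\per(\Gamma)$ one has $\Hom_{\per(\Gamma)}(\Gamma,T)=H^0(T)=\sigma_\Gamma^0(T)$, which lies in the heart $\mod H^0(\Gamma)$ and is therefore finite-dimensional. For (ST3), Lemma~\ref{AI223}(c) applied to the silting object $\Gamma$ yields $\per(\Gamma)=\bigcup_n\per(\Gamma)_\Gamma^{\leq n}$ at once. The inclusion $\Dfd(\Gamma)\subseteq\bigcup_n\per(\Gamma)_\Gamma^{\geq n}$ follows from (SN3) together with the cohomological boundedness of objects in $\Dfd(\Gamma)$. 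Conversely, if $T\in\per(\Gamma)_\Gamma^{\geq n}$, the first equality already established gives $T\in\per(\Gamma)_\Gamma^{\leq m}$ for some $m$, so the cohomology of $T$ is supported in $[n,m]$ and is finite-dimensional in each degree by $(\ast)$; therefore $T\in\Dfd(\Gamma)$.
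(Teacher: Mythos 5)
Your proof follows essentially the same route as the paper: both reduce everything to the fact that the standard $t$-structure on $\D(\Gamma)$ restricts to $\per(\Gamma)$ with heart equivalent to $\mod H^0(\Gamma)$, which hinges on your claim $(\ast)$ that $H^p(T)$ is finite-dimensional for every $T\in\per(\Gamma)$ and every $p$. The only caveat is that the paper imports precisely this package (Hom-finiteness of $\per(\Gamma)$, the restricted $t$-structure, and $\Dfd(\Gamma)=\thick(\T_\Gamma^0)$) from \cite[Propositions 2.1(c) and 2.5]{KaY16}, whereas you leave the genuinely hard step --- finite-dimensionality of each $H^p(\Gamma)$, for which your d\'evissage-plus-minimal-resolution strategy \`a la Amiot is indeed the standard argument --- as an expectation rather than carrying it out; the remainder of your verification of (ST1)--(ST3) matches the paper's.
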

\begin{proof}
Let $\T:=\per(\Gamma)$, which is Hom-finite and Krull--Schmidt by \cite[Proposition 2.5]{KaY16}. By Lemma~\ref{lem:non-positive-dg-algebra-and-silting}, $\Gamma$ is a silting object of $\T$. It follows from the formula \eqref{eq:Hom-space-from-free-dg-module} that
\begin{eqnarray*}
\T_{\Gamma}^{\leq 0}\hspace{-5pt}&=\hspace{-5pt}&\{X\in\per(\Gamma)\mid H^p(X)=0~\forall p>0\},\\
\T_{\Gamma}^{\geq 0}\hspace{-5pt}&=\hspace{-5pt}&\{X\in\per(\Gamma)\mid H^p(X)=0~\forall p<0\},\\
\T_{\Gamma}^{0}\hspace{-5pt}&=\hspace{-5pt}&\{X\in\per(\Gamma)\mid H^p(X)=0~\forall p\neq 0\}.
\end{eqnarray*}
By \cite[Propositions 2.5 and 2.1(c)]{KaY16}, $(\tT_{\Gamma}^{\le 0}, \tT_{\Gamma}^{\ge 0})$ is a $t$-structure on $\T$ and $\Dfd(\Gamma)=\thick(\tT_{\Gamma}^{0})$. For any $X\in\T$, we have $H^p(X)=\Hom(\Gamma,X[p])=0$ for $p\gg 0$. It follows that $\tT_{\Gamma}^{\ge 0}\subseteq \Dfd(\Gamma)$.
So $(\per(\Gamma),\Dfd(\Gamma),\Gamma)$ is an ST-triple inside $\per(\Gamma)$.
\end{proof}

It is clear that under the conditions (SN1--3), the equality $\per(\Gamma)=\Dfd(\Gamma)$ holds if and only if $H^*(\Gamma)$ is finite-dimensional.

\subsection{Left-right symmetry}

This subsection is about the left-right symmetry of the categories and structures involved in an ST-pair.

\smallskip
Let $(\C,\D,M)$ be an ST-triple inside $\T$. By Proposition~\ref{prop:heart-of-silting-t-structure}, $\D_M^0$ is a length abelian category. Let $S$ be the direct sum of a complete set of pairwise non-isomorphic simple objects of $\D_M^0$. Then $\D_M^{\leq 0}$ (respectively, $\D_M^{\geq 0}$) is the smallest full subcategory of $\T$ which contains $S$ and is closed under extensions, direct summands and $[1]$ (respectively, $[-1]$), by Lemma~\ref{BBD}(b).
Put
\[
\T_{M,\geq 0}:=\bigcup\nolimits_{n\geq 0} M^{[-n,0]},~~\T_{M,\leq 0}:=\T_M^{\leq 0}.
\]
The following description of these categories are dual to the definition of $\T_M^{\leq 0}$ and $\T_M^{\geq 0}$.

\begin{lemma} \label{lem:dual-description-of-co-t-str}
We have
\begin{eqnarray*}
\T_{M,\leq 0}\hspace{-5pt}&=\hspace{-5pt}&\{X\in\T\mid \Hom(X[>\hspace{-3pt}0],S)=0\},\\
\T_{M,\geq 0}\hspace{-5pt}&=\hspace{-5pt}&\{X\in\T\mid\Hom(X[<\hspace{-3pt}0],S)=0\}.
\end{eqnarray*}
\end{lemma}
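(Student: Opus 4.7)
\textit{Proof proposal.} The strategy is to prove each equality by verifying the two inclusions, using the $t$-structure $(\T_M^{\leq 0},\T_M^{\geq 0})$ from (ST2) together with the duality $\Hom_\T(M_i,S_j)=\delta_{ij}\End(S_i)$ of Corollary~\ref{cor:Hom-duality-between-silting-and-smc} and the minimal $M$-resolution of Proposition~\ref{prop:minimal-resolution}. It will be convenient to reformulate both characterisations as $\Hom_\T(X,S[q])=0$ for all $q<0$ (respectively, all $q>0$).

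For the characterisation of $\T_{M,\leq 0}=\T_M^{\leq 0}$, the inclusion ``$\subseteq$'' is immediate from $t$-structure orthogonality: for $X\in\T_M^{\leq 0}$ and $q<0$, one has $S[q]\in\T_M^{-q}\subseteq\T_M^{\geq 1}$, so $\Hom_\T(X,S[q])=0$. For ``$\supseteq$'', I argue by contrapositive: if $X\notin\T_M^{\leq 0}$, then (ST3) combined with the non-degeneracy of the $t$-structure (Proposition~\ref{prop:heart-of-silting-t-structure}(c)) yields a largest $k\geq 1$ with $H^k_M(X)\neq 0$. Since $\T_M^0$ is a length category (Proposition~\ref{prop:heart-of-silting-t-structure}(a)), $H^k_M(X)$ admits a simple quotient $S_j$. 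Applying $\Hom_\T(-,S_j[-k])$ to the truncation triangle $\sigma^{\leq k-1}_M X\to X\to H^k_M(X)[-k]\to (\sigma^{\leq k-1}_M X)[1]$, the third term vanishes by orthogonality, producing an injection $\Hom_{\T_M^0}(H^k_M(X),S_j)\hookrightarrow\Hom_\T(X,S_j[-k])$ whose left side is nonzero. This contradicts the hypothesis at $q=-k$.

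For the characterisation of $\T_{M,\geq 0}$, the inclusion ``$\subseteq$'' reduces by extension-closure to checking the generators: for $i\in[-n,0]$ and $q>0$ one has $\Hom_\T(M[i],S[q])=\Hom_\T(M,S[q-i])$ with $q-i>0$, which vanishes since $S\in\T_M^{\leq 0}$. The reverse inclusion is the main obstacle. Given $X\in\T$ with $\Hom_\T(X,S[q])=0$ for all $q>0$, pick $k\geq 0$ such that $X[k]\in\T_M^{\leq 0}$ (possible by (ST3)). Proposition~\ref{prop:minimal-resolution} applied to $X[k]$ produces, for every $l>k$, objects $M^0,M^{-1},\ldots,M^{-l+1}\in\add(M)$ and $Y_l\in\T_M^{\leq -l}$ with $X[k]\in M^0\ast M^{-1}[1]\ast\cdots\ast M^{-l+1}[l-1]\ast Y_l$ and $\Hom_\T(X[k],S[p])\simeq\Hom_\T(M^{-p},S)$ for $0\leq p\leq l-1$. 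The hypothesis translates, via $\Hom_\T(X[k],S[p])=\Hom_\T(X,S[p-k])$, into vanishing for $p>k$, which by Corollary~\ref{cor:Hom-duality-between-silting-and-smc} forces $M^{-p}=0$ for all such $p$.

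The final step, where the only real subtlety lies, is to conclude that $Y_{k+1}=0$. In the recursive construction underlying Proposition~\ref{prop:minimal-resolution}, $M^{-p}=0$ means that the minimal right $\add(M)$-approximation of $Y_p[-p]\in\T_M^{\leq 0}$ is the zero map; this both refines $Y_p\in\T_M^{\leq -(p+1)}$ and, via the defining triangle $M^{-p}[p]\to Y_p\to Y_{p+1}\to M^{-p}[p+1]$, yields $Y_p\simeq Y_{p+1}$. Iterating for $p=k+1,k+2,\ldots$, one obtains $Y_{k+1}\in\bigcap_{l\geq k+1}\T_M^{\leq -l}$, which equals $0$ by the non-degeneracy of the $t$-structure (Proposition~\ref{prop:heart-of-silting-t-structure}(c)). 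Therefore $X[k]\in M^0\ast M^{-1}[1]\ast\cdots\ast M^{-k}[k]$, whence $X\in M^{[-k,0]}\subseteq\T_{M,\geq 0}$, completing the proof.
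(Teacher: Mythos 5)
Your proof is correct, but it diverges from the paper's at two points, and the second divergence makes your argument less self-contained than it could be.

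For the first equality, the paper's proof is a one-liner: the right-hand side is exactly ${}^{\perp_\T}\D_M^{\geq 1}=\T_M^{\leq 0}$, since $\D_M^{\geq 1}$ is the closure of $S[-1]$ under extensions, direct summands and $[-1]$. Your contrapositive argument via the top nonvanishing cohomology $H_M^k(X)$ and a simple quotient of it is perfectly valid and instructive, just longer.

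For the harder inclusion $\{X\mid\Hom(X,S[>\hspace{-3pt}0])=0\}\subseteq\T_{M,\geq 0}$, the approaches genuinely differ. The paper applies Proposition~\ref{prop:minimal-resolution} \emph{once} with a fixed large $l$, getting $X[k]\in M^{[0,k]}\ast Y$ with $Y\in\T_M^{\leq -l}$, and then shows $\Hom(Y,S[p])=0$ for \emph{all} $p$: for $p\geq l$ this is the proposition's Hom-isomorphism, and for $p<l$ it is plain $t$-structure orthogonality ($Y\in\T_M^{\leq -l}$, $S[p]\in\T_M^{\geq -p}$). Non-degeneracy then kills $Y$. You instead apply the proposition for \emph{every} $l>k$, deduce $M^{-p}=0$ for $p>k$ via Corollary~\ref{cor:Hom-duality-between-silting-and-smc}, and conclude by arguing that the remainders $Y_{k+1}\simeq Y_{k+2}\simeq\cdots$ lie in every $\T_M^{\leq -l}$. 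The identification $Y_{p}\simeq Y_{p+1}$ when $M^{-p}=0$ is correct, but it is a fact about the recursive construction in the \emph{proof} of Proposition~\ref{prop:minimal-resolution}, not about its stated conclusion; you invoke this explicitly. The argument is sound because minimal right approximations are unique up to isomorphism, but the paper's route only needs the statement of the proposition plus orthogonality, which is cleaner and more robust. You could in fact simplify your last step by dropping the iteration entirely: from a single application with $l=k+1$, orthogonality already gives $\Hom(Y,S[p])=0$ for $p\leq k$, and your computation $M^{-p}=0$ for $k<p\leq l-1$ is not needed at all.
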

\begin{proof}
The first equality follows from the fact $\T_M^{\leq 0}={}^{\perp_\T}\D_M^{\geq 1}$. The inclusion `$\subseteq$' in the second equality is clear because the category on the right hand contains $M$ and is closed under extensions, direct summands and $[-1]$. To show `$\supseteq$', take $X\in\T$ satisfying $\Hom(X[<\hspace{-3pt}0],S)=0$. By (ST3), there exists $n\geq 0$ such that $X\in \T_M^{\leq n}$. 
By Proposition~\ref{prop:minimal-resolution}, there exist $M'\in M^{[-n,0]}$ and $Y\in\T_M^{\leq -l}$ such that $X\in M'\ast Y$ and $\Hom(Y,S[p])\simeq \Hom(X,S[p])$. Therefore $\Hom(Y,S[p])=0$ for all $p\in\mathbb{Z}$, so $Y\in\bigcap_{n\in\mathbb{Z}}{}^{\perp_\T}\D_M^{\geq n}=\bigcap_{n\in\mathbb{Z}}\T^{\leq n}_M=0$, where the last equality is by Proposition~\ref{prop:heart-of-silting-t-structure}(c). Therefore $X\in M^{[-n,0]}\subseteq \T_{M,\geq 0}$.
\end{proof}

In view of Lemma~\ref{lem:dual-description-of-co-t-str}, the following properties are dual to (ST1--3).

\begin{proposition} \label{prop:ST=>TS}
We have
\begin{itemize}
\item[(ST1')] $\Hom_\T(T,S)$ is finite-dimensional for any object $T$ of $\T$,
\item[(ST2')] $(\T_{M,\geq 0},\T_{M,\leq 0})$ is a co-$t$-structure on $\T$,
\item[(ST3')] $\T=\bigcup_{n\in\mathbb{Z}}\T_{M,\leq n}$ and $\C=\bigcup_{n\in\mathbb{Z}}\T_{M,\geq n}$.
\end{itemize}
\end{proposition}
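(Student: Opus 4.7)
My plan is to verify the three conditions in the order (ST2'), (ST3'), (ST1'), since the co-$t$-structure produced by (ST2') gives filtrations that reduce (ST1') to combining (ST1) with Proposition~\ref{prop:minimal-resolution}. For (ST2') I would check the four axioms of Definition~\ref{defn:co-t-structure} directly. Axiom (0) on closure under summands is immediate: $\T_{M,\leq 0}=\T_M^{\leq 0}$ is the aisle of the $t$-structure given by (ST2), and each $M^{[-n,0]}$ is closed under summands by Lemma~\ref{AI223}(a), hence so is their union $\T_{M,\geq 0}$. Axiom (1) is clear by inspection. For axiom (2), any $X\in\T_{M,\geq 1}=\bigcup_{m\geq 0}M^{[-m-1,-1]}$ is an iterated extension of shifts $M[k]$ with $k\in\{-m-1,\ldots,-1\}$, so for $Y\in\T_{M,\leq 0}$ devissage reduces $\Hom(X,Y)$ to the spaces $\Hom(M[k],Y)=\Hom(M,Y[-k])$, which vanish since $-k\geq 1$ and $Y\in\T_M^{\leq 0}$.

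The essential step is axiom (3): $\T=\T_{M,\geq 1}\ast\T_{M,\leq 0}$. By (ST3) it suffices to prove $\T_M^{\leq n}\subseteq\T_{M,\geq 1}\ast\T_{M,\leq 0}$ by induction on $n\geq 0$; the case $n=0$ is trivial. For the inductive step, using that $\add(M[-n])$ is contravariantly finite (by Remark~\ref{rem:ST1}(c) applied after shift), pick a right $\add(M[-n])$-approximation $f\colon M_0[-n]\to X$ and form the triangle $M_0[-n]\xto{f}X\to Y\to M_0[-n+1]$. Applying $\Hom(M,-[p])$ and exploiting that $\Hom(M,f[n])$ is surjective by the approximation property gives $\Hom(M,Y[p])=0$ for $p\geq n$, so $Y\in\T_M^{\leq n-1}$. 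By induction there is a triangle $Y'\to Y\to Y''\to Y'[1]$ with $Y'\in\T_{M,\geq 1}$ and $Y''\in\T_{M,\leq 0}$. The octahedron applied to the composition $X\to Y\to Y''$ produces a triangle $Z\to X\to Y''\to Z[1]$ together with a triangle $M_0[-n]\to Z\to Y'\to M_0[-n+1]$. Since $M_0[-n]\in M^{[-n,-1]}\subseteq\T_{M,\geq 1}$ (for $n\geq 1$) and $\T_{M,\geq 1}$ is extension-closed (being an ascending union of the extension-closed subcategories $M^{[-m-1,-1]}$), we conclude $Z\in\T_{M,\geq 1}$, completing the decomposition.

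The first equality of (ST3') is (ST3) itself. For the second, unfold $\T_{M,\geq n}=\T_{M,\geq 0}[-n]=\bigcup_{m\geq 0}M^{[-m-n,-n]}$: each such subcategory is contained in $\C=\thick(M)$, and conversely $\C=\bigcup_{N\geq 0}M^{[-N,N]}$ by Lemma~\ref{AI223}(c), with $M^{[-N,N]}\subseteq\T_{M,\geq -N}$ (take $m=2N$ in the unfolding). Finally, for (ST1'), given any $T\in\T$, the decomposition from (ST2') produces a triangle $T'\to T\to T''\to T'[1]$ with $T'\in\T_{M,\geq 1}$ and $T''\in\T_{M,\leq 0}$. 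The space $\Hom(T'',S)$ is finite-dimensional: Proposition~\ref{prop:minimal-resolution} applied to $T''$ with $l=1$ yields $\Hom(T'',S)\simeq\Hom(M^0,S)$ for some $M^0\in\add(M)$, and $\Hom(M,S)$ is finite-dimensional by (ST1) applied to $S\in\T$. The space $\Hom(T',S)$ is also finite-dimensional by devissage, since $T'\in M^{[-m-1,-1]}$ for some $m$ and each $\Hom(M[k],S)=\Hom(M,S[-k])$ for $k\in\{-m-1,\ldots,-1\}$ is finite-dimensional by (ST1). The long exact sequence from the triangle then yields that $\Hom(T,S)$ is finite-dimensional. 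The main conceptual obstacle is the octahedron-and-induction bookkeeping in axiom (3) of (ST2'); once that is established, (ST3') and (ST1') are essentially formal.
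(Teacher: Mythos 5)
Your proof is correct and follows the same high-level outline as the paper's — verifying the three dual conditions (ST1')--(ST3') — but you re-derive several steps that the paper handles by citing lemmas already established just above the proposition. The paper proves axiom (3) of the co-$t$-structure in one line by invoking Lemma~\ref{lem:co-t-structure-for-presilting}: shifting it gives $\T_M^{\le n}=M^{[-n,-1]}\ast\T_M^{\le 0}$, and taking the union over $n$ and using (ST3) gives $\T=\T_{M,\ge 1}\ast\T_{M,\le 0}$ directly. Your induction-plus-octahedron argument re-derives the approximation step of that lemma inline; the octahedron you invoke is precisely the associativity of $\ast$ that is implicitly used inside the lemma's own proof, so the two arguments are mathematically the same content but yours is unpacked. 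For axioms (0) and (1) the paper quotes Lemma~\ref{lem:dual-description-of-co-t-str}, while you instead use Lemma~\ref{AI223}(a) plus the observation that $M^{[k,l]}$ and their ascending union are extension-closed; both are fine. For (ST1') the paper applies Proposition~\ref{prop:minimal-resolution} directly to $T$ (after shifting into $\T_M^{\le 0}$ via (ST3)), getting $\Hom_\T(T,S)\simeq\Hom_\T(M^0,S)$ in one step, whereas you first use the co-$t$-structure to split $T$ into a $\T_{M,\ge 1}$-piece and a $\T_{M,\le 0}$-piece and handle them separately by d\'evissage and the minimal-resolution proposition respectively — again correct, but a detour. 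In short, your proof works and makes the logical dependencies visible (it only relies on (ST2') to deduce (ST1'), so the ordering (ST2'), (ST3'), (ST1') is consistent), at the cost of re-doing work that the surrounding lemmas were designed to package.
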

\begin{proof}
(ST1') Let $T\in\T$. By Proposition~\ref{prop:minimal-resolution}, there exists $M^0\in\add(M)$ such that $\Hom_\T(T,S)$ is isomorphic to $\Hom_\T(M^0,S)$, which is finite-dimensional.

(ST2') We verify the four conditions in Definition~\ref{defn:co-t-structure}.
(0) and (1) follow from Lemma~\ref{lem:dual-description-of-co-t-str}.
(2) follows from the definition of the two categories. (3) holds because 
\begin{eqnarray*}
\T\hspace{-5pt}&=\hspace{-5pt}&\bigcup\nolimits_{n\geq 0}\T_M^{\leq n}=\bigcup\nolimits_{n\geq 0}(M^{[-n,-1]}\ast \T_M^{\leq 0})\\
\hspace{-5pt}&=\hspace{-5pt}&(\bigcup\nolimits_{n\geq 0}M^{[-n,-1]})\ast \T_M^{\leq 0}=\T_{M,\geq 1}\ast\T_{M,\leq 0},
\end{eqnarray*}
where the first equality follows from (ST3), and the second equality follows from Lemma~\ref{lem:co-t-structure-for-presilting}.

(ST3') This follows from (ST3) and Lemma~\ref{AI223}(c).
\end{proof}

\begin{remark}
The co-heart of the co-$t$-structure $(\T_{M,\geq 0},\T_{M,\leq 0})$ is $\add(M)$. This follows from Lemma~\ref{AI223}(b) because the co-heart consists of objects $X$ which belongs to $X\in M^{[-n,0]}$ for some positive integer $n$ and which satisfies  $\Hom_\T(M,X[<\hspace{-3pt}0])=0$. Let $\C_{M,\geq 0}:=\T_{M,\geq 0}$ and $\C_{M,\leq 0}:=\T_{M,\leq 0}\cap\C$. Then $\C_{M,\leq 0}=\bigcup_{n\geq 0}M^{[0,n]}$ and $(\C_{M,\geq 0},\C_{M,\leq 0})$ is a bounded co-$t$-structure on $\C$ with co-heart $\add(M)$. The $t$-structure $(\D_M^{\leq 0},\D_M^{\geq 0})$  is right orthogonal to the co-t-structure $(\C_{M,\geq 0},\C_{M,\leq 0})$ in the sense of Bondarko \cite[Definition 2.5.1]{Bondarko10a}.
\end{remark}

The categories $\C$ and $\D$ determine each other in the following way.

\begin{theorem}\label{thm:S-and-T-determine-each-other}
We have $\D=\T_{\C}^{\chf}$ and $\C=\T_{\hf}^{\D}$.
\end{theorem}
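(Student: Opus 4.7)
The plan is to prove each of the two equalities by a pair of inclusions, using $M$ as a ``generator'' on the silting side and $S$ (the sum of simples of $\D_M^0$) as a ``cogenerator'' on the heart side, together with the conditions (ST1)--(ST3) and their duals (ST1')--(ST3') from Proposition~\ref{prop:ST=>TS}.

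\smallskip
\textbf{The equality $\D=\T_\C^{\chf}$.} For the inclusion $\D\subseteq \T_\C^{\chf}$, fix $Y\in\D$ and consider the full subcategory
\[
\X_Y:=\{X\in\T\mid \bigoplus\nolimits_{p\in\mathbb{Z}}\Hom_\T(X,Y[p])\text{ is finite-dimensional}\}.
\]
This is closed under shifts, direct summands and extensions (a long exact sequence of finite-dimensional spaces remains finite-dimensional), hence is a thick subcategory of $\T$. Since $\C=\thick(M)$, it is enough to show $M\in\X_Y$. By (ST3) applied to both $\T$ and $\D$, there exist integers $n\leq m$ with $Y\in\T_M^{\geq n}\cap \T_M^{\leq m}$, so $\Hom_\T(M,Y[p])=0$ unless $n\leq p\leq m$, and each of these finitely many nonzero terms is finite-dimensional by (ST1). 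Conversely, for $\T_\C^{\chf}\subseteq\D$, take $Y\in\T_\C^{\chf}$. By (ST3) there is some $m$ with $Y\in\T_M^{\leq m}$. The hypothesis $\bigoplus_p\Hom_\T(M,Y[p])$ finite-dimensional forces only finitely many $p$ to give a nonzero summand, so there exists $n$ with $\Hom_\T(M,Y[p])=0$ for all $p<n$, i.e.\ $Y\in\T_M^{\geq n}$. By (ST3) again, $Y\in\D$.

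\smallskip
\textbf{The equality $\C=\T_{\hf}^{\D}$.} For $\C\subseteq\T_{\hf}^{\D}$, fix $Y\in\D$ and observe that
\[
\Y_Y:=\{X\in\T\mid \bigoplus\nolimits_{p\in\mathbb{Z}}\Hom_\T(X,Y[p])\text{ is finite-dimensional}\}
\]
is again a thick subcategory, so it suffices to show $M\in\Y_Y$; this was already verified in the previous paragraph. For the reverse inclusion $\T_{\hf}^{\D}\subseteq \C$, we switch to the dual picture provided by Proposition~\ref{prop:ST=>TS}: we must find, for $Y\in \T_{\hf}^{\D}$, an integer $m$ with $Y\in\T_{M,\geq m}$, since then (ST3') gives $Y\in \C$. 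By (ST3'), $Y\in\T_{M,\leq n}$ for some $n$, and by Lemma~\ref{lem:dual-description-of-co-t-str} membership in $\T_{M,\geq m}$ amounts to the vanishing $\Hom_\T(Y,S[q])=0$ for $q>-m$. Since $S\in\D_M^0\subseteq\D$ and $Y\in\T_{\hf}^{\D}$, the space $\bigoplus_{q}\Hom_\T(Y,S[q])$ is finite-dimensional, so $\Hom_\T(Y,S[q])=0$ for all $q\gg 0$, which furnishes the required $m$.

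\smallskip
\textbf{Expected obstacle.} The two ``easy'' inclusions ($\D\subseteq\T_\C^{\chf}$ and $\C\subseteq\T_{\hf}^{\D}$) are essentially formal once one reduces to $M$ via the thick-subcategory observation. The genuine content lies in the reverse inclusions: the inclusion $\T_\C^{\chf}\subseteq\D$ uses only the $t$-structure $(\T_M^{\leq 0},\T_M^{\geq 0})$ of (ST2), but the inclusion $\T_{\hf}^{\D}\subseteq\C$ requires the dual co-$t$-structure $(\T_{M,\geq 0},\T_{M,\leq 0})$ and in particular the description of $\T_{M,\geq 0}$ by vanishing against $S$ (Lemma~\ref{lem:dual-description-of-co-t-str}). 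Thus the main technical step is to have the left-right symmetric picture of Section~\ref{ss:ST-pair} at our disposal; granted that, the argument is a direct combination of (ST1)--(ST3') with the thick-subcategory reduction.
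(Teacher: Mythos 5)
Your proof is correct and follows essentially the same route as the paper's: both reduce the finiteness condition to testing against $M$ (via thickness of $\C=\thick(M)$ and (ST1), (ST3)) in one direction and against $S$ (via Lemma~\ref{lem:dual-description-of-co-t-str} and (ST1'), (ST3')) in the other, the paper merely packaging the argument as a chain of equalities rather than four inclusions. Your closing remark correctly identifies the left--right symmetric picture of Proposition~\ref{prop:ST=>TS} as the real content.
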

\begin{proof}
We have
\begin{eqnarray*}
\T_{\C}^{\chf}\hspace{-5pt}&=\hspace{-5pt}&\{Y\in\T\mid\bigoplus_{p\in\mathbb{Z}}\Hom_\T(X,Y[p]) \text{ is finite-dimensional }\forall X\in\C\}\\
\hspace{-5pt}&=\hspace{-5pt}&\{Y\in\T\mid\bigoplus_{p\in\mathbb{Z}}\Hom_\T(M,Y[p]) \text{ is finite-dimensional }\}\\
\hspace{-5pt}&=\hspace{-5pt}&\{Y\in\T\mid\exists~m,n\in\mathbb{Z} \text{ s.t. }\Hom_\T(M,Y[p])=0\text{ for }p>m\text{ and }p<n\}\\
\hspace{-5pt}&=\hspace{-5pt}&\bigcup_{m,n\in\mathbb{Z}}(\T_M^{\geq -m}\cap\T_M^{\leq -n})\\
\hspace{-5pt}&=\hspace{-5pt}&\bigcup_{m,n\in\mathbb{Z}}(\D_M^{\geq -m}\cap\D_M^{\leq -n})\\
\hspace{-5pt}&=\hspace{-5pt}&\D,
\end{eqnarray*}
and
\begin{eqnarray*}
\T^{\D}_{\hf}\hspace{-5pt}&=\hspace{-5pt}&\{X\in\T\mid\bigoplus_{p\in\mathbb{Z}}\Hom_\T(X,Y[p]) \text{ is finite-dimensional }\forall Y\in\D\}\\
\hspace{-5pt}&=\hspace{-5pt}&\{X\in\T\mid\bigoplus_{p\in\mathbb{Z}}\Hom_\T(X,S[p]) \text{ is finite-dimensional }\}\\
\hspace{-5pt}&=\hspace{-5pt}&\{X\in\T\mid\exists~m,n\in\mathbb{Z} \text{ s.t. }\Hom_\T(X,S[p])=0\text{ for }p>m\text{ and }p<n\}\\
\hspace{-5pt}&=\hspace{-5pt}&\bigcup_{m,n\in\mathbb{Z}}(\T_{M,\geq -m}\cap\T_{M,\leq -n})\\
\hspace{-5pt}&=\hspace{-5pt}&\bigcup_{m,n\in\mathbb{Z}}(\C_{M,\geq -m}\cap\C_{M,\leq -n})\\
\hspace{-5pt}&=\hspace{-5pt}&\C. \qedhere
\end{eqnarray*}
\end{proof}

Immediately we obtain the following corollary.

\begin{corollary}\label{cor:ST-pair-for-category-of-finite-type}
Let $(\C,\D)$ be an ST-pair. Then $\C$ is of finite type if and only if $\C\subseteq \D$; $\D$ is of finite type if and only if $\C\supseteq\D$.
\end{corollary}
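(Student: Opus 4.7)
The plan is to deduce the corollary directly from Theorem~\ref{thm:S-and-T-determine-each-other}, which identifies $\D=\T^{\chf}_{\C}$ and $\C=\T^{\D}_{\hf}$. With these two equalities in hand the corollary should be a routine unfolding of definitions, so I would not expect a genuine obstacle.

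For the first equivalence, I would unfold the definition of ``finite type'': $\C$ is of finite type precisely when for every pair $X,Y\in\C$ the total $\Hom$-space $\bigoplus_{p\in\mathbb{Z}}\Hom_{\T}(X,Y[p])$ is finite-dimensional. Fixing $Y$ and letting $X$ range over $\C$, this says exactly that every $Y\in\C$ lies in $\T^{\chf}_{\C}$. Thus $\C$ is of finite type if and only if $\C\subseteq\T^{\chf}_{\C}$, which by Theorem~\ref{thm:S-and-T-determine-each-other} is the same as $\C\subseteq\D$.

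For the second equivalence, I would argue dually: $\D$ is of finite type if and only if for every $X\in\D$ and every $Y\in\D$ the space $\bigoplus_{p\in\mathbb{Z}}\Hom_{\T}(X,Y[p])$ is finite-dimensional; fixing $X$ and varying $Y\in\D$, this is exactly the condition $X\in\T^{\D}_{\hf}$. Hence $\D$ is of finite type if and only if $\D\subseteq\T^{\D}_{\hf}$, which by Theorem~\ref{thm:S-and-T-determine-each-other} coincides with $\D\subseteq\C$. Since both directions of each equivalence are purely formal once Theorem~\ref{thm:S-and-T-determine-each-other} is granted, the corollary should follow without additional input. The only thing worth a sentence in the written proof is the small interchange of quantifiers showing that the ``$X$ and $Y$ both in $\C$ (resp.\ $\D$)'' finite-type condition is literally the containment $\C\subseteq\T^{\chf}_{\C}$ (resp.\ $\D\subseteq\T^{\D}_{\hf}$).
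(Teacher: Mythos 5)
Your proof is correct and follows the same route as the paper, which simply states that the corollary is immediate from Theorem~\ref{thm:S-and-T-determine-each-other}; the unfolding of definitions you spell out (together with the implicit use of $\C$ and $\D$ being full in $\T$, so that $\Hom_\C=\Hom_\T$ on $\C$ and likewise for $\D$) is exactly what makes it immediate.
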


\begin{remark}
The Euler form $K_0(\C)\times K_0(\D)\to\mathbb{Z}$, $([X],[Y])\mapsto \sum_{p\in\mathbb{Z}}\Hom(X,Y[p])$, is a non-degenerate bilinear form. In fact, $K_0(\C)$ is free with basis the classes of indecomposable direct summands of $M$, and $K_0(\D)$ is free with basis the classes of simple objects of $\D_M^0$. By Corollary~\ref{cor:Hom-duality-between-silting-and-smc}, these two bases are dual to each other up to the dimensions of the endomorphism algebras of the simple objects.
\end{remark}


\section{From silting objects to $t$-structures}
\label{s:order-preserving-map}

In \cite{KV88} Keller and Vossieck constructed a map from the set of silting objects to the set of bounded $t$-structures on derived categories of Dynkin quivers. This work was later generalised in \cite{KN2,KoY14,SY16} for derived categories of non-positive dg algebras. 
In this section, we make a further generalisation in the framework of an ST-pair $(\C,\D)$  and establishing an order-preserving injective map from the set of silting objects of $\C$ to the set of  bounded $t$-structures on $\D$. In the situations of \cite{KV88,KN2,KoY14,SY16}, there are natural ST-pairs and for these ST-pairs our map coincides with the maps in \emph{loc.cit.}.


\subsection{$t$-structures induced by a presilting object}
In this subsection we prove the following result, which generalises \cite[Theorem 4.4]{IYa14} and plays a fundamental role in the construction of the map from silting objects to $t$-structures. We remark that $M$ is assumed to be silting in the original statement of \cite[Theorem 4.4]{IYa14}. Let $\T$ be a $K$-linear triangulated category with shift functor $[1]$. 

\begin{lemma}\label{IYa-thm44}
Let $M,N$ be presilting objects of $\T$ satisfying $\thick(M)=\thick(N)$. Assume that both $\add(M)$ and $\add(N)$ are contravariantly finite in $\T$. Then $(\tT_{M}^{\le 0},\tT_{M}^{\ge 0})$ is a $t$-structure on $\T$ if and only if $(\tT_{N}^{\le 0}, \tT_{N}^{\ge 0})$ is a $t$-structure on $\T$.
\end{lemma}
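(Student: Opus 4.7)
Since the conclusion is symmetric in $M$ and $N$, it suffices to prove one direction: assume $(\T_M^{\le 0},\T_M^{\ge 0})$ is a $t$-structure, and deduce that $(\T_N^{\le 0},\T_N^{\ge 0})$ is as well. First I would unpack the relation between $M$ and $N$. Both are silting in the common thick subcategory $\C=\thick(M)=\thick(N)$, so Lemma~\ref{AI223}(c) applied inside $\C$ produces integers $a\le b$ with $N\in M^{[a,b]}$; Lemma~\ref{lemma32} then gives $M\in N^{[-b,-a]}$. Translating through Lemma~\ref{lem:partial-orders-of-presilting-and-t-structure} yields the sandwich $\T_M^{\le -a}\supseteq\T_N^{\le 0}\supseteq\T_M^{\le -b}$. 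A complementary filtration argument (using $M\in N^{[-b,-a]}$ to bound $\Hom(M,Y[p])$ in terms of $\Hom(N,Y[p-j])$ for $-b\le j\le -a$) shows that $\T_N^{\ge 1}\subseteq \T_M^{\ge -b+1}$.

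For the orthogonality axiom $\Hom_\T(\T_N^{\le 0},\T_N^{\ge 1})=0$, take $X\in\T_N^{\le 0}$ and $Y\in\T_N^{\ge 1}$. Since $\add(N)$ is contravariantly finite, Lemma~\ref{triangle1} applies to $N$, giving $X\in N^{[0,l-1]}*\T_N^{\le -l}$ for any $l>0$. The contribution of the $N^{[0,l-1]}$-piece to $\Hom(-,Y)$ vanishes directly, because $\Hom(N[i],Y)=\Hom(N,Y[-i])=0$ for $i\ge 0$ by the definition of $\T_N^{\ge 1}$. For the tail piece in $\T_N^{\le -l}\subseteq \T_M^{\le -l-a}$, choosing $l\ge b-a$ places it in $\T_M^{\le -b}$; the $M$-$t$-structure orthogonality $\Hom(\T_M^{\le -b},\T_M^{\ge -b+1})=0$ combined with $Y\in\T_M^{\ge -b+1}$ finishes the vanishing.

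For the decomposition axiom $\T=\T_N^{\le 0}*\T_N^{\ge 1}$, for $X\in\T$ I would first apply the $M$-truncation at the cut $-b$/$-b+1$ to obtain a triangle $A\to X\to B\to A[1]$ with $A\in\T_M^{\le -b}\subseteq\T_N^{\le 0}$ and $B\in\T_M^{\ge -b+1}$. It remains to further decompose $B$ into an $N$-aisle part and its orthogonal. The idea is to iteratively peel off from $B$, for $q=0,1,\ldots,b-a-1$, the minimal right $\add(N[q])$-approximations (each existing by contravariant finiteness of $\add(N)$), producing at each stage a triangle whose left term is an extension of $\add(N[q])$-objects (hence lies in $\T_N^{\le 0}$) and whose right term remains in $\T_M^{\ge -b+1}$ while gaining the vanishing $\Hom(N[q],-)=0$. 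Assembling the accumulated left terms with $A$ produces the required $N$-truncation triangle for $X$.

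The main obstacle is the inductive construction in the decomposition step: one must verify that after $b-a$ approximation stages the residual term genuinely lies in $\T_N^{\ge 1}$, i.e.\ kills $\Hom(N[q],-)$ for \emph{every} $q\ge 0$ and not merely for the finitely many $q<b-a$ that were explicitly processed. The escape is that the standing containment $B\in\T_M^{\ge -b+1}$ together with $N[q]\in M^{[a+q,b+q]}$ already forces $\Hom(N[q],-)$ to vanish automatically once $q\ge b-a$, so only a finite list of shifts require active treatment. The contravariant finiteness assumption is used essentially at each stage to guarantee that the successive approximation triangles can be formed.
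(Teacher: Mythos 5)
Your reduction to one direction, the sandwich $\T_M^{\le -a}\supseteq\T_N^{\le 0}\supseteq\T_M^{\le -b}$, and the verification of the orthogonality axiom (splitting $X\in\T_N^{\le 0}$ by Lemma~\ref{triangle1} and pushing the tail into $\T_M^{\le -b}$) are all correct and agree with the paper's argument. The gap is in the decomposition axiom $\T=\T_N^{\le 0}\ast\T_N^{\ge 1}$, which is the genuinely hard part. Your iterative peeling of minimal right $\add(N[q])$-approximations from $B\in\T_M^{\ge -b+1}$ rests on two claims that fail. First, the cone $B'$ of a right $\add(N[q])$-approximation $N^q\to B$ need not remain in $\T_M^{\ge -b+1}$: the connecting map feeds $\Hom_\T(M,N^q[p+1])$ into $\Hom_\T(M,B'[p])$, and for $N^q\in\add(N[q])\subseteq M^{[a+q,\,b+q]}$ with $q<b-a$ and $p\le -b$ this group is built from spaces $\Hom_\T(M,M[j])$ with $j\le 0$, which presilting does not kill. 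Second, and more fatally, your ``escape'' for the unprocessed shifts does not survive the iteration: even though $\Hom_\T(N[q''],B)=0$ for all $q''\ge b-a$ at the start, the triangle $N^q\to B\to B'\to N^q[1]$ contributes $\Hom_\T(N[q''],N^q[1])=\Hom_\T(N,N'[q+1-q''])$ to $\Hom_\T(N[q''],B')$, and since $q+1-q''\le 0$ this is again not killed by presilting. So already after one peeling step the residual object may fail $\Hom_\T(N[q''],-)=0$ for infinitely many $q''$, and the induction has no hypothesis to close on.

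The paper circumvents exactly this difficulty by invoking \cite[Lemma 4.6]{IYa14}: for the finite window $\X=N^{[0,2n]}$ one has $\T=\X\ast\X^{\perp}$ --- this is precisely the nontrivial approximation statement your iteration is implicitly trying to reprove. Given $Y\in\X^{\perp}$, one then truncates with the $M$-$t$-structure at level $-2n-1$ and checks that the upper truncation $Y''$ is right-orthogonal to all of $\T_N^{\le 0}=N^{[0,n-1]}\ast M^{[n,2n]}\ast\T_M^{\le -2n-1}$; the finite piece $N^{[0,n-1]}\ast M^{[n,2n]}$ lies in $N^{[0,2n]}\cap M^{[0,2n]}$ and hence kills both $Y$ and $Y'[1]$, so it kills $Y''$, while the deep truncation kills the tail. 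To salvage your strategy you would essentially have to reprove that lemma; as written, the decomposition step is not established.
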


\begin{proof}
We only show that, if $(\tT_{M}^{\le 0},\tT_{M}^{\ge 0})$ is a $t$-structure on $\T$, so is $(\tT_{N}^{\le 0},\tT_{N}^{\ge 0})$. It is enough to prove that (1) $\Hom_{\T}(\tT_{N}^{\le 0}, \tT_{N}^{\ge 1})=0$ and (2) $\T=\tT_{N}^{\le 0}\ast \tT_{N}^{\ge 1}$.

Since $N\in \thick(M)$, it follows from Lemma \ref{AI223}(c) that there exist non-negative integers $m,n$ such that $N\in M^{[-m,n]}$ (equivalently, $M\in N^{[-n,m]}$ by Lemma \ref{lemma32}). Without loss of generality, we may assume $m=0$ and $n>0$. By Lemma \ref{lem:partial-orders-of-presilting-and-t-structure}, we have $\tT_{M}^{\le 0}\supseteq \tT_{N}^{\le 0}\supseteq \tT_{M}^{\le -n}$. Then
\[
N^{[0,n-1]}\ast \tT_{M}^{\le -n} \subseteq \tT_{N}^{\le 0}=N^{[0,n-1]}\ast \tT_{N}^{\le -n}\subseteq N^{[0,n-1]}\ast \tT_{M}^{\le -n}, 
\]
where the equality is obtain by applying Lemma \ref{triangle1} to the presilting object $N$. So we obtain the equality
\begin{eqnarray}\label{decomp-subcat}
\tT_{N}^{\le 0}=N^{[0,n-1]}\ast \tT_{M}^{\le -n}.
\end{eqnarray}

(1) By the equality (\ref{decomp-subcat}), it suffices to show the equalities $\Hom_{\T}(N^{[0,n-1]}, \tT_{N}^{\ge 1})=0$ and $\Hom_{\T}(\tT_{M}^{\le -n}, \tT_{N}^{\ge 1})=0$. The first equality is clear by the definition of $\tT_{N}^{\ge 1}$. To show the second equality, take $X\in \tT_{N}^{\ge 1}$; then $\Hom_{\T}(M,X[<\hspace{-3pt}-n+1])\subseteq\Hom_{\T}(N^{[0,n]},X[<\hspace{-3pt}1])=0$, showing that $\T_N^{\geq 1}\subseteq \T_M^{\geq -n+1}$.

(2) Take $\X:=N^{[0,2n]}$ and $\Y:=\X^{\perp_{{\T}}}$. Then $\T=\X\ast \Y$ by \cite[Lemma 4.6]{IYa14}.
We claim that $\Y\subseteq \tT_{N}^{\le 0}\ast \tT_{N}^{\ge 1}$.
Then
\[
\T=\X\ast \Y \subseteq \tT_{N}^{\le 0}\ast (\tT_{N}^{\le 0}\ast \tT_{N}^{\ge 1})= \tT_{N}^{\le 0}\ast \tT_{N}^{\ge 1} \subseteq \T,
\]
implying the equality $\T=\tT_{N}^{\le 0}\ast \tT_{N}^{\ge 1}$.

Now we show the inclusion $\Y\subseteq \tT_{N}^{\le 0}\ast \tT_{N}^{\ge 1}$.
Let $Y\in \Y$. 
Since $(\tT_{M}^{\le -2n-1}, \tT_{M}^{\ge -2n-1})$ is a $t$-structure, there is a triangle 
$Y' \to Y \to Y'' \to Y'[1]$ 
with $Y'\in \tT_{M}^{\le -2n-1}$ and $Y''\in \tT_{M}^{\ge -2n}$. Since $Y'\in \T_M^{\leq -2n-1}\subseteq\tT_{N}^{\le 0}$, it remains to show $Y''\in \tT_{N}^{\ge 1}$. By the equality (\ref{decomp-subcat}) and  Lemma \ref{triangle1}, we have 
\[
\tT_{N}^{\le 0}=N^{[0,n-1]}\ast M^{[n, 2n]}\ast \tT_{M}^{\le -2n-1}.
\]
Because $N\in M^{[0,n]}$ and $M\in N^{[-n,0]}$, there is an inclusion $N^{[0,n-1]}\ast M^{[n,2n]}\subseteq M^{[0,2n]}\cap N^{[0,2n]}$.
This implies the vanishing of $\Hom_\T(N^{[0,n-1]}\ast M^{[n,2n]},Y)$ and $\Hom_\T(N^{[0,n-1]}\ast M^{[n,2n]},Y'[1])$. It follows that $\Hom_\T(N^{[0,n-1]}\ast M^{[n,2n]},Y'')=0$. Since $\Hom_\T(\T_M^{\leq -2n-1},Y'')$ vanishes, so does $\Hom_\T(\T_N^{\leq 0},Y'')$. In particular, $Y''\in \T_N^{\geq 1}$.
\end{proof}

\begin{proposition}\label{ST-property} 
Let $(\C,\D,M)$ be an ST-triple inside $\T$ and let $N$ be a silting object of $\C$. Then $(\C,\D,N)$ is an ST-triple.
\end{proposition}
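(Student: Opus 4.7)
The plan is to verify the three axioms (ST1), (ST2), (ST3) for $N$ in place of $M$, relying on the lemmas already assembled in the excerpt. Since $M$ and $N$ are both silting objects of $\C$, we have $\thick(M)=\C=\thick(N)$, which is the crucial compatibility we will exploit throughout.

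The condition (ST1) for $N$ is immediate from Remark~\ref{rem:ST1}(b): the class of objects $X$ with $\Hom_\T(X,T)$ finite-dimensional for every $T\in\T$ is a thick subcategory of $\T$, and $N\in\thick(M)=\C$. For (ST2), I would apply Lemma~\ref{IYa-thm44} to the pair of presilting objects $M$ and $N$. Its hypotheses are $\thick(M)=\thick(N)$ (already observed) and that $\add(M)$, $\add(N)$ are contravariantly finite in $\T$. The latter follows from Remark~\ref{rem:ST1}(c) applied to (ST1) for $M$ and the just-established (ST1) for $N$. Hence $(\T_N^{\le 0},\T_N^{\ge 0})$ is a $t$-structure on $\T$.

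For (ST3), the idea is to compare $\T_N^{\le n}$ and $\T_N^{\ge n}$ with their $M$-counterparts using Lemma~\ref{lem:partial-orders-of-presilting-and-t-structure}. Since $\thick(M)=\thick(N)$, Lemma~\ref{AI223}(c) and Lemma~\ref{lemma32} yield integers $a,b\ge 0$ with $N\in M^{[-a,b]}$ and $M\in N^{[-b,a]}$. Translating these via Lemma~\ref{lem:partial-orders-of-presilting-and-t-structure} gives the two-sided inclusions
\[
\T_M^{\le -b}\subseteq \T_N^{\le 0}\subseteq \T_M^{\le a},\qquad \T_N^{\le -a}\subseteq \T_M^{\le 0}\subseteq \T_N^{\le b}.
\]
Shifting and taking the union over all $n\in\mathbb{Z}$ shows $\bigcup_n\T_N^{\le n}=\bigcup_n\T_M^{\le n}=\T$, which is the first half of (ST3). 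Taking right perpendiculars in the first chain of inclusions gives the dual chain
\[
\T_M^{\ge -b+1}\supseteq \T_N^{\ge 1}\supseteq \T_M^{\ge a+1},
\]
and in particular $\T_N^{\ge 0}\subseteq \T_M^{\ge -b}\subseteq \D$, where the last inclusion uses that $\T_M^{\ge k}\subseteq\D$ for every $k$ (a consequence of (ST3) for $M$ and the fact that $\D$ is closed under shifts). Finally, shifting the symmetric inclusion and unioning yields $\bigcup_n\T_N^{\ge n}\supseteq\bigcup_n\T_M^{\ge n}=\D$, while the reverse inclusion holds because $\T_N^{\ge 0}\subseteq \D$. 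This verifies the second half of (ST3), so $(\C,\D,N)$ is indeed an ST-triple.

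There is no real obstacle here beyond bookkeeping: the whole argument is a careful application of Lemma~\ref{IYa-thm44} together with the sandwich inequalities provided by Lemma~\ref{lem:partial-orders-of-presilting-and-t-structure}. The only subtle point is making sure that (ST1) for $N$ is available before invoking Lemma~\ref{IYa-thm44}, which is why I handle the axioms in the order (ST1), (ST2), (ST3).
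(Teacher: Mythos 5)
Your proof is correct and follows essentially the same route as the paper: (ST1) via thickness (Remark~\ref{rem:ST1}(b)), (ST2) via Lemma~\ref{IYa-thm44}, and (ST3) by sandwiching $\T_N^{\leq 0}$ and $\T_N^{\geq 0}$ between shifts of the corresponding $M$-subcategories using Lemma~\ref{lem:partial-orders-of-presilting-and-t-structure}. Your handling is slightly more verbose (a single sandwich $N\in M^{[-n,n]}$, as the paper uses, already suffices for both directions of the unions), but the argument and its reliance on having (ST1) and (ST2) in place before taking perpendiculars are exactly right.
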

\begin{proof}
We need to verify the three conditions (ST1--3) for $N$.

(ST1) This follows from (ST1) for $M$, see Remark~\ref{rem:ST1}(b).

(ST2) This follows from Lemma~\ref{IYa-thm44} and (ST2) for $M$.

(ST3) Since $N\in \C=\thick(M)$, it follows from Lemma~\ref{AI223}(c) that there exists an integer $n\ge 0$ such that $N\in M^{[-n,n]}$. By Lemmas~\ref{lem:partial-order-and-interval} and~\ref{lem:partial-orders-of-presilting-and-t-structure}, we have $\tT_{M}^{\le n}\supseteq \tT_{N}^{\le 0}\supseteq \tT_{M}^{\le -n}$, and hence $\T_M^{\ge n}\subseteq \tT_{N}^{\ge 0}\subseteq \tT_{M}^{\ge -n}$. Therefore
\[
\T=\bigcup\nolimits_{l\in\mathbb{Z}} \T_M^{\leq l}=\bigcup\nolimits_{l\in\mathbb{Z}}\T_N^{\leq l},~~\text{and}~~
\D=\bigcup\nolimits_{l\in\mathbb{Z}} \T_M^{\geq l}=\bigcup\nolimits_{l\in\mathbb{Z}}\T_N^{\geq l}.\qedhere
\]
\end{proof}

\subsection{Order-preserving map}\label{ss:order-preserving-map}
Let $(\C,\D)$ be an ST-pair. 
In view of Proposition~\ref{prop:heart-of-silting-t-structure}(b), we  introduce a new class of $t$-structures as follows. 

\begin{definition}
A bounded $t$-structure $(\tD^{\le 0}, \tD^{\ge 0})$ on $\D$ is said to be \emph{$\C$-silting} if there exists a silting object $M$ of $\C$ such that $\tD^{\le 0}=\tD_{M}^{\le 0}$ (or equivalently $\tD^{\ge 0}=\tD_{M}^{\ge 0}$). 
\end{definition}

Then $\C$-silting $t$-structures have the following remarkable property by Proposition~\ref{prop:heart-of-silting-t-structure}(a).

\begin{corollary}\label{C-silt is alg} 
Every $\C$-silting $t$-structure on $\D$ is algebraic. 
\end{corollary}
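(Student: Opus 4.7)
The statement follows almost immediately from the substantial work already done in Proposition~\ref{prop:heart-of-silting-t-structure}, so my plan is essentially to assemble three ingredients that were established there.

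First, I would fix a silting object $M$ of $\C$ realising the given $\C$-silting $t$-structure, so that the $t$-structure is $(\D_M^{\leq 0},\D_M^{\geq 0})$. By Proposition~\ref{prop:heart-of-silting-t-structure}(b) its heart is $\D_M^0=\T_M^0$, and by Proposition~\ref{prop:heart-of-silting-t-structure}(a) the functor $\Hom_\T(M,-)$ restricts to an equivalence of abelian categories
\[
\T_M^0\;\xrightarrow{\;\simeq\;}\;\mod E,\qquad E:=\End_\T(M).
\]
Next, I would observe that (ST1) (together with Remark~\ref{rem:ST1}) forces $E=\Hom_\T(M,M)$ to be a finite-dimensional $K$-algebra. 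Hence $\mod E$ is a length category with only finitely many isomorphism classes of simple objects (one for each summand in the Wedderburn decomposition of $E/\mathrm{rad}(E)$).

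Finally, since being a length category and having only finitely many isomorphism classes of simple objects are properties preserved by abelian equivalences, these properties transport back along the equivalence above to $\T_M^0=\D_M^0$. This is exactly the definition of an algebraic $t$-structure, concluding the proof.

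There is no real obstacle here: once Proposition~\ref{prop:heart-of-silting-t-structure} is in place, the result is a direct consequence of Morita theory for finite-dimensional algebras. The only thing one must be slightly careful about is that the two notions of heart (inside $\T$ versus inside $\D$) coincide, which is exactly the content of Proposition~\ref{prop:heart-of-silting-t-structure}(b) and relies on the condition $\T_M^{\geq 0}\subseteq\D$ encoded in (ST3).
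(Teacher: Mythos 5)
Your proof is correct and follows exactly the paper's route: the corollary is stated as an immediate consequence of Proposition~\ref{prop:heart-of-silting-t-structure}(a), whose content is precisely the equivalence $\T_M^0\simeq\mod\End_\T(M)$ with $\End_\T(M)$ finite-dimensional by (ST1), so the heart is a length category with finitely many simples. The one point you use implicitly — that Proposition~\ref{prop:heart-of-silting-t-structure} applies to the chosen silting object $M$ and not only to the distinguished one from the definition of ST-pair — is exactly Proposition~\ref{ST-property}, which the paper also invokes in this passage.
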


Proposition~\ref{ST-property} allows us to establish an order-preserving map from the set $\silt(\C)$ of silting objects of $\C$ to the set $\tstr(\D)$ of bounded $t$-structures on $\D$.

\begin{theorem}\label{basic-map}
Let $(\C,\D)$ be an ST-pair.
There is an order-preserving injection
\[
\Psi=\Psi_{(\C,\D)}\colon \silt(\C) \to \tstr(\D),\ \ M\mapsto (\tD_{M}^{\le 0}, \tD_{M}^{\ge 0}). 
\]
Consequently, it induces a bijection from $\silt(\C)$ to the set of $\C$-silting $t$-structures on $\D$.
\end{theorem}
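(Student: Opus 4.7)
My plan is to verify the three requirements---well-definedness, order preservation, and injectivity---using the ST-pair machinery established earlier. For well-definedness, fix $M \in \silt(\C)$. Proposition~\ref{ST-property} upgrades the given ST-pair to an ST-triple $(\C,\D,M)$, and then Proposition~\ref{prop:heart-of-silting-t-structure}(b) yields that $(\tD_M^{\leq 0}, \tD_M^{\geq 0})$ is a bounded $t$-structure on $\D$, so $\Psi(M) \in \tstr(\D)$. For order preservation, if $M \geq N$ in $\silt(\C)$, then since $\thick(M) = \thick(N) = \C$, Lemma~\ref{lem:partial-orders-of-presilting-and-t-structure} gives $\tT_M^{\leq 0} \supseteq \tT_N^{\leq 0}$; intersecting with $\D$ yields $\tD_M^{\leq 0} \supseteq \tD_N^{\leq 0}$, which is precisely $\Psi(M) \geq \Psi(N)$.

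The main step is injectivity, which I plan to prove by descending to the co-$t$-structure side of the ST-pair rather than trying to invert the definition of $\Psi$ directly. Suppose $\Psi(M) = \Psi(N)$, so that the hearts coincide: $\tD_M^0 = \tD_N^0$. Let $S$ be a direct sum of a complete set of pairwise non-isomorphic simple objects of this common heart. By the dual description in Lemma~\ref{lem:dual-description-of-co-t-str}, both halves of the co-$t$-structure $(\T_{M,\geq 0}, \T_{M,\leq 0})$ on $\T$ depend only on $S$:
\[
\T_{M,\leq 0} = \{X \in \T \mid \Hom(X[p],S) = 0~\forall p > 0\}, \quad \T_{M,\geq 0} = \{X \in \T \mid \Hom(X[p],S) = 0~\forall p < 0\},
\]
and the same formulas hold for $N$ with the same $S$. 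Therefore the two co-$t$-structures on $\T$ coincide. By Proposition~\ref{prop:ST=>TS} and the subsequent remark, each of them restricts to a bounded co-$t$-structure on $\C$ with co-heart $\add(M)$, respectively $\add(N)$. Since the co-heart is uniquely determined by the co-$t$-structure, we conclude $\add(M) = \add(N)$, and as both $M$ and $N$ are basic, $M \simeq N$.

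The final claim that $\Psi$ induces a bijection from $\silt(\C)$ onto the set of $\C$-silting $t$-structures on $\D$ is then immediate from the very definition of $\C$-silting, which is precisely the image of $\Psi$. The principal obstacle I anticipate is the injectivity step, since it requires transferring an equality of $t$-structures on $\D$ back to an equality of silting objects in $\C$---a category that need not even be contained in $\D$. The left-right symmetry encoded in Lemma~\ref{lem:dual-description-of-co-t-str} and the rigidity of co-hearts for bounded co-$t$-structures on $\C$ (Lemma~\ref{l:co-heart}) are exactly the tools that close this gap.
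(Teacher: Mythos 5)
Your well-definedness and order-preservation steps match the paper exactly. Where you diverge is the injectivity argument. The paper proves injectivity and order preservation simultaneously, by noting that Lemma~\ref{lem:partial-orders-of-presilting-and-t-structure} is a biconditional: $M\geq N$ if and only if $\tT_M^{\leq 0}\supseteq \tT_N^{\leq 0}$, which (using $\tT_M^{\geq 0}=\tD_M^{\geq 0}$) is equivalent to $\tD_M^{\leq 0}\supseteq \tD_N^{\leq 0}$. Since $\geq$ is a partial order on $\silt(\C)$ (anti-symmetry from \cite[Theorem 2.11]{AI12}), $\Psi(M)=\Psi(N)$ forces $M\geq N$ and $N\geq M$, hence $M\simeq N$. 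Your route instead passes to the common heart $\D_M^0=\D_N^0$, fixes a semisimple generator $S$, and uses the dual description in Lemma~\ref{lem:dual-description-of-co-t-str} to recover the co-$t$-structure $(\T_{M,\geq 0},\T_{M,\leq 0})$ from $S$ alone; reading off the co-heart (which by the remark after Proposition~\ref{prop:ST=>TS} is $\add(M)$) then gives $\add(M)=\add(N)$, hence $M\simeq N$ for basic objects. Both arguments are correct. The paper's is shorter and reuses a single lemma for everything; yours is conceptually closer to the left-right symmetry philosophy of Section~4.5 and makes the role of the simple objects explicit, at the cost of invoking heavier machinery (Lemma~\ref{lem:dual-description-of-co-t-str} and the co-heart computation) for a statement the paper dispatches with the partial-order biconditional alone.
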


\begin{proof}
By Propositions~\ref{ST-property} and~\ref{prop:heart-of-silting-t-structure}(b), the map $\Psi$ is well-defined.
Moreover, it is an order-preserving injective map because the following holds:
\[
M\ge N \Longleftrightarrow \tT_{M}^{\le 0} \supseteq \tT_{N}^{\le 0} \Longleftrightarrow \tD_{M}^{\ge 0} =\tT_{M}^{\ge 0} \subseteq \tT_{N}^{\ge 0}=\tD_{N}^{\ge 0} \Longleftrightarrow \tD_{M}^{\le 0} \supseteq \tD_{N}^{\le 0}. 
\]
The proof is complete.
\end{proof}

In view of Lemma~\ref{lem:partial-orders-of-presilting-and-t-structure}, the following corollary is immediate.

\begin{corollary} \label{cor:restriction-of-the-map-to-n-terms}
Let $(\C,\D)$ be an ST-pair,  $M$ a basic silting object of $\C$ and $n$ a positive integer. The map  $\Psi_{(\C,\D)}$ restricts to an injection
\[
\silt^n_{M}(\C)\longrightarrow\tstr^n_{M}(\D),
\]
where $\tstr^n_{M}(\D)$ denotes the set of $t$-structures $(\D'^{\leq 0},\D'^{\geq 0})$ on $\D$ satisfying $\D_M^{\leq 0}\supseteq\D'^{\leq 0}\supseteq\D_M^{\leq -n+1}$. 
\end{corollary}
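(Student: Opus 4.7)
The statement is a direct corollary of Theorem~\ref{basic-map} combined with Lemma~\ref{lem:partial-orders-of-presilting-and-t-structure}, so my plan is essentially a bookkeeping argument rather than a new construction. Since Theorem~\ref{basic-map} already furnishes an order-preserving injection $\Psi\colon\silt(\C)\to\tstr(\D)$, the only thing left to check is that when $N\in\silt^n_M(\C)$, the image $\Psi(N)$ actually lies in $\tstr^n_M(\D)$; injectivity is then inherited from $\Psi$.

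First, I would unfold the definition of $\silt^n_M(\C)$: by Definition~\ref{defn:n-term-silting}, an element $N\in\silt^n_M(\C)$ is exactly a basic silting object of $\C$ satisfying $M\ge N\ge M[n-1]$. Now I apply Lemma~\ref{lem:partial-orders-of-presilting-and-t-structure} with $k=0$ and $l=n-1$ to the silting objects $M$ and $N$ (both generate the same thick subcategory $\C=\thick(M)=\thick(N)$ by Proposition~\ref{ST-property}). This yields the chain of inclusions
\[
\tT_{M}^{\le 0}\ \supseteq\ \tT_{N}^{\le 0}\ \supseteq\ \tT_{M}^{\le -n+1}
\]
inside the ambient category $\T$.

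Next, I intersect each term with the thick subcategory $\D$. By the definitional equalities in Section~\ref{ss:categories-associated-to-presilting-object}, namely $\S_M^{\le k}=\T_M^{\le k}\cap\S$ applied to $\S=\D$, this gives
\[
\tD_{M}^{\le 0}\ \supseteq\ \tD_{N}^{\le 0}\ \supseteq\ \tD_{M}^{\le -n+1},
\]
which is precisely the condition defining $\tstr^n_M(\D)$. By Proposition~\ref{prop:heart-of-silting-t-structure}(b) the pair $(\tD_N^{\le 0},\tD_N^{\ge 0})$ is a bounded $t$-structure on $\D$, so $\Psi(N)\in\tstr^n_M(\D)$ as required.

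Finally, injectivity of the restricted map follows at once from injectivity of $\Psi$ itself, established in Theorem~\ref{basic-map}. There is no real obstacle here: the entire content of the corollary is already packed into Lemma~\ref{lem:partial-orders-of-presilting-and-t-structure}, which translates the silting partial order into the inclusion order of aisles, combined with the fact that passing from $\T$ to $\D$ commutes with the operation $(-)_M^{\le k}$. If one wanted to flag a subtlety, it would be confirming that $\Psi(N)$ is still a \emph{bounded} $t$-structure on $\D$ (not merely a $t$-structure), but this is handled by Proposition~\ref{prop:heart-of-silting-t-structure}(b) applied to the ST-triple $(\C,\D,N)$ produced by Proposition~\ref{ST-property}.
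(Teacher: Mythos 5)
Your proof is correct and takes essentially the same approach as the paper: the paper simply remarks that the corollary is "immediate in view of Lemma~\ref{lem:partial-orders-of-presilting-and-t-structure}", and you have unpacked exactly that observation, together with the routine remarks that intersecting with $\D$ preserves the chain of inclusions and that Propositions~\ref{ST-property}~and~\ref{prop:heart-of-silting-t-structure}(b) guarantee the image is a bounded $t$-structure on $\D$.
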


\begin{remark}\label{rem:ST-pair-interpretation-of-previous-results}
For the following pairs $(\C,\D)$:
\begin{itemize}
\item[(1)] $(\Db(\mod k\Delta),\Db(\mod k\Delta))$, where $\Delta$ is a Dynkin quiver,
\item[(2)] $(\Kb(\proj\Lambda),\Db(\mod\Lambda))$, where $\Lambda$ is a finite-dimensional $K$-algebra,
\item[(3)] $(\per(\Gamma),\Dfd(\Gamma))$, where $\Gamma$ is a dg $K$-algebra satisfying the conditions (SN1--3) in Section~\ref{ss:ST-pair-for-smooth-non-positive-dg-algebra},
\item[(4)] $(\per(A),\Dfd(A))$, where $A$ is a dg $K$-algebra satisfying (FN1--2) in Lemma~\ref{lem:ST-pair-for-fd-non-positive-dg-algebra},
\end{itemize}
a map $\Psi'_{(\C,\D)}\colon\silt(\C)\to\tstr(\D)$ was established in \cite{KV88,KoY14,KN2,SY16}, respectively. We have seen in Section~\ref{s:ST-pairs-examples} that these pairs are ST-pairs. In fact, $\Psi_{(\C,\D)}=\Psi'_{(\C,\D)}$. For cases (2), (3) and (4), the definitions of $\Psi$ and $\Psi'$ are exactly the same; for case (1), observe that if $\C=\D$ then $\D_M^{\leq 0}=\C_{M,\leq 0}$ is the smallest full subcategory of $\D$ which contains $M$ and which is closed under extensions, direct summands and $[1]$, by Lemma~\ref{AI223}.  Moreover, it is shown in {\it loc. cit.} (\cite[Theorem 5]{KV88}, \cite[Theorem 6.1]{KoY14}, \cite[Theorem 13.3]{KN2}, \cite[Theorem 1.2]{SY16}) that $\Psi$ is bijective in case (1) and the image of $\Psi$ is the set of algebraic $t$-structures on $\D$ in cases (2), (3) and (4) (for (3) and (4), the field $K$ is assumed to be algebraically closed).
In particular, in all these cases the converse of Corollary~\ref{C-silt is alg} holds: every algebraic $t$-structure on $\D$ is $\C$-silting. 
\end{remark}

\section{ST-pairs: uniqueness and existence}

In this section we discuss the uniqueness of ST-pairs, especially for the ST-pairs in Lemmas~\ref{lem:ST-pair-for-fd-algebra},~\ref{lem:ST-pair-for-fd-non-positive-dg-algebra}~and~\ref{lem:standard-ST-pair-for-smooth-non-positive-dg-algebra} and Corollary~\ref{cor:ST-pair-for-variety-with-silting}, and show that if $\C$ is a Hom-finite Krull--Schmidt algebraic triangulated category with silting objects, then there exists an ST-pair $(\C,\D)$.

\subsection{Uniqueness}

In this subsection we discuss the uniqueness of ST-pairs. Let $\T$ and $\T'$ be $K$-linear triangulated categories with shift functor $[1]$. 

\smallskip
We first show that a triangulated category can have at most one ST-pair.
\begin{theorem}\label{thm:uniqueness-of-ST-pair-inside}
Let $(\C,\D,M)$ and $(\C',\D',N)$ be ST-triples inside $\T$. Then $\C=\C'$ and $\D=\D'$.
\end{theorem}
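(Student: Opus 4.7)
The plan is to invoke Theorem~\ref{thm:S-and-T-determine-each-other}, which gives $\D = \T_\C^{\chf}$ and $\D' = \T_{\C'}^{\chf}$ as intrinsic formulae in $\T$. Thus once $\C = \C'$ is established, $\D = \D'$ follows automatically. By the symmetry between the two ST-triples, it then suffices to prove one inclusion, say $\C \subseteq \C'$; and since $\C$ is the thick subcategory of $\T$ generated by $M$ while $\C'$ is itself thick in $\T$, this reduces further to checking that the single object $M$ lies in $\C'$.

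The key ingredient for placing $M$ in $\C'$ is the dual description of $\C'$ obtained from Proposition~\ref{prop:ST=>TS} together with Lemma~\ref{lem:dual-description-of-co-t-str}. Writing $S_N$ for the direct sum of the (finitely many) simple objects of the heart $\T_N^0$ associated to the second ST-triple, one has
\[
\C' = \bigcup_{n\in\ZZ} \T_{N,\geq n} = \{X \in \T \mid \Hom_\T(X, S_N[q]) = 0 \text{ for } q \gg 0 \}.
\]
So the whole matter reduces to showing that $\Hom_\T(M, S_N[q])$ vanishes for $q$ sufficiently large.

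This vanishing is delivered immediately by (ST3) for the first triple $(\C,\D,M)$: since $\T = \bigcup_k \T_M^{\leq k}$ and $S_N \in \T$, we have $S_N \in \T_M^{\leq k_0}$ for some $k_0$, which by the very definition of $\T_M^{\leq k_0}$ says $\Hom_\T(M, S_N[q]) = 0$ for all $q > k_0$. Hence $M \in \T_{N, \geq -k_0} \subseteq \C'$, completing the proof of $\C \subseteq \C'$; the reverse inclusion, and therefore $\C = \C'$, follows by swapping the roles of the two triples. I do not foresee any real obstacle: the argument is essentially a direct assembly of Theorem~\ref{thm:S-and-T-determine-each-other}, Proposition~\ref{prop:ST=>TS} and Lemma~\ref{lem:dual-description-of-co-t-str}, the only subtle point being the bookkeeping translation between the $t$-structure condition $S_N \in \T_M^{\leq k_0}$ furnished by one triple and the co-$t$-structure condition $M \in \T_{N,\geq -k_0}$ needed to witness membership in $\C'$ for the other.
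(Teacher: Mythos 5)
Your proof is correct, and it takes a genuinely different route from the paper's. The paper argues directly and symmetrically: from (ST3) it extracts $N\in\T_M^{\le n}$ and $M\in\T_N^{\le m}$, deduces the sandwiches $\T_N^{\ge m}\subseteq\T_M^{\ge 0}\subseteq\T_N^{\ge -n}$ and $\T_{N,\ge m}\subseteq\T_{M,\ge 0}\subseteq\T_{N,\ge -n}$, and then takes increasing unions to get both $\D=\D'$ and $\C=\C'$ at once, without ever invoking Theorem~\ref{thm:S-and-T-determine-each-other} or the simple-object characterization. You instead front-load Theorem~\ref{thm:S-and-T-determine-each-other} to dispose of $\D=\D'$ for free once $\C=\C'$ is known, then reduce $\C\subseteq\C'$ to the single membership $M\in\C'$ (legitimate, since $\C=\thick_\T(M)$ and $\C'$ is thick), and verify this via the Hom-vanishing description of $\C'=\bigcup_n\T_{N,\ge n}$ furnished by Lemma~\ref{lem:dual-description-of-co-t-str} and Proposition~\ref{prop:ST=>TS}, with (ST3) for $(\C,\D,M)$ supplying the required vanishing of $\Hom_\T(M,S_N[q])$ for $q\gg0$. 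Your approach leans on somewhat heavier prior machinery but is conceptually tidy in that it localizes everything to one object; the paper's is more self-contained and symmetric. There is no circularity, since Theorem~\ref{thm:S-and-T-determine-each-other}, Proposition~\ref{prop:ST=>TS} and Lemma~\ref{lem:dual-description-of-co-t-str} all precede this theorem and do not depend on it.
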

\begin{proof}
By (ST3), there exists $n\in\mathbb{Z}$ such that $N\in\T_M^{\leq n}$, which implies that $\T_N^{\geq 0}\supseteq \T_M^{\geq n}$. Similarly, there exists $m\in\mathbb{Z}$ such that $\T_M^{\geq 0}\supseteq \T_N^{\geq m}$. Then $\T_N^{\geq m}\subseteq\T_M^{\geq 0}\subseteq\T_N^{\geq -n}$. It follows that 
\[
\D=\bigcup\nolimits_{n\in\mathbb{Z}}\T_M^{\geq n}=\bigcup\nolimits_{m\in\mathbb{Z}}\T_N^{\geq m}=\D',
\] 
and $\T_N^{\leq m}\supseteq\T_M^{\leq 0}\supseteq\T_N^{\leq -n}$, which implies that $\T_{N,\geq m}\subseteq\T_{M,\geq 0}\subseteq\T_{N,\geq -n}$. Therefore
\[
\C=\bigcup\nolimits_{n\in\mathbb{Z}}\T_{M,\geq n}=\bigcup\nolimits_{m\in\mathbb{Z}}\T_{N,\geq m}=\C'.\qedhere
\]
\end{proof}

In general, the category $\T$ in which an ST-pair sits is not unique. For example, if $\Lambda$ is a finite-dimensional $K$-algebra, then $(\Kb(\proj\Lambda),\Db(\mod\Lambda))$ is an ST-pair also inside the right bounded derived category $\D^-(\mod\Lambda)$. In fact, if $(\C,\D,M)$ be an ST-triple inside $\T$, then $(\C,\D,M)$ is an ST-triple inside any thick subcategory of $\T$ containing both $\C$ and $\D$. 

\begin{definition}\label{defn:equivalence-of-ST-pairs}
Let $(\C,\D)$ be an ST-pair inside $\T$ and $(\C',\D')$ be an ST-pair inside $\T'$. We say that $(\C,\D)$ is \emph{equivalent} to $(\C',\D')$ if there is a triangle equivalence $\thick_\T(\C\cup\D)\to\thick_{\T'}(\C'\cup\D')$. 
\end{definition}

For example, the ST-pair $(\Kb(\proj\Lambda),\Db(\mod\Lambda))$ inside $\Db(\mod\Lambda)$ is equivalent to the ST-pair $(\Kb(\proj\Lambda),\Db(\mod\Lambda))$ inside $\D^-(\mod\Lambda)$. 
Note that by Theorem~\ref{thm:uniqueness-of-ST-pair-inside}, an equivalence in Definition~\ref{defn:equivalence-of-ST-pairs} automatically restricts to triangle equivalences $\C\to\C'$ and $\D\to\D'$.

We expect that if $\C\simeq \C'$ or $\D\simeq \D'$ then the ST-pairs $(\C,\D)$ and $(\C',\D')$ are equivalent. We can prove this under certain assumptions.

\begin{proposition}\label{prop:tilting=>unique-ST-pair}
Let $(\C,\D)$ be an ST-triple. Assume that $\D$ is algebraic and that $\C$ has a tilting object $M$. Then $(\C,\D)$ is equivalent to the ST-pair $(\Kb(\proj\Lambda),\Db(\mod\Lambda))$, where $\Lambda=\End_\C(M)$.
\end{proposition}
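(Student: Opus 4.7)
The plan is to construct a triangle equivalence $F\colon\D\xrightarrow\sim\Db(\mod\Lambda)$ sending $M$ to $\Lambda$. Once we have it, combined with the inclusion $\C\subseteq\D$ (to be established), we get $\thick_\T(\C\cup\D)=\D$, and $F$ automatically restricts to $\C=\thick_\D(M)\xrightarrow\sim\thick_{\Db(\mod\Lambda)}(\Lambda)=\Kb(\proj\Lambda)$, yielding the claimed equivalence of ST-pairs in the sense of Definition~\ref{defn:equivalence-of-ST-pairs}.

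First I would record the preliminary facts. Since $M$ is tilting, $\bigoplus_p\Hom_\T(M,M[p])=\Lambda$ is finite-dimensional, and the class of $X\in\C=\thick(M)$ such that $\bigoplus_p\Hom_\T(X,M[p])$ is finite-dimensional is closed under extensions, shifts and summands, hence equals $\C$. Thus $\C$ is of finite type, so by Corollary~\ref{cor:ST-pair-for-category-of-finite-type} we obtain $\C\subseteq\D$; in particular $M\in\D$, and the tilting hypothesis further forces $M\in\D_M^0$. Under the Morita equivalence $\Hom_\D(M,-)\colon\D_M^0\xrightarrow\sim\mod\Lambda$ of Proposition~\ref{prop:heart-of-silting-t-structure}(a), the object $M$ corresponds to the projective generator $\Lambda$.

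Next I would use the algebraicity of $\D$ to fix a pretriangulated dg category $\E$ with $H^0(\E)\simeq\D$ and form the dg endomorphism algebra $B:=\RHom_\E(M,M)$. The computation $H^n(B)=\Hom_\D(M,M[n])$ shows that the augmentation $B\to H^0(B)=\Lambda$ is a quasi-isomorphism of dg algebras, so the derived categories of $B$ and $\Lambda$ are triangle equivalent. Keller's theorem then provides a triangle functor
\[
F\colon\D\simeq H^0(\E)\longrightarrow \D(\Lambda),\qquad X\longmapsto\RHom_\E(M,X),
\]
whose cohomology is $H^nF(X)=\Hom_\D(M,X[n])$. This identity shows simultaneously that $F$ is $t$-exact with respect to $(\D_M^{\le 0},\D_M^{\ge 0})$ and the standard $t$-structure, that the restriction $F|_{\D_M^0}$ agrees with the Morita equivalence $\Hom_\D(M,-)$, and (using boundedness of the $t$-structure on $\D$ from Proposition~\ref{prop:heart-of-silting-t-structure}(b) together with (ST1)) that $F$ factors through $\Db(\mod\Lambda)$.

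Finally, I would show by dévissage that $F\colon\D\to\Db(\mod\Lambda)$ is an equivalence. Using the filtration description in Lemma~\ref{KY33}(b) on both sides, an induction on filtration length combined with the five-lemma reduces fully faithfulness to the fact that $F$ is an equivalence on hearts, and essential surjectivity is reduced similarly to lifting heart objects through $F|_{\D_M^0}$. The main obstacle lies in this dévissage step: while all ingredients (dg enhancement, Keller's realization functor, the five-lemma induction) are classical, care is needed to identify $F|_{\D_M^0}$ with the Morita equivalence of Proposition~\ref{prop:heart-of-silting-t-structure}(a) so that the base case of the induction is correct, and to verify the $t$-exactness of $F$ before running the induction.
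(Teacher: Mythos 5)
Your construction of $F$ via a dg enhancement of $\D$, the identification of $\RHom_\E(M,M)$ with $\Lambda$ (up to a zig-zag of quasi-isomorphisms, not an ``augmentation'' — there is no canonical dg-algebra map $B\to H^0(B)$), the $t$-exactness of $F$, and the fact that $F$ lands in $\Db(\mod\Lambda)$ and restricts on hearts to the Morita equivalence of Proposition~\ref{prop:heart-of-silting-t-structure}(a) are all correct. This is a genuinely different construction from the paper's, which instead builds a realization functor $R\colon\Db(\mod\Lambda)\to\D$ out of the abelian equivalence $\mod\Lambda\to\D_M^0$ using \cite[Proposition 3.1.10]{BBD81} (going in the opposite direction).

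However, the final dévissage step has a real gap. You claim that filtration induction plus the five-lemma reduces fully faithfulness of $F$ to ``$F$ is an equivalence on hearts.'' This is not what the dévissage gives. What the dévissage actually reduces to is: $\Hom_\D(X,Y[p])\to\Hom_{\Db(\mod\Lambda)}(F(X),F(Y)[p])$ is an isomorphism for all $X,Y$ in the heart $\D_M^0$ \emph{and all $p\in\mathbb{Z}$}. The equivalence of hearts only supplies the case $p=0$ (and $p<0$ trivially). The groups $\Hom_\D(X,Y[p])$ for $p>0$ are Yoneda Ext-groups computed in the ambient triangulated category $\D$, while $\Hom_{\Db(\mod\Lambda)}(F(X),F(Y)[p])=\mathrm{Ext}^p_\Lambda(FX,FY)$ is computed in the module category; an abelian-category equivalence between the hearts does not by itself identify these. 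Establishing this Ext-comparison is precisely the substantive part of the paper's proof: it invokes Keller's generator criterion \cite[Lemma 4.2]{Ke1} with generator $S_\Lambda$, proves the Hom-comparison first for $\Lambda$, then for all $X\in\Kb(\proj\Lambda)$, and then for $S_\Lambda$ itself via a carefully chosen triangle $X\to S_\Lambda\to Y\to X[1]$ with $X\in\Lambda^{[0,p+1]}$ and $Y\in\D_{\std}^{\leq -p-2}$ so that the outer terms of the induced five-term exact sequences vanish. You flag an ``obstacle'' but misidentify it as verifying $t$-exactness and the base-case identification with the Morita equivalence; those are the easy parts. The missing piece is the comparison of higher extension groups between the two ambient triangulated categories, and without it the induction has no correct base case.
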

\begin{proof}
Since $\C$ has a tilting object, it is of finite type, and hence by Corollary~\ref{cor:ST-pair-for-category-of-finite-type} we have $\C\subseteq \D$. By Lemma~\ref{IYa-thm44}, $(\C,\D,M)$ is an ST-triple. By Proposition~\ref{prop:heart-of-silting-t-structure}, the heart $\D_M^0$ is equivalent to $\mod\Lambda$. Fix an equivalence $F\colon\mod\Lambda\to\D_M^0$ which takes $\Lambda$ to $M$ and extend it to a triangle functor 
\[
R\colon\Db(\mod\Lambda)\longrightarrow\D
\]
by \cite[Proposition 3.1.10]{BBD81}. We will show that $R$ is a triangle equivalence. By Beilinson's criterion \cite[Lemma 4.2]{Ke1}, it is enough to show that $\Hom(S_\Lambda,S_\Lambda[p])\stackrel{R}{\to}\Hom(S,S[p])$ is an isomorphism for all $p\in\mathbb{Z}$, where $S_\Lambda$ is the direct sum of a complete set of pairwise non-isomorphic simple $\Lambda$-modules and $S=F(S_\Lambda)$. First, it is clear that $\Hom(\Lambda,S_\Lambda[p])\stackrel{R}{\to}\Hom(M,S[p])$ is an isomorphism for all $p\in\mathbb{Z}$. This implies that $\Hom(X,S_\Lambda[p])\stackrel{R}{\to}\Hom(F(X),S[p])$ is an isomorphism for all $X\in \Kb(\proj\Lambda)$ and all $p\in\mathbb{Z}$. Now fix any $p\in\mathbb{Z}$. There is a triangle $X\to S_\Lambda\to Y\to X[1]$ in $\Db(\mod\Lambda)$ with $X\in\Lambda^{[0,p+1]}$ and $Y\in\D_{\std}^{\leq -p-2}$. So $R(X)\in M^{[0,p+1]}$ and $R(Y)\in\D_M^{\leq -p-2}$ and there is a triangle $R(X)\to S\to R(Y)\to R(X)[1]$ in $\D$. Applying $\Hom_{\Db(\mod\Lambda)}(-,S_\Lambda)$ and $\Hom_\D(-,S)$ to these two triangles, respectively, we obtain a commutative diagram
\[
\xymatrix@C=1pc{
\Hom(Y,S_\Lambda[p])\ar[r]\ar[d] & \Hom(S_\Lambda,S_\Lambda[p])\ar[r]\ar[d]^R & \Hom(X,S_\Lambda[p])\ar[r] \ar[d]^R_\simeq & \Hom(Y,S_\Lambda[p+1])\ar[d]\\
\Hom(R(Y),S[p])\ar[r] & \Hom(S,S[p]) \ar[r] & \Hom(R(X),S[p])\ar[r] & \Hom(R(Y),S[p+1]).
}
\]
The four outer terms vanish, so we obtain the desired isomorphism.
\end{proof}

In fact, we have proved the following result.

\begin{corollary}\label{cor:unique-ST-pair-for-the-homotopy-category}
Let $\Lambda$ be a finite-dimensional $K$-algebra. Let $(\Kb(\proj\Lambda),\D)$ be an ST-pair with $\D$ a $K$-linear algebraic triangulated category. Then $\Kb(\proj\Lambda)\subseteq\D$ and there is a triangle equivalence $\Db(\mod\Lambda)\to \D$ which restricts to the identity on $\Kb(\proj\Lambda)$.
\end{corollary}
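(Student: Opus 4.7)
The plan is to derive this corollary from Proposition~\ref{prop:tilting=>unique-ST-pair} applied to the tilting object $M=\Lambda$ of $\Kb(\proj\Lambda)$, observing that $\End_{\Kb(\proj\Lambda)}(\Lambda)=\Lambda$.

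First I would verify that $\Kb(\proj\Lambda)\subseteq\D$: since $\Lambda$ is a tilting object, $\bigoplus_{p\in\ZZ}\Hom_{\Kb(\proj\Lambda)}(\Lambda,\Lambda[p])=\Lambda$ is finite-dimensional, and this finite-type property propagates to the thick closure $\thick(\Lambda)=\Kb(\proj\Lambda)$. By Corollary~\ref{cor:ST-pair-for-category-of-finite-type}, $\Kb(\proj\Lambda)\subseteq\D$, so in particular $\Lambda$ lives in $\D_\Lambda^0$ via this inclusion.

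Next I would invoke Proposition~\ref{prop:tilting=>unique-ST-pair} to obtain a triangle equivalence $R\colon\Db(\mod\Lambda)\to\D$. To arrange that $R$ restricts to the identity on $\Kb(\proj\Lambda)$, I would revisit the construction in the proof of Proposition~\ref{prop:tilting=>unique-ST-pair}: there we have freedom to choose the abelian equivalence $F\colon\mod\Lambda\to\D_\Lambda^0$ sending $\Lambda$ to $\Lambda$. By Proposition~\ref{prop:heart-of-silting-t-structure}(a), the functor $\Hom_\D(\Lambda,-)\colon\D_\Lambda^0\to\mod\Lambda$ is an equivalence sending $\Lambda$ to the right regular module $\Lambda_\Lambda$, and I would pick $F$ to be a quasi-inverse inducing the identity on $\End(\Lambda)=\Lambda$. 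The resulting realization functor $R$ then sends $\Lambda\mapsto \Lambda$ and induces the identity on the graded algebra $\End_\D^*(\Lambda)=\Lambda$ (concentrated in degree $0$).

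It remains to show that the restriction $R|_{\Kb(\proj\Lambda)}$ is naturally isomorphic to the inclusion $\iota\colon\Kb(\proj\Lambda)\hookrightarrow\D$, which I expect to be the main obstacle. Both are triangle functors defined on $\Kb(\proj\Lambda)=\thick(\Lambda)$, both send $\Lambda$ to $\Lambda$, and, by the choice of $F$, both induce the same (identity) map on $\End^*(\Lambda)$. Since $\D$ is algebraic, fix a dg enhancement and let $A$ be the dg endomorphism algebra of $\Lambda\in\D$; as $H^*(A)=\End_\D^*(\Lambda)=\Lambda$ is concentrated in degree $0$, the dg algebra $A$ is formal and quasi-isomorphic to $\Lambda$. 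Keller's theorem then provides a triangle equivalence $\per(\Lambda)=\Kb(\proj\Lambda)\to\thick_\D(\Lambda)$ sending $\Lambda\mapsto\Lambda$, and such an equivalence is unique up to natural isomorphism once the isomorphism on $\End^*(\Lambda)$ is fixed. Both $R|_{\Kb(\proj\Lambda)}$ and $\iota$ realize this common data, so they are naturally isomorphic, completing the proof.
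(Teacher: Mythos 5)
Your overall approach is the same as the paper's — the paper derives the corollary from (the proof of) Proposition~\ref{prop:tilting=>unique-ST-pair} applied with $\C=\Kb(\proj\Lambda)$ and $M=\Lambda$ — and you correctly isolate the substantive remaining point that the paper itself does not spell out: showing that the realization functor $R$, restricted to $\Kb(\proj\Lambda)$, is naturally isomorphic to the inclusion $\iota$.

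The argument you give for that step, however, has a gap. The assertion that a triangle equivalence $\Kb(\proj\Lambda)\to\thick_\D(\Lambda)$ sending $\Lambda\mapsto\Lambda$ and inducing the identity on $\End^*(\Lambda)$ is unique up to natural isomorphism is not a fact about triangle functors. Two triangle functors can agree on a compact generator, and on the entire graded endomorphism algebra of that generator, and yet fail to be naturally isomorphic; the uniqueness you want holds for dg-enhanced (equivalently, $A_\infty$-) functors via Keller's Morita theory, not at the purely triangulated level. For $\iota$ a dg lift is automatic, being a full dg subcategory inclusion. But $R$ is a BBD realization functor: it is constructed from an $f$-category over $\D$ at the level of triangulated categories and is not a priori a dg functor with respect to the enhancement you fixed. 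Formality of the dg endomorphism algebra of $\Lambda$ (which you correctly note) yields \emph{some} dg equivalence $\per(\Lambda)\to\thick_\D(\Lambda)$, but it does not by itself place $R$ in the dg world where the uniqueness theorem is available. To close the argument you would need the additional, nontrivial input that the realization functor on an algebraic triangulated category can be realized as a dg functor compatible with the chosen enhancement; without that, the step "both realize this common data, so they are naturally isomorphic" does not go through.
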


\subsubsection{The case $\C\supseteq\D$}

\begin{lemma}\label{lem:uniqueness-case-CcontainsD}
Let $(\C,\D)$ be an ST-pair inside $\T$ with $\C\supseteq\D$ and let $(\C,\D')$ be an ST-pair inside $\T'$. Then $\D'=\D$.
\end{lemma}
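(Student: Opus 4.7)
The approach is to pin down $\D$ intrinsically from $\C$ (together with any silting object) when $\C\supseteq\D$, so that the same data forces $\D'$ to agree with $\D$. First, by Proposition~\ref{ST-property} applied to each ST-pair, I may choose a common silting object $M$ of $\C$ so that $(\C,\D,M)$ is an ST-triple inside $\T$ and $(\C,\D',M)$ is an ST-triple inside $\T'$. Since $\C\supseteq\D$, each truncation of an object of $\C$ relative to $(\T_{M}^{\le 0},\T_{M}^{\ge 1})$ lies entirely in $\C$ (the positive piece lies in $\T_{M}^{\ge 1}\subseteq\D\subseteq\C$, whence the negative piece does too), so a direct check shows that $(\C,\D,M)$ is already an ST-triple inside $\C$ itself, and I shall henceforth take $\T=\C$.

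The central step is to prove the strict equality
\[
\C_{M}^{0}\;=\;\T'^{\,0}_{M}
\]
as full subcategories of $\T'$. The inclusion $\C_{M}^{0}\subseteq\T'^{\,0}_{M}$ is immediate from the fullness of $\C\subseteq\T'$, and in fact yields $\C_{M}^{0}=\T'^{\,0}_{M}\cap\C$. For the reverse, take $Y\in\T'^{\,0}_{M}$; Proposition~\ref{prop:heart-of-silting-t-structure}(a) applied to both ST-triples provides equivalences $\Hom_{\C}(M,-)\colon\C_{M}^{0}\xto{\sim}\mod\End_{\C}(M)$ and $\Hom_{\T'}(M,-)\colon\T'^{\,0}_{M}\xto{\sim}\mod\End_{\C}(M)$, which coincide on objects of $\C_{M}^{0}$. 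Choosing $X\in\C_{M}^{0}$ with $\Hom_{\C}(M,X)\simeq\Hom_{\T'}(M,Y)$ then yields an isomorphism $X\simeq Y$ in $\T'^{\,0}_{M}$; since $\C$ is closed under isomorphism in $\T'$ (being a thick subcategory), we conclude $Y\in\C$ and hence $Y\in\C_{M}^{0}$.

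With the hearts identified, Lemma~\ref{KY33}(a) gives $\D=\thick_{\C}(\C_{M}^{0})$ and $\D'=\thick_{\T'}(\T'^{\,0}_{M})=\thick_{\T'}(\C_{M}^{0})$. The inclusion $\D\subseteq\D'$ holds because every triangle, shift, and direct summand formed in $\C$ is also such in $\T'$; conversely, $\D$, being thick in $\C$ and $\C$ being thick in $\T'$, is itself thick in $\T'$ and contains $\C_{M}^{0}$, so $\D'=\thick_{\T'}(\C_{M}^{0})\subseteq\D$. Combining yields $\D=\D'$. I expect the main technical point to be the strict identification of the two hearts as subcategories of $\T'$ (rather than a mere equivalence); this rests on the Morita-type description of the heart in Proposition~\ref{prop:heart-of-silting-t-structure}(a) together with the fact that thick subcategories are closed under isomorphism.
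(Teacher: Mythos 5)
Your proof is correct, and it reaches the conclusion by a route that differs from the paper's in one meaningful respect. The paper first establishes the containment $\D\subseteq\D'$ by invoking Theorem~\ref{thm:S-and-T-determine-each-other}: since $\C\supseteq\D$ the ambient category may be taken to be $\C$, so $\D=\C^{\chf}$, and then $\C^{\chf}\subseteq(\T')_\C^{\chf}=\D'$ because $\C$ is full in $\T'$. With that inclusion in hand the paper compares the two Morita equivalences of Proposition~\ref{prop:heart-of-silting-t-structure}(a) to see that $\D_M^0\hookrightarrow(\D')_M^0$ is itself an equivalence, hence an equality, and then applies Lemma~\ref{KY33}(a). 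You instead skip Theorem~\ref{thm:S-and-T-determine-each-other} altogether: you observe directly that the two restrictions of $\Hom(M,-)$ to the hearts $\C_M^0$ and $\T'^0_M$ are equivalences onto the same module category that agree on the smaller heart, and then use essential surjectivity plus strict fullness of $\C$ in $\T'$ to conclude $\C_M^0=\T'^0_M$ on the nose; the final $\thick$-closure comparison replaces the paper's opening step. Both arguments have the same Morita-type comparison at their core, but your version is self-contained modulo Proposition~\ref{prop:heart-of-silting-t-structure}(a) and a small reduction to the ambient category $\T=\C$ that the paper leaves implicit; the paper's version is marginally shorter because establishing $\D\subseteq\D'$ first lets it phrase the Morita step as ``an inclusion compatible with equivalences is itself an equivalence'' without needing to produce the isomorphism by hand. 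One small checkpoint worth making explicit in your write-up: the step $\D'=\thick_{\T'}(\C_M^0)\subseteq\D$ uses that $\D$ is thick not just in $\C$ but in $\T'$, which holds because a thick subcategory of a thick subcategory is thick.
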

\begin{proof}
Let $M$ be a silting object of $\C$. By Theorem~\ref{thm:S-and-T-determine-each-other}, we have $\D=\C^{\chf}\subseteq (\T')_\C^{\chf}=\D'$. It follows that $\D_M^0\subseteq (\D')_M^0$. By Lemma~\ref{IYa-thm44}, both $(\C,\D,M)$ and $(\C,\D',M)$ are ST-triples. By Proposition~\ref{prop:heart-of-silting-t-structure}(a), both $\Hom(M,-):(\D')_M^0\to\mod\End(M)$ and its restriction to $\D_M^0$ are equivalences. Therefore the inclusion $\D_M^0\subseteq (\D')_M^0$ is an equivalence. So $\D'=\thick((\D')_M^0)=\thick(\D_M^0)=\D$.
\end{proof}

\begin{proposition}\label{prop:uniqueness-case:C-is-homologically-smooth}
Let $\C$ be a $K$-linear triangulated category triangle equivalent to $\per(\Gamma)$ for some dg $K$-algebra $\Gamma$ satisfying the conditions (SN1--3) in Section~\ref{ss:ST-pair-for-smooth-non-positive-dg-algebra}.
\begin{itemize}
\item[(a)] $(\C,\C^{\chf})$ is an ST-pair and it is equivalent to $(\per(\Gamma),\Dfd(\Gamma))$.
\item[(b)] If $(\C,\D)$ is an ST-pair inside $\T$, then $\D=\C^{\chf}$. In particular, the image of $\Psi_{(\C,\D)}$ is the set of algebraic $t$-structures on $\D$, provided that $K$ is algebraically closed.
\end{itemize}
\end{proposition}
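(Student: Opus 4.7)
For part (a), the plan is to transport the standard ST-triple from Lemma~\ref{lem:standard-ST-pair-for-smooth-non-positive-dg-algebra} along a chosen triangle equivalence $F\colon\C\to\per(\Gamma)$. By (SN1--3), that lemma yields the ST-triple $(\per(\Gamma),\Dfd(\Gamma),\Gamma)$ inside $\per(\Gamma)$, and Theorem~\ref{thm:S-and-T-determine-each-other} applied to this pair identifies $\Dfd(\Gamma)=\per(\Gamma)^{\chf}$. Since the subcategory of cohomologically finite objects is defined purely in terms of Hom-spaces in the ambient category, it is preserved by triangle equivalences, so $F^{-1}(\Dfd(\Gamma))=\C^{\chf}$. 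Transporting the ST-triple along $F$ therefore shows that $(\C,\C^{\chf},F^{-1}(\Gamma))$ is an ST-triple inside $\C$ which is, by construction, equivalent as an ST-pair to $(\per(\Gamma),\Dfd(\Gamma))$.

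For part (b), note that $\per(\Gamma)\supseteq\Dfd(\Gamma)$ by (SN3), so the ST-pair produced in (a) satisfies $\C\supseteq\C^{\chf}$. Hence, given any ST-pair $(\C,\D)$ inside any ambient $\T$, Lemma~\ref{lem:uniqueness-case-CcontainsD}, applied with the role of its first ST-pair played by $(\C,\C^{\chf})$ and its $(\C,\D')$ played by $(\C,\D)$, immediately forces $\D=\C^{\chf}$.

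For the final assertion, the equivalence of ST-pairs established in (a) induces bijections on both sides of $\Psi$ which commute with $\Psi$, so the image of $\Psi_{(\C,\D)}$ is in natural bijection with the image of $\Psi_{(\per(\Gamma),\Dfd(\Gamma))}$; under the assumption that $K$ is algebraically closed, case (3) of Remark~\ref{rem:ST-pair-interpretation-of-previous-results} (\ie \cite[Theorem 13.3]{KN2}) identifies the latter with the set of algebraic $t$-structures on $\Dfd(\Gamma)$. No genuine obstacle is expected: the proof is a direct assembly of Lemmas~\ref{lem:standard-ST-pair-for-smooth-non-positive-dg-algebra} and~\ref{lem:uniqueness-case-CcontainsD} together with Theorem~\ref{thm:S-and-T-determine-each-other}, plus the transparent remark that cohomological finiteness is preserved under triangle equivalences.
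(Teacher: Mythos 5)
Your proposal is correct and follows essentially the same route as the paper: establish the ST-pair $(\per(\Gamma),\Dfd(\Gamma))$ via Lemma~\ref{lem:standard-ST-pair-for-smooth-non-positive-dg-algebra}, identify $\Dfd(\Gamma)=\per(\Gamma)^{\chf}$, transport along the equivalence, and then invoke Lemma~\ref{lem:uniqueness-case-CcontainsD} and Remark~\ref{rem:ST-pair-interpretation-of-previous-results} for (b). The only (immaterial) difference is that you obtain $\Dfd(\Gamma)=\per(\Gamma)^{\chf}$ from Theorem~\ref{thm:S-and-T-determine-each-other} applied to the ST-pair inside $\per(\Gamma)$, whereas the paper cites Lemma~\ref{lem:compact-and-homologically-finite-objects-for-dg-algebra}; both are valid.
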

\begin{proof}
(a) By Lemma~\ref{lem:standard-ST-pair-for-smooth-non-positive-dg-algebra}, $(\per(\Gamma),\Dfd(\Gamma))$ is an ST-pair. Since $\Dfd(\Gamma)=\per(\Gamma)^{\chf}$ by Lemma~\ref{lem:compact-and-homologically-finite-objects-for-dg-algebra}, we deduce that $(\C,\C^{\chf})$ is an ST-pair and is equivalent to $(\per(\Gamma),\Dfd(\Gamma))$.

(b) The first statement follows from (a) and Lemma~\ref{lem:uniqueness-case-CcontainsD}. The second statement follows from (a) and Remark~\ref{rem:ST-pair-interpretation-of-previous-results}.
\end{proof}

\subsubsection{The case $\C\subseteq\D$}

\begin{lemma}\label{lem:uniqueness-case-DcontainsC}
Let $(\C,\D)$ be an ST-pair inside $\T$ with $\C\subseteq \D$ such that the image of $\Psi_{(\C,\D)}$ is the set of algebraic $t$-structures on $\D$. Let $(\C',\D)$ be an ST-pair inside $\T'$. Then $\C'=\C$.
\end{lemma}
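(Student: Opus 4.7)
The plan is to show $\C' \subseteq \D$ as a subcategory of $\T'$; once this is achieved, both $(\C,\D)$ and $(\C',\D)$ become ST-pairs inside $\D$ (the former because $\C \subseteq \D$), and Theorem~\ref{thm:uniqueness-of-ST-pair-inside} applied inside $\D$ gives $\C = \C'$.

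To set up the comparison, fix a basic silting object $N \in \silt(\C')$. By Corollary~\ref{C-silt is alg}, $\Psi_{(\C',\D)}(N)$ is an algebraic bounded $t$-structure on $\D$, so by the hypothesis on $(\C,\D)$ there exists a unique (by Theorem~\ref{basic-map}) basic $M \in \silt(\C)$ with $\Psi_{(\C,\D)}(M) = \Psi_{(\C',\D)}(N)$. In particular the hearts coincide as subcategories of $\D$: writing $\A := \D_M^0 = \D_N^0$, Proposition~\ref{prop:heart-of-silting-t-structure}(a) together with Morita's theorem shows that the basic projective generators agree, so $P := \sigma^0_M(M) \simeq \sigma^0_N(N)$ in $\D$.

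The crucial step is to show $N \in \D$. I would apply Proposition~\ref{prop:minimal-resolution} to the object $P \in \D \subseteq \T'$ in both ST-triples: in $(\C,\D,M)$ it yields a minimal $\add(M)$-resolution $\cdots \to M^{-l+1} \to \cdots \to M^0 \to P$ that lies entirely in $\D$ (because $M \in \C \subseteq \D$); in $(\C',\D,N)$ it yields a minimal $\add(N)$-resolution $\cdots \to N^{-l+1} \to \cdots \to N^0 \to P$ in $\T'$. Both are uniquely characterised, via Lemma~\ref{lem:truncation-preserves-morphisms}, by the projective-cover data of $P$ viewed in $\mod\End(M) \simeq \mod\End(N)$ through the Morita equivalence. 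This should force the two resolutions to coincide inside $\D$, so that each $N^i$ lies in $\add(M) \subseteq \D$; splicing the truncation triangles from Proposition~\ref{prop:minimal-resolution} then yields $N \in \D$ (in fact $N \simeq M$).

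With $N \in \D$, we have $\C' = \thick_{\T'}(N) \subseteq \D$ since $\D$ is thick in $\T'$, and $(\C',\D,N)$ becomes an ST-triple inside $\D$. Together with $(\C,\D,M)$ inside $\D$, Theorem~\ref{thm:uniqueness-of-ST-pair-inside} gives $\C = \C'$. The main obstacle is the identification of the two minimal resolutions inside $\D$: the Morita isomorphism $\End(M) \simeq \End(N)$ is only an identification of ungraded algebras, so the match of the full resolutions requires additional input on the graded (or dg) structure. A robust way around is to exploit the hypothesis to show that the realization functor $\Db(\A) \to \D$ is a triangle equivalence, so that $\D \simeq \Db(\mod\End(M))$; Corollary~\ref{cor:unique-ST-pair-for-the-homotopy-category} then identifies both $\C$ and $\C'$ with $\Kb(\proj\End(M))$ inside $\D$.
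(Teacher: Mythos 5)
There is a genuine gap at exactly the step you flag yourself, and it cannot be papered over: nothing forces the two minimal resolutions of $P$ to coincide. Already at the first stage, the minimal right $\add(M)$-approximation of $P=\sigma^0_M(M)\simeq\sigma^0_N(N)$ is the canonical truncation map $M\to P$, and the minimal right $\add(N)$-approximation is $N\to P$ (both by Lemma~\ref{lem:truncation-preserves-morphisms}); asserting that these ``coincide'' is literally the statement $M\simeq N$ you are trying to prove. The Morita identification $\End(M)\simeq\End(N)$ is an isomorphism of ordinary algebras obtained through $P$ and produces no morphism between $M$ and $N$ in the ambient triangulated category, so it gives no mechanism for matching the approximations. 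The fallback does not repair this. First, the realization functor $\Db(\A)\to\D$ is an equivalence only when the silting object is tilting (Proposition~\ref{prop:tilting=>unique-ST-pair}); for a genuinely silting $M$ the category $\D$ is typically $\Dfd$ of a non-positive dg algebra with nonzero negative cohomology (e.g.\ $H^{-1}(\End^\bullet(M))\neq 0$), and then $\D\not\simeq\Db(\mod\End(M))$ even though the hypothesis on $\Psi_{(\C,\D)}$ holds. Second, Corollary~\ref{cor:unique-ST-pair-for-the-homotopy-category} determines the second component $\D$ from the hypothesis that the first component is $\Kb(\proj\Lambda)$; what you need is the converse determination of an unknown first component, which is Corollary~\ref{cor:uniqueness-for-fd-algebra}(a) and is deduced in the paper \emph{from} the present lemma, so invoking it here would be circular.

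The missing ingredient is the Hom-duality between the indecomposable summands of a silting object and the simple objects of its heart. The paper first records $\C=\D_{\hf}\subseteq(\T')^{\D}_{\hf}=\C'$ by Theorem~\ref{thm:S-and-T-determine-each-other}. Then, since $\D_M^0=\D_N^0$, the two hearts have the same semisimple object $S$, and Corollary~\ref{cor:Hom-duality-between-silting-and-smc} applied to $(\C,\D,M)$ gives $\Hom(M,S[p])=0$ for all $p\neq 0$ --- a condition involving only objects of $\D$, hence meaningful in $\T'$. By Lemma~\ref{lem:dual-description-of-co-t-str} applied to the triple $(\C',\D,N)$ inside $\T'$, the objects satisfying this vanishing are exactly those of the co-heart $\T'_{N,\geq 0}\cap\T'_{N,\leq 0}=\add(N)$; hence $M\in\add(N)$, and counting via $|M|=|S|=|N|$ yields $M\simeq N$, whence $\C=\thick(M)=\thick(N)=\C'$. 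Your reduction to Theorem~\ref{thm:uniqueness-of-ST-pair-inside} inside $\D$ would be sound once $N\in\D$ were known, but with $M\simeq N$ in hand it is superfluous.
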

\begin{proof}
By Theorem~\ref{thm:S-and-T-determine-each-other}, we have $\C=\D_{\hf}\subseteq (\T')_{\hf}^{\D}=\C'$. Let $M$ be a silting object of $\C'$. Then by Lemma~\ref{IYa-thm44}, $(\C',\D,M)$ is an ST-triple, and hence by Proposition~\ref{prop:heart-of-silting-t-structure}(a), $(\D_M^{\leq 0},\D_M^{\geq 0})$ is an algebraic $t$-structure on $\D$. By assumption, there is a silting object $N$ of $\C$ such that $\D_N^{\leq 0}=\D_M^{\leq 0}$ and $\D_N^{\geq 0}=\D_M^{\geq 0}$. Let $S$ be the direct sum of a complete set of pairwise non-isomorphic simple objects of $\D_N^0=\D_M^0$. Then $\Hom(N,S[p])=0$ for $p\neq 0$, so $N$ belongs to $\T_{M,\geq 0}\cap\T_{M,\leq 0}=\add(M)$ by Lemma~\ref{lem:dual-description-of-co-t-str}. Assume that $M$ and $N$ are basic, then by Proposition~\ref{prop:heart-of-silting-t-structure}(a), we have $|M|=|S|=|N|$. So $M\simeq N$ and consequently, $\C'=\thick(M)=\thick(N)=\C$.
\end{proof}

\begin{proposition}\label{prop:uniqueness-case:D-is-homologically-smooth}
Let $\D$ be triangle equivalent to $\Dfd(A)$ for some dg $K$-algebra $A$ satisfying (FN1) and (FN2) in Lemma~\ref{lem:ST-pair-for-fd-non-positive-dg-algebra}.
\begin{itemize}
\item[(a)] $(\D_{\hf},\D)$ is an ST-pair and it is equivalent to $(\per(A),\Dfd(A))$.
\item[(b)] If $(\C,\D)$ is an ST-pair inside $\T$, then $\C=\D_{\hf}$. In particular, the image of $\Psi_{(\C,\D)}$ is the set of algebraic $t$-structures on $\D$, provided that $K$ is algebraically closed.
\end{itemize}
\end{proposition}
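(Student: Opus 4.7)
The plan is to mimic the proof of the companion Proposition~\ref{prop:uniqueness-case:C-is-homologically-smooth}, swapping the roles of ``compact'' and ``hom-finite'': combine the existence lemma for non-positive dg algebras of finite-dimensional total cohomology with the uniqueness Lemma~\ref{lem:uniqueness-case-DcontainsC}, and transport everything along the given triangle equivalence $F\colon\Dfd(A)\to\D$.

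For part (a), I would start from Lemma~\ref{lem:ST-pair-for-fd-non-positive-dg-algebra}, which provides the ST-triple $(\per(A),\Dfd(A),A)$ inside $\Dfd(A)$. Under the hypotheses (FN1) and (FN2), Lemma~\ref{lem:compact-and-homologically-finite-objects-for-dg-algebra} identifies $\per(A)$ with $\Dfd(A)_{\hf}$. Since the hom-finite subcategory depends only on the triangulated structure, $F$ restricts to a triangle equivalence $\per(A)=\Dfd(A)_{\hf}\to\D_{\hf}$, and transporting the ST-triple along $F$ yields the ST-triple $(\D_{\hf},\D,F(A))$ inside $\D$. This is the asserted ST-pair, and it is equivalent to $(\per(A),\Dfd(A))$ in the sense of Definition~\ref{defn:equivalence-of-ST-pairs}.

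For part (b), I would apply Lemma~\ref{lem:uniqueness-case-DcontainsC} with the ``reference'' ST-pair taken to be $(\D_{\hf},\D)$, constructed in part~(a) and sitting inside $\D$, letting the given $\C$ play the role of the ``other'' summand $\C'$ in the lemma. The inclusion $\D_{\hf}\subseteq\D$ is automatic; the non-trivial hypothesis to verify is that the image of $\Psi_{(\D_{\hf},\D)}$ coincides with the set of algebraic $t$-structures on $\D$. Case~(4) of Remark~\ref{rem:ST-pair-interpretation-of-previous-results}, recording the Su--Yang theorem, states exactly this for the model pair $(\per(A),\Dfd(A))$ when $K$ is algebraically closed; since silting objects, bounded $t$-structures, hearts and their simple objects are all preserved by the triangle equivalence $F$, algebraicity transfers directly from $\Dfd(A)$ to $\D$. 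Lemma~\ref{lem:uniqueness-case-DcontainsC} then yields $\C=\D_{\hf}$, and the ``In particular'' clause is immediate.

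The one point requiring genuine care is that Lemma~\ref{lem:uniqueness-case-DcontainsC} really does invoke the algebraic-$t$-structures image hypothesis---unlike Lemma~\ref{lem:uniqueness-case-CcontainsD} used for the companion Proposition~\ref{prop:uniqueness-case:C-is-homologically-smooth}(b), which is unconditional---and this hypothesis rests on the non-trivial Su--Yang surjectivity. Consequently the proviso ``$K$ algebraically closed'' must govern both conclusions of~(b). Beyond this, everything in the argument is a direct transport of structure along $F$ and presents no further difficulty.
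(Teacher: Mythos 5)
Your argument is correct and mirrors the paper's proof step for step: part (a) combines Lemma~\ref{lem:ST-pair-for-fd-non-positive-dg-algebra} with the identification $\per(A)=\Dfd(A)_{\hf}$ from Lemma~\ref{lem:compact-and-homologically-finite-objects-for-dg-algebra} and transports along $F$, and part (b) invokes Lemma~\ref{lem:uniqueness-case-DcontainsC} together with Remark~\ref{rem:ST-pair-interpretation-of-previous-results}(4), exactly as the authors do.

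Your closing caveat is also well-founded. The paper's own deduction of $\C=\D_{\hf}$ passes through Lemma~\ref{lem:uniqueness-case-DcontainsC}, whose hypothesis that every algebraic $t$-structure is in the image of $\Psi_{(\D_{\hf},\D)}$ is supplied only by the Su--Yang surjectivity, and the paper's Remark~\ref{rem:ST-pair-interpretation-of-previous-results} records that the latter is stated over an algebraically closed field in case (4). So, along the route actually taken, the proviso enters already for the first conclusion of (b), not merely for the ``in particular''; the statement as printed is slightly loose on this point. The asymmetry you noticed is real: Lemma~\ref{lem:uniqueness-case-CcontainsD} is unconditional (it compares the two candidate $\D$'s directly via the Morita-equivalent hearts $\D_M^0$), which is why Proposition~\ref{prop:uniqueness-case:C-is-homologically-smooth}(b) needs the proviso only for its second sentence, and why in the degree-zero specialisation Corollary~\ref{cor:uniqueness-for-fd-algebra} one could instead appeal to Remark~\ref{rem:ST-pair-interpretation-of-previous-results}(2), which holds over any field.
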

\begin{proof} 
(a) By Lemma~\ref{lem:ST-pair-for-fd-non-positive-dg-algebra}, $(\per(A),\Dfd(A))$ is an ST-pair. Since $\per(A)=\Dfd(A)_{\hf}$ by Lemma~\ref{lem:compact-and-homologically-finite-objects-for-dg-algebra}, we deduce that $(\D_{\hf},\D)$ is an ST-pair and is equivalent to $(\per(A),\Dfd(A))$.

(b) The first statement follows from (a) and Lemma~\ref{lem:uniqueness-case-DcontainsC}. The second statement follows from (a) and Remark~\ref{rem:ST-pair-interpretation-of-previous-results}.
\end{proof}

We are particularly interested in the following two cases.

\begin{corollary}\label{cor:uniqueness-for-fd-algebra}
Let $\Lambda$ be a finite-dimensional $K$-algebra.
\begin{itemize}
\item[(a)]
If $(\C,\Db(\mod\Lambda))$ is an ST-pair, then $\C=\Kb(\proj\Lambda)$.
\item[(b)]
If $(\C,\D)$ is an ST-pair inside $\T$ such that $\D$ is triangle equivalent to $\Db(\mod\Lambda)$, then $\C=\D_{\hf}$ and $(\C,\D)$ is equivalent to $(\Kb(\proj\Lambda),\Db(\mod\Lambda))$. In particular, the image of $\Psi_{(\C,\D)}$ is the set of algebraic $t$-structures on $\D$.
\end{itemize}
\end{corollary}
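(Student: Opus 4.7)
The plan is to reduce both statements to Proposition~\ref{prop:uniqueness-case:D-is-homologically-smooth} by viewing $\Lambda$ as a dg algebra concentrated in degree $0$. Then $\Lambda$ trivially satisfies (FN1) and (FN2), and the canonical triangle equivalences $\Kb(\proj\Lambda)\to\per(\Lambda)$ and $\Db(\mod\Lambda)\to\Dfd(\Lambda)$ are available from the preliminaries.

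For part (a), suppose $(\C,\Db(\mod\Lambda))$ is an ST-pair inside some ambient category $\T$. Since $\Db(\mod\Lambda)\simeq \Dfd(\Lambda)$, Proposition~\ref{prop:uniqueness-case:D-is-homologically-smooth}(b) applies and yields $\C=\Db(\mod\Lambda)_{\hf}$, where the right hand side is computed either inside $\T$ or inside $\Db(\mod\Lambda)$ itself, as these agree by Theorem~\ref{thm:S-and-T-determine-each-other}. By Lemma~\ref{lem:compact-and-homologically-finite-objects-for-dg-algebra} applied to the dg algebra $\Lambda$, $\Dfd(\Lambda)_{\hf}=\per(\Lambda)$; transporting along the canonical equivalence gives $\Db(\mod\Lambda)_{\hf}=\Kb(\proj\Lambda)$, and thus $\C=\Kb(\proj\Lambda)$.

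For part (b), let $(\C,\D)$ be an ST-pair inside $\T$ with $\D$ triangle equivalent to $\Db(\mod\Lambda)\simeq\Dfd(\Lambda)$. The equality $\C=\D_{\hf}$ is again Proposition~\ref{prop:uniqueness-case:D-is-homologically-smooth}(b). To upgrade this to an equivalence of ST-pairs in the sense of Definition~\ref{defn:equivalence-of-ST-pairs}, note that Proposition~\ref{prop:uniqueness-case:D-is-homologically-smooth}(a) already supplies an equivalence of ST-pairs $(\D_{\hf},\D)\simeq(\per(\Lambda),\Dfd(\Lambda))$; composing with the equivalence induced by the canonical identifications $\per(\Lambda)\simeq \Kb(\proj\Lambda)$ and $\Dfd(\Lambda)\simeq\Db(\mod\Lambda)$ produces the desired equivalence between $(\C,\D)$ and $(\Kb(\proj\Lambda),\Db(\mod\Lambda))$.

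Finally, the ``in particular'' clause follows from the second statement of Proposition~\ref{prop:uniqueness-case:D-is-homologically-smooth}(b). The one subtlety is that the proposition states this conclusion under the hypothesis that $K$ is algebraically closed, whereas no such hypothesis appears in our corollary; however, in the finite-dimensional algebra case this is precisely the content of case (2) of Remark~\ref{rem:ST-pair-interpretation-of-previous-results} (namely \cite[Theorem 6.1]{KoY14}), which holds over an arbitrary field. The only genuine check is bookkeeping of the ambient categories in Definition~\ref{defn:equivalence-of-ST-pairs} when transferring the equivalence along the two dg-algebra identifications; this is routine since the identifications are triangle equivalences that are the identity on the relevant generators $\Lambda$ and $S$.
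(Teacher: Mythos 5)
Your proof is correct and follows essentially the same route as the paper: reduce to the case of the non-positive dg algebra $A=\Lambda$ via the canonical identifications $\Kb(\proj\Lambda)\simeq\per(\Lambda)$, $\Db(\mod\Lambda)\simeq\Dfd(\Lambda)$, and then invoke Proposition~\ref{prop:uniqueness-case:D-is-homologically-smooth} for the equality $\C=\D_{\hf}$ and the equivalence of ST-pairs, with Remark~\ref{rem:ST-pair-interpretation-of-previous-results} (case of $(\Kb(\proj\Lambda),\Db(\mod\Lambda))$, \ie \cite[Theorem 6.1]{KoY14}) supplying the ``in particular'' clause over an arbitrary field. Two small remarks: the parenthetical ``computed either inside $\T$ or inside $\Db(\mod\Lambda)$'' is a red herring, since $\D_{\hf}$ (without a superscript) always means $\D^{\D}_{\hf}$ and $\Hom$-spaces agree in a full subcategory, so there is no ambiguity to reconcile; and your explicit invocation of Lemma~\ref{lem:compact-and-homologically-finite-objects-for-dg-algebra} to rewrite $\Db(\mod\Lambda)_{\hf}$ as $\Kb(\proj\Lambda)$, together with explicitly combining both parts (a) and (b) of Proposition~\ref{prop:uniqueness-case:D-is-homologically-smooth} and flagging the algebraically-closed hypothesis discrepancy, spells out steps that the paper's terse proof leaves implicit — so this is if anything a more careful write-up of the same argument rather than a different one.
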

\begin{proof}
(a) and the first statement of (b) follows from Proposition~\ref{prop:uniqueness-case:D-is-homologically-smooth}(b).The second statement of (b) follows from the first statement and 
Remark~\ref{rem:ST-pair-interpretation-of-previous-results}.
\end{proof}

\begin{corollary}\label{cor:uniqueness-for-variety}
Let $X$ be a projective scheme over $K$.
\begin{itemize}
\item[(a)] Assume that $\Db(\coh(X))$ has an algebraic $t$-structure, then $(\per(X),\Db(\coh(X)))$ is an ST-pair. Moreover, the image of $\Psi_{(\per(X),\Db(\coh(X)))}$ is the set of algebraic $t$-structures on $\Db(\coh(X))$, provided that $K$ is algebraically closed.
\item[(b)] If $(\C,\Db(\coh(X)))$ is an ST-pair, then $\C=\per(X)$.
\item[(c)] If $(\C,\D)$ is an ST-pair inside $\T$ such that $\D$ is triangle equivalent to $\Db(\coh(X))$, then $\C=\D_{\hf}$ and $(\C,\D)$ is equivalent to $(\per(X),\Db(\coh(X)))$.
\end{itemize}
\end{corollary}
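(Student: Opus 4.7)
The strategy is to reduce all three statements to the analogous results already established for dg algebras: Lemma~\ref{lem:ST-pair-for-fd-non-positive-dg-algebra} produces the prototype ST-pair $(\per(B),\Dfd(B))$ for dg algebras $B$ satisfying (FN1)--(FN2), and Proposition~\ref{prop:uniqueness-case:D-is-homologically-smooth} characterises it. The bridge to the geometric side is a triangle equivalence $\Db(\coh(X)) \simeq \Dfd(B)$ for a suitable $B$, built from the simples of an algebraic $t$-structure via derived Morita theory.

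For part (a), given a bounded algebraic $t$-structure on $\Db(\coh(X))$ with heart $\ca$ and pairwise non-isomorphic simples $S_1,\ldots,S_n$, set $S=\bigoplus_i S_i$ and, working in the canonical dg enhancement of $\D(\Qcoh(X))$, form the dg algebra $A:=\End^{\dg}(S)$. Then $H^p(A)=0$ for $p<0$ and $H^0(A)=\bigoplus_i \End(S_i)$ is a finite-dimensional semisimple $K$-algebra. A Keller-style derived Morita / Koszul-duality argument then produces a dg algebra $B$ satisfying (FN1) and (FN2) together with a triangle equivalence $F\colon \Db(\coh(X)) \to \Dfd(B)$; concretely, $B$ is a minimal model of the dg endomorphism algebra of a Koszul-dual projective-like resolution of $S$. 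Under $F$, the subcategory $\per(X)=\Db(\coh(X))_{\hf}$ (Lemma~\ref{lem:compact-and-homologically-finite-objects-for-projetive-variety}) matches $\Dfd(B)_{\hf}=\per(B)$ (Lemma~\ref{lem:compact-and-homologically-finite-objects-for-dg-algebra}), so the silting object $B\in\per(B)$ (Lemma~\ref{lem:non-positive-dg-algebra-and-silting}) transports to a silting object in $\per(X)$, and Corollary~\ref{cor:ST-pair-for-variety-with-silting} then delivers the ST-pair $(\per(X),\Db(\coh(X)))$. For the image statement with $K$ algebraically closed, Remark~\ref{rem:ST-pair-interpretation-of-previous-results}(4) applied to $(\per(B),\Dfd(B))$ pulls back along $F$ to identify the image of $\Psi_{(\per(X),\Db(\coh(X)))}$ with the algebraic $t$-structures on $\Db(\coh(X))$.

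For part (b), assume $(\C,\Db(\coh(X)))$ is an ST-pair. By Corollary~\ref{C-silt is alg} the resulting $\C$-silting $t$-structure on $\Db(\coh(X))$ is algebraic, so (a) applies and yields an equivalence $F\colon\Db(\coh(X))\to\Dfd(B)$ with $B$ satisfying (FN1)--(FN2). The ST-pair transports through $F$ to an ST-pair $(F(\C),\Dfd(B))$, and Proposition~\ref{prop:uniqueness-case:D-is-homologically-smooth}(b) forces $F(\C)=\Dfd(B)_{\hf}=\per(B)$; transporting back gives $\C=\Db(\coh(X))_{\hf}=\per(X)$. For part (c), given an ST-pair $(\C,\D)$ inside some $\T$ together with a triangle equivalence $G\colon\D\to\Db(\coh(X))$, the equivalence $G$ extends uniquely to $\thick_\T(\C\cup\D)$ and transports the ST-pair structure to an ST-pair $(G(\C),\Db(\coh(X)))$; part (b) then identifies $G(\C)$ with $\per(X)$, so $\C=\D_{\hf}$ and $(\C,\D)$ is equivalent to $(\per(X),\Db(\coh(X)))$ in the sense of Definition~\ref{defn:equivalence-of-ST-pairs}.

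The main obstacle lies in part (a): producing the dg algebra $B$ with (FN1)--(FN2) and the triangle equivalence $\Db(\coh(X))\simeq\Dfd(B)$ from nothing more than an algebraic $t$-structure is the technically demanding step. One must treat the Ext algebra $\End^{\dg}(S)$ as an $A_\infty$-algebra and extract from it a Koszul-dual non-positive dg algebra with \emph{finite-dimensional total} cohomology; the latter finiteness is not automatic from $\Ext^{*}(S,S)$ alone (this total space can be infinite-dimensional already for $K[\epsilon]/(\epsilon^2)$) and must be forced through the external input that $\per(X)=\Db(\coh(X))_{\hf}$, applied to the dual projective-like resolutions produced by the Koszul construction.
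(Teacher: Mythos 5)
Your high-level strategy for part (a) is the right one — replace $\Db(\coh(X))$ by $\Dfd$ of a non-positive dg algebra $B$ with finite-dimensional total cohomology and then invoke Corollary~\ref{cor:ST-pair-for-variety-with-silting} and Proposition~\ref{prop:uniqueness-case:D-is-homologically-smooth} — and parts (b) and (c) are routine once (a) is in hand. But your central step in (a), "a Keller-style derived Morita / Koszul-duality argument then produces a dg algebra $B$ satisfying (FN1) and (FN2) together with a triangle equivalence $\Db(\coh(X))\to\Dfd(B)$," is not an argument, and you candidly say so in your last paragraph. The gap is genuine and in fact circular in structure: the object whose dg endomorphism algebra would be $B$ is a silting object of $\per(X)$, and its total cohomology is finite-dimensional because $\per(X)$ is of finite type; but the existence of a silting object of $\per(X)$ is precisely what you are trying to establish. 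Taking a ``minimal model of the dg endomorphism algebra of a Koszul-dual projective-like resolution of $S$'' presupposes the very resolution you do not yet have.

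The paper closes this gap with three concrete inputs absent from your sketch. First, it invokes Lunts's categorical resolution theorem to write $\Db(\coh(X))\simeq\per(B')$ with $B'$ homologically smooth, and transfers homological smoothness across the derived Morita equivalence $\D(\Gamma)\simeq\D(B')$ to the positive Ext dg algebra $\Gamma=\RHom(L,L)$ of the direct sum of simples $L$ of the chosen algebraic $t$-structure (your $A=\End^{\dg}(S)$). Second, homological smoothness of $\Gamma$ gives the crucial containment $\Dfd(\Gamma)\subseteq\per(\Gamma)$ via Lemma~\ref{lem:homologically-smooth=>finite-projective-dimension}, at which point Lemma~\ref{lem:silting-for-positive-dg-algebra} produces a silting object of $\Dfd(\Gamma)=\per(\Gamma)^{\chf}$. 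Third, it uses Rouquier's Serre functor on $\per(X)$ together with Lemma~\ref{lem:compact-and-homologically-finite-objects-for-projetive-variety} to identify $\per(X)\simeq\Db(\coh(X))^{\chf}\simeq\Dfd(\Gamma)$, transporting the silting object to $\per(X)$; only then does Corollary~\ref{cor:ST-pair-for-variety-with-silting} apply. Your appeal to $\per(X)=\Db(\coh(X))_{\hf}$ is the right finiteness to exploit, but without homological smoothness of $\Gamma$ you have no handle on $\Dfd(\Gamma)$ inside $\per(\Gamma)$, so the finiteness cannot be ``forced through'' in the way your sketch envisages. With those three inputs supplied, the rest of your proof goes through essentially as the paper's does.
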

\begin{proof}
(a) By \cite[Theorem 6.3]{Lunts10}, there is an object $E$  such that $\Db(\coh(X))=\thick(E)$, $B=\RHom(E,E)$ is homologically smooth, and $\RHom(E,-)\colon~\Db(\coh(X))\to\per(B)$ is a triangle equivalence . Let $L$ be the direct sum of a complete set of pairwise non-isomorphic simple objects of the heart of a fixed algebraic $t$-structure on $\Db(\coh(X))$, and let $\Gamma=\RHom(L,L)$. Then there is a triangle equivalence $\RHom(L,-)\colon\Db(\coh(X))\to\per(\Gamma)$. Moreover, $-\otimes^{\mathbf{L}}_\Gamma\RHom(E,L)\colon\D(\Gamma)\to\D(B)$ is a triangle equivalence, and hence by \cite[Lemma 3.9]{Lunts10}, $\Gamma$ is homologically smooth too. By Lemma~\ref{lem:homologically-smooth=>finite-projective-dimension}, $\per(\Gamma)\supseteq\Dfd(\Gamma)$. It is clear that $\Gamma$ satisfies the conditions (P1) and (P2) in Lemma~\ref{lem:silting-for-positive-dg-algebra} and it follows that $\Dfd(\Gamma)$ has a silting object. As $\Dfd(\Gamma)=\per(\Gamma)^{\chf}$ by Lemma~\ref{lem:compact-and-homologically-finite-objects-for-dg-algebra}, it follows that $\Db(\coh(X))^{\chf}$ has a silting object. By \cite[Proposition 7.47]{Rouquier08}, there is a fully faithful triangle functor $\mathbb{S}:\per(X)\to\Db(\coh(X))$ together with a bifunctorial isomorphism $D\Hom(\cf,\cg)\stackrel{\simeq}{\longrightarrow}\Hom(\cg,\mathbb{S}(\cf))$ for $\cf\in\per(X)$ and $\cg\in\Db(\coh(X))$. By Lemma~\ref{lem:compact-and-homologically-finite-objects-for-projetive-variety}, $\mathbb{S}$ induces a triangle equivalence $\per(X)\to\Db(\coh(X))^{\chf}$. It follows that $\per(X)$ has a silting object. Now by Corollary~\ref{cor:ST-pair-for-variety-with-silting}, $(\per(X),\Db(\coh(X)))$ is an ST-pair. By the proof of Corollary~\ref{cor:ST-pair-for-variety-with-silting}, there is a dg algebra $A$ satisfying (FN1) and (FN2) such that $(\per(X),\Db(\coh(X)))$ is equivalent to $(\per(A),\Dfd(A))$. The second statement of (a) follows from Proposition~\ref{prop:uniqueness-case:D-is-homologically-smooth}(b).

(b) follows from Proposition~\ref{prop:uniqueness-case:D-is-homologically-smooth}(b).

(c) follows from Proposition~\ref{prop:uniqueness-case:D-is-homologically-smooth} because by the proof of (a) there is a dg algebra $A$ satisfying (FN1) and (FN2) such that $\D$ is equivalent to $\Dfd(A)$ and $(\per(X),\Db(\coh(X)))$ is equivalent to $(\per(A),\Dfd(A))$.
\end{proof}

\subsection{Completing a triangulated category with silting objects}
\label{ss:completing-silting-to-ST-pair}

The aim of this subsection is to prove the following theorem.

\begin{theorem}\label{thm:silting=>ST-pair}
Let $\C$ be a $K$-linear Hom-finite Krull--Schmidt algebraic triangulated category with silting objects. Then there exist $K$-linear triangulated categories $\T$, $\C'$ and $\D$ such that $(\C',\D)$ is an ST-pair inside $\T$ and $\C'$ is triangle equivalent to $\C$.
\end{theorem}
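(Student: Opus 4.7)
The plan is to realise $\C$ as the perfect derived category of a non-positive dg algebra via Keller's Morita-type theorem, and then to construct an ambient category $\T$ inside the unbounded derived category so that $(\per(A),\Dfd(A))$ becomes an ST-pair inside it. Concretely, pick a basic silting object $M$ of $\C$ and set $A:=\RHom_\C(M,M)$. Since $\C$ is algebraic, Hom-finite, Krull--Schmidt and $\thick(M)=\C$, Keller's theorem \cite{Ke1} yields a triangle equivalence $\C\simeq\per(A)$ sending $M$ to the free dg module $A$. The presilting condition forces $H^p(A)=\Hom_\C(M,M[p])=0$ for $p>0$, and Hom-finiteness of $\C$ implies that each $H^p(A)$ is finite-dimensional (though the total cohomology of $A$ need not be, so Lemma~\ref{lem:ST-pair-for-fd-non-positive-dg-algebra} does not apply directly).

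Set $\C':=\per(A)$, $\D:=\Dfd(A)$, and define
\[
\T:=\{T\in\D(A)\mid H^p(T)=0\text{ for }p\gg 0\text{ and }H^p(T)\text{ is finite-dimensional for all }p\}.
\]
The subcategory $\T$ is thick in $\D(A)$: closure under triangles follows from the long exact sequence in cohomology (finite-dimensionality passes to extensions and bounded-aboveness is clear), while closure under direct summands is immediate. It contains $\D=\Dfd(A)$ by definition, and it contains $\C'=\per(A)$ because a perfect complex is built from finitely many shifts of $A$ via iterated extensions and summands, so its cohomology is bounded above and finite-dimensional in each degree (using that each $H^p(A)$ is).

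It remains to verify the three axioms of Definition~\ref{defn:ST-pair} for the silting object $A$ of $\C'$ inside $\T$. (ST1): for any $T\in\T$, $\Hom_\T(A,T)=H^0(T)$ is finite-dimensional by the definition of $\T$. (ST2): the standard $t$-structure on $\D(A)$ restricts to $\T$, since the standard truncations $\sigma^{\le 0}$ and $\sigma^{\ge 1}$ preserve both bounded-aboveness and degree-wise finite-dimensionality of cohomology; via the identification $\Hom_\T(A,T[p])=H^p(T)$, this restriction is precisely $(\T_A^{\le 0},\T_A^{\ge 0})$. (ST3): any $T\in\T$ lies in $\T_A^{\le n}$ for $n\gg 0$ by bounded-aboveness, while the union $\bigcup_n\T_A^{\ge n}$ consists of those $T\in\T$ whose cohomology is also bounded below --- equivalently, whose total cohomology is finite-dimensional --- which is exactly $\Dfd(A)=\D$.

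The main obstacle is the correct choice of the ambient category $\T$: it must be large enough to contain both $\per(A)$ and $\Dfd(A)$ and to carry the standard $t$-structure, yet small enough to enforce (ST1). The key strengthening over a naive ``right-bounded'' choice of $\T$ is to require degree-wise (not merely total) finite-dimensionality of cohomology --- a property that is automatic for objects of $\per(A)$ thanks to Hom-finiteness of $\C$, and that is exactly what ensures $\bigcup_n\T_A^{\ge n}$ recovers precisely $\Dfd(A)$ rather than a strictly larger subcategory.
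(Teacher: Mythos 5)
Your proof is correct and follows essentially the same route as the paper: realise $\C$ as $\per(A)$ via Keller's Morita-type theorem, observe that $A$ is non-positive with degree-wise finite-dimensional cohomology, and then embed $\per(A)$ and $\Dfd(A)$ into the right-bounded, degree-wise finite-dimensional derived category, where the standard $t$-structure gives (ST2) and boundedness of cohomology gives (ST3). Your $\T$ is exactly the paper's $\Dfd^-(A)$ (Example~\ref{ex:t-structure-for-non-positive-dg-algebra}), and your verification of (ST1)--(ST3) reproduces the content of Proposition~\ref{prop:ST-pair-for-non-positive-dg-algebra}, which the paper invokes as a lemma; you have simply unfolded it. One small merit of your write-up is that you spell out explicitly that $\T$ must require degree-wise finite-dimensionality of cohomology (not merely right-boundedness) --- the paper's displayed definition of $\Dfd^-(A)$ only mentions $H^p=0$ for $p\gg 0$, leaving the finite-dimensionality implicit in the subscript $\fd$, even though this is needed both for (ST1) and for $\bigcup_n \T_A^{\geq n}=\Dfd(A)$ in (ST3).
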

\begin{proof}
Let $M$ be a silting object of $\C$. Then by \cite[Theorem 3.8 b)]{Ke4}, there exists a dg $K$-algebra $A$  together with a triangle equivalence from $\per(A)$ to $\C$ taking $A$ to $M$. Therefore
\[
H^p(A)=\Hom_{\per(A)}(A,A[p])\simeq \Hom_\C(M,M[p])
\]
vanishes for $p>0$ and is finite-dimensional for any $p\in\mathbb{Z}$. By Proposition~\ref{prop:ST-pair-for-non-positive-dg-algebra} below, the categories $\T=\Dfd^-(A)$, $\C'=\per(A)$ and $\D=\Dfd(A)$ satisfy the desired properties.
\end{proof}

The following result generalises Lemmas~\ref{lem:ST-pair-for-fd-algebra}~and~\ref{lem:standard-ST-pair-for-smooth-non-positive-dg-algebra}. We point out that in general $\per(A)$ and $\Dfd(A)$ are not comparable. For example, let $A=K[x,y]/(y^2)$ with $x$ and $y$ in degree $-1$ and with trivial differential. Then $A\notin\Dfd(A)$ and $A/(x,y)\notin\per(A)$.

\begin{proposition}\label{prop:ST-pair-for-non-positive-dg-algebra}
Let $A$ be a dg $K$-algebra satisfying
\begin{itemize}
\item[(N1)] $H^p(A)=0$ for any $p>0$,
\item[(N2)] $H^p(A)$ is finite-dimensional for any $p\in\mathbb{Z}$.
\end{itemize}
Then the following statements hold.
\begin{itemize}
\item[(a)] $\per(A)$ and $\Dfd(A)$ are Hom-finite.
\item[(b)] $(\per(A),\Dfd(A),A)$ is an ST-triple inside $\Dfd^-(A)$ (defined in Example~\ref{ex:t-structure-for-non-positive-dg-algebra}).
\end{itemize}
\end{proposition}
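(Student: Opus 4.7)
The plan is to establish Hom-finiteness in part (a) by devissage along natural filtrations, and to verify part (b) directly by identifying $(\T_A^{\le 0}, \T_A^{\ge 0})$ with the standard $t$-structure on $\Dfd^-(A)$ described in Example~\ref{ex:t-structure-for-non-positive-dg-algebra}.

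For the Hom-finiteness of $\per(A)$, note that $A$ is a silting object of $\per(A)$ by Lemma~\ref{lem:non-positive-dg-algebra-and-silting}, so by Lemma~\ref{AI223}(c) any $X\in \per(A)$ belongs to $A^{[-n,n]}$ for some $n\ge 0$. Given $X,Y\in \per(A)$, an iterated use of the long exact sequences coming from the extension triangles in such filtrations reduces $\Hom_{\D(A)}(X,Y)$ to a subquotient of a finite direct sum of spaces of the form $\Hom_{\D(A)}(A[i],A[j])=H^{j-i}(A)$, each of which is finite-dimensional by (N2). For the Hom-finiteness of $\Dfd(A)$, I would first observe that the standard $t$-structure of Example~\ref{ex:t-structure-for-non-positive-dg-algebra} restricts to a \emph{bounded} $t$-structure on $\Dfd(A)$ whose heart is naturally equivalent to $\mod H^0(A)$; indeed, standard truncations preserve finite-dimensionality of cohomology in each degree, and finite-dimensional total cohomology forces the truncations to stabilise, giving boundedness. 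Devissage through this bounded $t$-structure reduces the problem to showing that $\Hom_{\D(A)}(M,N[k])$ is finite-dimensional for all $M,N\in \mod H^0(A)$ and all $k\in \mathbb{Z}$. This vanishes for $k<0$, and for $k\ge 0$ I would construct a semi-free resolution $p(M)\to M$ of the shape $p(M)=\bigoplus_{i\ge 0} A\otimes_K V_i[i]$ with each $V_i$ a \emph{finite-dimensional} vector space, built inductively by killing the cohomology of the growing cone at each step; the induction stays finite-dimensional because $H^p(A)$ is finite-dimensional for every $p$ by (N2), $M$ is finite-dimensional, and the cohomology of each $A\otimes_K V_i[i]$ is $H^*(A)\otimes V_i$ degreewise. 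Since $N$ is concentrated in degree $0$, the component $\Hom^k_A(p(M),N)\simeq \Hom_K(V_k,N)$ is finite-dimensional, so its cohomology $\Hom_{\D(A)}(M,N[k])$ is finite-dimensional as a subquotient.

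For part (b), using the formula $\Hom_{\D(A)}(A,T[p])=H^p(T)$, condition (ST1) for $T\in \Dfd^-(A)$ reduces to $H^0(T)$ being finite-dimensional, which holds by the definition of $\Dfd^-(A)$; the same formula identifies $\T_A^{\le 0}$ and $\T_A^{\ge 0}$ (for $\T=\Dfd^-(A)$) with the standard truncation subcategories of Example~\ref{ex:t-structure-for-non-positive-dg-algebra}, yielding (ST2). Condition (ST3) is then an immediate reformulation of the definitions: $\T=\bigcup_n \T_A^{\le n}$ simply records that every object of $\Dfd^-(A)$ has cohomology vanishing in high degrees, while $\bigcup_n \T_A^{\ge n}$ consists of those objects of $\Dfd^-(A)$ whose cohomology also vanishes in low degrees, which by finite-dimensionality in each degree is precisely $\Dfd(A)$. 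The principal obstacle is the construction of the finite-dimensional semi-free resolution in (a), where the interplay of (N1) (guaranteeing we can work in the non-positive cohomological range and build the resolution degree by degree) and (N2) (ensuring the inductively added free pieces $V_i$ remain finite-dimensional) is essential.
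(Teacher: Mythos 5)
Your proof of part (b) is essentially identical to the paper's: both use the formula $\Hom_{\D(A)}(A,M[p])=H^p(M)$ to identify $\bigl(\T_A^{\le 0},\T_A^{\ge 0}\bigr)$ with the standard $t$-structure $\bigl(\Dfd^{-,\le 0},\Dfd^{-,\ge 0}\bigr)$, and then read off (ST1)--(ST3) directly. For part (a), however, you take a genuinely different route. The paper proves (b) first and obtains (a) as an immediate corollary of the general ST-pair machinery: Remark~\ref{rem:ST1}(d) shows that in any ST-triple, $\C$ is Hom-finite from (ST1) plus the fact that the relevant objects form a thick subcategory, and $\D$ is Hom-finite from the dual condition (ST1') established in Proposition~\ref{prop:ST=>TS}, whose proof in turn uses the abstract ``minimal $M$-resolution'' of Proposition~\ref{prop:minimal-resolution}. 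You instead prove (a) by hand and independently of (b): for $\per(A)$, devissage along $A^{[-n,n]}$ reducing to $H^{j-i}(A)$ being finite-dimensional; for $\Dfd(A)$, devissage through the bounded standard $t$-structure to reduce to $\Hom_{\D(A)}(M,N[k])$ with $M,N\in\mod H^0(A)$, followed by an explicit minimal semi-free resolution $p(M)=\bigoplus_{i\ge 0}A\otimes_K V_i[i]$ with finite-dimensional $V_i$. Your construction of that resolution does work: the cone of $F_{i-1}\to M$ has cohomology concentrated in degrees $\le -i$ and finite-dimensional in each degree (by induction via the long exact sequence, using (N1), (N2) and the finite-dimensionality of $M$), so the next generating space $V_i$ can be taken finite-dimensional, and the identification $\Hom^k_{\C_\dg(A)}(p(M),N)\cong\Hom_K(V_k,N)$ is correct because $N$ is concentrated in degree $0$. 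In fact, this is precisely the ``direct proof using \cite[Theorem~3.1~c)]{Ke1}'' that the authors allude to (without writing out) in the proof of Lemma~\ref{lem:compact-and-homologically-finite-objects-for-dg-algebra}, and conceptually it is the chain-level analogue of the paper's abstract Proposition~\ref{prop:minimal-resolution}. The trade-off is clear: the paper's route is shorter and illustrates the utility of the ST-pair formalism they have already built, whereas your route is self-contained and makes the role of (N1) and (N2) entirely explicit at the level of dg modules.
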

\begin{proof} 
(a) is a consequence of (b) by Remark~\ref{rem:ST1}(d).

(b) By Lemma~\ref{lem:non-positive-dg-algebra-and-silting}, $A$ is a silting object of $\per(A)$. Moreover $A$ belongs to $\Dfd^-(A)$. We will verify the three conditions (ST1--3) for the silting object $A$.

(ST1): For $N\in\Dfd^-(A)$, the space $\Hom(A,N)=H^{0}(N)$ is finite-dimensional.

(ST2):  Recall that in Example~\ref{ex:t-structure-for-non-positive-dg-algebra} we defined a $t$-structure $(\Dfd^{-,\leq 0},\Dfd^{-,\geq 0})$ on $\Dfd^-(A)$. Thanks to the formula~\eqref{eq:Hom-space-from-free-dg-module}, we have $\Dfd^{-,\leq 0}=(\Dfd^-)_A^{\leq 0}$ and $\Dfd^{-,\geq 0}=(\Dfd^-)_A^{\geq 0}$. So $((\Dfd^-)_A^{\leq 0},(\Dfd^-)_A^{\geq 0})$ is a $t$-structure on $\Dfd^-(A)$.

(ST3): This is clear due to the equalities $\Dfd^{-,\leq 0}=(\Dfd^-)_A^{\leq 0}$ and $\Dfd^{-,\geq 0}=(\Dfd^-)_A^{\geq 0}$.
\end{proof}

The following result shows that the ST-pair in Lemma~\ref{lem:standard-ST-pair-for-smooth-non-positive-dg-algebra} is a `normal form' for ST-pairs $(\C,\D)$ with $\C\supseteq\D$.

\begin{corollary}
Let $(\C,\D)$ be an ST-pair such that $\C\supseteq\D$ and $\C$ is an algebraic triangulated category. Then there is dg $K$-algebra $\Gamma$ satisfying (SN1--3) in Section~\ref{ss:ST-pair-for-smooth-non-positive-dg-algebra} such that $(\C,\D)$ is equivalent to $(\per(\Gamma),\Dfd(\Gamma))$.
\end{corollary}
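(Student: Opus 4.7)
The plan is to construct a dg algebra $\Gamma$ from a silting object of $\C$ via Keller's theorem, verify the conditions (SN1)--(SN3), and then conclude by invoking the uniqueness result Proposition~\ref{prop:uniqueness-case:C-is-homologically-smooth}(b).

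First I would fix a basic silting object $M$ of $\C$. By Remark~\ref{rem:ST1}(d), $\C$ is Hom-finite and Krull--Schmidt; together with the hypothesis that $\C$ is algebraic, \cite[Theorem 3.8 b)]{Ke4} supplies a dg $K$-algebra $\Gamma$ together with a triangle equivalence $F\colon\per(\Gamma)\xto{\sim}\C$ sending $\Gamma$ to $M$. The isomorphisms $H^p(\Gamma)\simeq\Hom_\C(M,M[p])$ then immediately yield (SN1) (since $M$ is presilting) and (SN2) (by Hom-finiteness); in fact every $H^p(\Gamma)$ is finite-dimensional, so by Proposition~\ref{prop:ST-pair-for-non-positive-dg-algebra}, $(\per(\Gamma),\Dfd(\Gamma),\Gamma)$ is an ST-triple inside $\Dfd^-(\Gamma)$.

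The hard part will be (SN3), namely $\Dfd(\Gamma)\subseteq\per(\Gamma)$. I plan to first locate the simple $H^0(\Gamma)$-modules (viewed as dg $\Gamma$-modules concentrated in degree $0$) inside $\per(\Gamma)$. By Proposition~\ref{prop:heart-of-silting-t-structure}(a), $\Hom_\C(M,-)$ induces an equivalence $\D_M^0\xto{\sim}\mod H^0(\Gamma)$. Using Theorem~\ref{thm:S-and-T-determine-each-other} together with the Hom-finiteness of $\C$, an object $X\in\C$ with $\Hom_\C(M,X[p])=0$ for all $p\neq 0$ automatically satisfies $\sum_p\dim\Hom_\C(M,X[p])=\dim\Hom_\C(M,X)<\infty$ and hence lies in $\D$. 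Transporting through $F$ (and using $H^p(Y)\simeq\Hom_\C(M,F(Y)[p])$), it follows that $F$ restricts to an equivalence from the full subcategory $\{Y\in\per(\Gamma):H^p(Y)=0\text{ for }p\neq 0\}$ onto $\D_M^0$. Each such $Y$ is quasi-isomorphic in $\D(\Gamma)$ to $H^0(Y)$ regarded as a dg $\Gamma$-module concentrated in degree $0$, so every simple $H^0(\Gamma)$-module, so regarded, lies in $\per(\Gamma)$.

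Finally I would run a standard truncation argument: Example~\ref{ex:t-structure-for-non-positive-dg-algebra} provides the standard $t$-structure on $\D(\Gamma)$ with heart $\Mod H^0(\Gamma)$, under whose truncations $\Dfd(\Gamma)$ is stable. Combined with composition series in $\mod H^0(\Gamma)$ (available by (SN2)), this exhibits every object of $\Dfd(\Gamma)$ as an iterated extension in $\D(\Gamma)$ of shifts of simple $H^0(\Gamma)$-modules, so $\Dfd(\Gamma)$ lies in the thick subcategory of $\D(\Gamma)$ generated by those simples, which by the previous paragraph is contained in $\per(\Gamma)$; this establishes (SN3). With (SN1)--(SN3) in hand, Proposition~\ref{prop:uniqueness-case:C-is-homologically-smooth}(b), applied to the equivalence $\per(\Gamma)\simeq\C$ and the ST-pair $(\C,\D)$, yields that $(\C,\D)$ is equivalent to $(\per(\Gamma),\Dfd(\Gamma))$, completing the proof.
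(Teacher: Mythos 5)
Your proof is correct, but it reaches (SN3) by a genuinely different route than the paper does. You establish $\Dfd(\Gamma)\subseteq\per(\Gamma)$ directly: you transport the heart $\D_M^0$ through the Keller equivalence to identify the simple $H^0(\Gamma)$-modules as perfect complexes, and then run the standard truncation/d\'evissage argument to place all of $\Dfd(\Gamma)$ in $\thick(S)\subseteq\per(\Gamma)$. The paper instead gets (SN3) \emph{for free} from the uniqueness machinery: since $\C\supseteq\D$ forces $\D=\C^{\chf}$ (Theorem~\ref{thm:S-and-T-determine-each-other}), the equivalence $\per(\Gamma)\simeq\C$ immediately makes $(\per(\Gamma),\per(\Gamma)^{\chf})$ an ST-pair equivalent to $(\C,\D)$; on the other hand Proposition~\ref{prop:ST-pair-for-non-positive-dg-algebra} gives a second ST-pair $(\per(\Gamma),\Dfd(\Gamma))$ with the same left member, so Lemma~\ref{lem:uniqueness-case-CcontainsD} forces $\Dfd(\Gamma)=\per(\Gamma)^{\chf}\subseteq\per(\Gamma)$, which is (SN3) and the desired equivalence in one stroke. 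Your proof is more self-contained and hands-on; the paper's is shorter because it exploits the earlier uniqueness result to avoid the explicit simple-module and truncation computation. Your final appeal to Proposition~\ref{prop:uniqueness-case:C-is-homologically-smooth}(b) is valid but is a slight detour: once you have (SN1)--(SN3) and the given equivalence $\per(\Gamma)\simeq\C$, Definition~\ref{defn:equivalence-of-ST-pairs} together with Theorem~\ref{thm:uniqueness-of-ST-pair-inside} already yields the claimed equivalence of ST-pairs, since $\thick(\C\cup\D)=\C$ and $\thick(\per(\Gamma)\cup\Dfd(\Gamma))=\per(\Gamma)$.
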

\begin{proof}
As shown in the proof of Theorem~\ref{thm:silting=>ST-pair}, there is a dg $K$-algebra $\Gamma$ satisfying (N1) and (N2) (which imply (SN1) and (SN2)) such that $\C$ is triangle equivalent to $\per(\Gamma)$. Therefore $(\per(\Gamma),\per(\Gamma)^{\chf})$ is an ST-pair equivalent to $(\C,\D)$. But by Proposition~\ref{prop:ST-pair-for-non-positive-dg-algebra}, $(\per(\Gamma),\Dfd(\Gamma))$ is also an ST-pair. So by Lemma~\ref{lem:uniqueness-case-CcontainsD} we have $\Dfd(\Gamma)=\per(\Gamma)^{\chf}$, in particular, (SN3) is satisfied. The proof is complete.
\end{proof}

The rest of this subsection is devoted to a discussion on the uniqueness of the ST-pair constructed in the proof of Theorem~\ref{thm:silting=>ST-pair}. If $\C$ has a tilting object (respectively, the dg algebra $A$ is homologically smooth), then by Proposition~\ref{prop:tilting=>unique-ST-pair} (respectively, Proposition~\ref{prop:uniqueness-case:C-is-homologically-smooth}) such an ST-pair must be unique up to equivalence. In general we do not know if this is the case. 
The dg algebra $A$ is the `dg endomorphism algebra' of the silting object $M$ in a certain dg category. As $\C$ is algebraic, there is a dg $K$-category $\ca$ whose homotopy category $H^0\ca$ is triangle equivalent to $\C$ (a standard way to construct such a dg category is given in the proof of \cite[Theorem 4.3]{Ke1}). Such a dg category is called a \emph{dg enhancement} of $\C$. The dg algebra $A$ depends both on the silting object $M$ and on the dg enhancement. If $\C=\Kb(\proj\Lambda)$ for a finite-dimensional $K$-algebra $\Lambda$, $\ca$ is the dg category of bounded complexes of finitely generated projective $\Lambda$-modules and $M=\Lambda$, then $A=\Lambda$ and the resulting ST-pair is exactly $(\Kb(\proj\Lambda),\Db(\mod\Lambda))$.

(1) Let $\ca$ be a dg enhancement of $\C$. Let $M$ be a silting object of $\C$ and $\tilde{M}$ a lift of $M$ in $\ca$. Then by construction $A=\End_\ca(\tilde{M})$. If $N$ is another silting object of $\C$ and $B=\End_\cb(\tilde{N})$, then $-{\ten}^{\mathbf{L}}_A\Hom_\ca(\tilde{N},\tilde{M})\colon \D(A)\to\D(B)$ is a triangle equivalence. So it restricts to triangle equivalences $\Dfd^-(A)\to\Dfd^-(B)$, $\Dfd(A)\to\Dfd(B)$ and $\per(A)\to\per(B)$. As a consequence, the ST-pair $(\per(A),\Dfd(A))$ inside $\Dfd^-(A)$ is equivalent to the ST-pair $(\per(B),\Dfd(B))$ inside $\Dfd^-(B)$.

(2) We say that $\C$ \emph{has a unique dg enhancement} \cite[Definition 2.2]{LuntsOrlov10} if for any two dg enhancements $\ca$ and $\cb$ there is a quasi-functor $\phi:\ca\to\cb$  such that $H^0\phi\colon H^0\ca\to\H^0\cb$ is a triangle equivalence.

\begin{lemma}\label{lem:unique-dg-enhancement=>unique-ST-pair-silting-case}
Assume that $\C$ has a unique dg enhancement. Then the ST-pair constructed in the proof of Theorem~\ref{thm:silting=>ST-pair} is unique up to equivalence.
\end{lemma}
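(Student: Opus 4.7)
The plan is to show that the ST-pair $(\per(A),\Dfd(A))$ inside $\Dfd^-(A)$ constructed in the proof of Theorem~\ref{thm:silting=>ST-pair} does not depend, up to equivalence, on either the silting object $M \in \C$ or the dg enhancement $\ca$ of $\C$ (the choice of a lift $\tilde{M} \in \ca$ of $M$ is then automatic). Point~(1) preceding the statement already settles the first kind of variation: once $\ca$ is fixed, if $M$ is replaced by another silting object $N$ of $\C$ with lift $\tilde{N}$, then $-\otimes^{\mathbf{L}}_A \Hom_\ca(\tilde{N},\tilde{M})$ is a triangle equivalence $\D(A) \to \D(B)$ with $B=\End_\ca(\tilde{N})$, and this equivalence restricts to equivalences on $\per$, $\Dfd$, and $\Dfd^-$, giving an equivalence of the resulting ST-pairs. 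Hence only the dependence on the dg enhancement remains to be addressed.

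Let $\ca$ and $\cb$ be two dg enhancements of $\C$. By the unique dg enhancement hypothesis, there is a quasi-functor $\phi\colon \ca \to \cb$ such that $H^0\phi\colon H^0\ca \to H^0\cb$ is a triangle equivalence. I would first reduce to a ``matched'' situation: pick a silting object $M_\cb \in \C \simeq H^0\cb$ with a lift $\tilde{M}_\cb \in \cb$, and choose $M_\ca \in \C \simeq H^0\ca$ to be the preimage of $M_\cb$ under $H^0\phi$, with a lift $\tilde{M}_\ca \in \ca$; by point~(1) on either side, any other choice of silting objects yields equivalent ST-pairs, so this matching is harmless. Then $\phi(\tilde{M}_\ca)$ and $\tilde{M}_\cb$ are isomorphic in $H^0\cb$, and a further application of point~(1) inside $\cb$ shows that the ST-pair built from $\End_\cb(\tilde{M}_\cb)$ is equivalent to the one built from $\End_\cb(\phi(\tilde{M}_\ca))$. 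Finally, because $H^0\phi$ is a triangle equivalence, $\phi$ is itself a quasi-equivalence of dg categories, so it induces a quasi-isomorphism of dg algebras $A = \End_\ca(\tilde{M}_\ca) \to \End_\cb(\phi(\tilde{M}_\ca))$; this quasi-isomorphism yields a triangle equivalence of unbounded derived categories restricting to the three subcategories $\per$, $\Dfd$, and $\Dfd^-$, and in particular an equivalence of the associated ST-pairs. Chaining these three equivalences completes the argument.

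The main technical point is to check that $\phi$ is really a quasi-equivalence, not merely a quasi-functor whose induced functor on $H^0$ is a triangle equivalence. The key observation is that $\ca$ and $\cb$, being dg enhancements of the triangulated category $\C$, are pretriangulated, so shifts are internally representable; fully faithfulness of $H^0\phi$ applied to all shifts $Y[p]$ upgrades to the statement that $H^p(\Hom_\ca(X,Y)) \to H^p(\Hom_\cb(\phi(X),\phi(Y)))$ is an isomorphism for every $p \in \mathbb{Z}$ and all $X,Y \in \ca$, \ie $\phi$ is quasi-fully-faithful. The remaining ingredient, namely that a quasi-isomorphism of dg algebras induces a triangle equivalence of $\D$ preserving $\per$, $\Dfd$, and $\Dfd^-$ simultaneously, is a standard fact about derived categories of dg algebras and requires no separate treatment.
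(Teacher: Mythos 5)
Your argument is correct and follows essentially the same route as the paper: both rest on the quasi-functor supplied by the uniqueness of the dg enhancement inducing a triangle equivalence between the derived categories of the two dg endomorphism algebras (restricting to $\per$, $\Dfd$ and $\Dfd^-$), combined with the discussion in (1). The only difference is presentational: the paper packages the change of silting object and of enhancement into a single bimodule $\phi(\tilde{M},\hat{M})$, whereas you factor the comparison through a matched pair of silting objects and verify the quasi-fully-faithfulness of $\phi$ explicitly.
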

\begin{proof}
Let $\ca$ and $\cb$ be two dg enhancements of $\C$. Let $M\in\C$ be a silting object and let $\tilde{M}$ and $\hat{M}$ be lifts of $M$ in $\ca$ and $\cb$, respectively. Let $A=\End_\ca(\tilde{M})$ and $B=\End_\cb(\hat{M})$. By assumption, there is a quasi-functor $\phi:\ca\to\cb$  such that $H^0\phi\colon H^0\ca\to\H^0\cb$ is a triangle equivalence. As a consequence, $\phi(\tilde{M},\hat{M})\colon A\to B$ is a quasi-functor inducing a triangle equivalence $\D(A)\to\D(B)$. It follows that the ST-pair $(\per(A),\Dfd(A))$ inside $\Dfd^-(A)$ is equivalent to the ST-pair $(\per(B),\Dfd(B))$ inside $\Dfd^-(B)$. The proof is complete, due to the above discussion in (1).
\end{proof}

Let $A$ and $B$ be as in the proof of Lemma~\ref{lem:unique-dg-enhancement=>unique-ST-pair-silting-case}. Even if $\C$ does not have a unique dg enhancement, there is a triangle equivalence $\per(A)\to\per(B)$ as both categories are triangle equivalent to $\C$. But in general we do not know whether it is possible to extend such an equivalence to a triangle equivalence $\Dfd^-(A)\to\Dfd^-(B)$.



\section{Silting-discreteness and $t$-discreteness}

For an ST-pair $(\C,\D)$, we constructed in Section~\ref{s:order-preserving-map} an injective map $\Psi$ from the set $\silt(\C)$ of isomorphism classes of basic silting objects to the set $\tstr(\D)$ of bounded $t$-structures on $\D$. 
In this section, we study when the map is bijective by giving characterisations in terms of silting theory of $\C$ as well as the theory of $t$-structures of $\D$. 
The following theorem is the main result of this paper. 

\begin{theorem}\label{main}
\label{thm:bijectivity-vs-discreteness}
The following statements are equivalent for an ST-pair $(\C,\D)$.
\begin{itemize}
\item[(i)] The map $\Psi\colon \silt(\C) \to \tstr(\D), M\mapsto (\tD_{M}^{\le 0}, \tD_{M}^{\ge 0})$, is bijective. Namely, all bounded $t$-structures on $\D$ are $\C$-silting.
\item[(ii)] $\C$ is silting-discrete.
\item[(iii)] $\D$ is t-discrete.
\item[(iv)] The heart of every bounded $t$-structure on $\D$ has a projective generator. 
\item[(v)] The heart of any bounded $t$-structure has finitely many torsion classes, provided that it has  a projective generator. 
\end{itemize}
\end{theorem}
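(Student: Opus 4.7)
The plan is to establish the chain $(i) \Rightarrow (iv) \Rightarrow (ii) \Rightarrow (i)$, and then derive $(ii) \Leftrightarrow (iii) \Leftrightarrow (v)$ as structural consequences. The direction $(i) \Rightarrow (iv)$ is immediate from Proposition~\ref{prop:heart-of-silting-t-structure}(a): when every bounded $t$-structure on $\D$ is $\C$-silting, each heart $\D_M^0$ carries $\sigma_M^0(M)$ as a projective generator.

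For $(iv) \Rightarrow (ii)$, by AM's criterion (Lemma~\ref{AM}) it suffices to show that $\silt^2_M(\C)$ is finite for every basic silting $M \in \C$. By Woolf's bijection (Proposition~\ref{prop:woolf's-proposition}), 2-term tilts of the $\C$-silting $t$-structure $(\D_M^{\le 0}, \D_M^{\ge 0})$ correspond to torsion classes $\X$ of $\D_M^0 \simeq \mod\End_\T(M)$; the tilted heart $\ca = \X^{\perp}[1] \ast \X$ carries a torsion pair $(\X^{\perp}[1], \X)$ with $\X$ torsionfree, the required vanishing $\Hom_\D(\X^\perp[1], \X) = 0$ being immediate from the $t$-structure axioms. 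Condition $(iv)$ furnishes $\ca$ with a projective generator $P$, whose torsion short exact sequence $0 \to P_t \to P \to P_{tf} \to 0$ in $\ca$ puts $P_{tf}$ in $\X$. For $X \in \X$, any epimorphism $P^n \twoheadrightarrow X$ in $\ca$ factors through $P_{tf}^n \twoheadrightarrow X$ (using $\Hom(\X^\perp[1], \X) = 0$) with kernel in $\X$ (since torsionfree classes in $\ca$ are closed under subobjects), and this factorisation is a genuine short exact sequence in $\D_M^0$. Hence $\X = \Fac(P_{tf})$ in $\D_M^0$, so every torsion class of $\mod\End_\T(M)$ is finitely generated, and Proposition~\ref{twosilt-fgtor}(b) yields $\silt^2_M(\C)$ finite.

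For $(ii) \Rightarrow (i)$, fix a bounded $t$-structure $(\D^{\le 0}, \D^{\ge 0})$ and a silting $N \in \C$; Corollary~\ref{C-silt is alg} and Lemma~\ref{lem:intermediate-t-structures-wrt-algebraic-t-structure} provide integers $a > b$ with $\D_N^{\le a} \supseteq \D^{\le 0} \supseteq \D_N^{\le b}$. I would induct on $a - b$. In the base case $a - b = 1$, the $t$-structure lies in $\tstr^2_{N[-b]}(\D)$, and silting-discreteness combined with Proposition~\ref{twosilt-fgtor}(b)\&(a) produces a 2-term silting mutation of $N[-b]$ realising the target as $\C$-silting. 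The inductive step builds an intermediate $\C$-silting $t$-structure by HRS-tilting $(\D_N^{\le a}, \D_N^{\ge a})$ with respect to the torsion class $\D^{\le 0} \cap \D_N^{0,a}$ of its heart $\D_N^{0,a} := \D_N^{\le a} \cap \D_N^{\ge a}$, yielding via Proposition~\ref{twosilt-fgtor}(a) a 2-term mutation $N'$ of $N[-a]$ with $\D_{N'}^{\le 0} \supseteq \D^{\le 0}$ and a strictly smaller sandwich width measured against $N'$, so the hypothesis applies.

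The remaining equivalences follow from $(i) \Leftrightarrow (iv)$: Corollary~\ref{cor:restriction-of-the-map-to-n-terms} promotes $\Psi$ to a bijection $\silt^n_M(\C) \leftrightarrow \tstr^n_M(\D)$, and combined with Lemma~\ref{lem:criterion-for-t-discreteness} (checking $t$-discreteness against a single algebraic $t$-structure, e.g.\ any $\C$-silting one) this gives $(ii) \Leftrightarrow (iii)$. Under $(i)$, condition $(v)$ translates to finiteness of $\tors(\mod\End_\T(M))$ for every silting $M$, which Proposition~\ref{twosilt-fgtor}(b) identifies with $(ii)$. The main obstacle is the inductive step in $(ii) \Rightarrow (i)$: one must verify that $\D^{\le 0} \cap \D_N^{0,a}$ is indeed a torsion class of $\D_N^{0,a}$ and that the resulting mutation $N'$ satisfies $\D_{N'}^{\le 0} \supseteq \D^{\le 0}$ with strictly reduced width, which requires a careful analysis of how 2-term HRS-tilts interact with nested sandwich structures.
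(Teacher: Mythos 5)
Your handling of $(i)\Rightarrow(iv)$, of $(iv)\Rightarrow(ii)$ via the projective-generator--finitely-generated-torsion-class argument (which in effect re-proves the surjectivity half of Proposition~\ref{restmap}(b), feeding into Corollary~\ref{finiteness} and Lemma~\ref{AM}), and of the reduction of $(iii)$ and $(v)$ to the core equivalence are sound and track the paper's route in substance. The genuine issue is $(ii)\Rightarrow(i)$, and you have correctly identified it as the main obstacle.

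Your inductive step HRS-tilts $(\D_N^{\le a},\D_N^{\ge a})$ by $\Y:=\D^{\le 0}\cap\D_N^{a}$ to produce $N'\in\silt^2_{N[-a]}(\C)$ with $\D_{N'}^{\le 0}=\D_N^{\le a-1}*\Y$. The width reduction is fine \emph{assuming} the key inclusion: $\D_{N'}^{\le b-a+1}=\D_N^{\le b}*\Y[a-b-1]\subseteq\D^{\le 0}$ because both factors lie in $\D^{\le 0}$ and $\D^{\le 0}$ is extension-closed. But the key inclusion $\D^{\le 0}\subseteq\D_{N'}^{\le 0}$ is not established and is not formal. Unwinding, for $X\in\D^{\le 0}$ one needs $\sigma_N^{a}X\in\D^{\le 0}$, equivalently $\Hom(\sigma_N^{a}X,Y)=0$ for all $Y\in\D^{\ge 1}$; the long exact sequence attached to $\sigma_N^{\le a-1}X\to X\to\sigma_N^{a}X$ only shows this $\Hom$ is a quotient of $\Hom\bigl((\sigma_N^{\le a-1}X)[1],Y\bigr)$, and with $(\sigma_N^{\le a-1}X)[1]\in\D_N^{\le a-2}$ and $Y\in\D_N^{\ge b+1}$ that group has no reason to vanish once $a-b\ge 3$.

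The paper sidesteps exactly this via Proposition~\ref{keythm}, by choosing a \emph{larger} torsion class. Set $M=N[-a]$; one first shows that $\D_M^0\cap\D^{\ge 1}$ is nonzero and closed under subobjects in $\D_M^0$, hence contains a simple object $S$. Tilting by $\X_S=\{X\in\D_M^0:\Hom(X,S)=0\}\supseteq\Y$ gives an irreducible left mutation $N_1$ of $M$ with $\D_M^{\le 0}\supsetneq\D_{N_1}^{\le 0}$, and the inclusion $\D^{\le 0}\subseteq\D_{N_1}^{\le 0}$ is forced by the single Hom-vanishing $\Hom(X,S)=0$ (for $X\in\D^{\le 0}$, $S\in\D^{\ge 1}$) together with Lemma~\ref{lem:truncation-preserves-morphisms}, which transports it to $\Hom(\sigma_M^0X,S)=0$, i.e.\ $\sigma_M^0X\in\X_S$. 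Your class $\Y$ is defined by membership in $\D^{\le 0}$ rather than by Hom-vanishing against one object, and there is no analogue of Lemma~\ref{lem:truncation-preserves-morphisms} transporting \emph{that} across $\sigma_M^0$. The paper then concludes not by descending on the width but by iterating Proposition~\ref{keythm} to produce a strictly decreasing chain $M\gneq N_1\gneq N_2\gneq\cdots\gneq M[l]$, contradicting finiteness of $\silt^{l+1}_M(\C)$ under silting-discreteness. To complete your argument you must either prove the inclusion $\D^{\le 0}\subseteq\D_{N'}^{\le 0}$ independently, or replace the tilt by $\Y$ with the tilt by $\X_S$, which is exactly Proposition~\ref{keythm}.
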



We will make some preparations in Sections~\ref{sub4.2}, ~\ref{ss:silting-mutations-vs-semisimple-tilts} and~\ref{ss:mutation-vs-partial-order}, and give the proof of Theorem~\ref{main} in Section~\ref{ss:proof-of-the-main-theorem}.
In Section~\ref{ss:discrete-triangulated-categories} we will apply Theorem~\ref{thm:bijectivity-vs-discreteness} to show that discrete triangulated categories in the sense of \cite{BPP3} are $t$-discrete, provided that it is part of an ST-pair.

\begin{remark}
\begin{itemize}
\item[(a)]
In cases (1) and (2) of Example~\ref{ex:algebraic-t-structure} the map $\Psi$ was known to be bijective. In case (1) it is \cite[Theorem 5]{KV88},  and in case (2) it is discussed in \cite[Section 1.6]{BPP2}.
\item[(b)] 
The equivalence between (i) and (iv) follows from \cite[Theorem 6.1]{KoY14} for the ST-pair in Lemma~\ref{lem:ST-pair-for-fd-algebra}, from \cite[Theorem 13.3]{KN2} for the ST-pair in Lemma~\ref{lem:standard-ST-pair-for-smooth-non-positive-dg-algebra} ($K=\overline{K}$) and from \cite[Theorem 1.2]{SY16} for the ST-pair in Lemma~\ref{lem:ST-pair-for-fd-non-positive-dg-algebra} ($K=\overline{K}$).
\end{itemize}
\end{remark}

\begin{corollary}\label{cor:projective-variety-not-t-discrete}
Let $X$ be a projective scheme over $K$ such that $\Db(\coh(X))$ has an algebraic $t$-structure. The following conditions are equivalent:
\begin{itemize}
\item[(i)] $\Db(\coh(X))$ is $t$-discrete,
\item[(ii)] $\per(X)$ is silting-discrete,
\item[(iii)] $\dim X=0$.
\end{itemize}
\end{corollary}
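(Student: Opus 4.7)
The plan is to decouple the equivalence $(i)\Leftrightarrow(ii)$ from the geometric content. By Corollary \ref{cor:uniqueness-for-variety}(a), the hypothesis that $\Db(\coh(X))$ admits an algebraic $t$-structure ensures that $(\per(X),\Db(\coh(X)))$ is an ST-pair, so Theorem \ref{thm:bijectivity-vs-discreteness} immediately yields $(i)\Leftrightarrow(ii)$. It therefore suffices to prove $(ii)\Leftrightarrow(iii)$.

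For $(iii)\Rightarrow(ii)$, I would exploit that a zero-dimensional projective scheme over $K$ is $\mathrm{Spec}\,\Lambda$ for a finite-dimensional commutative $K$-algebra $\Lambda$, which decomposes as a finite product $\Lambda_1\times\cdots\times\Lambda_r$ of finite-dimensional local $K$-algebras. Under this identification $\per(X)\simeq\Kb(\proj\Lambda)$ splits as $\prod_i\Kb(\proj\Lambda_i)$. Each factor is silting-discrete by Example \ref{ex:silting-discrete-derived-categories}(1), and silting-discreteness is clearly inherited by finite products of triangulated categories: basic silting objects decompose componentwise, the partial order factors accordingly, and hence $\silt^n_M$ is a product of $r$ finite sets.

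For the contrapositive of $(ii)\Rightarrow(iii)$, suppose $\dim X\geq 1$ and choose a closed point $x$ lying on an irreducible component of positive dimension. The local ring $\mathcal{O}_{X,x}$ has positive Krull dimension, hence is not Artinian, so by Nakayama's lemma $\mathfrak{m}_x^n\supsetneq\mathfrak{m}_x^{n+1}$ for every $n\geq 1$. Sheafifying, this yields a strictly descending chain of coherent subsheaves $\mathfrak{m}_x\supsetneq\mathfrak{m}_x^2\supsetneq\cdots$ of $\mathcal{O}_X$, showing that $\coh(X)$ is not a length category. The standard $t$-structure on $\Db(\coh(X))$ is therefore a bounded $t$-structure whose heart is not algebraic. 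By Corollary \ref{C-silt is alg} it cannot lie in the image of $\Psi$, so $\Psi$ fails to be surjective; via $(i)\Leftrightarrow(ii)$, $(ii)$ fails as well.

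The only delicate point is this final step: producing a bounded $t$-structure on $\Db(\coh(X))$ that is not $\per(X)$-silting whenever $\dim X\geq 1$. The ideal-sheaf-power argument works uniformly for reducible or non-reduced $X$ as soon as a closed point on a positive-dimensional component is chosen. Everything else is a direct appeal to the main theorem together with the structure theory of finite-dimensional commutative $K$-algebras.
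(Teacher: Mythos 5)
Your proof is correct and follows essentially the same route as the paper's: both handle $(i)\Leftrightarrow(ii)$ via Corollary~\ref{cor:uniqueness-for-variety}(a) together with Theorem~\ref{thm:bijectivity-vs-discreteness}, both prove $(iii)\Rightarrow(ii)$ by identifying $\coh(X)$ with $\mod\Lambda$ for $\Lambda$ a finite product of commutative local Artinian algebras and invoking Example~\ref{ex:silting-discrete-derived-categories}(1), and both derive $\neg(iii)\Rightarrow\neg(ii)$ from the observation that $\coh(X)$ fails to be a length category when $\dim X>0$.

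The only genuine variation is which item of Theorem~\ref{main}'s list you check in the last step: you argue that the standard $t$-structure is not algebraic, hence by Corollary~\ref{C-silt is alg} not in the image of $\Psi$, contradicting condition~(i), whereas the paper observes that its heart has no projective generator, contradicting condition~(iv). These are interchangeable since a Hom-finite Krull--Schmidt abelian category with a projective generator is automatically a length category. You also fill in two small details the paper leaves implicit — the ideal-power chain showing $\coh(X)$ is not length, and the remark that silting-discreteness passes to finite products of triangulated categories (needed since Example~\ref{ex:silting-discrete-derived-categories}(1) is literally stated only for local algebras) — which is a reasonable thing to spell out. One small wording slip: when you write ``via $(i)\Leftrightarrow(ii)$, $(ii)$ fails'' in the final step, the bijectivity of $\Psi$ is condition~(i) of Theorem~\ref{main}, not of the Corollary; the conclusion is the same but the label should be disambiguated.
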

\begin{proof}
By Corollary~\ref{cor:uniqueness-for-variety}(a), the pair $(\per(X),\Db(\coh(X)))$ is an ST-pair, so the equivalence (i)$\Leftrightarrow$(ii) follows from Theorem~\ref{thm:bijectivity-vs-discreteness}.

(i)$\Rightarrow$(iii): If $\dim X>0$, then $\coh(X)$ is not a length category. So the condition (iv) in Theorem~\ref{thm:bijectivity-vs-discreteness} is not satisfied, in view of Example~\ref{ex:standard-t-structure}. Therefore $\Db(\coh(X))$ is not $t$-discrete.

(iii)$\Rightarrow$(ii): If $\dim X=0$, then $\coh(X)$ is equivalent to $\mod\Lambda$, where $\Lambda$ is the product of finitely many local commutative finite-dimensional $K$-algebras. 
 So $\per(X)\simeq \Kb(\proj\Lambda)$ is silting-discrete by Example~\ref{ex:silting-discrete-derived-categories}(1).
\end{proof}

\subsection{Two-term silting objects, intermediate $t$-structures and torsion classes}\label{sub4.2}
Let $(\C,\D)$ be an ST-pair.
In this subsection we study further the relationship between two-term silting objects, $t$-structures and torsion classes. 
This will be needed in the proof of the implication (iv)$\Rightarrow$(ii) in Theorem~\ref{main}.
\newcommand{\pg}{\mathrm{g}}
\smallskip
For a silting object $M$ of $\C$, we let 
\begin{eqnarray*}
\tstr^2_{M}(\D)\hspace{-5pt}&:=\hspace{-5pt}&\{ (\tD^{\le 0}, \tD^{\ge 0})\in \tstr(\D) \mid \tD_{M}^{\le 0}\supseteq \tD^{\le 0}\supseteq \tD_{M}^{\le -1} \},\notag\\
\tstr^{2,\pg}_{M}(\D)\hspace{-5pt}&:=\hspace{-5pt}&\{ (\tD^{\le 0}, \tD^{\ge 0})\in \tstr^2_{M}(\D) \mid \textnormal{$\tD^{0}$ has a projective generator}\}.\notag
\end{eqnarray*}

\begin{proposition}\label{restmap}
\label{prop:intermediate-t-structure-vs-torsion-pairs-silting-case}
\begin{itemize}
\item[(a)]There are mutually inverse bijections
\[
\xymatrix@C=3pc{\tstr^2_{M}(\D)\ar@/^1mm/[r]^{\ \Phi}&\tors(\tD_{M}^{0})\ar@/^1mm/[l]^{\ \ \Phi'}},
\]
where $\Phi\colon  (\tD^{\le 0}, \tD^{\ge 0})\mapsto \tD^{0}\cap \tD_{M}^{0}$ and $\Phi'\colon  \X\mapsto (\mu_{\X}^{\Le}\tD_{M}^{\le 0}, \mu_{\X}^{\Le}\tD_{M}^{\ge 0})$.
\item[(b)]The restriction of the maps $\Phi$ and $\Phi'$ gives mutually inverse bijections
\[
\xymatrix@C=3pc{\tstr_{M}^{2,\pg}(\D)\ar@/^1mm/[r]^{\ \phi}&\ftors(\tD_{M}^{0})\ar@/^1mm/[l]^{\ \ \phi'}}. 
\]
\end{itemize}
\end{proposition}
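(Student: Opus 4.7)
Part (a) will follow directly from Proposition~\ref{prop:woolf's-proposition} applied to the bounded $t$-structure $(\D_M^{\leq 0}, \D_M^{\geq 0})$ on $\D$ (which is bounded by Proposition~\ref{prop:heart-of-silting-t-structure}(b)), because every $t$-structure in $\tstr^2_M(\D)$ is automatically bounded by Lemma~\ref{lem:bounded-$t$-structure}(c).

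The heart of part (b) is the following claim: \emph{for every $(\D'^{\leq 0}, \D'^{\geq 0}) \in \tstr^{2,\pg}_M(\D)$ with projective generator $P$ of its heart $\H'$, one has $\Phi(\D'^{\leq 0}, \D'^{\geq 0}) = \Fac(\sigma^0_M(P))$}, which is then finitely generated by definition. I would prove this in two steps. First, the defining inclusions of $\tstr^2_M(\D)$ give $\H' \subseteq \D_M^{\leq 0} \cap \D_M^{\geq -1}$ and $\D_M^{\leq -1} \subseteq \D'^{\leq 0}$, so the $\D_M$-truncation triangle $\sigma^{\leq -1}_M(P) \to P \to \sigma^0_M(P)$ has first term in $\D'^{\leq 0}$ and middle term $P \in \D'^{\leq 0}$, which forces $\sigma^0_M(P) \in \D'^{\leq 0}$; combined with $\sigma^0_M(P) \in \D_M^0 \subseteq \D_M^{\geq 0} \subseteq \D'^{\geq 0}$ this puts $\sigma^0_M(P)$ in $\H' \cap \D_M^0 = \Phi(\D')$. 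Second, any $Y \in \Phi(\D')$ admits a short exact sequence $0 \to K \to P^k \to Y \to 0$ in $\H'$ for some $k$ (since $P$ is a projective generator), equivalently a triangle in $\D$ by Proposition~\ref{BBD}(c); applying the cohomological functor $\sigma^0_M$ and killing the boundary terms of the long exact sequence via $Y \in \D_M^0$ and $K \in \H' \subseteq \D_M^{\leq 0}$, one gets a short exact sequence $0 \to \sigma^0_M(K) \to \sigma^0_M(P)^k \to Y \to 0$ in $\D_M^0$, so that $Y \in \Fac(\sigma^0_M(P))$.

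To exhibit the reverse restriction, I would use Proposition~\ref{prop:heart-of-silting-t-structure}(a), which furnishes an equivalence $F = \Hom_\T(M,-)|_{\D_M^0} \colon \D_M^0 \to \mod E$ with $E = \End_\T(M)$, together with Proposition~\ref{twosilt-fgtor}(a): given $\X \in \ftors(\D_M^0)$, there exists $N \in \silt^2_M(\C)$ with $F(\X) = \Fac(\Hom_\T(M,N))$. The central claim applied to $\Psi(N)$, whose heart $\D_N^0$ has projective generator $\sigma^0_N(N)$ by Proposition~\ref{prop:heart-of-silting-t-structure}(a), yields $\Phi(\Psi(N)) = \Fac(\sigma^0_M(\sigma^0_N(N)))$. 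Applying the long exact sequence of $\sigma^0_M$ to the triangle $\sigma^{\leq -1}_N(N) \to N \to \sigma^0_N(N)$ and using $\sigma^{\leq -1}_N(N) \in \D_N^{\leq -1} \subseteq \D_M^{\leq -1}$ (which holds by Lemma~\ref{lem:partial-orders-of-presilting-and-t-structure}) identifies $\sigma^0_M(\sigma^0_N(N)) = \sigma^0_M(N)$; therefore $F(\Phi(\Psi(N))) = \Fac(F(N)) = F(\X)$ and hence $\Phi(\Psi(N)) = \X$. Part (a) then gives $\Phi'(\X) = \Psi(N) \in \tstr^{2,\pg}_M(\D)$, completing the bijection. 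The main obstacle will be the central claim, which converts the abstract projectivity of $P$ in the heart $\H'$ into a concrete statement relating $\D'$ back to the reference $t$-structure $\D_M$ through the cohomology functor $\sigma^0_M$.
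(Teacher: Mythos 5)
Your proposal is correct and follows essentially the same route as the paper's proof: part (a) via Proposition~\ref{prop:woolf's-proposition}, and part (b) via the key identity $\Phi(\D'^{\le 0},\D'^{\ge 0})=\Fac(\sigma^0_M(P))$ (proved with the long exact sequence of $\sigma^0_M$ and the vanishing of the outer terms), followed by surjectivity through the bijection $\silt^2_M(\C)\to\ftors(\D_M^0)$ and the identification $\sigma^0_M(\sigma^0_N(N))\simeq\sigma^0_M(N)$. The only cosmetic difference is that you spell out why $\sigma^0_M(P)$ lies in the heart $\H'$, a point the paper states without proof.
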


\begin{proof}
(a) This is obtained by applying Proposition~\ref{prop:woolf's-proposition} to the $t$-structure $(\D_M^{\leq 0},\D_M^{\geq 0})$.

(b) In view of (a), it is enough to show that both $\phi$ is well-defined and surjective.
Let $(\tD^{\le 0},\tD^{\ge 0})$ be in $\tstr_{M}^{2,\pg}(\D)$ and $P$ a projective generator in $\tD^{0}$.
By (a), the subcategory $\X:=\tD^{0}\cap \tD_{M}^{0}$ is a torsion class of $\tD_{M}^{0}$ such that $\tD^{\le 0}=\mu_{\X}^{\Le}\tD_{M}^{\le 0}=\tD_{M}^{\le -1}\ast \X$. Note that $\sigma_{M}^{0}(P)\in \X$. We claim that $\X=\Fac(\sigma_{M}^{0}(P))$, which shows that $\phi$ is well-defined. Because $\X$ is closed under taking factor objects, it contains $\Fac(\sigma_{M}^{0}(P))$. Conversely, let $X$ be a nonzero object in $\X$.  Since $P$ is a projective generator of $\tD^{0}$, there is an exact sequence $0\to X'\to P'\xto{f} X \to 0$ in $\tD^{0}$ with $P'\in \add(P)$ and $f\neq 0$. By Proposition \ref{BBD}(c), we obtain a triangle $X'\to P' \xto{f} X\to X'[1]$ in $\D$.
Applying $\sigma_{M}^{0}$ to this triangle, we obtain an exact sequence
$\sigma_{M}^{0}(P')\xto{\sigma_{M}^{0}(f)}\sigma_{M}^{0}(X)\to\sigma_{M}^{0}(X'[1])=0$, implying that $X\simeq \sigma_{M}^{0}(X)$ belongs to $\Fac(\sigma_{M}^{0}(P))$.

Next we show that $\phi$ is surjective. By Propositions~\ref{twosilt-fgtor} and~\ref{prop:heart-of-silting-t-structure}, there is a bijection
\[
\theta\colon \silt^2_{M}(\C) \longrightarrow \ftors(\tD_{M}^{0}),\  N\mapsto \Fac(\sigma^{0}_{M}(N)).
\]
Namely, for a finitely generated torsion class $\X$ of $\tD_{M}^{0}$, there is a unique silting object $N\in\silt^2_{M}(\C)$ such that $\X=\Fac(\sigma_{M}^{0}(N))$. By Corollary~\ref{cor:restriction-of-the-map-to-n-terms} and Proposition~\ref{prop:heart-of-silting-t-structure}(a), the $t$-structure $(\tD^{\le 0}_{N},\tD_{N}^{\ge 0})$ is in $\tstr^{2,\pg}_{M}(\D)$ and $\sigma_{N}^{0}(N)$ is a projective generator of the heart $\tD_{N}^{0}$. We will show that $\Phi(\tD^{\le 0}_{N},\tD_{N}^{\ge 0})=\X$. By the above argument, $\Phi(\tD^{\le 0}_{N},\tD_{N}^{\ge 0})=\Fac(\sigma_M^0(\sigma_N^0(N)))$. Therefore it remains to show that $\sigma_M^0(N)\simeq\sigma_M^0(\sigma_N^0(N))$. Applying $\sigma_M^0$ to the canonical triangle $\sigma_N^{\leq -1}(N)\to N\to \sigma_N^0(N)\to \sigma_N^{\leq -1}(N)[1]$ we obtain an exact sequence
\[
\sigma_M^0(\sigma_N^{\leq -1}(N))\to \sigma_M^0(N)\to \sigma_M^0(\sigma_N^0(N))\to \sigma_M^0(\sigma_N^{\leq -1}(N)[1]).
\]
The two outer terms are trivial because both $\sigma_N^{\leq -1}(N)$ and $\sigma_N^{\leq -1}(N)[1]$ belong to $\D_N^{\leq -1}\subseteq\D_M^{\leq -1}$. Therefore the two middle terms are isomorphic and the proof is complete.
\end{proof}

Let $\psi\colon  \silt^2_{M}(\C) \to \tstr_{M}^{2,\pg}(\D)$ be the restriction of $\Psi$ to $\silt^2_M(\C)$. In the proof of the surjectivity of $\phi$, we have shown that the following diagram is commutative
\[
\xymatrix{
\silt^2_M(\C)\ar[dr]^\theta\ar[d]^\psi & \\
\tstr^{2,\pg}_M(\D)\ar[r]^\phi & \ftors(\D).
}
\]
As a consequence of Proposition~\ref{restmap}, we obtain the following result.

\begin{theorem}\label{2term bij}
The map $\psi\colon  \silt^2_{M}(\C) \to \tstr_{M}^{2,\pg}(\D)$ is bijective.
\end{theorem}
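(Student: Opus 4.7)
The plan is to deduce bijectivity of $\psi$ formally from the bijections $\theta$ and $\phi$ already established, by verifying that the triangular diagram displayed just before the statement commutes. Once $\phi\circ\psi=\theta$ is checked, the claim follows since $\theta$ and $\phi$ are both bijections.

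First I would verify that $\psi$ takes values in $\tstr^{2,\pg}_M(\D)$. For $N\in\silt^2_M(\C)$, the relations $M\geq N\geq M[1]$ combined with Lemma~\ref{lem:partial-orders-of-presilting-and-t-structure} give $\D_M^{\leq 0}\supseteq\D_N^{\leq 0}\supseteq\D_M^{\leq -1}$, so $\psi(N)\in\tstr^2_M(\D)$. By Proposition~\ref{ST-property} the triple $(\C,\D,N)$ is again an ST-triple, hence by Proposition~\ref{prop:heart-of-silting-t-structure}(a) the heart $\D_N^0$ admits the projective generator $\sigma^0_N(N)$, so indeed $\psi(N)\in\tstr^{2,\pg}_M(\D)$.

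Next I would recall the two bijections already at hand: the map $\theta\colon\silt^2_M(\C)\to\ftors(\D_M^0)$, $N\mapsto\Fac(\sigma^0_M(N))$, obtained by combining Proposition~\ref{twosilt-fgtor} with the equivalence $\Hom_\T(M,-)\colon\D_M^0\to\mod\End_\T(M)$ of Proposition~\ref{prop:heart-of-silting-t-structure}(a); and the bijection $\phi\colon\tstr^{2,\pg}_M(\D)\to\ftors(\D_M^0)$ from Proposition~\ref{restmap}(b).

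Finally I would verify $\phi\circ\psi=\theta$. By definition $\phi(\psi(N))=\D_N^0\cap\D_M^0$, and the argument used in the surjectivity half of the proof of Proposition~\ref{restmap}(b), applied to the projective generator $\sigma^0_N(N)$ of $\D_N^0$, identifies this intersection with $\Fac(\sigma^0_M(\sigma^0_N(N)))$. Applying the cohomological functor $\sigma^0_M$ to the truncation triangle $\sigma^{\leq -1}_N(N)\to N\to\sigma^0_N(N)\to\sigma^{\leq -1}_N(N)[1]$ and observing that both outer terms lie in $\D_N^{\leq -1}\subseteq\D_M^{\leq -1}$ yields $\sigma^0_M(\sigma^0_N(N))\simeq\sigma^0_M(N)$. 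Hence $\phi(\psi(N))=\Fac(\sigma^0_M(N))=\theta(N)$, and bijectivity of $\psi$ follows from bijectivity of $\theta$ and $\phi$. There is no substantive obstacle: the theorem is essentially a formal repackaging of Propositions~\ref{twosilt-fgtor} and~\ref{restmap}(b), the only nontrivial ingredient being the identity $\sigma^0_M\circ\sigma^0_N\simeq\sigma^0_M$ on $N\in\silt^2_M(\C)$, which was already carried out inside the proof of Proposition~\ref{restmap}(b).
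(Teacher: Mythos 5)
Your proposal is correct and follows essentially the same route as the paper: the paper likewise observes that the triangle $\phi\circ\psi=\theta$ commutes (the identity $\sigma^0_M\circ\sigma^0_N\simeq\sigma^0_M$ having already been established inside the surjectivity argument for $\phi$ in Proposition~\ref{restmap}(b)), and then deduces bijectivity of $\psi$ from that of $\theta$ and $\phi$. Your unfolding of the well-definedness of $\psi$ via Corollary~\ref{cor:restriction-of-the-map-to-n-terms} and Proposition~\ref{prop:heart-of-silting-t-structure}(a) matches the paper's reasoning as well.
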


Moreover, Proposition~\ref{restmap} has the following corollary.
 
\begin{corollary}\label{finiteness}
For a basic silting object $M$ of $\C$, the following are equivalent.
\begin{itemize}
\item[(i)] The set $\silt^2_{M}(\C)$ is finite.
\item[(ii)] The heart of each bounded $t$-structure in $\tstr^2_{M}(\D)$ has a projective generator.
\item[(iii)] All torsion classes of $\tD^{0}_{M}$ are finitely generated.
\item[(iv)] The set $\tors(\D_M^0)$ is finite.
\item[(v)] $\End_\T(M)$ is $\tau$-tilting finite.
\end{itemize}
\end{corollary}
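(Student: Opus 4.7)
The plan is to reduce every clause to statements already proved in the paper, chiefly Proposition~\ref{prop:heart-of-silting-t-structure}(a), Proposition~\ref{twosilt-fgtor} and Proposition~\ref{restmap}. Set $E:=\End_\T(M)$. By Proposition~\ref{prop:heart-of-silting-t-structure}(a) the functor $\Hom_\T(M,-)$ restricts to an equivalence $\D^0_M \xrightarrow{\simeq} \mod E$ of abelian categories, which automatically induces order-preserving bijections $\tors(\D^0_M)\leftrightarrow\tors(\mod E)$ and $\ftors(\D^0_M)\leftrightarrow\ftors(\mod E)$. All of (i)--(v) will be translated into conditions on either of these two sides, at which point the equivalences will fall out of previously established correspondences.

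First I would dispatch (i)$\Leftrightarrow$(iii)$\Leftrightarrow$(iv)$\Leftrightarrow$(v). Transporting $\tors$ and $\ftors$ from $\D^0_M$ to $\mod E$ via the equivalence above, condition (iii) becomes ``every torsion class in $\mod E$ is finitely generated'' and (iv) becomes ``$\tors(\mod E)$ is finite''. Then Proposition~\ref{twosilt-fgtor}(b) applied to $\T=\C$ (valid since $\C$ is Hom-finite Krull--Schmidt by Remark~\ref{rem:ST1}(d)) directly gives the equivalence of (i), (iii), (iv) and (v).

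For (ii)$\Leftrightarrow$(iii) I would use Proposition~\ref{restmap}. By part (a) of that proposition the map $\Phi$ is a bijection $\tstr^2_M(\D)\xrightarrow{\simeq}\tors(\D^0_M)$, and by part (b) its restriction gives a bijection $\tstr^{2,\mathrm{g}}_M(\D)\xrightarrow{\simeq}\ftors(\D^0_M)$. Since condition (ii) asserts precisely the equality $\tstr^{2,\mathrm{g}}_M(\D)=\tstr^2_M(\D)$ (every heart in $\tstr^2_M(\D)$ has a projective generator), applying $\Phi$ translates this into $\ftors(\D^0_M)=\tors(\D^0_M)$, which is exactly condition (iii).

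There is no real obstacle here; the corollary is essentially a bookkeeping consequence of the two bijective correspondences. The only point requiring a brief verification is that $\Phi$ indeed restricts to a bijection between $\tstr^{2,\mathrm{g}}_M(\D)$ and $\ftors(\D^0_M)$, but this has already been handled in Proposition~\ref{restmap}(b), so the entire argument can be presented concisely in one short combined paragraph once the above translations are laid out.
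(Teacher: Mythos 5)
Your proposal is correct and follows essentially the same route as the paper: the paper likewise derives (ii)$\Leftrightarrow$(iii) from Proposition~\ref{restmap} and deduces (i)$\Leftrightarrow$(iii)$\Leftrightarrow$(iv)$\Leftrightarrow$(v) from Proposition~\ref{twosilt-fgtor} via the equivalence $\Hom_\T(M,-)\colon\D_M^0\to\mod\End_\T(M)$ of Proposition~\ref{prop:heart-of-silting-t-structure}(a). Your write-up merely makes explicit the transport of $\tors$ and $\ftors$ along that equivalence, which the paper leaves implicit.
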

\begin{proof}
By Proposition \ref{restmap}, (ii)$\Leftrightarrow$(iii) holds. 
By Proposition~\ref{prop:heart-of-silting-t-structure}(a), $\Hom_\T(M,-)$ restricts to an equivalence from $\tD_{M}^{0}$ to $\mod \End_{\T}(M)$. Therefore (i)$\Leftrightarrow$(iii)$\Leftrightarrow$(iv)$\Leftrightarrow$(v) follows from  Proposition~\ref{twosilt-fgtor}.
\end{proof}

\subsection{Silting mutations and semisimple tilts}\label{ss:silting-mutations-vs-semisimple-tilts} 
Let $(\C,\D)$ be an ST-pair. 
In this subsection, we study the relation between silting mutations of silting objects of $\C$ and semisimple tilts of $\C$-silting $t$-structures on $\D$ under the map $\Psi$ in Theorem~\ref{basic-map}. 

\smallskip
Let $(\tD^{\le 0},\tD^{\ge 0})$ be an algebraic $t$-structure on $\D$.
For a semisimple object $S$ of the heart $\tD^{0}$, we define a full subcategory $\X_{S}$ of $\D^0$ as
\[
\X_{S}:=\{ X\in \tD^{0} \mid \Hom_{\T}(X,S)=0 \}. 
\]
Then $\X_{S}$ is a torsion class of $\tD^{0}$.
The HRS-tilt $(\mu_{\X_{S}}^{\Le}\tD^{\le 0}, \mu_{\X_{S}}^{\Le}\tD^{\ge 0})$ is called the \emph{semisimple tilt} of $(\tD^{\le 0}, \tD^{\ge 0})$ with respect to $S$. It is called a \emph{simple tilt} if $S$ is simple (see \cite{Br2}). Let $\mu_{S}^{\Le}\tD^{\le 0}:=\mu_{\X_{S}}^{\Le}\tD^{\le 0}$.

Let $M=M_{1}\oplus M_{2}\oplus\ldots\oplus M_{n}$ be a basic silting object of $\C$, where each $M_{i}$ is indecomposable. Let $P_i:=\sigma_M^0(M_i)$ and let $S_i$ be the simple top of $P_i$. It follows from Proposition~\ref{prop:heart-of-silting-t-structure}(a) that $P_1,\ldots, P_n$ is a complete set of pairwise non-isomorphic indecomposable projective objects of $D_M^0$ and $S_1,\ldots,S_n$ is a complete set of pairwise non-isomorphic simple objects of $D_M^0$. The following result shows that the map $\Psi$ takes silting mutation to semisimple tilts.

\begin{proposition}\label{st-mutation}
Let $M$ be a basic silting object of $\C$ and let $I$ be a subset of $\{1, 2, \ldots, n\}$. Put $M_{I}=\bigoplus_{i\in I}M_{i}$ and $S_{I}=\bigoplus_{i\in I}S_{i}$. Then
\[
\tD_{\mu_{M_{I}}^{\Le}(M)}^{\le 0} = \mu_{S_{I}}^{\Le}\tD_{M}^{\le 0}.
\]
Namely, a silting object $N$ of $\C$ is the left mutation of $M$ with respect to $M_{I}$ if and only if the $t$-structure $(\tD_{N}^{\le 0},\tD_{N}^{\ge 0})$ is the semisimple tilt of $(\tD_{M}^{\le 0},\tD_{M}^{\ge 0})$ with respect to $S_{I}$.
\end{proposition}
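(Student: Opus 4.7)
The strategy is to reduce the statement to a comparison of torsion classes in $\D_M^0$ via Proposition~\ref{restmap}, and then perform an explicit computation of the torsion class coming from silting mutation. First I would note that both $t$-structures in question, namely $\Psi(N) = (\D_N^{\le 0}, \D_N^{\ge 0})$ with $N = \mu_{M_I}^{\Le}(M)$ and the HRS-tilt $\mu_{S_I}^{\Le}\Psi(M) = (\mu_{S_I}^{\Le}\D_M^{\le 0}, \mu_{S_I}^{\Le}\D_M^{\ge 0})$, lie in $\tstr_M^2(\D)$: the former by \cite[Proposition~2.33]{AI12} together with Corollary~\ref{cor:restriction-of-the-map-to-n-terms} (left mutation lands in $\silt_M^2(\C)$), and the latter by construction. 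Since the map $\Phi$ of Proposition~\ref{restmap}(a) is a bijection, it therefore suffices to verify that the torsion classes $\Phi(\Psi(N)) = \D_N^0\cap \D_M^0$ and $\Phi(\mu_{S_I}^{\Le}\Psi(M)) = \X_{S_I}$ coincide.

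Next I would exploit the surjectivity argument in the proof of Proposition~\ref{restmap}(b): applied to the ST-triple $(\C,\D,N)$ (valid by Proposition~\ref{ST-property}) whose heart has projective generator $\sigma_N^0(N)$, it yields $\D_N^0\cap \D_M^0 = \Fac(\sigma_M^0(\sigma_N^0(N))) = \Fac(\sigma_M^0(N))$. So everything boils down to proving
\[
\Fac(\sigma_M^0(N)) = \X_{S_I}.
\]
Writing $N = M_I^*\oplus M_{I^c}$, I would compute $\sigma_M^0(N) = \sigma_M^0(M_I^*)\oplus P_{I^c}$, where $P_{I^c} = \bigoplus_{i\notin I}P_i$. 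Applying the cohomological functor $\sigma_M^0$ to the defining mutation triangle $M_I\xrightarrow{f} M_{I^c}'\to M_I^*\to M_I[1]$ and using that $M_I\in\T_M^{\le 0}$ forces $\sigma_M^0(M_I[1]) = 0$, one obtains a right exact sequence
\[
\sigma_M^0(M_I)\longrightarrow \sigma_M^0(M_{I^c}')\longrightarrow \sigma_M^0(M_I^*)\longrightarrow 0
\]
in $\D_M^0$, identifying $\sigma_M^0(M_I^*)$ with the cokernel of $\sigma_M^0(f)\colon P_I\to \sigma_M^0(M_{I^c}')$, where $\sigma_M^0(M_{I^c}')\in \add(P_{I^c})$.

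The inclusion $\Fac(\sigma_M^0(N))\subseteq \X_{S_I}$ then reduces, since $\X_{S_I}$ is closed under quotients, to checking that $\sigma_M^0(N)\in \X_{S_I}$: for $i\in I$, the vanishing $\Hom(P_{I^c},S_i) = 0$ follows from Corollary~\ref{cor:Hom-duality-between-silting-and-smc} via Lemma~\ref{lem:truncation-preserves-morphisms}, and applying $\Hom(-,S_i)$ to the above right exact sequence together with $\sigma_M^0(M_{I^c}')\in\add(P_{I^c})$ gives $\Hom(\sigma_M^0(M_I^*),S_i)=0$. The reverse inclusion $\X_{S_I}\subseteq \Fac(\sigma_M^0(N))$ is immediate: any $X\in \X_{S_I}$ has no simple quotient among $\{S_i\mid i\in I\}$, so its projective cover in $\D_M^0\simeq \mod\End_\T(M)$ (which exists by Proposition~\ref{prop:heart-of-silting-t-structure}(a)) lies in $\add(P_{I^c})\subseteq \add(\sigma_M^0(N))$, whence $X\in \Fac(P_{I^c})\subseteq \Fac(\sigma_M^0(N))$. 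Combining these two inclusions with the reductions above concludes the proof.

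The main technical point I expect is the identification of $\sigma_M^0(M_I^*)$ as $\coker(\sigma_M^0(f))$, which rests on the vanishing $\sigma_M^0(M_I[1])=0$; once this is in place, the torsion-class equality $\Fac(\sigma_M^0(N)) = \X_{S_I}$ is essentially the standard observation that left mutation at $M_I$ corresponds on the $\mod\End_\T(M)$-side to the support $\tau$-tilting object whose torsion class is cut out by the simples $S_i$ for $i\in I$.
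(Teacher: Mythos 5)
Your proof is correct and follows the same overall strategy as the paper: reduce, via the bijection $\Phi$ of Proposition~\ref{restmap}, to the torsion-class equality $\Fac(\sigma_M^0(N)) = \X_{S_I}$ in $\D_M^0$, then verify the two inclusions. For the inclusion $\Fac(\sigma_M^0(N))\subseteq\X_{S_I}$ (equivalently $\sigma_M^0(N)\in\X_{S_I}$), the paper applies $\Hom_\T(-,S_I)$ directly to the mutation triangle in $\T$ and invokes Corollary~\ref{cor:Hom-duality-between-silting-and-smc} together with Lemma~\ref{lem:truncation-preserves-morphisms}; you instead push the triangle into $\D_M^0$ first by $\sigma_M^0$ (the key vanishing $\sigma_M^0(M_I[1])=0$, which follows from $M_I\in\T_M^{\le 0}$, is correctly identified) and then apply $\Hom(-,S_i)$ --- the same computation in a different order. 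The genuine divergence is in the reverse inclusion $\X_{S_I}\subseteq\Fac(\sigma_M^0(N))$. The paper shows $\Hom_\T(N,X[1])=0$ by a factorization-through-approximation argument that passes back and forth between $\T$ and $\D_M^0$, then uses the characterization $\X(N)=\{X\in\D_M^0\mid\Hom_\T(N,X[1])=0\}$. You bypass all of this: once $\Hom(X,S_I)=0$ forces the projective cover $P_X$ to lie in $\add(P_{I^c})$, and $P_{I^c}$ is visibly a direct summand of $\sigma_M^0(N)=\sigma_M^0(M_I^*)\oplus P_{I^c}$, the conclusion $X\in\Fac(P_{I^c})\subseteq\Fac(\sigma_M^0(N))$ is immediate. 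This shortcut is legitimate, shorter, and avoids both the approximation argument and the intermediate characterization of $\X(N)$; the paper's version earns nothing extra here, so your simplification is a clean improvement on that step.
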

\begin{proof}
Let $N$ be the left mutation of $M$ with respect to $M_{I}$.
By Proposition~\ref{prop:intermediate-t-structure-vs-torsion-pairs-silting-case}(b) and its proof, we have $\D_N^{\leq 0}=\D_M^{\leq -1}*\X(N)$, where
\[
\X(N)=\D_N^0\cap\D_M^0=\{X\in\D_M^0\mid\Hom_\T(N,X[1])=0\}=\Fac(\sigma_M^0(N)).
\]
It is enough to show that $\X_{S_I}=\X(N)$.

`$\supseteq$': It is enough to show $\sigma_M^0(N)\in\X_{S_I}$ because $\X_{S_I}$ is closed under factor objects. By the definition of left mutation, there is a triangle 
\begin{eqnarray}\label{mut-seq}
M_{I}\xto{f}M'\to M_{I}^{\ast}\to M_{I}[1],
\end{eqnarray}
where $f$ is a minimal left $\add(M/M_{I})$-approximation, such that $N=M_{I}^{\ast}\oplus(M/M_{I})$.
Applying $\Hom_\T(-,S_I)$ to this triangle, we obtain an exact sequence $\Hom_\T(M_I[1],S_I)\to \Hom_\T(M_I^*,S_I)\to \Hom_T(M',S_I)$. Both outer terms vanish by Corollary~\ref{cor:Hom-duality-between-silting-and-smc}. Therefore $\Hom_\T(M_I^*,S_I)=0$, and hence $\Hom_\T(N,S_I)=0$. By Lemma~\ref{lem:truncation-preserves-morphisms} we have $\Hom_\T(\sigma^0_M(N),S_I)=0$.

`$\subseteq$':  Let $X\in\X_{S_I}$, \ie $\Hom(X,S_I)=0$, and let $P_X\to X$ be a projective cover of $X$ in $\D_M^0$. Then $P_X$ belongs to $\add(P/P_I)$, where $P=\sigma^0_M(M)$ and $P_I=\sigma^0_M(M_I)=\bigoplus_{i\in I}P_i$. So any morphism $P_I\to X$ factors through $P_X$, and further through $P':=\sigma_M^0(M')$, as by Lemma~\ref{lem:truncation-preserves-morphisms} the morphism $\sigma^0_M(f)\colon P_I\to P'$ is a left $\add(P/P_I)$-approximation of $P_I$. Therefore the map $\Hom(\sigma_M^0(f),X)$ is surjective. By Lemma~\ref{lem:truncation-preserves-morphisms} again, the map $\Hom(f,X)$ is surjective. Now applying $\Hom(-,X)$ to the triangle \eqref{mut-seq}, we obtain that $\Hom(M_I^*,X[1])=0$. Consequently, $\Hom(N,X[1])=0$.
\end{proof}

\subsection{The covering relation of bounded $t$-structures}
\label{ss:mutation-vs-partial-order}
Let $(\C,\D)$ be an ST-pair.
In this subsection we prove Proposition~\ref{keythm} below, which will be needed in the proof of the implication (ii)$\Rightarrow$(i) of Theorem~\ref{main}.

\begin{proposition}\label{keythm}
\label{prop:mutations-vs-partial-order}
Let $M$ be a basic silting object of $\C$ and $(\tD^{\le 0}, \tD^{\ge 0})$ a bounded $t$-structure on $\D$.
If $\tD_{M}^{\le 0}\supsetneq \tD^{\le 0}$, then there exists a left mutation $N$ of $M$ such that 
\[
\tD_{M}^{\le 0}\supsetneq\tD_{N}^{\le 0}\supseteq \tD^{\le 0}.\notag
\]
\end{proposition}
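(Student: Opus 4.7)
The strategy is to identify a subset $I \subseteq \{1,\ldots,n\}$ (with $n=|M|$) so that the left mutation $N:=\mu_{M_I}^{\Le}(M)$ at $M_I:=\bigoplus_{i\in I}M_i$ satisfies the required sandwiching. By Proposition~\ref{st-mutation} this mutation corresponds to the semisimple tilt of $(\tD_M^{\le 0},\tD_M^{\ge 0})$ at $S_I:=\bigoplus_{i\in I}S_i$, yielding $\tD_N^{\le 0}=\tD_M^{\le -1}\ast \X_{S_I}$, where $S_1,\ldots,S_n$ are the simple tops of the indecomposable projectives $\sigma_M^0(M_i)$ of the heart $\tD_M^0$ (Proposition~\ref{prop:heart-of-silting-t-structure}(a)) and $\X_{S_I}=\{X\in\tD_M^0:\Hom_\T(X,S_I)=0\}$. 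The natural candidate is
\[
I:=\{\, i\in\{1,\ldots,n\} : S_i\in\tD^{\ge 1}\,\} = \{\, i : \Hom_\T(\tD^{\le 0},S_i)=0\,\}.
\]

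Given $I\neq\emptyset$, both required inclusions follow easily. For $\tD_N^{\le 0}\supseteq\tD^{\le 0}$: any $Z\in\tD^{\le 0}\subseteq\tD_M^{\le 0}$ fits into the canonical truncation triangle $\sigma_M^{\le -1}(Z)\to Z\to H_M^0(Z)\to\sigma_M^{\le -1}(Z)[1]$. Applying $\Hom_\T(-,S_I)$, and using the vanishing $\Hom_\T(\tD_M^{\le -1},S_I)=0$ (since both $\sigma_M^{\le -1}(Z)$ and its shift lie in $\tD_M^{\le -1}$, while $S_I\in\tD_M^{\ge 0}$), one obtains an isomorphism $\Hom_\T(H_M^0(Z),S_I)\cong\Hom_\T(Z,S_I)$; this vanishes by the defining property of $I$, forcing $H_M^0(Z)\in\X_{S_I}$ and hence $Z\in\tD_M^{\le -1}\ast\X_{S_I}=\tD_N^{\le 0}$. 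For the strict inclusion $\tD_M^{\le 0}\supsetneq\tD_N^{\le 0}$: since $I\neq\emptyset$, $S_I\neq 0$, and $\Hom(S_i,S_i)=\End(S_i)\neq 0$ forces $S_I\notin\X_{S_I}$, so $\X_{S_I}\subsetneq\tD_M^0$.

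The hard step is showing $I\neq\emptyset$. My plan is to argue by contradiction: assume $\Hom_\T(\tD^{\le 0},S_i)\neq 0$ for every $i$. By Proposition~\ref{prop:minimal-resolution} combined with Corollary~\ref{cor:Hom-duality-between-silting-and-smc}, this is equivalent to saying that every summand $M_i$ of $M$ appears as a direct summand in the degree-$0$ term $M_{Z_i}^0\in\add(M)$ of the minimal $M$-resolution of some $Z_i\in\tD^{\le 0}$; equivalently, the torsion-class closure in $\tD_M^0$ of $\sigma_M^0(\tD^{\le 0})$ is all of $\tD_M^0$. Combining this with the interval bound $\tD_M^{\le -n_0}\subseteq\tD^{\le 0}$ supplied by Lemma~\ref{lem:intermediate-t-structures-wrt-algebraic-t-structure} (applied to the algebraic $t$-structure $(\tD_M^{\le 0},\tD_M^{\ge 0})$ of Corollary~\ref{C-silt is alg}), and iterating the resolution machinery along the filtration of $\tD_M^{\le 0}$ coming from Lemma~\ref{AI223}(c), one reconstructs every object of $\tD_M^{\le 0}$ inside $\tD^{\le 0}$, yielding $\tD_M^{\le 0}\subseteq\tD^{\le 0}$ and contradicting the strict inclusion hypothesis. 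This reconstruction argument, balancing the minimal-resolution formula against the bounded/algebraic structure of the two $t$-structures, is the technical core of the proof and the main obstacle.
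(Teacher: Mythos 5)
Your overall strategy — locate a simple $S_i$ of $\D_M^0$ lying in $\D^{\geq 1}$, mutate at the corresponding summand, and verify the two inclusions — is exactly the strategy of the paper's proof. Your verification that $I\neq\emptyset$ forces both $\D_N^{\leq 0}\supseteq\D^{\leq 0}$ (by truncating $Z\in\D^{\leq 0}$ to $\sigma_M^0(Z)\in\X_{S_I}$) and $\D_M^{\leq 0}\supsetneq\D_N^{\leq 0}$ is correct and matches the paper's final paragraph. Taking $I$ to be the full set of indices with $S_i\in\D^{\geq 1}$ rather than a single such index is a harmless variant, since an arbitrary left mutation is allowed.

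The problem is that the step you yourself flag as ``the technical core and the main obstacle'' — showing $I\neq\emptyset$ — is not proved but only sketched, and the sketch does not readily close. You propose to assume $\Hom_\T(\D^{\leq 0},S_i)\neq 0$ for every $i$ and deduce $\D_M^{\leq 0}\subseteq\D^{\leq 0}$ by ``iterating the resolution machinery,'' but knowing that for each $i$ some $Z_i\in\D^{\leq 0}$ maps nontrivially to $S_i$ does not obviously let you reconstruct an arbitrary $X\in\D_M^{\leq 0}$ inside $\D^{\leq 0}$; you would need to relate the minimal $M$-resolution of $X$ to the special objects $Z_i$, and no mechanism for that is given. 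The paper instead runs a much shorter chain: (1) if $\D_M^{\leq 0}\cap\D^{\geq 1}=0$, then for any $X\in\D_M^{\leq 0}$ the triangle $\sigma^{\leq 0}X\to X\to\sigma^{\geq 1}X\to\sigma^{\leq 0}(X)[1]$ forces $\sigma^{\geq 1}X\in\D_M^{\leq 0}\cap\D^{\geq 1}=0$, so $X\simeq\sigma^{\leq 0}X\in\D^{\leq 0}$ and the inclusion would not be strict — hence $\D_M^{\leq 0}\cap\D^{\geq 1}\neq 0$; (2) given a nonzero $X$ there, take the maximal $l$ with $\sigma_M^0(X[-l])\neq 0$; the truncation triangle shows $\sigma_M^0(X[-l])\in\D_M^0\cap\D^{\geq 1}$; (3) $\D_M^0\cap\D^{\geq 1}$ is closed under subobjects in $\D_M^0$, so it contains a simple. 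Replacing your reconstruction sketch with this three-step descent completes the proof.
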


\begin{proof}
First, \emph{$\D_M^{\leq 0}\cap\D^{\geq 1}$ is non-zero}. Otherwise, for any $X\in\D_M^{\leq 0}$, consider the canonical triangle $\sigma^{\leq 0}X\to X\to \sigma^{\geq 1}X\to \sigma^{\leq 0}(X)[1]$ associated to the $t$-structure $(\D^{\leq 0},\D^{\geq 0})$. Because $\sigma^{\leq 0}(X)[1]\in \D^{\leq -1}\subseteq\D_M^{\leq -1}$, we obtain that $\sigma^{\geq 1}(X)\in\D_M^{\leq 0}\cap \D^{\geq -1}$. By assumption, $\sigma^{\geq 1}(X)=0$, and therefore $X\simeq \sigma^{\leq 0}(X)\in \D^{\leq 0}$, a contradiction.

Secondly, \emph{$\D_M^{0}\cap\D^{\geq 1}$ is non-zero}. Let $X$ be a non-zero object of $\D_M^{\leq 0}\cap\D^{\geq 1}$. Let $l$ be the maximal integer satisfying $\sigma_M^{0}(X[-l])\neq 0$, and consider the triangle $\sigma_M^{0}(X[-l])\to X[-l]\to \sigma^{\geq 1}_M(X[-l])\to \sigma_M^{0}(X[-l])[1]$. Then both $X[-l]$ and $\sigma^{\geq 1}_M(X[-l])$ are contained in $\D^{\geq 1}$, so does $\sigma_M^{0}(X[-l])$. Consequently, $\sigma_M^{0}(X[-l])$ is a non-zero object of $\D_M^{0}\cap \D^{\geq 1}$.

Thirdly, \emph{$\D_M^0\cap\D^{\geq 1}$ is closed under subobjects in $\D_M^0$}. Let $X\in \D_M^0\cap\D^{\geq 1}$ and let $X'$ be a subobject of $X$ in $\D_M^0$. Then there is a short exact sequence $0\to X'\to X\to X''\to 0$ in $\D_M^0$. It yields a triangle $X''[-1]\to X'\to X\to X''$ in $\D$. Since $X''\in \D_M^0$, we have $X''[-1]\in \D_M^{\geq 1}\subseteq \D^{\geq 1}$. It follows that $X'$ belongs to $\D^{\geq 1}$ and hence to $\D_M^0\cap\D^{\geq 1}$.

Finally, by the preceding two steps, there is a simple object $S$ of $\D_M^0$ belonging to $\D^{\geq 1}$. Let $X\in\D^{\leq 0}$. Then $\Hom(X,S)=0$, which, by Lemma~\ref{lem:truncation-preserves-morphisms}, implies $\Hom(\sigma^0_M(X),S)=0$, \ie $\sigma^0_M(X)\in \X_S$, so $X\in\sigma_M^{\leq -1}(X)*\sigma_M^0(X)\subseteq \D_M^{\leq -1}\ast \X_S=\mu_S^\Le\D_M^{\leq 0}$. Therefore $\D^{\leq 0}\subseteq \mu_S^\Le\D_M^{\leq 0}$. Let $N$ be the left mutation of $M$ with respect to the indecomposable direct summand of $M$ corresponding to $S$, then $\D_N^{\leq 0}=\mu_S^\Le\D_M^{\leq 0}$ by Proposition~\ref{st-mutation}. The inclusion $\D_M^{\leq 0}\supseteq\D_N^{\leq 0}$ is strict because $M\gneq N$. The proof is complete.
\end{proof}

\subsection{Proof of Theorem~\ref{main}}
\label{ss:proof-of-the-main-theorem}
This subsection is devoted to the proof of Theorem~\ref{main}.

\begin{proof}
(i)$\Rightarrow$(iv): This holds true because each $\C$-silting $t$-structure on $\D$ has a projective generator by Proposition \ref{prop:heart-of-silting-t-structure}(a).

(iv)$\Rightarrow$(ii): Let $M$ be an arbitrary silting object of $\C$. By the condition (iv), the heart of each bounded $t$-structure in $\tstr^2_{M}(\D)$ has a projective generator. It follows from Corollary \ref{finiteness}(ii)$\Rightarrow$(i) that the set $\silt^2_{M}(\C)$ is finite. We obtain (ii) by applying Lemma \ref{AM}. 

(ii)$\Rightarrow$(i): Assume that $(\tD^{\le 0}, \tD^{\ge 0})$ is a bounded $t$-structure on $\D$ which is not $\C$-silting. Let $M$ be a silting object of $\C$. Then by Lemma~\ref{lem:intermediate-t-structures-wrt-algebraic-t-structure}, 
 there exist integers $l>k$ such that
$\tD_{M}^{\le -k}\supseteq \tD^{\le 0}\supseteq \tD_{M}^{\le -l}$.  Both inclusions are strict because $(\tD^{\le 0}, \tD^{\ge 0})$ is not $\C$-silting.
Up to shift we may assume that $k=0$.
By Proposition \ref{keythm}, there exists a basic silting object $N_{1}$ such that $\tD_{M}^{\le 0}\supsetneq \tD_{N_{1}}^{\le 0}\supsetneq \tD^{\le 0}$.
Applying Proposition \ref{keythm} repeatedly, we obtain an infinite sequence of subcategories
\[
\tD_{M}^{\le 0} \supsetneq \tD_{N_{1}}^{\le 0}\supsetneq \tD_{N_{2}}^{\le 0} \supsetneq \cdots (\supsetneq \tD^{\le 0}\supsetneq \tD_{M}^{\le -l}),
\]
and hence an infinite sequence of silting objects
\[
M\gneq N_{1}\gneq N_{2} \gneq\cdots (\gneq M[l]). 
\]
This contradicts the fact that $\C$ is silting-discrete.

(iii)$\Rightarrow$(v): Let $\H$ be the heart of a bounded $t$-structure on $\D$. By Proposition~\ref{prop:woolf's-proposition}, if $\D$ is $t$-discrete, then $\tors(\D)$ is finite. In particular, (v) is satisfied.

(v)$\Rightarrow$(ii): Let $M$ be any silting object of $\C$. Then $\tors(\D_M^0)$ is finite, and hence $\silt^2_M(\C)$ is finite by Proposition~\ref{finiteness}(iv)$\Rightarrow$(i). We obtain (ii) by Lemma~\ref{AM}.

(i)$\Rightarrow$(iii): By (i), any bounded $t$-structure on $\D$ is of the form $(\D_M^{\leq 0},\D_M^{\geq 0})$ for some basic silting object $M$ of $\C$, and moreover, by Corollary~\ref{cor:restriction-of-the-map-to-n-terms}, the map $\Psi$ restricts to a bijection $\silt^n_{M}(\C)\rightarrow\tstr^n_{M}(\D)$ for any positive integer $n$.
If $\C$ is silting-discrete, then $\silt^n_{M}(\C)$ is finite, and hence $\tstr^n_{M}(\D)$ is finite. Therefore $\D$ is $t$-discrete.
\end{proof}


\subsection{Discrete triangulated categories are $t$-discrete}
\label{ss:discrete-triangulated-categories}

Assume that $K$ is algebraically closed and let $(\C,\D)$ be an ST-pair. 

Let $\H$ be the heart of a fixed bounded $t$-structure on $\D$, $\sigma^0$ the associated cohomology functor and $\sigma^i=\sigma^0\circ[i]$. For an object $X$ of $\H$, let $[X]$ denote the class of $X$ in the Grothendieck group $K_0(\H)$. For an object $X$ of $\D$, define a function $v_X\colon\mathbb{Z}\to K_0(\H)$ by $v_X(i)=[\sigma^i(X)]$ for $i\in\mathbb{Z}$. Following \cite{BPP3}, we say that $\D$ is \emph{$\H$-discrete} if for any function $v\colon\mathbb{Z}\to K_0(\H)$ the set of isomorphism classes of indecomposable objects $X$ of $\D$ satisfying $v_X=v$ is finite.

\begin{theorem}\label{thm:discreteness=>t-discreteness}
Keep the above notation and assumptions. Assume that $\H$ is a length category with finitely many isomorphism classes of simple objects and that $\D$ is $\H$-discrete. Then  $\D$ is $t$-discrete. Moreover, $\D$ is $\H'$-discrete for the heart $\H'$ of any bounded $t$-structure on $\D$.
\end{theorem}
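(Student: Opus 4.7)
The plan is to derive $t$-discreteness of $\D$ via Theorem~\ref{main} by bounding the number of intermediate $t$-structures, and then to deduce $\H'$-discreteness as a consequence.

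First, since $\H$ is length with finitely many simples, the $t$-structure $(\D^{\le 0}, \D^{\ge 0})$ is algebraic. By Lemma~\ref{lem:criterion-for-t-discreteness}, to prove $\D$ is $t$-discrete it suffices to show, for each positive integer $n$, that the set
\[
\I_n := \{(\D'^{\le 0}, \D'^{\ge 0})\in \tstr(\D)\mid \D^{\le 0}\supseteq \D'^{\le 0}\supseteq \D^{\le -n}\}
\]
is finite. Since a bounded $t$-structure is determined by its heart, and by Lemma~\ref{KY33}(b) any heart in $\I_n$ sits inside $\D^{\le 0}\cap \D^{\ge -n} = \H[n]\ast\H[n-1]\ast\cdots\ast\H$, the task reduces to counting such hearts.

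The key case is $n=1$, where Proposition~\ref{prop:woolf's-proposition} gives a bijection $\I_1\cong \tors(\H)$. I would prove that $\H$-discreteness forces $\tors(\H)$ to be finite by contradiction: a hypothetical infinite family of torsion classes in $\H$ would produce infinitely many distinct tilted hearts, each of whose simple objects is an indecomposable of $\D$ lying in the two-step interval $\H[1]\ast\H$, and a pigeonhole argument exploiting the length and finite-simples structure of $\H$ would force an infinite subfamily to share a common cohomology vector in $K_0(\H)^{\{-1,0\}}$, contradicting $\H$-discreteness. For general $n$, one iterates this analysis along a finite chain of HRS-tilts connecting $(\D^{\le 0},\D^{\ge 0})$ to $(\D^{\le -n},\D^{\ge -n})$, using that each intermediate heart remains length with finitely many simples (so the $n=1$ argument applies at each step).

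For the moreover part, once $\D$ is $t$-discrete, Theorem~\ref{main} and Corollary~\ref{C-silt is alg} imply every bounded $t$-structure on $\D$ has an algebraic heart, and by $t$-discreteness any heart $\H'$ is obtained from $\H$ by finitely many simple HRS-tilts (Proposition~\ref{st-mutation}). Consequently $\H'\subseteq \H[-a]\ast\cdots\ast\H[-b]$ for some integers $a \le b$, and a cohomological bookkeeping argument transferring the relationship between $\H$-truncations and $\H'$-truncations through the finite chain of tilts shows that for any finite-support $v'\colon\mathbb{Z}\to K_0(\H')$, the indecomposables $X$ with $v^{\H'}_X = v'$ have $\H$-cohomology vectors confined to a finite subset of $K_0(\H)^{\mathbb{Z}}$; combined with $\H$-discreteness this yields $\H'$-discreteness. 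The principal obstacle is the $n=1$ counting step: rigorously translating an infinite family of torsion classes into an infinite family of indecomposables of $\D$ with coincident cohomology vectors requires delicate analysis, since $\H$-discreteness controls only indecomposables indexed by cohomology vectors while torsion classes are parametrised by fundamentally different combinatorial data.
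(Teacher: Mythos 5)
The gap you anticipate at the end is real and fatal to the chosen route, and the paper sidesteps it entirely by a different strategy. You attempt to invoke Lemma~\ref{lem:criterion-for-t-discreteness} directly, which forces you to count \emph{all} intermediate $t$-structures, i.e.\ to prove that $\tors(\H)$ is finite. But $\H$-discreteness controls indecomposable objects indexed by cohomology vectors, not torsion classes: there is no a priori reason an infinite family of torsion classes (possibly not finitely generated, possibly with non-algebraic tilted hearts) would yield infinitely many indecomposables with repeated cohomology vectors. Indeed, the tilted hearts need not be length categories, so ``their simple objects'' may not exist, and the pigeonhole argument you sketch has no foothold. You correctly flag this as the principal obstacle, but it is not merely a matter of ``delicate analysis'' — the translation simply isn't available at this level of generality.

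The paper's proof instead verifies condition (v) of Theorem~\ref{main}, which only requires that \emph{hearts with a projective generator} have finitely many torsion classes. For such a heart $\H'\simeq\mod\Lambda'$, one exploits subadditivity of $v_X$ along short exact sequences in $\H'$ to show the $K_0(\H)$-valued cohomology vector of each indecomposable $\Lambda'$-module is bounded by the sum of the vectors of its simple factors; the length-and-finite-simples hypothesis makes the interval $[0,v_{X_1}+\cdots+v_{X_l}]$ finite, $\H$-discreteness then bounds the number of indecomposables per dimension vector, and the second Brauer--Thrall conjecture forces $\Lambda'$ to be representation-finite, hence $\tors(\H')$ finite. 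Then (v)$\Rightarrow$(iii) in Theorem~\ref{main} gives $t$-discreteness. For the ``moreover'', Theorem~\ref{main}(iv) shows every heart $\H'$ has a projective generator, the preceding argument shows $\H'$ is representation-finite, and the conclusion follows from \cite[Theorem 2.5]{BPP3} — a reference your sketch omits and would have to reproduce. The crucial ideas you are missing are: restrict attention to hearts with projective generators (not arbitrary torsion classes of $\H$), and feed the per-dimension-vector finiteness into Brauer--Thrall II.
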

\begin{proof}
Let $K_0^+(\H)$ be the subset of $K_0(\H)$ consisting of $[X]$ with $X\in\H$. For two functions $v,w\colon\mathbb{Z}\to K_0(\H)$, define $v\geq w$ if $v(i)-w(i)\in K_0^+(\H)$ for any $i\in\mathbb{Z}$. Let $S_1,\ldots,S_n$ be a complete set of pairwise non-isomorphic simple objects of $\H$. Then $K_0(\H)$ is a free abelian group with basis $[S_1],\ldots,[S_n]$, and $K_0^+(\H)$ consists of those elements with non-negative entries with respect to this basis. It follows that for any two functions $v,w\colon\mathbb{Z}\to K_0(\H)$ the interval $[w,v]:=\{u\colon\mathbb{Z}\to K_0(\H)|w\leq u\leq v\}$ is finite.

We will show that the condition (v) of Theorem~\ref{thm:bijectivity-vs-discreteness} is satisfied. Then the $t$-discreteness of $\D$ follows. Let $\H'$ be the heart of a bounded $t$-structure and assume that $\H'$ has a projective generator. Then $\H'$ is equivalent to $\mod\Lambda'$ for some finite-dimensional $K$-algebra $\Lambda'$. Let $X$ be an object of $\H'$ and let $0\to X'\to X\to X''\to 0$ be a short exact sequence in $\H'$. Then there is a triangle $X'\to X\to X''\to X'[1]$ in $\D$ by Proposition~\ref{BBD}(c). Since $\sigma^0$ is a cohomological functor, it follows that $v_X\leq v_{X'}+v_{X''}$. Therefore by induction we have $0\leq v_X\leq v_{X_1}+\ldots+v_{X_l}$, where $X_1,\ldots,X_l$ are simple factors of $X$. As discussed above, the interval $[0,v_{X_1}+\ldots+v_{X_l}]$ is finite. So due to the $\H$-discreteness of $\D$, the number of isomorphism classes of indecomposable objects $X$ with simple factors $X_1,\ldots,X_l$ is finite. Consequently, the number of isomorphism classes of indecomposable $\Lambda'$-modules with a fixed dimension vector is finite. So by the validity of the second Brauer--Thrall conjecture (see for example~\cite[Section IV.5]{AssemSimsonSkowronski}), $\Lambda'$ is representation-finite. In particular, $\tors(\H')=\tors(\mod\Lambda')$ is finite.

\smallskip
Now let $\H'$ be the heart of any bounded $t$-structure on $\D$. Then $\H'$ has a projective generator by Theorem~\ref{thm:bijectivity-vs-discreteness}. By the above discussion, $\H'$ has only finitely many isomorphism classes of indecomposable objects. The $\H'$-discreteness of $\D$ then follows by \cite[Theorem 2.5]{BPP3}.
\end{proof}

By applying Theorem~\ref{thm:discreteness=>t-discreteness} to the ST-pair $(\Kb(\proj\Lambda),\Db(\mod\Lambda))$ we obtain the following corollary. Recall that a finite-dimensional $K$-algebra $\Lambda$ is \emph{derived-discrete} if $\Db(\mod\Lambda)$ is $\mod\Lambda$-discrete (\cite[Section 2]{BPP3}).

\begin{corollary}[{\cite[Proposition 3.2]{BPP3}}] 
Let $\Lambda$ be a finite-dimensional derived-discrete algebra and let $\H$ be the heart of  any bounded $t$-structure on $\Db(\mod\Lambda)$. Then $\Db(\mod\Lambda)$ is $\H$-discrete.
\end{corollary}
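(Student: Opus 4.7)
The plan is to derive this corollary as a direct consequence of Theorem~\ref{thm:discreteness=>t-discreteness}, applied to the ST-pair $(\Kb(\proj\Lambda),\Db(\mod\Lambda))$ furnished by Lemma~\ref{lem:ST-pair-for-fd-algebra}. Inside this ST-pair I will distinguish one particular bounded $t$-structure, namely the standard $t$-structure on $\Db(\mod\Lambda)$ of Example~\ref{ex:standard-t-structure}, whose heart is (canonically equivalent to) $\mod\Lambda$.

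The verification of the hypotheses of Theorem~\ref{thm:discreteness=>t-discreteness} for this choice is essentially automatic. Since $\Lambda$ is finite-dimensional, the abelian category $\mod\Lambda$ is a length category, and the simple objects are indexed by the primitive idempotents of $\Lambda/\mathrm{rad}(\Lambda)$, hence there are only finitely many isomorphism classes of them. The second hypothesis is exactly the definition of ``derived-discrete'' recalled immediately before the statement: $\Db(\mod\Lambda)$ is $\mod\Lambda$-discrete. Thus, with $\D=\Db(\mod\Lambda)$ and $\H_0=\mod\Lambda$, both assumptions of Theorem~\ref{thm:discreteness=>t-discreteness} are met.

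The conclusion now follows from the ``moreover'' clause of Theorem~\ref{thm:discreteness=>t-discreteness}, which asserts precisely that $\D$ is $\H'$-discrete for the heart $\H'$ of \emph{any} bounded $t$-structure on $\D$. Taking $\H':=\H$ gives the statement. The only point deserving care is that Theorem~\ref{thm:discreteness=>t-discreteness} was stated under the blanket assumption that $K$ is algebraically closed; one should verify at the outset that the corollary is asserted under the same convention (as is standard in the derived-discrete literature, cf.\ \cite[Section~2]{BPP3}), so that there is no mismatch of hypotheses.

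There is essentially no obstacle beyond bookkeeping: the substantive work has already been absorbed into Theorem~\ref{thm:discreteness=>t-discreteness}, whose proof invokes the equivalence (i)$\Leftrightarrow$(v) of Theorem~\ref{thm:bijectivity-vs-discreteness} together with the second Brauer--Thrall conjecture to reduce $\H'$-discreteness to the $\H$-discreteness that one starts with. In this sense the corollary is a direct specialisation of the general theorem to the prototypical ST-pair attached to a finite-dimensional algebra.
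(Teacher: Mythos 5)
Your proof is correct and matches the paper's argument exactly: the corollary is obtained by applying Theorem~\ref{thm:discreteness=>t-discreteness} to the ST-pair $(\Kb(\proj\Lambda),\Db(\mod\Lambda))$ with the standard $t$-structure (heart $\mod\Lambda$), using the definition of derived-discrete as $\mod\Lambda$-discreteness and reading off the ``moreover'' clause. Your observation about the blanket assumption that $K$ be algebraically closed is a sensible piece of bookkeeping that the paper leaves implicit.
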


\section{Silting-discrete Calabi--Yau categories}\label{section calabi-yau}

In this section, we study the silting-discreteness for relative Calabi--Yau triangulated categories. More precisely, we give a characterisation of silting-discreteness in terms of cluster-tilting theory. Moreover, we show that the perfect derived category of a derived preprojective algebra of a quiver is silting-discrete if and only if the quiver is Dynkin. 

\medskip
First we recall the notion of Calabi--Yau pairs. Fix an integer $d\ge 1$. 

\begin{definition}\label{def CYtriple}
Let $(\C,\D)$ be an ST-pair inside $\C$. 
We call $(\C,\D)$ a \emph{$(d+1)$-Calabi--Yau pair} if there exists a bifunctorial isomorphism for $X\in\D$ and $Y\in\C$:
\[
D\Hom_{\C}(X,Y)\simeq \Hom_{\C}(Y,X[d+1]). 
\]
\end{definition}

If $M$ is a silting object of $\C$, then $(\C,\D,M)$ is a \emph{$(d+1)$-Calabi--Yau triple} in the sense of \cite[Section 5.1]{IYa14}. 
Note that, for silting objects $M$ and $N$, $(\C,\D,M)$ is a $(d+1)$-Calabi--Yau triple if and only if $(\C,\D,N)$ is a $(d+1)$-Calabi--Yau triple.

\begin{example}\label{ex:CY-triple-from-CY-algebra}
Let $\Gamma$ be a bimodule $(d+1)$-Calabi--Yau dg algebra such that $H^p(\Gamma)=0$ for $p>0$ and $H^0(\Gamma)$ is finite-dimensional. Then by Lemmas~\ref{lem:homologically-smooth=>finite-projective-dimension},~\ref{lem:topologically homologically-smooth=>finite-projective-dimension} and~\ref{lem:standard-ST-pair-for-smooth-non-positive-dg-algebra}, $(\per(\Gamma),\Dfd(\Gamma),\Gamma)$ is a $(d+1)$-Calabi--Yau triple.
\end{example}

Let  $(\C,\D)$ be a $(d+1)$-Calabi--Yau pair. 
Consider the triangle quotient 
\[
\U:=\C/\D, 
\]
which is called the \emph{cluster category} (\cite[Section 5.3]{IYa14}). Let $\pi\colon  \C\to \U$ be the canonical projection functor.
We call $T\in\U$ a \emph{$d$-cluster tilting object} 
if 
\begin{eqnarray*}
\add(T)
\hspace{-3pt}&=\hspace{-3pt}& \{X\in \U\ |\ \Hom_{\U}(X,T[i])=0\ \textnormal{for}\ 1\leq i\leq d-1\}\notag\\
\hspace{-3pt}&=\hspace{-3pt}&\{X\in \U\ |\ \Hom_{\U}(T,X[i])=0\ \textnormal{for}\ 1\leq i\leq d-1\}.
\end{eqnarray*}
We denote by $\ctilt{d}(\U)$ the set of isomorphism classes of basic $d$-cluster tilting objects of $\U$.
Then we have the following result.

\begin{theorem}[{\cite[Theorem 5.8 and Corollary 5.12]{IYa14}}]\label{IY cluster} For a $(d+1)$-Calabi--Yau triple $(\C,\D,M)$, the following  statements hold.
\begin{itemize}
\item[(a)] The category $\U$ is a $d$-Calabi--Yau triangulated category.
\item[(b)] The functor $\pi\colon \C\to \U$ induces an equivalence $M^{[0,d-1]}\to \U$.
\item[(c)] The restriction of $\pi$ induces an injection
\[
\phi\colon \silt^d_{M}(\C)\to \ctilt{d}(\U),
\]
which is a bijection if $d=1$ or $d=2$.
\end{itemize}
\end{theorem}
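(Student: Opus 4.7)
My plan is to prove the three parts in the order (b), (a), (c), since (a) and (c) both build on the equivalence established in (b). For (b), I would first establish essential surjectivity of $\pi|_{M^{[0,d-1]}}\colon M^{[0,d-1]}\to\U$. Given $X\in\C$, by Lemma~\ref{AI223}(c) we have $X\in M^{[-n,n]}$ for some $n$, and I would construct by induction on $n$ a triangle $T\to X\to Z\to T[1]$ with $T\in M^{[0,d-1]}$ and $Z\in\D$. The inductive step peels off summands $M[i]$ for $i$ outside $[0,d-1]$: the shifts with $i\ge d$ lie in $\D$ by (ST3), and those with $i<0$ are handled via the $(d+1)$-CY duality, which converts them into obstructions lying in $\D$. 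For full faithfulness, I would use the Verdier-quotient description: a morphism $f\colon X\to Y$ between objects of $M^{[0,d-1]}$ vanishes in $\U$ iff $f$ factors through some $W\in\D$. Using the $t$-structure $(\D_M^{\le 0},\D_M^{\ge 0})$ on $\D$ together with $X\in\C_M^{\le 0}$ and $Y\in\C_M^{\ge -d+1}$, I would reduce to the case $W\in\D_M^{\le 0}\cap\D_M^{\ge -d+1}$; then the $(d+1)$-CY pairing $D\Hom(W,Y)\simeq\Hom(Y,W[d+1])$ combined with the vanishing $\Hom(M^{[0,d-1]},M^{[0,d-1]}[d+1])=0$ forces the factorisation to be trivial.

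For part (a), I would transport the duality along the equivalence from (b). Since $M^{[0,d-1]}\subseteq\C_M^{\ge -d+1}\subseteq\D$ by (ST3), the ambient $(d+1)$-CY pairing on $\C$ applies to all of $M^{[0,d-1]}$. For $X,Y\in M^{[0,d-1]}$, essential surjectivity from (b) produces a representative $Y'\in M^{[0,d-1]}$ together with a triangle $Y'\to Y[d]\to Z\to Y'[1]$, $Z\in\D$. Then $\Hom_\U(\pi X,\pi Y[d])\simeq\Hom_\U(\pi X,\pi Y')\simeq\Hom_\C(X,Y')$, and combining this with the $(d+1)$-CY pairing $D\Hom_\C(Y,X)\simeq\Hom_\C(X,Y[d+1])$ and the rotation of the above triangle yields the required $d$-CY pairing on $\U$. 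For part (c), injectivity of $\phi$ is immediate from (b): if $\pi(N)\simeq\pi(N')$ with basic $N,N'\in\silt^d_M(\C)\subseteq M^{[0,d-1]}$, then $N\simeq N'$ in $\C$. Well-definedness (that $\pi(N)$ is $d$-cluster tilting) follows by transporting the presilting vanishing $\Hom_\C(N,N[i])=0$ for $1\le i\le d-1$ via (b), together with the cluster-tilting characterisation deduced from $\thick(N)=\C$. The case $d=1$ is tautological as $\silt^1_M(\C)=\{M\}$ and $\U=\add(\pi M)$. For $d=2$, given $T\in\ctilt{2}(\U)$, I would lift to $\tilde T\in M^{[0,1]}$ via (b) and combine Proposition~\ref{twosilt-fgtor}(a) with the Iyama--Yoshino bijection between $\ctilt{2}(\U)$ and support $\tau$-tilting modules over $\End_\U(\pi M)\simeq\End_\C(M)$ to promote $\tilde T$ to an element of $\silt^2_M(\C)$.

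The main obstacle I expect is the full faithfulness in (b). The delicate point is that after truncating the factoring object $W$ into the interval $\D_M^{\le 0}\cap\D_M^{\ge -d+1}$, one must exploit the $(d+1)$-CY pairing across the whole $d$-interval to kill the composed morphism. The numerology matches precisely: $d$ slots in $M^{[0,d-1]}$, one shift coming from the $t$-structure on $\D$, and one further shift from the CY functor, adding to the required $d+1$. Keeping track of these shifts and ensuring that the truncation triangles used in the argument do not spoil the vanishing is the technical heart of the proof; it is also what underpins the representative construction needed in (a) and the 2-CT lifting in (c).
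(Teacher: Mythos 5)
The paper does not give its own proof of this result; Theorem~\ref{IY cluster} is cited verbatim from \cite[Theorem 5.8 and Corollary 5.12]{IYa14}. So I will assess your proposal on its own merits rather than against a proof in the text.

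There is a genuine gap, and it is load-bearing. In two places you assert that certain shifts of $M$ lie in $\D$: in the essential surjectivity step you claim that $M[i]\in\D$ for $i\geq d$ ``by (ST3),'' and in part (a) you assert $M^{[0,d-1]}\subseteq\C_M^{\geq -d+1}\subseteq\D$. Both are false in general. By (ST3), $\D=\bigcup_n\T_M^{\geq n}$, so $M[i]\in\D$ would force $\Hom(M,M[p])=0$ for $p\ll 0$, i.e.\ $\bigoplus_p\Hom(M,M[p])$ finite-dimensional; but in the motivating Calabi--Yau examples (derived preprojective algebras $\Gamma_{d+1}(Q)$, Ginzburg dg algebras) $H^*(\Gamma)$ is unbounded below and $\Gamma\notin\Dfd(\Gamma)$. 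More decisively, if $M^{[0,d-1]}\subseteq\D$ then $\pi$ would annihilate $M^{[0,d-1]}$, which by the very equivalence (b) you are trying to prove would force $\U=0$ --- a contradiction in any nontrivial example. Since the silting reduction/CY reduction setup has $\C\supseteq\D$ with $\D$ of finite type (Corollary~\ref{cor:ST-pair-for-category-of-finite-type}) but $\C$ typically not, $M$ and its shifts genuinely fail to lie in $\D$, and the fundamental domain $M^{[0,d-1]}$ meets $\D$ only trivially.

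This defect propagates. The ``peeling off'' inductive step in your essential surjectivity argument needs the excess shifts of $M$ to land in $\D$, which they do not, so the construction of the triangle $T\to X\to Z\to T[1]$ with $T\in M^{[0,d-1]}$ and $Z\in\D$ is not established. In part (a) you apply the $(d+1)$-CY pairing in the form $D\Hom_\C(Y,X)\simeq\Hom_\C(X,Y[d+1])$ with $X,Y\in M^{[0,d-1]}$, but the paper's duality requires the \emph{first} argument to lie in $\D$ (Definition~\ref{def CYtriple}), so this application is illegal. In the fully-faithfulness step, the justification ``$Y\in\C_M^{\geq -d+1}$'' is again false; the correct statement that makes the truncation of $W$ work is a CY-derived vanishing such as $\Hom(\D_M^{\leq -d},M^{[0,d-1]})=0$ --- which one \emph{can} verify directly, because for $D\in\D_M^{\leq -d}$ one has $D\Hom(D,M[i])\simeq\Hom(M,D[d+1-i])=0$ since $d+1-i\geq 2>-d$. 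In other words, the numerology you sense (``$d$ slots, one $t$-structure shift, one CY shift'') is real, but it enters through CY duality against $\D_M^{\leq 0}$, not through any containment $M^{[0,d-1]}\subseteq\D$. You would also need to actually carry out the killing step after truncating $W$ into the window $\D_M^{\leq 0}\cap\D_M^{\geq -d+1}$; as written, invoking $\Hom(M^{[0,d-1]},M^{[0,d-1]}[d+1])=0$ does not kill a factorization through $W$ unless one first controls $\Hom(W,Y)$ via a resolution of $W$ by objects of $\add(M)$. The Iyama--Yang proof handles these points with the co-$t$-structure from Proposition~\ref{prop:ST=>TS} and minimal $M$-resolutions (Proposition~\ref{prop:minimal-resolution}); as it stands, your proposal would need that machinery and a correction of the two false inclusions before it could close.
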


Then we give a criterion for $\C$ being silting-discrete in terms of the cluster category $\U$.

\begin{theorem}\label{equivalent thm}
For a $(d+1)$-Calabi--Yau pair $(\C,\D)$, the following statements hold.
\begin{itemize}
\item[(a)] Assume $d\geq 2$. If $\ctilt{d}(\U)$ is a finite set, then $\C$ is silting-discrete. The converse holds true if $d=2$.
\item[(b)]
Assume that $d=1$ or $2$ and let $M$ be any basic silting object of $\C$. The following are equivalent. 
\begin{itemize}
\item[(i)] $\C$ is silting-discrete. 
\item[(ii)] $\silt^2_{M}(\C)$ is a finite set.
\item[(iii)] $\End_\C(M)$ is $\tau$-tilting finite.
\end{itemize}
\end{itemize}
\end{theorem}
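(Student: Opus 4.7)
The plan is to exploit the injection from silting objects to cluster-tilting objects provided by Theorem~\ref{IY cluster}(c), combined with the reduction of silting-discreteness to the finiteness of $2$-term silting (Lemma~\ref{AM}) and the $\tau$-tilting interpretation (Corollary~\ref{finiteness}).

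For part (a) with $d\geq 2$, I would first observe that $\silt^2_M(\C)\subseteq\silt^d_M(\C)$ for any basic silting object $M$, since the inequality $M[1]\geq M[d-1]$ (which holds because $M$ is presilting and $d\geq 2$) guarantees that any $N$ with $M\geq N\geq M[1]$ also satisfies $N\geq M[d-1]$. Then the injection $\phi\colon\silt^d_M(\C)\to\ctilt{d}(\U)$ of Theorem~\ref{IY cluster}(c), applied with $M$ replaced by every basic silting object of $\C$ (the target $\ctilt{d}(\U)$ being independent of $M$), forces $\silt^2_N(\C)$ to be finite for every silting $N$ whenever $\ctilt{d}(\U)$ is finite, and Lemma~\ref{AM} yields silting-discreteness. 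For the converse when $d=2$, silting-discreteness gives finiteness of $\silt^2_M(\C)=\silt^d_M(\C)$ for any chosen $M$, and bijectivity of $\phi$ in the case $d=2$ transfers this to finiteness of $\ctilt{2}(\U)$.

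For part (b), the implication (i)$\Rightarrow$(ii) is immediate from Definition~\ref{def silting-discrete}, and (ii)$\iff$(iii) is Corollary~\ref{finiteness}. The substantive content is (ii)$\Rightarrow$(i). In the case $d=2$, this follows directly from (a): the bijection $\phi\colon\silt^2_M(\C)\to\ctilt{2}(\U)$ turns finiteness of $\silt^2_M(\C)$ into finiteness of $\ctilt{2}(\U)$, and then the forward direction of (a) produces silting-discreteness. In the case $d=1$ the cluster-categorical reduction degenerates, since $\silt^1_M(\C)=\{M\}$ and $\phi$ carries no information; here I would instead argue directly in terms of $\tau$-tilting theory, showing that in a $2$-Calabi--Yau pair an irreducible silting mutation $\mu_X^\Le(M)$ has an endomorphism algebra that is $\tau$-tilting finite whenever $\End_\C(M)$ is (using that the $2$-CY duality lets one compute the Hom-spaces between $M$ and $\mu_X^\Le(M)$ in terms of the original endomorphism algebra), and then combining this with the fact that silting-discreteness propagates along the Hasse quiver via Aihara's repetition argument (Lemma~\ref{AM} together with \cite[Proposition 3.8]{Ai13}) to cover all silting objects reachable from $M$ by mutation.

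The main obstacle is precisely the $d=1$ case of (b): without the bijection onto $\ctilt{d}(\U)$ one must argue directly that $\tau$-tilting finiteness of $\End_\C(M)$ is mutation-invariant within the $2$-Calabi--Yau pair, and one needs to control the endomorphism algebras of all silting objects simultaneously rather than just those lying above $M[1]$.
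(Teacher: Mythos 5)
Parts (a) and the $d=2$ case of (b) follow the paper's argument closely and are correct: the injection (respectively bijection) $\phi$ from Theorem~\ref{IY cluster}(c) plus the inclusion $\silt^2_M(\C)\subseteq\silt^d_M(\C)$ and Lemma~\ref{AM} do the work exactly as you describe.

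However, for the $d=1$ case of (b), which you correctly identify as the substantive obstacle, your sketch has a genuine gap, and it also misses a much cleaner observation. You say the bijection $\phi\colon\silt^1_M(\C)\to\ctilt{1}(\U)$ "carries no information" because $\silt^1_M(\C)=\{M\}$. In fact it carries exactly the information needed: it forces $\ctilt{1}(\U)$ to be a singleton, i.e.\ $\U$ has a unique basic $1$-cluster-tilting object up to isomorphism. Now apply Theorem~\ref{IY cluster}(b) to the $2$-Calabi--Yau triple $(\C,\D,N)$ for an \emph{arbitrary} basic silting object $N$: the projection $\pi$ induces an equivalence $\add(N)=N^{[0,0]}\to\U$, so $\End_\C(N)\simeq\End_\U(\pi(N))\simeq\End_\U(\pi(M))\simeq\End_\C(M)$. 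Hence $\End_\C(N)$ is $\tau$-tilting finite for every silting object $N$, so $\silt^2_N(\C)$ is finite for every $N$ by Proposition~\ref{twosilt-fgtor}, and Lemma~\ref{AM} gives silting-discreteness.

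Your proposed alternative --- showing $\tau$-tilting finiteness is preserved under irreducible silting mutation via $2$-CY duality --- has two problems as stated. First, you never actually carry out the Hom-space computation, and the claim that $\tau$-tilting finiteness transfers across a single mutation is not something one can simply assert; the clean reason it holds here is precisely the isomorphism $\End_\C(N)\simeq\End_\C(M)$, which your approach would have to rediscover. Second, even granting that transfer, you would only control silting objects reachable from $M$ by finitely many mutations. The fact that this exhausts all basic silting objects is a \emph{consequence} of silting-discreteness (via \cite[Corollary~3.9]{Ai13}), so invoking it at this stage would be circular. Lemma~\ref{AM} requires finiteness of $\silt^2_N(\C)$ for \emph{all} $N$, not just those in the mutation component of $M$, and the only way to get that globally is via the uniform isomorphism of endomorphism algebras.
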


\begin{proof}
(a) Assume that $\ctilt{d}\U$ is finite. Let $M$ be any basic silting object of $\C$. By Theorem \ref{IY cluster}(c) applied to the $(d+1)$-Calabi--Yau triple $(\C,\D,M)$, we obtain that $\silt^d_{M}(\C)$ is finite, which implies that $\silt^2_{M}(\C)$ is finite. Therefore, by Lemma \ref{AM},  $\C$ is silting-discrete.

If $d=2$,  then according to Theorem \ref{IY cluster}(c) there is a bijection between $\silt^2_{M}(\C)$ and $\ctilt{2}(\U)$. If $\C$ is silting-discrete, then $\silt^2_{M}(\C)$ is finite. Hence $\ctilt{2}(\U)$ is finite.

(b) 
By Proposition~\ref{twosilt-fgtor}(b), (ii) and (iii) are equivalent. Next we show that (i) and (ii) are equivalent.

Case $d=2$: By Theorem \ref{IY cluster}(c), there is a bijection between $\silt^2_{M}(\C)$ and $\ctilt{2}(\U)$. It follows that $\silt^2_{M}(\C)$ is finite if and only if $\ctilt{2}(\U)$ is finite. The equivalence of (i) and (ii) then follows by (a).

Case $d=1$:  Let $N$ be any basic silting object  of $\C$. 
Then $\pi(N)$ and $\pi(M)$ are basic 1-cluster-tilting objects by Theorem~\ref{IY cluster}(c), which is unique up to isomorphism.  
Moreover, since $\End_\C(N)\simeq\End_\U(\pi(N))$ and $\End_\C(M)\simeq\End_\U(\pi(M))$ by Theorem~\ref{IY cluster}(b), it follows that $\End_\C(N)\simeq\End_\C(M)$. Hence by Proposition~\ref{twosilt-fgtor}, $\silt^2_{N}(\C)$ is in bijection with $\silt^2_{M}(\C)$. The equivalence of (i) and (ii) then follows by Lemma~\ref{AM}.
\end{proof}

\subsection{Derived preprojective algebras}
\label{ss:derived-preprojective-algebra}
In this subsection we show that the perfect derived category of a derived preprojective algebra associated with a quiver is silting-discrete if and only if the quiver is Dynkin.

\smallskip
Let $Q$ be a finite quiver. Define a graded quiver $\tilde{Q}$ by
\begin{itemize}
\item[-] $\tilde{Q}$ has the same vertices as $Q$;
\item[-] $\tilde{Q}$ has three types of arrows:
\begin{itemize}
\item[$\cdot$] the arrows of $Q$, in degree $0$,
\item[$\cdot$] $\alpha^*\colon j\to i$ in degree $-d+1$, for each arrow $\alpha\colon  i\to j$ of $Q$,
\item[$\cdot$] $t_i\colon i\to i$ in degree $-d$, for each vertex $i$ of $Q$.
\end{itemize}
\end{itemize}
According to \cite[Section 4.1]{Ke2}, the \emph{derived $(d+1)$-preprojective algebra} $\Gamma:=\Gamma_{d+1}(Q)$ is the dg algebra $(K\tilde{Q},d)$, where $K\tilde{Q}$ is the graded path algebra of $\tilde{Q}$ and $d$ is the unique $K$-linear differential which satisfies the graded Leibnitz rule
\[
d(ab)=d(a)b+(-1)^{p}ad(b),
\]
where $a$ is homogeneous of degree $p$, and which takes the following values
\begin{itemize}
\item[$\cdot$] $d(e_i)=0$ for any vertex $i$ of $Q$, where $e_i$ is the trivial path at $i$,  
\item[$\cdot$] $d(\alpha)=0$ for any arrow $\alpha$ of $Q$,
\item[$\cdot$] $d(\alpha^*)=0$ for any arrow $\alpha^*$ of $Q$,
\item[$\cdot$] $d(t_i)=e_i\sum_\alpha (\alpha\alpha^*-\alpha^*\alpha)e_i$ for any vertex $i$ of $Q$, where the sum is over all arrows of $Q$.
\end{itemize}
Note that if $d=1$, then $H^{0}(\Gamma)$ is the preprojective algebra associated with $Q$, and if $d\ge 2$, then $H^{0}(\Gamma)$ is the path algebra of $Q$. See \cite{Ringel98} for the definition of preprojective algebras.

\begin{lemma}\label{lem:Hom-finiteness-and-cy-property-for-derived-preprojective-algebras}
The following three conditions are equivalent:
\begin{itemize}
\item[(i)]
$\per(\Gamma)$ is a Hom-finite Krull--Schmidt triangulated $K$-category,
\item[(ii)] 
$H^0(\Gamma)$ is finite-dimensional,
\item[(iii)]
$d=1$ and $Q$ is Dynkin, or 
$d\geq 2$ and $Q$ has no oriented cycles.
\end{itemize}
If these conditions are satisfied, then $(\per(\Gamma),\Dfd(\Gamma),\Gamma)$ is a $(d+1)$-Calabi--Yau triple.
\end{lemma}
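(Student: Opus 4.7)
My plan is to verify (i)$\Rightarrow$(ii) (immediate from $\End_{\per(\Gamma)}(\Gamma) = H^0(\Gamma)$), then (ii)$\Leftrightarrow$(iii), then (ii)$\Rightarrow$(i), and finally to deduce the Calabi--Yau triple statement by applying Keller's theorem on Calabi--Yau completions.

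For (ii)$\Leftrightarrow$(iii), I compute $H^0(\Gamma)$ directly from the graded quiver $\tilde{Q}$, treating $d=1$ and $d\geq 2$ separately. For $d\geq 2$, only the arrows of $Q$ lie in degree $0$, and the differential acts nontrivially only on the generators $t_i$, which lie in $\Gamma^{-d}\subseteq\Gamma^{\leq -2}$; hence the differential $\Gamma^{-1}\to\Gamma^0$ vanishes and $H^0(\Gamma)=\Gamma^0=KQ$. For $d=1$, both the arrows of $Q$ and the reverse arrows $\alpha^*$ live in degree $0$, so $\Gamma^0=K\overline{Q}$ with $\overline{Q}$ the double quiver, and a Leibnitz-rule calculation shows that the image of $\Gamma^{-1}\to\Gamma^0$ is precisely the two-sided ideal generated by $e_i\sum_\alpha(\alpha\alpha^*-\alpha^*\alpha)e_i$, so $H^0(\Gamma)=\Pi(Q)$ is the preprojective algebra of $Q$. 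The equivalence (ii)$\Leftrightarrow$(iii) then follows from two classical facts: $KQ$ is finite-dimensional if and only if $Q$ has no oriented cycles, and $\Pi(Q)$ is finite-dimensional if and only if $Q$ is Dynkin (see \cite{Ringel98}). For (ii)$\Rightarrow$(i), note that every generator of $\tilde{Q}$ sits in non-positive degree, so $\Gamma$ automatically satisfies (SN1); combined with (ii)$=$(SN2), the hypotheses of \cite[Proposition 2.5]{KaY16} hold and yield that $\per(\Gamma)$ is Hom-finite and Krull--Schmidt.

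For the Calabi--Yau triple statement I invoke Keller's theorem on Calabi--Yau completions: condition (iii) forces $Q$ to be acyclic (Dynkin quivers are by convention acyclic), so $KQ$ is finite-dimensional and hereditary, hence homologically smooth. By \cite[Theorem 4.8]{Ke2}, $\Gamma=\Gamma_{d+1}(KQ)$ is then bimodule $(d+1)$-Calabi--Yau. Lemma~\ref{lem:homologically-smooth=>finite-projective-dimension} supplies both the inclusion (SN3) and the required bifunctorial duality $D\Hom(X,Y)\simeq\Hom(Y,X[d+1])$ for $X\in\Dfd(\Gamma)$ and $Y\in\per(\Gamma)$, while Lemma~\ref{lem:standard-ST-pair-for-smooth-non-positive-dg-algebra} assembles (SN1)--(SN3) into the ST-triple structure. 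The most delicate point of the argument is the $d=1$ case of the $H^0$-computation, where one must carefully check that the image of $\Gamma^{-1}\to\Gamma^0$ coincides with the full two-sided preprojective ideal (and not merely a generating subspace); the rest is essentially an assembly of existing results.
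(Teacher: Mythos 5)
Your proof follows the same skeleton as the paper's: (i)$\Rightarrow$(ii) via $\End_{\per(\Gamma)}(\Gamma)\simeq H^0(\Gamma)$, (ii)$\Rightarrow$(i) via \cite[Proposition~2.5]{KaY16}, (ii)$\Leftrightarrow$(iii) via the descriptions $H^0(\Gamma)=\Pi(Q)$ (for $d=1$) and $H^0(\Gamma)=KQ$ (for $d\geq 2$), and Keller's Calabi--Yau results for the triple structure. Your explicit computation of $H^0(\Gamma)$ from the differential is a useful expansion of what the paper only states in passing, and you correctly identify the delicate point that $d(pt_iq)=p\,d(t_i)\,q$ shows the image of $d\colon\Gamma^{-1}\to\Gamma^0$ is the full two-sided preprojective ideal when $d=1$.

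Two points of comparison. First, the paper opens the proof with the unconditional observation (\cite[Theorem~6.3]{Ke2}) that $\Gamma=\Gamma_{d+1}(Q)$ is homologically smooth and bimodule $(d+1)$-Calabi--Yau for \emph{any} finite quiver $Q$; this supplies (SN3) before any of the equivalences is proved. You establish smoothness only in the final paragraph, under condition (iii), via the CY-completion theorem applied to $KQ$ (requiring $Q$ acyclic and an implicit identification $\Gamma_{d+1}(Q)\simeq\Gamma_{d+1}(KQ)$). Yet you invoke \cite[Proposition~2.5]{KaY16} for (ii)$\Rightarrow$(i) citing only (SN1) and (SN2). The paper invokes the same proposition in a context where (SN3) is also available, and (SN1)$+$(SN2) alone do not obviously yield Hom-finiteness of $\per(\Gamma)$ (one needs every $H^p(\Gamma)$, not just $H^0$, to be finite-dimensional). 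You should either move the smoothness observation before (ii)$\Rightarrow$(i), as the paper does, or justify that the proposition applies without (SN3); as stated there is an ordering gap, though a fixable one. Second, the paper's direct appeal to \cite[Theorem~6.3]{Ke2} for the derived preprojective algebra of a quiver is cleaner than your route through the CY completion of the hereditary algebra $KQ$, since it needs no acyclicity hypothesis and no auxiliary identification.
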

\begin{proof}
According to \cite[Theorem 6.3]{Ke2}, $\Gamma$ is homologically smooth and bimodule $(d+1)$-Calabi--Yau. It is clear that $H^p(\Gamma)=0$ for $p>0$. So in view of Example~\ref{ex:CY-triple-from-CY-algebra}, the second statement is a consequence of the first statement.
Because $\per(\Gamma)$ is idempotent complete, it is Hom-finite Krull--Schmidt if and only if it is Hom-finite.

(i)$\Rightarrow$(ii): This is because $H^0(\Gamma)=\Hom_{\per(\Gamma)}(\Gamma,\Gamma)$. 

(ii)$\Rightarrow$(i): This follows from \cite[Proposition 2.5]{KaY16} (see Lemma~\ref{lem:standard-ST-pair-for-smooth-non-positive-dg-algebra}).

(ii)$\Leftrightarrow$(iii): 
If $d=1$, then $H^0(\Gamma)$ is the preprojective algebra associated with $Q$, which is finite-dimensional if and only if $Q$ is Dynkin (by \cite[Proposition 5.2]{DlabRingel80}). 
If $d\geq 2$, then $H^0(\Gamma)$ is the path algebra $KQ$ of $Q$, which is finite-dimensional if and only if $Q$ has no oriented cycles.
\end{proof}

Now we apply Theorem~\ref{equivalent thm} to perfect derived categories of derived preprojective algebras.

\begin{corollary}\label{main3-1} Let $Q$ be a finite quiver and $\Gamma=\Gamma_{d+1}(Q)$. Assume that $K$ is algebraically closed and that $H^0(\Gamma)$ is finite-dimensional.
Then $\per(\Gamma)$ is silting-discrete if and only if  $Q$ is Dynkin.
\end{corollary}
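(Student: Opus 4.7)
The underlying observation is that by Lemma~\ref{lem:Hom-finiteness-and-cy-property-for-derived-preprojective-algebras}, under our hypothesis $(\per(\Gamma),\Dfd(\Gamma),\Gamma)$ is a $(d+1)$-Calabi--Yau triple, so Theorem~\ref{equivalent thm} and Proposition~\ref{twosilt-fgtor} are available. Reading off the graded quiver $\tilde Q$, the endomorphism algebra $\End_{\per(\Gamma)}(\Gamma)=H^0(\Gamma)$ is the classical preprojective algebra $\Pi(Q)$ when $d=1$ and the path algebra $KQ$ when $d\ge 2$; this identification will do all the real work.

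For the forward implication, I would assume $\per(\Gamma)$ is silting-discrete; then the set $\silt^2_\Gamma(\per\Gamma)$ is finite, and Proposition~\ref{twosilt-fgtor}(b) forces $H^0(\Gamma)$ to be $\tau$-tilting finite. If $d\ge 2$, then $H^0(\Gamma)=KQ$ is hereditary without oriented cycles, and for hereditary algebras over an algebraically closed field, $\tau$-tilting finiteness coincides with representation-finiteness, so $Q$ must be a Dynkin quiver. If $d=1$, then $H^0(\Gamma)=\Pi(Q)$, and the classification of $\tau$-tilting finite preprojective algebras (Mizuno's theorem) gives $\tau$-tilting finiteness exactly for Dynkin $Q$.

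For the converse, assume $Q$ is Dynkin. When $d\in\{1,2\}$ I would invoke Theorem~\ref{equivalent thm}(b): it suffices to verify that $H^0(\Gamma)$ is $\tau$-tilting finite, and this is immediate --- $KQ$ is representation-finite for $d=2$, and Mizuno's theorem gives the $d=1$ case. When $d\ge 3$, part (b) no longer applies, so I would instead apply Theorem~\ref{equivalent thm}(a) and check that the cluster category $\U=\per(\Gamma)/\Dfd(\Gamma)$ has only finitely many basic $d$-cluster-tilting objects. For Dynkin $Q$, $\U$ is (equivalent to) the $d$-cluster category of $KQ$, which is known to have only finitely many isomorphism classes of indecomposables; since each basic $d$-cluster-tilting object is a direct sum of such indecomposables, $\ctilt{d}(\U)$ is finite, and we conclude by Theorem~\ref{equivalent thm}(a).

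The main obstacle will be giving a clean citation for the two non-elementary inputs: Mizuno's classification of $\tau$-tilting finite preprojective algebras (used in both directions when $d=1$), and the finiteness of indecomposables in the higher cluster category $\per(\Gamma_{d+1}(Q))/\Dfd(\Gamma_{d+1}(Q))$ for Dynkin $Q$ and $d\ge 3$. Both are standard in higher Auslander--Reiten / cluster-tilting theory (after Iyama and collaborators), but they are what power the statement, while Theorems~\ref{equivalent thm} and Proposition~\ref{twosilt-fgtor} supply the uniform bridge between silting-discreteness of $\C=\per(\Gamma)$ and $\tau$-tilting (resp.\ cluster-tilting) finiteness on the \lq\lq ground level\rq\rq{} $H^0(\Gamma)$ (resp.\ $\U$).
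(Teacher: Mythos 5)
Your proposal is correct and follows essentially the same route as the paper: Lemma~\ref{lem:Hom-finiteness-and-cy-property-for-derived-preprojective-algebras} to get the Calabi--Yau triple, Theorem~\ref{equivalent thm} together with Proposition~\ref{twosilt-fgtor} as the bridge, Mizuno's theorem for $d=1$, representation-finiteness of $KQ$ (the paper instead cites the brick criterion of Demonet--Iyama--Jasso, but these are equivalent for hereditary algebras) for the non-Dynkin direction, and for $d\geq 2$ Dynkin the identification of $\per(\Gamma)/\Dfd(\Gamma)$ with the $d$-cluster category (Guo) plus finiteness of its indecomposables (Zhu). The only cosmetic differences are that the paper treats $d=2$ Dynkin via the cluster-category route of Theorem~\ref{equivalent thm}(a) rather than part~(b), and that Lemma~\ref{lem:Hom-finiteness-and-cy-property-for-derived-preprojective-algebras} already forces $Q$ Dynkin when $d=1$, making that case of the forward implication vacuous.
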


\begin{proof} By Lemma~\ref{lem:Hom-finiteness-and-cy-property-for-derived-preprojective-algebras}, we have $d=1$ and $Q$ is Dynkin, or $d\geq 2$ and $Q$ has no oriented cycles.
 
Case 1: $d\geq 2$ and $Q$ has no oriented cycles. Assume that $Q$ is a Dynkin quiver. By \cite[Corollary 3.4]{Gu11}, $\per(\Gamma)/\Dfd(\Gamma)$ is triangle equivalent to the orbit category $\U_Q$ of the bounded derived category $\Db(\mod KQ)$ under the action of the automorphism $\tau^{-1}[d]$, where $\tau$ is the Auslander-Reiten translation. By \cite[Proposition 2.2(d)]{Z08}, $\U_Q$ has only finitely many isomorphism classes of indecomposable objects, and hence $\ctilt{d}(\U_Q)$ is finite. By Theorem~\ref{equivalent thm}(a), $\per(\Gamma)$ is silting-discrete.

Assume that $Q$ is not Dynkin. Then there are infinitely many bricks in $\mod KQ$, so  $KQ$ is not $\tau$-tilting finite, by \cite[Theorem 1.10]{DIJ15}. 
Since
$\End_{\per(\Gamma)}(\Gamma)=H^0(\Gamma)=KQ$, it follows from Theorem~\ref{equivalent thm}(b) that $\per(\Gamma)$ is not silting-discrete.

Case 2: $d=1$ and $Q$ is Dynkin. It follows from \cite[Theorem 2.21]{M14} and Proposition~\ref{twosilt-fgtor} that $\silt^2_\Gamma(\per(\Gamma))$ is finite. By Theorem~\ref{equivalent thm}(b), $\per(\Gamma)$ is silting-discrete. 
\end{proof}

\subsection{Complete Ginzburg dg algebras}\label{ss:ginzburg-dg-algebras}
In this subsection we show that the perfect derived category of the complete Ginzburg dg algebra associated with a quiver with a nondegenerate potential is silting-discrete if and only if the quiver is mutation equivalent to a Dynkin quiver. We refer to \cite{DWZ08} for the definition and properties of quiver mutation and mutation of quivers with potential.

Let $Q$ be a finite quiver and $W$ a  potential. Let $\widehat{\Gamma}(Q,W)$ be the complete Ginzburg dg algebra associated with the quiver with potential $(Q,W)$, see \cite{Gi06,KeY11}. The algebra $H^0\widehat{\Gamma}(Q,W)$ is known as the \emph{Jacobian algebra}, and if it is finite-dimensional, we say that  $(Q,W)$ is \emph{Jacobi-finite}.
By definition $\widehat{\Gamma}(Q,W)$ is concentrated in non-positive degrees. According to \cite[Theorem A.17]{KeY11}, $\widehat{\Gamma}(Q,W)$ is topologically homologically smooth and 3-Calabi--Yau as a bimodule. So if $(Q,W)$ is Jacobi-finite, then by Example~\ref{ex:CY-triple-from-CY-algebra} we obtain a $3$-Calabi--Yau triple $(\per(\widehat{\Gamma}(Q,W)),\Dfd(\widehat{\Gamma}(Q,W)),\widehat{\Gamma}(Q,W))$. Similar to Lemma~\ref{lem:Hom-finiteness-and-cy-property-for-derived-preprojective-algebras}(i)$\Leftrightarrow$(ii), $\per(\widehat{\Gamma}(Q,W))$ is Hom-finite if and only if $(Q,W)$ is Jacobi-finite.
Put 
\[\U_{(Q,W)}:=\per(\widehat{\Ga}(Q,W))/\D_{\fd}(\widehat{\Ga}(Q,W)).\]

\begin{corollary}\label{main3-2}
Let $(Q,W)$ be a Jacobi-finite quiver with potential. 
\begin{itemize}
\item[(a)] 
The following conditions are equivalent.
\begin{itemize}
\item[(i)] $\per(\widehat{\Gamma}(Q,W))$ is silting-discrete.
\item[(ii)] $\ctilt{2}(\U_{(Q,W)})$ is finite,
\item[(iii)] $H^0\widehat{\Gamma}(Q,W)$ is $\tau$-tilting finite.
\end{itemize}
\item[(b)]
Assume further that $W$ is nondegenerate $($\cite{DWZ08}$)$. Then 
$\per(\widehat{\Gamma}(Q,W))$ is silting-discrete if and only if $Q$ is related to a Dynkin quiver by a finite sequence of quiver mutations. 
\end{itemize}
\end{corollary}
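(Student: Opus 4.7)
The plan is to derive Part (a) directly from Theorem \ref{equivalent thm} applied to the $3$-Calabi--Yau triple provided by the Ginzburg dg algebra, and to reduce Part (b) to the derived preprojective case via iterated Keller--Yang mutation equivalences together with the Fomin--Zelevinsky classification of finite-type cluster algebras.

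\emph{Setup and Part (a).} First I would record that the preceding paragraph supplies the $3$-Calabi--Yau triple $(\C, \D, M) := (\per(\widehat{\Gamma}(Q,W)), \Dfd(\widehat{\Gamma}(Q,W)), \widehat{\Gamma}(Q,W))$ with $\End_{\C}(M) = H^0\widehat{\Gamma}(Q,W)$ the Jacobian algebra of $(Q,W)$. Since $d+1 = 3$, Theorem \ref{equivalent thm}(a) with $d = 2$ yields (i)$\Leftrightarrow$(ii), while Theorem \ref{equivalent thm}(b) applied to the silting object $M$ yields (i)$\Leftrightarrow$(iii).

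\emph{Part (b), sufficient direction.} Next, suppose $Q = Q_0, Q_1, \ldots, Q_n = \Delta$ is a finite sequence of quiver mutations ending at a Dynkin quiver $\Delta$. Because $W$ is nondegenerate, iteratively applying the Keller--Yang mutation theorem \cite{KeY11} will produce nondegenerate potentials $W_i$ such that each $(Q_i, W_i)$ is Jacobi-finite and a triangle equivalence $\per(\widehat{\Gamma}(Q_i, W_i)) \simeq \per(\widehat{\Gamma}(Q_{i+1}, W_{i+1}))$ holds. Since silting-discreteness is a triangle-equivalence invariant, it will suffice to verify it for $\per(\widehat{\Gamma}(\Delta, W_n))$. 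For the Dynkin quiver $\Delta$ any potential is cyclically equivalent to $0$, so $\widehat{\Gamma}(\Delta, W_n)$ is quasi-isomorphic to the derived $3$-preprojective algebra $\Gamma_3(\Delta)$, whose perfect derived category is silting-discrete by Corollary \ref{main3-1} with $d = 2$.

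\emph{Part (b), necessary direction.} Finally, assume $\per(\widehat{\Gamma}(Q,W))$ is silting-discrete, so that $\ctilt{2}(\U_{(Q,W)})$ is finite by Part (a). Using nondegeneracy, I would invoke Keller--Yang once more to ensure that iterated cluster-tilting mutations of the canonical cluster-tilting object $\pi(\widehat{\Gamma}(Q,W))$ in $\U_{(Q,W)}$ realize all iterated quiver mutations of $(Q,W)$, with endomorphism algebras equal to the Jacobian algebras of the mutated quivers with potential. Since distinct clusters of the cluster algebra of $Q$ give rise to non-isomorphic basic cluster-tilting objects in $\U_{(Q,W)}$, finiteness of $\ctilt{2}(\U_{(Q,W)})$ forces the cluster algebra of $Q$ to have only finitely many clusters, which by the Fomin--Zelevinsky classification of finite-type cluster algebras is equivalent to $Q$ being mutation-equivalent to a Dynkin quiver. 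The main obstacle will be this final direction: making precise the injection from seeds of the cluster algebra into isomorphism classes of cluster-tilting objects of $\U_{(Q,W)}$ via the Keller--Yang mutation framework (which is where nondegeneracy is essential), after which the Fomin--Zelevinsky classification closes the loop.
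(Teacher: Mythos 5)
Part (a) and the overall architecture of part (b) match the paper, but your two directions in (b) diverge from the paper's proof in instructive ways. For sufficiency, the paper does \emph{not} pass through derived equivalences of Ginzburg dg algebras: it uses \cite[Corollary 4.6]{KeY11}, which says that representation-finiteness of the Jacobian algebra is a mutation invariant, to conclude that $H^0\widehat{\Gamma}(Q,W)$ is representation-finite (since $KQ'$ is, for the Dynkin quiver $Q'$), hence $\tau$-tilting finite, and then closes with condition (iii) of part (a). Your route --- iterating the Keller--Yang triangle equivalences $\per(\widehat{\Gamma}(Q_i,W_i))\simeq\per(\widehat{\Gamma}(Q_{i+1},W_{i+1}))$ down to $(\Delta,0)$ and quoting Corollary~\ref{main3-1} --- is also viable, but it needs the full mutation-equivalence theorem rather than its corollary on representation type, and it requires the additional (true, but worth justifying) observation that the completed Ginzburg algebra $\widehat{\Gamma}(\Delta,0)$ of an acyclic quiver coincides with the non-completed derived $3$-preprojective algebra $\Gamma_3(\Delta)$, since each graded piece is already finite-dimensional. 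The paper's detour through $\tau$-tilting finiteness avoids both points.

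For the necessary direction, you have correctly isolated the one step that is not routine, but you have not supplied it, and the way you phrase it is slightly harder than what is needed. You ask for an \emph{injection} from clusters (seeds) to isomorphism classes of $2$-cluster-tilting objects, which would require knowing that the cluster-tilting object reached by a mutation sequence depends only on the resulting cluster and not on the path. What the paper actually uses is the opposite and easier direction: Palu's cluster character (\cite[Theorem 1.4]{P08}, packaged as \cite[Corollary 4.4]{Y09}) gives a \emph{surjection} from the reachable part of $\ctilt{2}(\U_{(Q,W)})$ onto the set of clusters of $Q$; combined with \cite[Theorem 1.6]{FZ03}, infinitude of clusters for non-mutation-Dynkin $Q$ then forces $\ctilt{2}(\U_{(Q,W)})$ to be infinite, and part (a) finishes. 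So the gap you flag is real and is closed not by the Keller--Yang mutation framework alone but by the cluster character; without citing that (or an equivalent statement relating cluster-tilting objects to clusters), the final implication does not go through.
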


\begin{proof} 
(a) This follows from Theorem~\ref{equivalent thm}. 

(b)
Assume that $Q$ is related to a Dynkin quiver $Q'$ by a finite sequence of quiver mutations. 
Since $W$ is a nondegenerate potential, the quiver of the mutated quiver with potential $\mu_i(Q,W)$ is just the mutated quiver $\mu_i(Q)$ and the potential of $\mu_i(Q,W)$ is again nondenegerate. 
From the assumption, we have a finite sequence of mutations 
$\mu_{i_1}\circ\cdots\circ\mu_{i_\ell}(Q)=Q'$ and hence 
we obtain $\mu_{i_1}\circ\cdots\circ\mu_{i_\ell}(Q,W)=(Q',0)$. Since $KQ'=H^0\widehat{\Gamma}(Q',0)$ is representation-finite, it follows from \cite[Corollary 4.6]{KeY11} that $H^0\widehat{\Gamma}(Q,W)$ is representation-finite. As a consequence, $H^0\widehat{\Gamma}(Q,W)$ is $\tau$-tilting finite and by the first statement  $\per(\widehat{\Gamma}(Q,W))$ is silting-discrete.

Next assume that $Q$ is not related to any Dynkin quiver by any finite sequence of quiver mutations. Then it follows from \cite[Corollary 4.4]{Y09} (which is a consequence of \cite[Theorem 1.4]{P08}) that there is a surjection from a certain subset of $\ctilt{2}(\U_{(Q,W)})$ to clusters of $Q$, which is infinite  by \cite[Theorem 1.6]{FZ03}.
This implies that  $\ctilt{2}(\U_{(Q,W)})$ is infinite. So by the first statement $\per(\widehat{\Gamma}(Q,W))$ is not silting-discrete.
\end{proof}

\begin{example}
Let $(Q,W)$ be a Jacobi-finite quiver with potential.  
By Corollary~\ref{main3-2}, if $H^0\widehat{\Gamma}(Q,W)$ is local or representation-finite, then $\per(\widehat{\Gamma}(Q,W))$ is silting-discrete. We give one concrete example for each case:
\begin{itemize}
\item[(1)] $Q$ consists of one vertex and two loops $x$ and $y$ and $W=x^4-x^2y$. \footnote{The Jacobian algebra of this quiver with potential appears as a contraction algebra in \cite{BrownWemyss17,DonovanWemyss16}. We thank Osamu Iyama for providing these two references.}
\item[(2)] $Q$ is an oriented cycle and $W$ is a non-trivial power of the oriented cycle.
\end{itemize}
\end{example}


\section{Contractible stability spaces}\label{s:contractible-stability-spaces}

In this section we apply results in previous sections to study the contractibility of  stability spaces.

\medskip

Let $\D$ be a $K$-linear Hom-finite Krull--Schmidt triangulated category with shift functor $[1]$.
Following \cite[Sections 2.3 and 2.4]{QW14}, we define $\mathrm{Tilt}(\D)$ to be the poset of bounded $t$-structures with $(\D^{\leq 0},\D^{\geq 0})\leq (\D'^{\leq 0},\D'^{\geq 0})$ if and only if there is a finite sequence of left tilts from $(\D^{\leq 0},\D^{\geq 0})$ to $(\D'^{\leq 0},\D'^{\geq 0})$ and define $\mathrm{Tilt}_{\mathrm{alg}}(\D)$ to be the poset of algebraic $t$-structures with $(\D^{\leq 0},\D^{\geq 0})\preceq (\D'^{\leq 0},\D'^{\geq 0})$ if and only if there is a finite sequence of left tilts from $(\D^{\leq 0},\D^{\geq 0})$ to $(\D'^{\leq 0},\D'^{\geq 0})$ via algebraic $t$-structures. Following \cite[Section 4.1]{QW14}, we say that a component of $\mathrm{Tilt}(\D)$ has \emph{finite type} if each $t$-structure in it is algebraic and has only finitely many torsion classes in its heart. Let $\mathrm{Stab}(\D)$ denote the stability space of $\D$ \cite{Br1}.

\begin{theorem}\label{thm:contractibility-vs-silting-discreteness}
Let $(\C,\D)$ be an ST-pair.

\begin{itemize}
\item[(a)]
If $\C$ is silting-discrete, then $\mathrm{Tilt}(\D)$ is connected and has finite type, and $\mathrm{Stab}(\D)$ is connected and contractible.
\item[(b)]
If $\mathrm{Tilt}(\D)$ has a finite-type component which contains a $\C$-silting bounded $t$-structure, then $\C$ is silting-discrete. 
\end{itemize}
\end{theorem}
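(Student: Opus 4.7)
For part (a), I plan to transfer silting-discreteness of $\C$ to the $t$-structure side of $\D$ via Theorem~\ref{thm:bijectivity-vs-discreteness}, which gives both the bijectivity of $\Psi\colon\silt(\C)\to\tstr(\D)$ and, together with Corollary~\ref{finiteness}, that the endomorphism algebra of every silting object of $\C$ is $\tau$-tilting finite; hence every heart of a bounded $t$-structure on $\D$ is a length category with only finitely many torsion classes, so every component of $\mathrm{Tilt}(\D)$ is of finite type in the sense of~\cite{QW14}. Proposition~\ref{st-mutation} identifies irreducible silting mutations with simple tilts of the corresponding $t$-structures, and silting-discreteness of $\C$ forces every silting object to be reachable from a fixed one by a finite sequence of irreducible mutations (as recalled after Definition~\ref{def silting-discrete}), so $\mathrm{Tilt}(\D)$ is connected. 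The Qiu--Woolf contractibility theorem from~\cite{QW14} then yields that $\mathrm{Stab}(\D)$ is connected and contractible.

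For part (b), the strategy is to show that every bounded $t$-structure on $\D$ is $\C$-silting, since Theorem~\ref{thm:bijectivity-vs-discreteness} then yields silting-discreteness of $\C$. Let $\mathcal{C}_0$ be the given finite-type component of $\mathrm{Tilt}(\D)$, containing $t_M := (\D_M^{\leq 0}, \D_M^{\geq 0})$ for some silting $M$ of $\C$. A preparatory step shows that $t_{M[j]} \in \mathcal{C}_0$ for every $j \in \mathbb{Z}$: the heart of $t_M$ has finitely many torsion classes by finite-type, so by Corollary~\ref{finiteness} $\End_\C(M)$ is $\tau$-tilting finite, and \cite[Corollary 2.9]{DIJ15} then gives that the zero support $\tau$-tilting module is connected to $\End_\C(M)$ in the support-$\tau$-tilting mutation quiver; via the bijection with $\silt^2_M(\C)$ and Proposition~\ref{st-mutation}, this translates into a finite sequence of simple tilts linking $t_M$ and $t_{M[1]}$ in $\mathcal{C}_0$, and iteration covers all shifts.

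Now take any bounded $t$-structure $t$ on $\D$. Lemma~\ref{lem:intermediate-t-structures-wrt-algebraic-t-structure} applied to the algebraic $t$-structure $t_M$ (Corollary~\ref{C-silt is alg}) yields an integer $k$ with $t_M \geq t \geq t_{M[k]}$. Iterating Proposition~\ref{keythm} starting at $t_M$ produces a strictly descending chain $t_M > t_{N_1} > t_{N_2} > \cdots \geq t$ of $\C$-silting $t$-structures whose consecutive terms are related by simple left tilts (Proposition~\ref{st-mutation}); the chain lies in $\mathcal{C}_0$ by simple-tilt connectedness and inside the interval bounded by $t_{M[k]}$ and $t_M$, both of which lie in $\mathcal{C}_0$. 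The decisive input, which I expect to extract from~\cite{QW14}, is that such an interval in a finite-type component is finite, forcing the chain to terminate at some $t_{N_r} = t$ and exhibiting $t$ as $\C$-silting. The principal obstacle is securing this interval-finiteness statement cleanly from the Qiu--Woolf framework; once it is in hand, Propositions~\ref{keythm} and~\ref{st-mutation} together with Theorem~\ref{thm:bijectivity-vs-discreteness} close the argument.
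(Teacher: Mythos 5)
Part (a) of your proposal follows essentially the same route as the paper: pass through Theorem~\ref{thm:bijectivity-vs-discreteness} to get that all bounded $t$-structures are $\C$-silting (hence algebraic and with finitely many torsion classes by Corollary~\ref{finiteness}), use connectedness of the silting mutation graph in the silting-discrete case together with Proposition~\ref{st-mutation} to get connectedness of $\mathrm{Tilt}(\D)$, and then invoke Qiu--Woolf. That part is correct.

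Part (b) is where your proposal diverges from the paper's argument, and here there is a genuine gap. You try to show that \emph{every} bounded $t$-structure on $\D$ is $\C$-silting, using Lemma~\ref{lem:intermediate-t-structures-wrt-algebraic-t-structure} to box an arbitrary $t$-structure $t$ into an interval $[t_{M[k]},t_M]$ and then iterating Proposition~\ref{keythm} to descend by simple tilts towards $t$. The termination of this descent requires the kind of interval-finiteness you flag, and this is precisely the point that cannot be ``extracted'' from the finite-type hypothesis without, in effect, re-proving the statement: the finite-type condition bounds the number of \emph{immediate} left tilts at each heart in $\mathcal{C}_0$, but it does not bound the length of a strictly descending chain of simple tilts lying between $t_M$ and $t_{M[k]}$; indeed the finiteness of the poset interval $\{t':t_{M[k]}\le t'\le t_M\}$ is essentially the $t$-discreteness condition of Definition~\ref{defn:t-discreteness}, which is \emph{equivalent} to the silting-discreteness you are trying to establish (Theorem~\ref{main}). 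So the descent argument is, as written, circular, and Qiu--Woolf do not supply the needed interval-finiteness for a general finite-type component.

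The paper's proof of (b) sidesteps this entirely and is substantially shorter. Starting from the silting object $M$ with $t_M$ in the given finite-type component, any silting object $N$ reachable from $M$ by a finite sequence of irreducible left mutations has $t_N$ in the same component by Proposition~\ref{st-mutation}, hence the heart of $t_N$ has only finitely many torsion classes, hence by Corollary~\ref{finiteness} the set $\silt^2_N(\C)$ is finite. Now one quotes the actual criterion of \cite[Theorem 2.4]{AM15} (which is sharper than the bare equivalence recorded in Lemma~\ref{AM}): silting-discreteness already follows from finiteness of $\silt^2_N(\C)$ for all $N$ obtained from a \emph{fixed} $M$ by iterated irreducible left mutations. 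This avoids having to prove surjectivity of $\Psi$ first and needs no interval-finiteness at all. If you want to salvage your approach, you should either import the refined AM15 criterion directly (which makes your Proposition~\ref{keythm} descent unnecessary), or else realize that the termination argument must itself be an induction on the length of the interval using 2-term finiteness at each stage---but that induction is exactly the content of the AM15 theorem, so you would be reproving it. Also note that your preparatory step placing $t_{M[j]}$ in $\mathcal{C}_0$ for all $j$, while correct, is not required in the paper's argument.
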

\begin{proof}
(a)
Since $\C$ is silting-discrete, it follows from Theorem~\ref{main} that all bounded $t$-structures on $\D$ are $\C$-silting and hence algebraic. So $\mathrm{Tilt}(\D)=\mathrm{Tilt}_{\mathrm{alg}}(\D)$. By \cite[Corollary 3.9]{Ai13}, every silting object of $\C$ can be obtained from $A$ by a finite sequence of irreducible mutations. This means, by Proposition~\ref{st-mutation}, that any bounded $t$-structure on $\D$ can be obtained from $(\D_A^{\leq 0},\D_A^{\geq 0})$ by a finite sequence of simple tilts. Therefore,  $\mathrm{Tilt}(\D)$ is connected. Moreover, since any bounded $t$-structure on $\D$ has the form $(\D_M^{\leq 0},\D_M^{\geq 0})$ for some silting object $M$ of $\C$, it follows from Corollary~\ref{finiteness} that the heart has only finitely many torsion classes. So $\mathrm{Tilt}(\D)$ has finite type.
Therefore by \cite[Lemma 4.3 and Theorem 4.9]{QW14}, $\mathrm{Stab}(\D)$ is connected and contractible. 

(b) Let $M$ be a silting object of $\C$ such that the corresponding $t$-structure $(\D^{\leq 0}_M,\D^{\geq 0}_M)$ belongs to the given finite-type component. Let $N$ be any silting object obtained from $M$ by a finite sequence of irreducible left mutations. It follows from Proposition~\ref{st-mutation} that $(\D_N^{\leq 0},\D_N^{\geq 0})$ is in the same component as $(\D^{\leq 0}_M,\D^{\geq 0}_M)$. So
the heart of $(\D_N^{\leq 0},\D_N^{\geq 0})$ has finitely many torsion classes. By Corollary~\ref{finiteness}, 
$\ssilt{2_{N}}\C$ is finite.  By \cite[Theorem 2.4]{AM15}, $\C$ is silting-discrete. 
\end{proof}

We remark that the $t$-discreteness of $\D$ is not a necessary condition of the contractibility of $\mathrm{Stab}(\D)$. For example, it is known that $\mathrm{Stab}(\Db(\coh(\mathbb{P}^1)))=\mathbb{C}^2$ by \cite[Theorem 1.1]{Okada06}, but $\Db(\coh(\mathbb{P}^1))$ is not $t$-discrete by Corollary~\ref{cor:projective-variety-not-t-discrete}.

As an application of Theorem~\ref{thm:contractibility-vs-silting-discreteness}, we can show that some derived categories have contractible stability spaces. The following result is independently obtained in \cite{PSZ17}.

\begin{corollary}\label{cor:contractible-derived-category}
Let $\Lambda$ be a finite-dimensional $K$-algebra. 
If $\Kb(\proj\Lambda)$ is silting-discrete, 
then $\mathrm{Stab}(\Db(\mod\Lambda))$ is contractible. 
\end{corollary}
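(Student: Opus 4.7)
The plan is to derive this as an immediate consequence of Theorem~\ref{thm:contractibility-vs-silting-discreteness}(a) applied to a suitable ST-pair. The key point is that finite-dimensional algebras give rise to a prototypical ST-pair, already recorded as Lemma~\ref{lem:ST-pair-for-fd-algebra}: the pair $(\Kb(\proj\Lambda),\Db(\mod\Lambda))$ together with the silting object $\Lambda$ constitutes an ST-triple inside $\Db(\mod\Lambda)$. So I would begin by invoking this lemma to set up the framework, taking $\C=\Kb(\proj\Lambda)$ and $\D=\Db(\mod\Lambda)$.

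Next, I would observe that the hypothesis of the corollary, namely that $\Kb(\proj\Lambda)$ is silting-discrete, is exactly the hypothesis on $\C$ required by Theorem~\ref{thm:contractibility-vs-silting-discreteness}(a). Applying that theorem then yields that $\mathrm{Stab}(\D)=\mathrm{Stab}(\Db(\mod\Lambda))$ is connected and contractible, which is the desired conclusion.

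In effect, the corollary is just the specialisation of part (a) of Theorem~\ref{thm:contractibility-vs-silting-discreteness} to the ST-pair coming from Lemma~\ref{lem:ST-pair-for-fd-algebra}; there is no real obstacle, since all the substantive work (the bijection between silting objects and $t$-structures under silting-discreteness, the identification of silting mutation with simple tilt, the finite-type property of the resulting component, and the input from \cite{QW14}) has already been carried out in the proof of the general theorem. I would present the argument as a two-line proof, simply citing Lemma~\ref{lem:ST-pair-for-fd-algebra} to obtain the ST-pair and Theorem~\ref{thm:contractibility-vs-silting-discreteness}(a) to conclude contractibility.
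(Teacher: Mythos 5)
Your proposal is correct and coincides with the paper's own proof: the authors likewise invoke Lemma~\ref{lem:ST-pair-for-fd-algebra} to identify $(\Kb(\proj\Lambda),\Db(\mod\Lambda))$ as an ST-pair and then apply Theorem~\ref{thm:contractibility-vs-silting-discreteness} to conclude contractibility of the stability space. Nothing is missing.
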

\begin{proof}
Recall that $(\Kb(\proj\Lambda),\Db(\mod\Lambda))$ is an ST-pair (Lemma~\ref{lem:ST-pair-for-fd-algebra}). The desired result follows by applying Theorem~\ref{thm:contractibility-vs-silting-discreteness} to this ST-pair.
\end{proof}

\begin{example}\label{ex:contractibility-derived-categories}
By Corollary~\ref{cor:contractible-derived-category}, 
the stability space of $\Db(\mod \Lambda)$ is contractible if $\Lambda$ is one of the algebra in Example \ref{ex:silting-discrete-derived-categories}. In particular, Example \ref{ex:silting-discrete-derived-categories}(2) affirms the first part of \cite[Conjecture 5.8]{Q15} and Example \ref{ex:silting-discrete-derived-categories}(3) is \cite[Theorem 8.2]{BPP2}.
\end{example}

Concerning stability spaces of Calabi--Yau triangulated categories, we have the following result.

\begin{corollary}\label{cor:cluster-tilting-finite-vs-contractibility-cy-case}
Let $d\geq 1$ be an integer and let $(\C,\D)$ be a $(d+1)$-Calabi--Yau pair. 
If $\ctilt{d}(\C/\D)$ is finite, then $\mathrm{Stab}(\D)$ is contractible. 
\end{corollary}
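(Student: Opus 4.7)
The strategy is to chain together two previously proved results. By definition, a $(d+1)$-Calabi--Yau pair is in particular an ST-pair, so the desired contractibility of $\mathrm{Stab}(\D)$ will follow immediately from Theorem~\ref{thm:contractibility-vs-silting-discreteness}(a) once we verify that $\C$ is silting-discrete. Hence the whole proof reduces to one implication: finiteness of $\ctilt{d}(\C/\D)$ forces silting-discreteness of $\C$.

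The first (and substantive) step is to deduce silting-discreteness of $\C$ from the hypothesis. For $d\geq 2$ this is exactly the content of Theorem~\ref{equivalent thm}(a): one fixes an arbitrary basic silting object $M$ of $\C$ and uses the injection $\silt_M^d(\C)\hookrightarrow \ctilt{d}(\U)$ of Theorem~\ref{IY cluster}(c) (applied to the $(d+1)$-Calabi--Yau triple $(\C,\D,M)$) to conclude that $\silt_M^d(\C)$, and in particular $\silt_M^2(\C)$, is finite; then Lemma~\ref{AM} upgrades this to silting-discreteness. For $d=1$ one instead invokes Theorem~\ref{equivalent thm}(b), together with the fact (Theorem~\ref{IY cluster}(b)--(c)) that a $1$-cluster-tilting object $\pi(M)$ in $\U$ satisfies $\End_{\U}(\pi(M))\simeq \End_{\C}(M)$, so that the cluster-tilting hypothesis translates into a finiteness property of $\End_\C(M)$ controlling $\silt_M^2(\C)$.

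The second step is formal: apply Theorem~\ref{thm:contractibility-vs-silting-discreteness}(a) to the ST-pair $(\C,\D)$ (now known to have silting-discrete $\C$) to conclude that $\mathrm{Stab}(\D)$ is contractible.

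No substantive obstacle remains, since both ingredients are already in place; the corollary is really just the concatenation of Theorem~\ref{equivalent thm} and Theorem~\ref{thm:contractibility-vs-silting-discreteness}(a). The only care required is the mild bookkeeping between the values $d=1$ and $d\geq 2$, where the exact form of the cluster-tilting-to-silting-discreteness passage is supplied by different parts of Theorem~\ref{equivalent thm}.
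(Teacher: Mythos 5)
Your proposal follows exactly the same route as the paper's own two-line proof: apply Theorem~\ref{equivalent thm} to conclude that $\C$ is silting-discrete, then apply Theorem~\ref{thm:contractibility-vs-silting-discreteness}(a) to get contractibility of $\mathrm{Stab}(\D)$. One small remark on your elaboration of the $d=1$ case: by Theorem~\ref{IY cluster}(c) the map $\pi$ induces a bijection $\silt^1_M(\C)\to\ctilt{1}(\U)$, and since $\silt^1_M(\C)=\{M\}$ this means $\ctilt{1}(\U)$ always consists of a single isomorphism class, so the finiteness hypothesis is automatic and cannot by itself ``translate into'' $\tau$-tilting finiteness of $\End_\C(M)$, which is what Theorem~\ref{equivalent thm}(b) actually asks for. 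The paper's proof is equally terse on this point (it simply cites Theorem~\ref{equivalent thm} without separating $d=1$), so this is a shared imprecision rather than a divergence in approach.
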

\begin{proof}
By Theorem~\ref{equivalent thm}, $\C$ is silting-discrete. By Theorem~\ref{thm:contractibility-vs-silting-discreteness}, $\mathrm{Stab}(\D)$ is contractible.
\end{proof}

The following Calabi--Yau triangulated categories associated to Dynkin quivers have contractible stability spaces.

\begin{corollary}\label{derived pp} 
\begin{itemize}
\item[(a)]
Let $Q$ be a Dynkin quiver and 
$\Gamma=\Gamma_{d+1}(Q)$ the derived (d+1)-preprojective algebra. Then $\mathrm{Stab}(\Dfd(\Gamma))$ is contractible.
\item[(b)] Let $(Q,W)$ be a Jacobi-finite quiver with nondegenerate potential and  $\widehat{\Gamma}=\widehat{\Gamma}(Q,W)$ 
the complete Ginzburg algebra.
If $Q$ is related to a Dynkin quiver by a finite sequence of quiver mutations, then $\mathrm{Stab}(\Dfd(\widehat{\Gamma}))$ is contractible.
\end{itemize}
\end{corollary}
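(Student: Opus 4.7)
The plan is to reduce both parts to Theorem~\ref{thm:contractibility-vs-silting-discreteness}(a) by exhibiting the relevant pair as an ST-pair and invoking the silting-discreteness results already established in Section~\ref{section calabi-yau}. So the proof is essentially an assembly of earlier statements, with no new content needed.

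For part~(a), I would first note that since $Q$ is Dynkin, Lemma~\ref{lem:Hom-finiteness-and-cy-property-for-derived-preprojective-algebras} applies: $H^0(\Gamma_{d+1}(Q))$ is finite-dimensional and $(\per(\Gamma),\Dfd(\Gamma),\Gamma)$ is a $(d+1)$-Calabi--Yau triple. In particular $(\per(\Gamma),\Dfd(\Gamma))$ is an ST-pair. Corollary~\ref{main3-1} then yields that $\per(\Gamma)$ is silting-discrete. Applying Theorem~\ref{thm:contractibility-vs-silting-discreteness}(a) to this ST-pair gives that $\mathrm{Stab}(\Dfd(\Gamma))$ is contractible, as required.

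For part~(b), the argument runs in parallel. Since $(Q,W)$ is Jacobi-finite with nondegenerate potential, the discussion preceding Corollary~\ref{main3-2} tells us that $\widehat{\Gamma}(Q,W)$ is topologically homologically smooth and bimodule 3-Calabi--Yau, that $\per(\widehat{\Gamma})$ is Hom-finite, and that $(\per(\widehat{\Gamma}),\Dfd(\widehat{\Gamma}),\widehat{\Gamma})$ is a $3$-Calabi--Yau triple; in particular it is an ST-pair. By hypothesis $Q$ is mutation equivalent to a Dynkin quiver, so Corollary~\ref{main3-2}(b) yields that $\per(\widehat{\Gamma})$ is silting-discrete. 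A second application of Theorem~\ref{thm:contractibility-vs-silting-discreteness}(a) then gives the contractibility of $\mathrm{Stab}(\Dfd(\widehat{\Gamma}))$.

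There is essentially no obstacle here: all the heavy lifting has already been done in Section~\ref{section calabi-yau} (the identification of silting-discreteness with a Dynkin-type condition via cluster-tilting finiteness in the associated cluster category) and in Theorem~\ref{thm:contractibility-vs-silting-discreteness} (the passage from silting-discreteness of $\C$ to contractibility of $\mathrm{Stab}(\D)$, which relies on the finite-type, connectedness analysis in the sense of Qiu--Woolf). The only thing to check is that the hypotheses of Theorem~\ref{thm:contractibility-vs-silting-discreteness}(a) are met, i.e.\ that we are dealing with a genuine ST-pair in each case, and this is immediate from the Calabi--Yau triple structure provided in Lemma~\ref{lem:Hom-finiteness-and-cy-property-for-derived-preprojective-algebras} and in Section~\ref{ss:ginzburg-dg-algebras} respectively.
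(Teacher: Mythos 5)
Your proposal is correct and follows essentially the same route as the paper: the paper's proof reads "This is immediate from Corollaries \ref{main3-1}, \ref{main3-2} and \ref{cor:contractible-derived-category}", i.e.\ silting-discreteness of $\per(\Gamma)$ from Section~\ref{section calabi-yau} plus the contractibility criterion of Section~\ref{s:contractible-stability-spaces}. If anything, your citation of Theorem~\ref{thm:contractibility-vs-silting-discreteness}(a) for the final step is the more accurate reference, since the ST-pairs here arise from dg algebras rather than from $(\Kb(\proj\Lambda),\Db(\mod\Lambda))$.
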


\begin{proof}
This is immediate from 
Corollaries \ref{main3-1}, \ref{main3-2} and \ref{cor:contractible-derived-category}.
\end{proof}

\begin{remark}
Corollary \ref{derived pp} affirms \cite[Conjecture 1.3]{I17} and the second part of \cite[Conjecture 5.8]{Q15} (see also \cite[Corollary 1.2]{I17} and \cite[Corollary 5.1]{QW14}). 
\end{remark}


\end{document}